\documentclass{tac}
\pdfoutput=1
\usepackage[utf8]{inputenc}
\usepackage[T1]{fontenc}
\usepackage[shortlabels]{enumitem}

\usepackage{etoolbox,xparse}
\usepackage{amsmath,amssymb,mathtools}
\usepackage{dsfont}

\newcommand{\define}[1]{\textbf{#1}}
\newcommand{\pair}[1]{\langle{#1}\rangle}
\renewcommand{\vec}[1]{\underline{#1}}
\newcommand{\vecvec}[1]{\underline{\underline{#1}}}

\newcommand{\from}{\leftarrow}

\newcommand{\xto}[1]{\xrightarrow{#1}}
\newcommand{\xfrom}[1]{\xleftarrow{#1}}

\newcommand{\To}{\Rightarrow}
\newcommand{\op}{\mathrm{op}}
\newcommand{\co}{\mathrm{co}}
\DeclareMathOperator{\id}{id}
\DeclareMathOperator{\El}{El}

\newcommand{\cat}[1]{\mathsf{#1}}
\newcommand{\catV}{\mathcal{V}}
\newcommand{\Set}{\cat{Set}}
\newcommand{\FinSet}{\cat{FinSet}}
\newcommand{\CatOne}{\cat{Cat}}
\newcommand{\Fam}{\mathsf{Fam}}
\newcommand{\FamOp}{\mathsf{Fam}^*}
\newcommand{\FinFam}{\mathsf{FinFam}}

\makeatletter %
\newcommand{\xRrightarrow}[2][]{\ext@arrow 0359\Rrightarrowfill@{#1}{#2}}
\newcommand{\Rrightarrowfill@}{\arrowfill@\equiv\equiv\Rrightarrow}
\makeatother

\newcommand{\bicat}[1]{\mathbf{#1}}
\newcommand{\Cat}{\bicat{Cat}}
\newcommand{\TwoCat}{\bicat{2Cat}}
\newcommand{\Dbl}{\bicat{Dbl}}
\newcommand{\DblLax}{\bicat{Dbl}_{\mathrm{lax}}}
\newcommand{\DblColax}{\bicat{Dbl}_{\mathrm{clx}}}

\def\p#1{\mathrel{\ooalign{\hfil$\mapstochar\mkern 5mu$\hfil\cr$#1$}}}
\newcommand{\proto}{\p\rightarrow}
\newcommand{\xproto}[1]{\overset{#1}{\p\rightarrow}}
\newcommand{\proTo}{\p\Rightarrow}

\DeclareMathOperator{\res}{res}
\DeclareMathOperator{\ext}{ext}

\newcommand{\dbl}[1]{\ifstrequal{#1}{1}{\mathds{1}}{\mathbb{#1}}}
\NewDocumentCommand{\Span}{g}{\mathbb{S}\mathsf{pan}\IfNoValueTF{#1}{}{(#1)}}
\NewDocumentCommand{\Mat}{g}{\IfNoValueTF{#1}{}{{#1}\text{-}}\mathbb{M}\mathsf{at}}
\NewDocumentCommand{\Cospan}{g}{\mathbb{C}\mathsf{ospan}\IfNoValueTF{#1}{}{(#1)}}
\newcommand{\FinSpan}{\mathbb{F}\mathsf{inSpan}}
\newcommand{\FinCospan}{\mathbb{F}\mathsf{inCospan}}

\newcommand{\Sq}{\mathbb{S}\mathsf{q}}
\newcommand{\DblFam}{\mathbb{F}\mathsf{am}}
\newcommand{\DblFamOp}{\mathbb{F}\mathsf{am}^*}
\newcommand{\DblFinFam}{\mathbb{F}\mathsf{inFam}}
\newcommand{\Th}[1]{\mathbb{T}_{#1}}

\newcommand{\Eta}{\mathrm{H}}
\newcommand{\Kappa}{\mathrm{K}}
\newcommand{\Mu}{\mathrm{M}}
\newcommand{\Nu}{\mathrm{N}}

\usepackage{quiver}
\newcommand{\inlineCell}[1][small]{%
  \def\tempa{#1}%
  \inlineCellContinued
}
\newcommand{\inlineCellContinued}[9]{%
\begin{tikzcd}[ampersand replacement=\&, cramped, row sep=small, column sep=\tempa]
  {#1} \& {#2} \\
  {#3} \& {#4}
  \arrow[""{name=0, anchor=center, inner sep=0}, "{#5}", "\shortmid"{marking}, from=1-1, to=1-2]
  \arrow["{#7}"', from=1-1, to=2-1]
  \arrow["{#8}", from=1-2, to=2-2]
  \arrow[""{name=1, anchor=center, inner sep=0}, "{#6}"', "\shortmid"{marking}, from=2-1, to=2-2]
  \arrow["{#9}"{description}, draw=none, from=0, to=1]
\end{tikzcd}}

\hypersetup{bookmarksnumbered}

\newtheoremrm{construction}{Construction}

\usepackage[square]{natbib}
\bibliographystyle{plainnat}

\title{Products in double categories, revisited}
\author{Evan Patterson}
\date{}
\keywords{Double categories, free completion, products, formal category theory}
\amsclass{18N10, 18A30, 18D70}

\begin{document}
\maketitle

\begin{abstract}
  Products in double categories, as found in cartesian double categories, are an
  elegant concept with numerous applications, yet also have a few puzzling
  aspects. In this paper, we revisit double-categorical products from an
  unbiased perspective, following up an original idea by Paré to employ a
  double-categorical analogue of the family construction, or free product
  completion. Defined in this way, double categories with finite products are
  strictly more expressive than cartesian double categories, while being
  governed by a single universal property that is no more difficult to work
  with. We develop the basic theory and examples of such products and, by
  duality, of coproducts in double categories. As an application, we introduce
  finite-product double theories, a categorification of finite-product theories
  that extends recent work by Lambert and the author on cartesian double
  theories, and we construct the virtual double category of models of a
  finite-product double theory.
\end{abstract}

\tableofcontents

\section{Introduction}

A cartesian double category is a double category with binary and nullary
products \citep{aleiferi2018}, as defined by the general theory of limits in
double categories \citep{grandis1999,grandis2019}. This means that a double
category $\dbl{D}$ is \define{cartesian} when there exist right adjoints to the
diagonal double functor $\Delta: \dbl{D} \to \dbl{D} \times \dbl{D}$ and the unique double
functor $!: \dbl{D} \to \dbl{1}$. Equivalently, a cartesian double category is a
cartesian object in the 2-category of double categories.

Cartesian double categories aim to capture through a simple universal property
what in a bicategory can only be described through far more elaborate
structures, such as cartesian bicategories \citep{carboni1987,carboni2008}.
Recent works in pure and applied category theory have sought to exploit this
feature of double categories. For example, Lambert has given axioms for ``a
double category of relations'' \citep{lambert2022} simplifying those of ``a
bicategory of relations'' \citep{carboni1987}. The author has used the dual notion
of a cocartesian double category to streamline the theory of structured cospans
\citep{patterson2023}, a framework for modeling open systems
\citep{fiadeiro2007,baez2020}. Finally, cartesian double categories are the point
of departure for ``cartesian double theories,'' a categorification of
finite-product theories whose models are categorical doctrines
\citep{lambert2024}.

Yet, despite their elegance and success in applications, there are good reasons
to doubt that cartesian double categories are the right notion of a ``double
category with finite products.'' In ordinary category theory, binary and nullary
products famously generate all finite products, hence it is standard to identify
a ``category with binary and nullary products'' with a ``category with finite
products.'' Surprisingly, this identification, rarely given a second thought,
breaks down in certain ways for double categories.

We begin with a striking disanalogy between cocartesian categories and
cocartesian double categories. The category of sets and functions plays a
distinguished role in category theory through the Yoneda lemma and the
representability arguments that the lemma enables. In developing the Yoneda
theory of double categories \citep{pare2011}, Paré has convincingly argued that
the role played in category theory by $\Set$ is played in double category theory
by $\Span$, the double category of sets, functions, and spans. For this reason,
Paré and others even call this double category $\mathbb{S}\mathsf{et}$. The
category $\Set$ is also characterized as the free cocompletion of the terminal
category; similarly, $\FinSet$ is the free cocartesian category (category with
finite coproducts) on the terminal category. Yet the free cocartesian double
category on the terminal double category is not $\FinSpan$, as one would expect
under this analogy, but the degenerate fragment of it whose proarrows are
identity spans.

There is also disharmony between cartesian double categories and cartesian
bicategories. Although Aleiferi motivates her introduction of cartesian double
categories through cartesian bicategories \citep{aleiferi2018}, hence the
parallel terminology, no formal comparison between the two concepts has been
made. One might hope that a cartesian double category has an underlying
cartesian bicategory, but that seems unlikely since a generic cartesian double
category has no means of transferring the cartesian structure from arrows to
proarrows. A cartesian \emph{equipment} does have such a means. So, a cartesian
equipment is expected to have an underlying cartesian bicategory, and, at least
in the locally posetal case, Lambert has proved this
\citep[\mbox{Proposition 3.1}]{lambert2022}. But, even supposing that this is
true in general, a puzzle remains: a cartesian equipment is defined by two
independent universal properties, and the universal property of being an
equipment has, on the face of it, nothing to do with products, except insofar as
it involves a ``mapping into'' universal property.\footnote{Even this comparison
  is arguably misleading since equipments have a dual characterization in terms
  of a ``mapping out'' universal property \citep[\mbox{Theorem 4.1}]{shulman2008},
  so are, in any analogy with limits or colimits, equally close to either.} Is
this really necessary to obtain a cartesian bicategory?

So it appears that a cartesian double category possess too little structure to
fulfill the role of a ``double category with finite products,'' whereas a
cartesian equipment possesses too much. In this paper, we explore an
intermediate conception of products in a double category, based on a
double-categorical version of the family construction. This idea was first
proposed by Robert Paré in a talk at the Category Theory conference
\citep{pare2009} but has never been systematically developed in print. It is the
aim of this paper to do so, while exhibiting its virtues and drawing connections
with the related ideas surveyed above.

The essence of the idea is easily explained. As reviewed in Section
\ref{sec:family-construction}, the free coproduct completion of a category
$\cat{C}$ is a category, often called $\Fam(\cat{C})$, whose objects are indexed
families of objects in $\cat{C}$. This logic can be turned around. Forgetting
the definition of coproducts and starting from the family construction, one can
define a category $\cat{C}$ to have coproducts just when the embedding
$\Delta: \cat{C} \to \Fam(\cat{C})$ that sends objects in $\cat{C}$ to singleton
families has a left adjoint, $\Sigma: \Fam(\cat{C}) \to \cat{C}$. Products can then be
defined by dualization. We carry out an analogous procedure for double
categories, constructing a double category of families, $\DblFam(\dbl{D})$, on a
double category $\dbl{D}$ and then defining $\dbl{D}$ to have (colax) coproducts
just when the embedding $\Delta: \dbl{D} \to \DblFam(\dbl{D})$ has a (colax) left
adjoint $\Sigma: \DblFam(\dbl{D}) \to \dbl{D}$, in the sense of Grandis and Paré's
theory of adjunctions between double categories \citep{grandis2004,grandis2019}.

In a hypothetical double category of families, objects should clearly be indexed
by sets, as in the one-dimensional case, but proarrows admit at least two
possible indexing schemes. One could insist that an indexed family of proarrows
go between indexed families of objects with the same indexing set. That choice
leads to cocartesian double categories and their infinitary analogues. However,
having adopted the unbiased perspective, enforcing equality of indexing sets
seems artificial, even uncategorical insofar as it asks for equality between
objects. A more flexible approach takes a family of proarrows to be indexed by
an arbitrary \emph{span} between the object indexing sets. Thus, given a double
category $\dbl{D}$, a proarrow in $\DblFam(\dbl{D})$ from an indexed family of
objects $(x_i)_{i \in I}$ to another $(y_j)_{j \in J}$ consists of a span
$I \xfrom{\ell} A \xto{r} J$ of sets together with a family of proarrows in
$\dbl{D}$ of the form
\begin{equation*}
  m_a: x_{\ell(a)} \proto y_{r(a)}, \qquad a \in A.
\end{equation*}
The double family construction is developed along these lines in
Section \ref{sec:dbl-family-construction}. In Section \ref{sec:coproducts}, we define
\define{colax} or \define{strong coproducts} in a double category $\dbl{D}$ to
be a colax or pseudo left adjoint to the embedding
$\Delta: \dbl{D} \to \DblFam(\dbl{D})$, and we verify that two fundamental
classes of double categories, those of spans and matrices, have strong
coproducts under appropriate assumptions.

The new definition's most immediate advantage is in giving a more flexible
notion of coproduct. Coproducts of proarrows indexed by an identity span
$I \xfrom{1} I \xto{1} I$ are the \define{parallel coproducts} familiar from
cocartesian double categories. Coproducts of proarrows with common source and
target, indexed by a span $1 \xfrom{!} I \xto{!} 1$, are \define{local
  coproducts}. These are, in particular, local coproducts in the underlying
bicategory but have a stronger universal property.

Once the notion of coproduct-preserving double functor has been established in
Section \ref{sec:preserving-coproducts}, we formulate and prove the expected
result that $\DblFam(\dbl{D})$ is the free coproduct completion of the double
category $\dbl{D}$. In particular, $\Span$ is the free coproduct completion of
the terminal double category, since we have an isomorphism of double categories
$\DblFam(\dbl{1}) \cong \Span$ essentially by construction. Similarly,
$\DblFinFam(\dbl{1}) \cong \FinSpan$ is the free completion of the terminal
double category with finite coproducts. Thus the first conceptual problem with
cocartesian double categories is resolved.

In Section \ref{sec:products}, we turn to products in double categories. Of
course, products in a double category $\dbl{D}$ can be defined as coproducts in
the opposite double category $\dbl{D}^\op$, where
$(\dbl{D}^\op)_i = (\dbl{D}_i)^\op$ for each $i=0,1$. Yet there is an
interesting asymmetry in the main examples that has no counterpart for
one-dimensional categories. Key examples of double categories, such as those of
spans and of matrices, have \emph{strong} coproducts but \emph{lax} products, a
situation first noticed by Paré in the double category of relations
\citep{pare2009}. Products in these double categories are not entirely lax:
products commute with external composition when the adjacent legs of the
indexing spans are bijections. We isolate this condition as \emph{iso-strong
  products} in Section \ref{sec:iso-strong-products} and check that double
categories of spans and of matrices have iso-strong products under the expected
hypotheses.

A less immediate, but very important, consequence of the defining universal
property is that double categories with products automatically possess certain
\emph{restriction} cells (also known as \emph{cartesian} cells). Namely, they
have restrictions along any structure arrow between products, such as
projections and diagonals. Restrictions, companions, and conjoints in double
categories with products are investigated in Section \ref{sec:restrictions},
culminating in the characterization of double categories with iso-strong finite
products as cartesian double categories that have restrictions along structure
arrows between products. We thus obtain inclusions
\begin{align*}
  \text{cartesian equipments}
    &\subset \text{double categories with iso-strong finite products} \\
    &\subset \text{cartesian double categories}
\end{align*}
and the inclusions are strict. The same sequence of inclusions holds for
precartesian equipments, double categories with lax finite products, and
precartesian double categories, where a \emph{precartesian} double category is
like a cartesian double category except that the pseudo double functors
comprising the right adjoints are replaced by lax double functors
\citep[\S{4.1}]{aleiferi2018}.

Double categories with products possess \emph{just enough} restrictions to have
underlying cartesian bicategories, allowing the structure arrows between
products to be transferred to proarrows but in general nothing more. To be more
precise, the underlying bicategory of a double category with iso-strong finite
products is cartesian. We defer the proof of this theorem to another work
\citep[\mbox{Theorem 5.7}]{patterson2024}, since it involves techniques for
transposing structure in double categories of a rather different spirit than the
present paper. Nevertheless, this result resolves the second conceptual problem
raised above. As a corollary, it follows that cartesian equipments have
underlying cartesian bicategories.

The last two sections of the paper concern a different application,
double-categorical logic, which was actually the author's original motivation to
revisit products in double categories.\footnote{In fact, double-categorical
  logic was also Paré's original motivation \citep{pare2009}, although he
  considers both a different type of a double theory and a different type of
  model than we do.} In recent work \citep{lambert2024}, Michael Lambert and the
author proposed cartesian double categories as a double-categorical approach to
doctrines, defining a \define{cartesian double theory} to be a small, strict
cartesian double category and a \define{model} of a cartesian double theory to
be a cartesian lax functor out of it. Cartesian equipments were not considered
viable as double theories since lax functors seem not to interact well with all
of the abundant structure present in an equipment. Double categories with finite
products, in the sense developed here, are a useful compromise, more expressive
than a cartesian double category yet fully controlled by a single universal
property that lax functors respect.

In Section \ref{sec:theories}, we define a \define{finite-product double theory} to be
a small, strict double category with strong finite products and a \define{model}
of a finite-product double theory to be a product-preserving lax functor out of
it. We present several examples of finite-product double theories illustrating
their extra flexibility compared to cartesian double theories. Finally, in
Section \ref{sec:models}, we construct a unital virtual double category of models of a
finite-product double theory and prove a series of simple but necessary lemmas
showing that morphisms and higher morphisms of models behave properly.

Our thesis is, in short, that the canonical notion of products in double
categories is the one investigated here, at least for double categories with
``object-like'' proarrows, such as those of spans, matrices, relations, and
profunctors. A natural question for the future is whether the same unbiased
perspective can be usefully applied to general limits and colimits in double
categories.

\paragraph{Organization} This paper naturally divides into two parts, concerning
coproducts and products. The two parts are largely, though not completely,
independent of each other. In Sections
\ref{sec:family-construction}--\ref{sec:preserving-coproducts}, we review the
family construction for ordinary categories, extend it to a double-categorical
family construction, formulate coproducts in a double category $\dbl{D}$ as a
left adjoint to the inclusion double functor
$\Delta: \dbl{D} \to \DblFam(\dbl{D})$, and then explicitly construct coproducts
in several important classes of double categories. In Sections
\ref{sec:products}--\ref{sec:models}, we dualize to study products in double
categories; distinguish between lax, iso-strong, and strong products in key
examples; and finally apply these notions to formulate finite-product double
theories and their virtual double categories of models. The reader primarily
interested in a direct description of products, or in their applications to
formal category theory, can begin at Section \ref{sec:products}, referring
occasionally to earlier sections for technical results.

\paragraph{Conventions} Double categories and double functors are assumed to be
pseudo unless otherwise stated. Every double category $\dbl{D}$ has two
\define{underlying categories}, the category $\dbl{D}_0$ of objects and arrows
and the category $\dbl{D}_1$ of proarrows and cells. A double category also has
an \define{underlying 2-category} of objects, arrows, and cells bounded by
identity proarrows, and an \define{underlying bicategory} of objects, proarrows,
and \define{globular} cells, which are bounded by identity arrows. We always
write applications of the external composition
$\odot: \dbl{D}_1 \times_{\dbl{D}_0} \dbl{D}_1 \to \dbl{D}_1$ in a double category in
diagrammatic order, and we denote the external identity by
$\id: \dbl{D}_0 \to \dbl{D}_1$.

\paragraph{Acknowledgments} I thank Michael Lambert for a series of helpful
conversations over the course of this research. I thank Brandon Shapiro for
first implanting in my mind the belief that $\Span$ should be the free coproduct
completion of the terminal double category. Finally, I am grateful to Nathanael
Arkor and the anonymous reviewer for careful and detailed feedback on the
manuscript, which has notably improved as a result.

\section{Review of the family construction}
\label{sec:family-construction}

In this expository section, we review the family construction, or free coproduct
completion, of a category, as we will develop the family construction for double
categories in analogy with it. Much is known about the category of families in a
given category, which enjoys surprisingly many properties besides simply being
the free coproduct completion. Notable references for these properties include
the papers \citep{carboni1995,hu1995,adamek2020}. The category of families
apparently originates with Bénabou's study of the foundations of naive category
theory \citep[\S{3.3}]{benabou1985}.

The family construction can be defined as the Grothendieck construction of a
restricted representable functor. Given the simplicity of the category of
families, this may seem a circuitous route but it has the advantage of
immediately exhibiting the fibered structure of families. More to the point, it
will motivate an analogous approach to families in double categories.

\begin{construction}[Families] \label{def:fam}
  The \define{category of families} in a category $\cat{C}$, denoted
  $\Fam(\cat{C})$, is the Grothendieck construction of the functor
  $\Cat(-, \cat{C})|_{\Set}: \Set^\op \to \Cat$:
  \begin{equation*}
    \Fam(\cat{C}) \coloneqq \int^{I \in \Set} \Cat(I, \cat{C}).
  \end{equation*}
\end{construction}

In the definition, the sets $I$ are regarded as discrete categories by a common
abuse of notation. Thus, the category $\Fam(\cat{C})$ has
\begin{itemize}
  \item as objects $(I, \vec{x})$, an \define{indexing set} $I$ together with an
    $I$-indexed family $\vec{x}: I \to \cat{C}$ of objects in $\cat{C}$, whose
    elements are denoted $x_i \coloneqq \vec{x}(i)$;
  \item as morphisms $(f_0, f): (I, \vec{x}) \to (J, \vec{y})$, a function
    $f_0: I \to J$ between indexing sets together with an $I$-indexed family
    $f: \vec{x} \To \vec{y} \circ f_0$ of morphisms in $\cat{C}$, having
    components of the form $f_i: x_i \to y_{f_0(i)}$.
\end{itemize}
By the fundamental relationship between indexed categories and fibrations, the
canonical projection $\Fam(\cat{C}) \to \Set$ sending a family of objects to its
indexing set is a fibration.

There is also a \define{category of finite families} in $\cat{C}$, denoted
$\FinFam(\cat{C})$. It is the full subcategory of $\Fam(\cat{C})$ spanned by
families of objects indexed by \emph{finite} sets; alternatively, it is the
Grothendieck construction of the functor
$\Cat(-,\cat{C})|_{\FinSet}: \FinSet^\op \to \Cat$. Everything that we will say
about the family construction and coproducts remains true when ``family'' is
replaced by ``finite family'' and ``coproduct'' by ``finite coproduct.'' Often
we will not bother to say so explicitly.

It would be conventional at this point to observe that $\Fam(\cat{C})$ is the
free completion of $\cat{C}$ to a category with coproducts, but this presupposes
the definition of coproducts, whereas we intend to use a double family
construction to \emph{define} coproducts in a double category. Mirroring this
logic, we recall how to recover the usual definition of a coproduct in a
category from the family construction. Denote by
$\Delta \coloneqq \Delta_{\cat{C}}: \cat{C} \to \Fam({\cat{C}})$ the functor
that sends each object $x$ in $\cat{C}$ to the singleton family $(1,x)$, indexed by
the terminal set.

\begin{proposition}[Coproducts as left adjoints]
  \label{prop:coproducts-as-adjoint}
  A category $\cat{C}$ has all (small) coproducts if and only if the functor
  $\Delta: \cat{C} \to \Fam(\cat{C})$ has a left adjoint:
  \begin{equation*}
    \begin{tikzcd}
      {\cat{C}} & {\Fam(\cat{C})}
      \arrow[""{name=0, anchor=center, inner sep=0}, "\Sigma"', curve={height=18pt}, from=1-2, to=1-1]
      \arrow[""{name=1, anchor=center, inner sep=0}, "\Delta"', curve={height=18pt}, from=1-1, to=1-2]
      \arrow["\dashv"{anchor=center, rotate=-90}, draw=none, from=0, to=1]
    \end{tikzcd}.
  \end{equation*}
\end{proposition}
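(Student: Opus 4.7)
The plan is to unpack the definition of morphisms in $\Fam(\cat{C})$ with target a singleton family, observe that such morphisms are literally the same data as cocones in $\cat{C}$, and then apply the universal property of coproducts (or conversely of the adjunction) to get the natural bijection.

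For the forward direction, assume $\cat{C}$ has all small coproducts. I would define $\Sigma: \Fam(\cat{C}) \to \cat{C}$ on objects by $\Sigma(I, \vec{x}) \coloneqq \coprod_{i \in I} x_i$. On a morphism $(f_0, f): (I,\vec{x}) \to (J,\vec{y})$, whose components are $f_i: x_i \to y_{f_0(i)}$, I would let $\Sigma(f_0, f)$ be the unique map $\coprod_i x_i \to \coprod_j y_j$ induced by the cocone with legs $\iota_{f_0(i)} \circ f_i$. Functoriality is a routine check from the uniqueness clause of the universal property. For the adjunction, the key observation is that a morphism $(!, g): (I,\vec{x}) \to (1, y) = \Delta(y)$ in $\Fam(\cat{C})$ is precisely a family $g_i: x_i \to y$, i.e., a cocone under $\vec{x}$ with apex $y$. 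The universal property of the coproduct therefore gives a natural bijection between such morphisms and morphisms $\Sigma(I,\vec{x}) = \coprod_i x_i \to y$ in $\cat{C}$; naturality in both variables follows by the usual compatibility of induced maps.

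For the reverse direction, suppose $\Delta$ has a left adjoint $\Sigma$. Fix a family $(I, \vec{x})$; I would show $\Sigma(I, \vec{x})$ is a coproduct of $\vec{x}$ in $\cat{C}$. The unit $\eta_{(I,\vec{x})}: (I,\vec{x}) \to \Delta\Sigma(I, \vec{x}) = (1, \Sigma(I,\vec{x}))$ consists of the unique function $I \to 1$ together with a family of morphisms $\iota_i: x_i \to \Sigma(I,\vec{x})$; these are the candidate injections. Given any cocone $(g_i: x_i \to y)_{i \in I}$, the same unpacking shows this is the same datum as a morphism $(!, g): (I,\vec{x}) \to \Delta(y)$, which by the adjunction corresponds bijectively to a unique morphism $\Sigma(I,\vec{x}) \to y$ in $\cat{C}$; the triangle identity for $\eta$ guarantees this morphism satisfies the expected factorization through the $\iota_i$.

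There is no substantial obstacle here: the proof is almost purely definitional once one observes that maps into $\Delta(y)$ are cocones. The only care needed is in phrasing naturality carefully in both directions and checking that the Grothendieck-construction definition of $\Fam(\cat{C})$ from \cref{def:fam} yields exactly the description of morphisms into singleton families used above. I would make this bijection explicit in a brief display, then note that the finite-family variant with $\FinFam(\cat{C})$ and finite coproducts follows by the identical argument, as foreshadowed in the discussion following \cref{def:fam}.
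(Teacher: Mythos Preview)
Your proposal is correct. The paper's proof rests on the same core observation---that a morphism $(I,\vec{x}) \to \Delta y$ in $\Fam(\cat{C})$ is exactly a cocone under $\vec{x}$ with apex $y$---but packages it slightly differently: rather than treating the two implications separately, it invokes the standard equivalence between adjunctions and universal arrows (citing Mac Lane and Grandis), so that the existence of $\Sigma \dashv \Delta$ is equivalent to each $(I,\vec{x})$ admitting a universal arrow to $\Delta$, and then unpacks that universal arrow directly as the usual coproduct cocone. Your approach is more hands-on, explicitly constructing $\Sigma$ on morphisms and checking the hom-set bijection and its naturality, and then in the reverse direction essentially re-deriving the universal-arrow characterization via the unit and triangle identity; the paper's route is shorter because it outsources all of that to the cited theorem, while yours has the virtue of being self-contained.
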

\begin{proof}
  The most expeditious proof uses the equivalence between adjunctions and
  universal arrows; see, for example, \citep[\S{IV.1}]{maclane1998} or
  \citep[\S{1.5.1}]{grandis2019}. Thus, the existence of a functor
  $\Sigma: \Fam(\cat{C}) \to \cat{C}$ along with an adjunction
  $\Sigma \dashv \Delta$ is equivalent to the existence merely of, for every
  family of objects $(I, \vec{x})$ in $\cat{C}$, a universal arrow from
  $(I,\vec{x})$ to the functor $\Delta$. Such a universal arrow consists of an
  object $\Sigma \vec{x} \coloneqq \Sigma(I, \vec{x})$ in $\cat{C}$ along with a
  morphism $(\iota_0, \iota): (I,\vec{x}) \to \Delta \Sigma \vec{x}$ in
  $\Fam(\cat{C})$. It satisfies the universal property that, for any object
  $y \in \cat{C}$ and morphism $(f_0, f): (I,\vec{x}) \to \Delta y$ in
  $\Fam(\cat{C})$, there exists a unique morphism $h: \Sigma\vec{x} \to y$ in
  $\cat{C}$ making the triangle commute:
  \begin{equation*}
    \begin{tikzcd}
      {(I, \vec{x})} & {\Delta\Sigma\vec{x}} \\
      & {\Delta y}
      \arrow["{\Delta h}", from=1-2, to=2-2]
      \arrow["{(\iota_0, \iota)}", from=1-1, to=1-2]
      \arrow["{(f_0, f)}"', from=1-1, to=2-2]
    \end{tikzcd}.
  \end{equation*}
  Now, since $f_0$ and $\iota_0$ must both be the unique map $!: I \to 1$, we
  recover the usual definition of a coproduct: there is an object
  $\Sigma \vec{x}$ in $\cat{C}$, conventionally denoted $\sum_{i \in I} x_i$,
  along with \define{coprojection} morphisms $\iota_i: x_i \to \Sigma\vec{x}$
  for each $i \in I$. These satisfy the universal property that for any object
  $y \in \cat{C}$ and family of morphisms $(f_i: x_i \to y)_{i \in I}$, there
  exists a unique morphism $h: \Sigma\vec{x} \to y$ making all the triangles
  commute:
  \begin{equation*}
    \begin{tikzcd}
      {x_i} & {\Sigma \vec{x}} \\
      & y
      \arrow["{\iota_i}", from=1-1, to=1-2]
      \arrow["{f_i}"', from=1-1, to=2-2]
      \arrow["h", from=1-2, to=2-2]
    \end{tikzcd},
    \qquad i \in I.
  \end{equation*}
\end{proof}

Functors that preserve coproducts can also be characterized using the family
construction. Notice that the assignment $\Fam: \Cat \to \Cat$ is a functor
(even a 2-functor, although we won't need that). Specifically, given a functor
$F: \cat{C} \to \cat{D}$, the functor $\Fam(F): \Fam(\cat{C}) \to \Fam(\cat{D})$
results by applying the Grothendieck construction, which is functorial
\citep[\S{6.2}]{peschke2020}, to the natural transformation
\begin{equation*}
  \Cat(-,F)|_\Set: \Cat(-,\cat{C})|_\Set \To \Cat(-,\cat{D})|_\Set: \Set^\op \to \Cat.
\end{equation*}
In concrete terms, $\Fam(F)$ acts on indexed families by post-composition: for a
family of objects $(I, \vec{x})$ in $\cat{C}$, we have
$\Fam(F)(I, \vec{x}) = (I, F\vec{x})$, so that $(F \vec{x})_i = F(x_i)$ for each
$i \in I$, and similarly for families of morphisms. To avoid notational clutter,
we will often simply write $F \vec{x}$ for this action.

\begin{proposition}[Coproduct-preserving functors]
  \label{prop:preserve-coproducts}
  A functor $F: \cat{C} \to \cat{D}$ between categories with coproducts
  preserves all coproducts if and only if the morphism of adjunctions
  \begin{equation*}
    \begin{tikzcd}
      {\Fam(\cat{C})} & {\cat{C}} \\
      {\Fam(\cat{D})} & {\cat{D}}
      \arrow[""{name=0, anchor=center, inner sep=0}, "{\Sigma \dashv \Delta}", "\shortmid"{marking}, from=1-1, to=1-2]
      \arrow["F", from=1-2, to=2-2]
      \arrow["{\Fam(F)}"', from=1-1, to=2-1]
      \arrow[""{name=1, anchor=center, inner sep=0}, "{\Sigma \dashv \Delta}"', "\shortmid"{marking}, from=2-1, to=2-2]
      \arrow["{(\Phi,1)}"{description}, draw=none, from=0, to=1]
    \end{tikzcd}
  \end{equation*}
  defined by the pair of functors $(\Fam(F), F)$ is strong.
\end{proposition}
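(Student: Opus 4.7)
The plan is to unpack the natural transformation $\Phi$ appearing in the morphism of adjunctions and identify its components with the canonical comparison maps whose invertibility characterizes preservation of coproducts.

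Writing $\Sigma \dashv \Delta$ for the adjunction over $\cat{C}$ and $\Sigma' \dashv \Delta'$ for the one over $\cat{D}$, I would first observe that the square involving the right adjoints commutes strictly on the nose: both $\Fam(F) \circ \Delta$ and $\Delta' \circ F$ send $x \in \cat{C}$ to the singleton family $(1, Fx)$, and they act compatibly on morphisms, so the 2-cell displayed as $1$ really is the identity. Consequently $\Phi: \Sigma' \circ \Fam(F) \Rightarrow F \circ \Sigma$ is obtained as its mate, namely the composite
\begin{equation*}
  \Sigma' \circ \Fam(F)
    \xrightarrow{\Sigma' \Fam(F) \eta}
  \Sigma' \circ \Fam(F) \circ \Delta \circ \Sigma
    =
  \Sigma' \circ \Delta' \circ F \circ \Sigma
    \xrightarrow{\epsilon' F \Sigma}
  F \circ \Sigma,
\end{equation*}
where $\eta$ is the unit of $\Sigma \dashv \Delta$ and $\epsilon'$ is the counit of $\Sigma' \dashv \Delta'$. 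Strength of the morphism of adjunctions means precisely that this $\Phi$ is invertible, since the other 2-cell is already an identity.

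Next I would compute the component $\Phi_{(I, \vec{x})}$ explicitly. From the proof of \cref{prop:coproducts-as-adjoint}, $\eta_{(I,\vec{x})}$ is the morphism $(!, \iota): (I,\vec{x}) \to (1, \Sigma\vec{x})$ assembled from the coprojections $\iota_i: x_i \to \Sigma\vec{x}$, while $\epsilon'_y$ is the identity on $y$ (since $\Sigma' \circ \Delta' = \mathrm{id}_{\cat{D}}$ on the nose, a singleton family being its own coproduct). Applying $\Fam(F)$ yields $(!, F\iota): (I, F\vec{x}) \to (1, F(\Sigma\vec{x}))$, which $\Sigma'$ then sends, by the defining universal property, to the unique morphism $\Sigma'(F\vec{x}) \to F(\Sigma\vec{x})$ whose precomposition with each coprojection $\iota'_i: F(x_i) \to \Sigma'(F\vec{x})$ equals $F(\iota_i)$. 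This is exactly the canonical comparison morphism associated to $F$ and $\vec{x}$.

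The remainder is formal: by the very definition of $F$ preserving the coproduct of $\vec{x}$, invertibility of this comparison map at every $(I, \vec{x})$ is equivalent to $F$ preserving all coproducts. The main work, such as it is, lies in matching the mate with the canonical comparison; once the units and counits are made explicit, the computation is routine.
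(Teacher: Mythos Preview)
Your proposal is correct and follows essentially the same approach as the paper: both identify the mate $\Phi$ of the identity 2-cell and unwind its component at $(I,\vec{x})$ to recover the canonical coproduct comparison, then conclude by definition. Your version is slightly more explicit in spelling out the mate formula and the choice $\Sigma'\Delta' = \mathrm{id}$, but the argument is the same.
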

\begin{proof}
  A \emph{morphism of adjunctions} is a cell in the double category of
  adjunctions \citep[\S{3.1.6}]{grandis2019}. Less standardly, the cell being
  \emph{strong} means that the identity transformation on the left
  \begin{equation*}
    \begin{tikzcd}
      {\Fam(\cat{C})} & {\cat{C}} \\
      {\Fam(\cat{D})} & {\cat{D}}
      \arrow["F", from=1-2, to=2-2]
      \arrow["{\Delta_{\cat{C}}}"', from=1-2, to=1-1]
      \arrow["{\Fam(F)}"', from=1-1, to=2-1]
      \arrow["{\Delta_{\cat{D}}}", from=2-2, to=2-1]
      \arrow["1", shorten <=5pt, shorten >=8pt, Rightarrow, from=1-1, to=2-2]
    \end{tikzcd}
    \qquad\leftrightsquigarrow\qquad
    \begin{tikzcd}
      {\Fam(\cat{C})} & {\cat{C}} \\
      {\Fam(\cat{D})} & {\cat{D}}
      \arrow["F", from=1-2, to=2-2]
      \arrow["{\Sigma_{\cat{C}}}", from=1-1, to=1-2]
      \arrow["{\Fam(F)}"', from=1-1, to=2-1]
      \arrow["{\Sigma_{\cat{D}}}"', from=2-1, to=2-2]
      \arrow["\Phi", shorten <=5pt, shorten >=5pt, Rightarrow, from=2-1, to=1-2]
    \end{tikzcd}
  \end{equation*}
  has a mate cell $\Phi$ as shown on the right that is a natural isomorphism.
  Unwinding the definitions, the component of $\Phi$ at a family $(I,\vec{x})$
  is the canonical comparison
  \begin{equation*} \textstyle
    \sum_{i \in I} Fx_i = \Sigma F\vec{x} \xto{\Phi_{\vec{x}}}
      F \Sigma\vec{x} = F(\sum_{i \in I} x_i)
  \end{equation*}
  provided by the universal property of the coproduct in $\cat{D}$. By
  definition, the functor $F$ preserves the coproduct of a family $\vec{x}$ if
  and only if the component $\Phi_{\vec{x}}$ is an isomorphism.
\end{proof}

We now recall the central property of the family construction.

\begin{theorem}[Free coproduct completion] \label{thm:free-coproduct-completion}
  For any category $\cat{C}$, the category $\Fam(\cat{C})$ is the free coproduct
  completion of $\cat{C}$.
\end{theorem}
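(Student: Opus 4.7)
The plan is to verify the universal property of the free coproduct completion, namely that for any category $\cat{D}$ with coproducts, precomposition with $\Delta: \cat{C} \to \Fam(\cat{C})$ induces an equivalence between coproduct-preserving functors $\Fam(\cat{C}) \to \cat{D}$ and arbitrary functors $\cat{C} \to \cat{D}$. This breaks into three tasks: (i) show $\Fam(\cat{C})$ itself has coproducts, (ii) construct a coproduct-preserving extension $\hat{F}: \Fam(\cat{C}) \to \cat{D}$ of any $F: \cat{C} \to \cat{D}$, and (iii) show that any coproduct-preserving extension is canonically isomorphic to $\hat{F}$.

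For (i), I would invoke \cref{prop:coproducts-as-adjoint} and exhibit a left adjoint to $\Delta: \Fam(\cat{C}) \to \Fam(\Fam(\cat{C}))$. Concretely, the coproduct of a $K$-indexed family of families $\bigl((I_k, \vec{x}^k)\bigr)_{k \in K}$ is the family $\bigl(\coprod_{k \in K} I_k,\ \vec{x}\bigr)$ where $\vec{x}$ assigns to $(k,i) \in \coprod_k I_k$ the object $x^k_i$. The coprojections are the evident inclusion functions on indexing sets paired with identity morphisms, and the universal property follows by unpacking a morphism out of a disjoint union of indexing sets.

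For (ii), I would define $\hat{F}(I, \vec{x}) \coloneqq \sum_{i \in I} F(x_i)$ using fixed choices of coproducts in $\cat{D}$, with action on morphisms determined by the universal property of coproducts. That $\hat{F}$ preserves coproducts is just the fact that coproducts of coproducts are iterated coproducts: $\hat{F}\bigl(\coprod_k (I_k, \vec{x}^k)\bigr) = \sum_{(k,i)} F(x^k_i) \cong \sum_k \sum_i F(x^k_i) = \sum_k \hat{F}(I_k, \vec{x}^k)$ via the canonical associativity isomorphism. The natural isomorphism $\hat{F} \circ \Delta \cong F$ comes from the canonical isomorphism $\sum_{i \in 1} F(x) \cong F(x)$.

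For (iii), suppose $G: \Fam(\cat{C}) \to \cat{D}$ is coproduct-preserving with $G \circ \Delta \cong F$. Every family $(I, \vec{x})$ is canonically the coproduct in $\Fam(\cat{C})$ of the singleton families $\bigl((1, x_i)\bigr)_{i \in I}$, so there is a forced natural isomorphism $G(I, \vec{x}) \cong \sum_{i \in I} G\Delta x_i \cong \sum_{i \in I} F(x_i) = \hat{F}(I, \vec{x})$, and one checks this is natural in morphisms of families. The main obstacle, such as it is, will be keeping the coherence data straight: the universal property is only up to isomorphism, so $\hat{F}$ is not literally unique but only unique up to unique natural isomorphism compatible with the extension data. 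This bookkeeping is routine once one commits to working with chosen coproducts, and the ``up to isomorphism'' character is exactly what is needed for the double-categorical analogue that will be developed later.
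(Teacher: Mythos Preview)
Your proposal is correct and follows essentially the same approach as the paper: construct coproducts in $\Fam(\cat{C})$ via disjoint unions of indexing sets with identity-morphism coprojections, then argue that any extension $\hat F$ is determined on singleton families by $F$ and on arbitrary families by the requirement that it preserve coproducts (since every family is a coproduct of singletons). The paper's argument is slightly more compressed, folding your steps (ii) and (iii) into a single existence-and-uniqueness paragraph, but the content is the same.
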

\begin{proof}
  Given a family of objects $(I, (\vec{U}, \vecvec{x}))$ in $\Fam(\cat{C})$,
  i.e., an object of $\Fam(\Fam(\cat{C}))$, a coproduct of it in $\Fam(\cat{C})$
  is
  \begin{equation*}
    \Sigma(\vec{U},\vecvec{x}) \coloneqq (\sqcup \vec{U}, \vec{x}),
    \qquad\text{where}\qquad
    \sqcup \vec{U} \coloneqq \bigsqcup_{i \in I} U_i,
    \qquad
    \vec{x} \coloneqq [\vec{x}_i]_{i \in I}: \sqcup \vec{U} \to \cat{C}.
  \end{equation*}
  In other words, $\sqcup \vec{U}$ is the disjoint union or coproduct of the
  indexing sets and $\vec{x}$ is the copairing of the indexed families given by
  the universal property of the coproduct in $\Set$. The coprojections in
  $\Fam(\cat{C})$ are the morphisms
  \begin{equation*}
    (\iota_i, 1_{\vec{x}_i}): (U_i, \vec{x}_i) \to (\sqcup U, \vec{x}), \qquad i \in I,
  \end{equation*}
  defined by the inclusions $\iota_i: U_i \to \sqcup \vec{U}$ in $\Set$ and the
  family of identity morphisms $(1_{\vec{x}_i})_u: x_{i,u} \to x_{\iota_i(u)}$
  in $\cat{C}$, for each $u \in U_i$. The universal property of the coproduct
  in $\Fam(\cat{C})$ follows directly from that of the coproduct in $\Set$.

  So the functor $\Delta: \cat{C} \to \Fam(\cat{C})$ embeds $\cat{C}$ into a
  category with coproducts. That $\Fam(\cat{C})$ is the \emph{free} coproduct
  completion of $\cat{C}$ is the universal property that for any other functor
  $F: \cat{C} \to \cat{D}$ into a category $\cat{D}$ with coproducts, there
  exists a coproduct-preserving functor $\hat F: \Fam(\cat{C}) \to \cat{D}$
  making the triangle
  \begin{equation*}
    \begin{tikzcd}
      & {\Fam(\cat{C})} \\
      {\cat{C}} & {\cat{D}}
      \arrow["{\hat{F}}", dashed, from=1-2, to=2-2]
      \arrow["\Delta", from=2-1, to=1-2]
      \arrow["F"', from=2-1, to=2-2]
    \end{tikzcd}
  \end{equation*}
  commute, and $\hat F$ is unique up to natural isomorphism. Indeed, a functor
  $\hat F: \Fam(\cat{C}) \to \cat{D}$ making the triangle commute is uniquely
  determined on singleton families and maps between them by $F$. But an
  arbitrary family of objects in $\cat{C}$ is a coproduct of singleton families,
  so if $\hat{F}$ is to preserve coproducts, it must send the family to a
  coproduct of fixed objects in $\cat{D}$. Since $\cat{D}$ is assumed to have
  coproducts, we can make such a choice of $\hat{F}$, and since coproducts are
  unique up to unique isomorphism commuting with coprojections, any such choice
  of $\hat{F}$ is unique up to unique natural isomorphism preserving the
  triangle formed with $\Delta$ and $F$.
\end{proof}

\section{The family construction for double categories}
\label{sec:dbl-family-construction}

As reviewed in the previous section, families in a category can be defined as
the Grothendieck construction of the restriction of a representable functor. We
will now define families in a double category analogously using the double
Grothendieck construction, a tool recently introduced by Cruttwell, Lambert,
Pronk, and Szyld in their study of double fibrations \citep{cruttwell2022}. The
impatient reader may wish to skip to the concrete description of the double
family construction following Construction \ref{def:dbl-fam}.

Besides the double Grothendieck construction, we will need a bit of the
technology of \emph{double 2-categories} developed by Cruttwell et al. A
\define{double 2-category} is a pseudocategory in $\TwoCat$, the 2-category of
2-categories, 2-functors, and 2-natural transformations \citep[Definition
3.2]{cruttwell2022}. So, a double 2-category $\dbl{E}$ is underlied by a
2-category $\dbl{E}_0$ of objects, arrows, and 2-cells between arrows as well as
a 2-category $\dbl{E}_1$ of proarrows, cells, and 2-cells between cells. The
fundamental example of a double 2-category is nothing other than the receiving
object for the lax double pseudofunctors that are the input to the double
Grothendieck construction.

\begin{example}[Spans of categories] \label{ex:span-cat}
  As a particular case of \citep[Construction 3.6]{cruttwell2022}, the double
  category of spans of categories upgrades to a double 2-category $\Span{\Cat}$.
  Its 2-category of objects is simply $\Span{\Cat}_0 = \Cat$, the 2-category of
  categories. More interestingly, its 2-category of morphisms $\Span{\Cat}_1$
  has spans of categories as objects; has maps of spans as morphisms; and has as
  2-cells
  \begin{equation*}
    \begin{tikzcd}
      {\cat{X}} & {\cat{S}} & {\cat{Y}} \\
      {\cat{W}} & {\cat{T}} & {\cat{Z}}
      \arrow["L"', from=1-2, to=1-1]
      \arrow["R", from=1-2, to=1-3]
      \arrow["L", from=2-2, to=2-1]
      \arrow["R"', from=2-2, to=2-3]
      \arrow["H"', from=1-2, to=2-2]
      \arrow["F"', from=1-1, to=2-1]
      \arrow["G", from=1-3, to=2-3]
    \end{tikzcd}
    \quad\To\quad
    \begin{tikzcd}
      {\cat{X}} & {\cat{S}} & {\cat{Y}} \\
      {\cat{W}} & {\cat{T}} & {\cat{Z}}
      \arrow["L"', from=1-2, to=1-1]
      \arrow["R", from=1-2, to=1-3]
      \arrow["L", from=2-2, to=2-1]
      \arrow["R"', from=2-2, to=2-3]
      \arrow["{H'}"', from=1-2, to=2-2]
      \arrow["{F'}"', from=1-1, to=2-1]
      \arrow["{G'}", from=1-3, to=2-3]
    \end{tikzcd},
  \end{equation*}
  triples of natural transformations $\alpha: F \To F'$, $\beta: G \To G'$, and
  $\sigma: H \To H'$ satisfying the two ``cylinder conditions''
  \begin{equation*}
    \begin{tikzcd}
      {\cat{X}} & {\cat{S}} \\
      {\cat{W}} & {\cat{T}}
      \arrow["L"', from=1-2, to=1-1]
      \arrow["L", from=2-2, to=2-1]
      \arrow["{H'}", curve={height=-12pt}, from=1-2, to=2-2]
      \arrow[""{name=0, anchor=center, inner sep=0}, "F"', curve={height=12pt}, from=1-1, to=2-1]
      \arrow[""{name=1, anchor=center, inner sep=0}, "{F'}", curve={height=-12pt}, from=1-1, to=2-1]
      \arrow["\alpha", shorten <=5pt, shorten >=5pt, Rightarrow, from=0, to=1]
    \end{tikzcd}
    =
    \begin{tikzcd}
      {\cat{X}} & {\cat{S}} \\
      {\cat{W}} & {\cat{T}}
      \arrow["L"', from=1-2, to=1-1]
      \arrow["L", from=2-2, to=2-1]
      \arrow[""{name=0, anchor=center, inner sep=0}, "H"', curve={height=12pt}, from=1-2, to=2-2]
      \arrow["F"', curve={height=12pt}, from=1-1, to=2-1]
      \arrow[""{name=1, anchor=center, inner sep=0}, "{H'}", curve={height=-12pt}, from=1-2, to=2-2]
      \arrow["\sigma", shorten <=5pt, shorten >=5pt, Rightarrow, from=0, to=1]
    \end{tikzcd}
    \quad\text{and}\quad
    \begin{tikzcd}
      {\cat{S}} & {\cat{Y}} \\
      {\cat{T}} & {\cat{Z}}
      \arrow["R", from=1-1, to=1-2]
      \arrow["R"', from=2-1, to=2-2]
      \arrow[""{name=0, anchor=center, inner sep=0}, "H"', curve={height=12pt}, from=1-1, to=2-1]
      \arrow["{G'}", curve={height=-12pt}, from=1-2, to=2-2]
      \arrow[""{name=1, anchor=center, inner sep=0}, "{H'}", curve={height=-12pt}, from=1-1, to=2-1]
      \arrow["\sigma", shorten <=5pt, shorten >=5pt, Rightarrow, from=0, to=1]
    \end{tikzcd}
    =
    \begin{tikzcd}
      {\cat{S}} & {\cat{Y}} \\
      {\cat{T}} & {\cat{Z}}
      \arrow["R", from=1-1, to=1-2]
      \arrow["H"', curve={height=12pt}, from=1-1, to=2-1]
      \arrow[""{name=0, anchor=center, inner sep=0}, "G"', curve={height=12pt}, from=1-2, to=2-2]
      \arrow[""{name=1, anchor=center, inner sep=0}, "{G'}", curve={height=-12pt}, from=1-2, to=2-2]
      \arrow["R"', from=2-1, to=2-2]
      \arrow["\beta", shorten <=5pt, shorten >=5pt, Rightarrow, from=0, to=1]
    \end{tikzcd}.
  \end{equation*}
  The source and target 2-functors $\Span{\Cat}_1 \rightrightarrows \Cat$ are
  the obvious ones that extract the left and right feet, legs, and cells between
  legs. External composition is by 2-pullback in $\Cat$.
\end{example}

Any double category $\dbl{D}$ has an underlying span of categories
$\dbl{D}_0 \xfrom{s} \dbl{D}_1 \xto{t} \dbl{D}_0$, an object of $\Span{\Cat}$.
Any span of sets can be regarded as a span of discrete categories, hence also as
an object of $\Span{\Cat}$. These observations motivate the following
construction of a doubly indexed category. We will also need some special
notation for elements of spans of sets, inspired by
\mbox{\citep[\S{3.7}]{pare2011}}. For any span $I \xfrom{\ell} A \xto{r} J$ in
$\Set$, we will generally label the left and right legs as $\ell$ and $r$ and
then conflate the apex of the span with the span itself, writing
$A: I \proto J$. We then write $a: i \proto j$ for an element $a \in A$ such
that $\ell(a) = i$ and $r(a) = j$.

\begin{construction}[Double indexing of a double category]
  \label{def:dbl-indexing}
  Given a double category $\dbl{D}$, we construct a lax double functor
  suggestively denoted
  \begin{equation*}
    F = \Span{\Cat}(-, \dbl{D})|_{\Span}: \Span^\op \to \Span{\Cat}.
  \end{equation*}
  The underlying functor
  $F_0 \coloneqq \Cat(-,\dbl{D}_0)|_\Set: \Set^\op \to \Cat$ is that used
  previously in Construction \ref{def:fam}. It sends a set $I$ to the functor category
  $\Cat(I,\dbl{D}_0)$ having as objects, $I$-indexed families
  $\vec{x}: I \to \dbl{D}_0$ of objects in $\dbl{D}$ and as morphisms
  $\vec{x} \to \vec{x}'$, transformations $f: \vec{x} \To \vec{x}'$. The second
  underlying functor is
  \begin{equation*}
    F_1 \coloneqq \Span{\Cat}_1(-, \dbl{D})|_{\Span_1}: \Span_1^\op \to \Span{\Cat}_1.
  \end{equation*}
  It sends a span of sets $I \xfrom{\ell} A \xto{r} J$ to the span of categories
  whose apex is the category
  \begin{equation*}
    \Span{\Cat}_1(I \xfrom{\ell} A \xto{r} J,\
      \dbl{D}_0 \xfrom{s} \dbl{D}_1 \xto{t} \dbl{D}_0)
  \end{equation*}
  from Example \ref{ex:span-cat}. Thus, the objects of this category are indexed
  families of objects and proarrows in $\dbl{D}$ of the form
  \begin{equation*}
    \begin{tikzcd}
      I & A & J \\
      {\dbl{D}_0} & {\dbl{D}_1} & {\dbl{D}_0}
      \arrow["{\vec{x}}"', from=1-1, to=2-1]
      \arrow["s", from=2-2, to=2-1]
      \arrow["\ell"', from=1-2, to=1-1]
      \arrow["t"', from=2-2, to=2-3]
      \arrow["r", from=1-2, to=1-3]
      \arrow["{\vec{y}}", from=1-3, to=2-3]
      \arrow["{\vec{m}}"', from=1-2, to=2-2]
    \end{tikzcd},
  \end{equation*}
  which we abbreviate as $\vec{m}: \vec{x} \proto \vec{y}$; and its morphisms
  from $\vec{m}: \vec{x} \proto \vec{y}$ to $\vec{m}': \vec{x}' \proto \vec{y}'$
  are families of arrows $f: \vec{x} \To \vec{x}'$ and $g: \vec{y} \To \vec{y}'$
  together with a family of cells $\alpha: \vec{m} \To \vec{m}'$ of form
  \begin{equation*}
    \begin{tikzcd}
      {x_i} & {y_j} \\
      {x'_{f_0(i)}} & {y'_{g_0(j)}}
      \arrow[""{name=0, anchor=center, inner sep=0}, "{m_a}"{inner sep=.8ex}, "\shortmid"{marking}, from=1-1, to=1-2]
      \arrow["{f_i}"', from=1-1, to=2-1]
      \arrow["{g_j}", from=1-2, to=2-2]
      \arrow[""{name=1, anchor=center, inner sep=0}, "{m'_{\alpha_0(a)}}"'{inner sep=.8ex}, "\shortmid"{marking}, from=2-1, to=2-2]
      \arrow["{\alpha_a}"{description}, draw=none, from=0, to=1]
    \end{tikzcd},
    \qquad (i \xproto{a} j): (I \xproto{A} J).
  \end{equation*}
  The left and right legs of this span of categories are the obvious ones.

  To complete the construction, the laxators and unitors of the lax double
  functor $F$ are furnished by the external composition and identities in
  $\dbl{D}$. Given spans of sets $A: I \proto J$ and $B: J \proto K$, the
  laxator $F_{A,B}$ is defined by the functor
  $FA \times_{FJ} FB \to F(A \times_J B)$ that sends a pair of proarrow families
  $\vec{m}: \vec{x} \proto \vec{y}$ and $\vec{n}: \vec{y} \proto \vec{z}$ to the
  family $\vec{m} \odot \vec{n}: \vec{x} \proto \vec{z}$ with components
  \begin{equation}
    \label{eq:proarrow-family-composition}
    (\vec{m} \odot \vec{n})(a, b) \coloneqq m_a \odot n_b: x_i \proto z_k,
    \qquad (i \xproto{a} j \xproto{b} k): (I \xproto{A} J \xproto{B} K).
  \end{equation}
  and similarly for families of cells. Given a set $I$, the unitor
  $F_I: \id_{FI} \to F(\id_I)$ is defined by the functor sending each
  $I$-indexed object family $\vec{x}$ to the proarrow family
  $\id_{\vec{x}}: \vec{x} \to \vec{x}$ with components
  $\id_{x_i}: x_i \proto x_i$ for $i \in I$, and similarly for families of
  arrows.

  When $\dbl{D}$ is a \emph{strict} double category, we obtain a genuine lax
  double functor, where the associativity and unitality axioms for the laxators
  and unitors are immediate from the corresponding axioms for the external
  composition and identity in $\dbl{D}$. In general, though, what we have is a
  lax double \emph{pseudo}functor \citep[Definition 3.12]{cruttwell2022}, whose
  associativity and unitality modifications are given by the associators and
  unitors of $\dbl{D}$. The coherence axioms for these modifications correspond
  to the coherence axioms for the pseudo double category. In either case, the
  naturality axioms for the laxators and unitors amount to the statements that
  reindexing commutes with composing indexed families of proarrows.
\end{construction}

\begin{construction}[Double families] \label{def:dbl-fam}
  The \define{double category of families} in a double category $\dbl{D}$,
  denoted $\DblFam(\dbl{D})$, is the double Grothendieck construction of the lax
  double pseudofunctor from Construction \ref{def:dbl-indexing}:
  \begin{equation*}
    \DblFam(\dbl{D}) \coloneqq \int^{S \in \Span} \Span{\Cat}(S, \dbl{D}).
  \end{equation*}
\end{construction}

The \define{double category of finite families} in $\dbl{D}$, denoted
$\DblFinFam(\dbl{D})$, is defined similarly by replacing $\Span$ with
$\FinSpan$, the double category of spans of finite sets.

By construction, the canonical projection $\DblFam(\dbl{D}) \to \Span$, a strict
double functor, is a double fibration. In fact, if $\Sq(\cat{C})$ denotes the
double category of commutative squares in a category $\cat{C}$, then
$\DblFam(\Sq(\cat{C}))$ is the double category denoted $\DblFam(\cat{C})$ and
called the ``double family fibration'' by Cruttwell et al \citep[Examples 2.29
and 3.19]{cruttwell2022}. Any double category of families is also an example of
a ``double category of decorated spans'' in the generalized sense of
\citep[\S{3}]{patterson2023}.

It is indispensable to have a fully concrete description of the double category
of families. Given a double category $\dbl{D}$, its double category of families,
$\DblFam(\dbl{D})$, has
\begin{itemize}
  \item as objects, an \define{indexing set} $I$ together with an $I$-indexed
    family $\vec{x}: I \to \dbl{D}_0$, consisting of objects $x_i$ in $\dbl{D}$
    for $i \in I$;
  \item as arrows $(I, \vec{x}) \to (J, \vec{y})$, a function $f_0: I \to J$ together
    with an $I$-indexed family $f$ of arrows in $\dbl{D}$ of the form
    $f_i: x_i \to y_{f_0(i)}$ for $i \in I$;
  \item as proarrows $(I, \vec{x}) \proto (J, \vec{y})$, an \define{indexing
    span} $I \xfrom{\ell} A \xto{r} J$ together with an $A$-indexed family
    $\vec{m}: A \to \dbl{D}_1$, consisting of proarrows in $\dbl{D}$ of the form
    $m_a: x_i \proto y_j$ for each $a: i \proto j$;
  \item as cells
    $\inlineCell{(I,\vec{x})}{(J,\vec{y})}{(K,\vec{w})}{(L,\vec{z})}{(A,\vec{m})}{(B,\vec{n})}{(f_0,f)}{(g_0,g)}{}$,
    a map of spans $(f_0,\alpha_0,g_0)$ as shown on the left below together with
    an $A$-indexed family $\alpha$ of cells in $\dbl{D}$ of the form on the
    right:
    \begin{equation*}
      \begin{tikzcd}
        I & A & J \\
        K & B & L
        \arrow["{f_0}"', from=1-1, to=2-1]
        \arrow["{g_0}", from=1-3, to=2-3]
        \arrow["\ell"', from=1-2, to=1-1]
        \arrow["\ell", from=2-2, to=2-1]
        \arrow["r"', from=2-2, to=2-3]
        \arrow["{\alpha_0}"', from=1-2, to=2-2]
        \arrow["r", from=1-2, to=1-3]
      \end{tikzcd},
      \hspace{1in}
      \begin{tikzcd}
        {x_i} & {y_j} \\
        {w_{f_0(i)}} & {z_{g_0(j)}}
        \arrow["{f_i}"', from=1-1, to=2-1]
        \arrow["{g_j}", from=1-2, to=2-2]
        \arrow[""{name=0, anchor=center, inner sep=0}, "{m_a}", "\shortmid"{marking}, from=1-1, to=1-2]
        \arrow[""{name=1, anchor=center, inner sep=0}, "{n_{\alpha_0(a)}}"', "\shortmid"{marking}, from=2-1, to=2-2]
        \arrow["{\alpha_a}"{description}, draw=none, from=0, to=1]
      \end{tikzcd},
      \quad (i \xproto{a} j) : (I \xproto{A} J).
    \end{equation*}
\end{itemize}
To compose proarrows in $\DblFam(\dbl{D})$, first compose the indexing spans by
pullback as usual, then compose the families of proarrows componentwise in
$\dbl{D}$ as defined in Equation \eqref{eq:proarrow-family-composition}.

The definition of the double category of families is lent further support by the
several senses in which the 1-categorical and double-categorical family
constructions commute. First of all, we have the expected compatibility
$\DblFam(\dbl{D})_0 = \Fam(\dbl{D}_0)$ for any double category $\dbl{D}$. Less
immediately, we also have:

\begin{proposition}[Families of spans]
  Let $\cat{S}$ be a category with pullbacks. Then there is an isomorphism of
  double categories
  \begin{equation*}
    \DblFam(\Span{\cat{S}}) \cong \Span{\Fam(\cat{S})}.
  \end{equation*}
\end{proposition}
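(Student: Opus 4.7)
The plan is to exhibit a strict isomorphism $\Phi: \DblFam(\Span{\cat{S}}) \to \Span{\Fam(\cat{S})}$ that is the identity on the zero-level and that rearranges indexed families of spans into spans of indexed families on the one-level. The claim will be essentially tautological once the data on each side are unpacked: the only substantive calculation is that external composition agrees, which reduces to the fact that pullbacks in $\Fam(\cat{S})$ are computed in two stages, first in $\Set$ on indexing sets and then componentwise in $\cat{S}$.

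First I would match the underlying categories of objects and arrows. Since $\Span{\cat{S}}_0 = \cat{S}$, we have $\DblFam(\Span{\cat{S}})_0 = \Fam(\cat{S}) = \Span{\Fam(\cat{S})}_0$ immediately, so $\Phi$ is defined as the identity on both. Second, I would unpack a proarrow $(A, \vec{m}): (I,\vec{x}) \proto (J, \vec{y})$ in $\DblFam(\Span{\cat{S}})$: it consists of an indexing span $I \xfrom{\ell_0} A \xto{r_0} J$ in $\Set$ together with, for each $a \in A$, a span $x_{\ell_0(a)} \xfrom{\ell_a} m_a \xto{r_a} y_{r_0(a)}$ in $\cat{S}$. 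Assembling the apices into a family $(A, \vec{m})$ and the legs into morphisms $(\ell_0, \ell), (r_0, r)$ of $\Fam(\cat{S})$ yields precisely a span $(I, \vec{x}) \from (A, \vec{m}) \to (J, \vec{y})$ in $\Fam(\cat{S})$, i.e., a proarrow in $\Span{\Fam(\cat{S})}$. The same reorganization applied to a cell in $\DblFam(\Span{\cat{S}})$, which consists of a map of indexing spans $(f_0, \alpha_0, g_0)$ together with a family of span maps $\alpha_a$ in $\cat{S}$, produces exactly a map of spans in $\Fam(\cat{S})$, namely a pair of arrows $(f_0, f), (g_0, g)$ and an arrow $(\alpha_0, \alpha): (A, \vec{m}) \to (B, \vec{n})$ in $\Fam(\cat{S})$ making both square conditions commute. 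Thus $\Phi$ is bijective on objects, arrows, proarrows, and cells.

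The main step to verify is that $\Phi$ preserves the double-categorical structure. Compatibility with internal composition in the two categories $\Fam(\cat{S})$ on each side is immediate. For external composition, I would argue that $\Fam(\cat{S})$ inherits pullbacks from $\cat{S}$, with the pullback $(A,\vec{m}) \times_{(J,\vec{y})} (B, \vec{n})$ having indexing set the $\Set$-pullback $A \times_J B$ and, for each $(a,b) \in A \times_J B$ with $r_0(a) = j = \ell_0(b)$, component $m_a \times_{y_j} n_b$; this standard computation matches \cref{def:dbl-indexing}'s recipe~\eqref{eq:proarrow-family-composition} for the laxator on the $\DblFam$ side. External units match just as directly: the identity proarrow on $(I, \vec{x})$ in $\DblFam(\Span{\cat{S}})$ is the identity span $I \xfrom{1} I \xto{1} I$ decorated with the identity spans on each $x_i$, while the identity proarrow on $(I, \vec{x})$ in $\Span{\Fam(\cat{S})}$ is the span of $\Fam(\cat{S})$ with apex $(I, \vec{x})$ and identity legs, giving exactly the same data. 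The only real obstacle is keeping the bookkeeping straight and confirming that the associators and unitors of the two pseudo double categories are transported to each other by $\Phi$, which again comes down to the universal property of pullback in $\Fam(\cat{S})$; this makes $\Phi$ a strict double functor and, being a bijection on all four levels, a strict isomorphism.
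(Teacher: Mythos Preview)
Your proof is correct and follows essentially the same approach as the paper: both establish the isomorphism by observing that pullbacks in $\Fam(\cat{S})$ are computed first on indexing sets in $\Set$ and then componentwise in $\cat{S}$, which matches the external composition in $\DblFam(\Span{\cat{S}})$. Your write-up is in fact slightly more thorough in spelling out the bijections on objects, arrows, proarrows, and cells, whereas the paper concentrates on the pullback computation and declares the remaining identifications straightforward.
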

\begin{proof}
  As observed in \citep[Remark 2.3]{adamek2020}, when the category $\cat{S}$ has
  pullbacks, so does $\Fam(\cat{S})$. The pullback of a cospan
  $(I,\vec{x}) \xto{(f_0,f)} (K,\vec{z}) \xfrom{(g_0,g)} (J,\vec{y})$ in
  $\Fam(\cat{S})$ is computed by, first, taking the pullback of the indexing
  sets
  \begin{equation*}
    \begin{tikzcd}[column sep=small]
      & {I \times_K J} \\
      I && J \\
      & K
      \arrow["{f_0}"', from=2-1, to=3-2]
      \arrow["{g_0}", from=2-3, to=3-2]
      \arrow["{p_0}"', from=1-2, to=2-1]
      \arrow["{q_0}", from=1-2, to=2-3]
      \arrow["\lrcorner"{anchor=center, pos=0.125, rotate=-45}, draw=none, from=1-2, to=3-2]
    \end{tikzcd}
  \end{equation*}
  and then, for each pair $(i,j) \in I \times_K J$, taking the pullback in
  $\cat{S}$
  \begin{equation*}
    \begin{tikzcd}[column sep=0]
      & {w_{(i,j)}} \\
      {x_i} && {y_j} \\
      & {z_{f_0(i)}=z_{g_0(j)}}
      \arrow["{f_i}"', from=2-1, to=3-2]
      \arrow["{g_j}", from=2-3, to=3-2]
      \arrow["{p_{(i,j)}}"', from=1-2, to=2-1]
      \arrow["{q_{(i,j)}}", from=1-2, to=2-3]
      \arrow["\lrcorner"{anchor=center, pos=0.125, rotate=-45}, draw=none, from=1-2, to=3-2]
    \end{tikzcd}.
  \end{equation*}
  The span
  $(I,\vec{x}) \xfrom{(p_0,p)} (I \times_K J, \vec{w}) \xto{(q_0,q)} (J, \vec{y})$
  is then a pullback in $\Fam(\cat{S})$. This proves that the double category
  $\Span{\Fam(\cat{S})}$ is well defined. Comparing with the external
  composition in Construction \ref{def:dbl-fam}, it also shows that not only are the
  proarrows in $\DblFam(\Span{\cat{S}})$ and $\Span{\Fam(\cat{S})}$ the same,
  they have the same composition law. The remaining identifications are
  straightforward.
\end{proof}

As a corollary, we obtain an isomorphism that is also easy to see directly.

\begin{corollary} \label{cor:span-via-fam}
  Denoting the terminal double category by $\dbl{1}$, there is an isomorphism
  \begin{equation*}
    \DblFam(\dbl{1}) \cong \Span.
  \end{equation*}
\end{corollary}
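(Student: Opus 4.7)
The plan is to derive the corollary directly from the preceding proposition applied to $\cat{S} = 1$, the terminal category. This requires only two elementary observations: that $\Span{1} \cong \dbl{1}$ and that $\Fam(1) \cong \Set$. The first is immediate because $1$ has one object and one morphism, so any span in $1$ is the trivial span on the unique object, giving $\Span{1}$ a single object, a single arrow, a single proarrow, and a single cell. The second is the standard identification of a family of objects in the terminal category with its indexing set and a family of morphisms with a function between indexing sets. Granting these, the proposition yields
\begin{equation*}
\DblFam(\dbl{1}) \cong \DblFam(\Span{1}) \cong \Span{\Fam(1)} \cong \Span,
\end{equation*}
which is exactly the claimed isomorphism.

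For completeness I would also sketch the direct argument promised in the text, to show it is as obvious as claimed. Using the concrete description following \cref{def:dbl-fam}, an object of $\DblFam(\dbl{1})$ is a set $I$ together with the unique $I$-indexed family into $\dbl{1}_0$, so an object is just $I$; an arrow is a function between such indexing sets; a proarrow $I \proto J$ is an indexing span $I \xfrom{\ell} A \xto{r} J$ together with the unique $A$-indexed family of proarrows in $\dbl{1}$, hence just the span itself; and a cell is a map of indexing spans together with the unique family of cells, hence just the map of spans. External composition of proarrows reduces to pullback of spans because the componentwise composition in $\dbl{1}$ is trivial. This exhibits an isomorphism of double categories with $\Span$ on the nose.

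I do not expect any real obstacles: the corollary is a direct unpacking, and the main work has already been done in establishing the proposition about families of spans. The only mild subtlety is ensuring the reader sees that the identifications $\Span{1} \cong \dbl{1}$ and $\Fam(1) \cong \Set$ are strict isomorphisms rather than mere equivalences, so that the chain of isomorphisms composes without coherence issues; this is immediate from the constructions.
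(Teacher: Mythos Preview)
Your proof is correct and follows exactly the same approach as the paper: apply the preceding proposition with $\cat{S} = \cat{1}$, using the identifications $\Span{\cat{1}} \cong \dbl{1}$ and $\Fam(\cat{1}) \cong \Set$. Your additional direct unpacking is a welcome elaboration of what the paper only gestures at with the remark that the isomorphism is ``easy to see directly.''
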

\begin{proof}
  Taking $\cat{S} = \cat{1}$ to be the terminal category, we have
  $\Span{\cat{1}} \cong \dbl{1}$ and $\Fam(1) \cong \Set$.
\end{proof}

It will be important later to know that $\DblFam(\dbl{D})$ inherits extension
cells, including companions and conjoints, whenever they exist in $\dbl{D}$. The
general theory of companions, conjoints, and equipments is established in
\citep{grandis2004,shulman2008,shulman2010}. In this paper, we use the terms
\define{restriction} and \define{extension cell} as more evocative synonyms for
cartesian and opcartesian cells in a double category, respectively.

\begin{proposition}[Extensions of families]
  \label{prop:extensions-fam}
  Let $\dbl{D}$ be a double category. Suppose given a co-niche in
  $\DblFam(\dbl{D})$ of the form
  \begin{equation*}
    \begin{tikzcd}[row sep=scriptsize]
      {(I,\vec{x})} & {(J,\vec{y})} \\
      {(K,\vec{w})} & {(L,\vec{z})}
      \arrow["{(A,\vec{m})}", "\shortmid"{marking}, from=1-1, to=1-2]
      \arrow["{(f_0,f)}"', from=1-1, to=2-1]
      \arrow["{(g_0,g)}", from=1-2, to=2-2]
    \end{tikzcd}
  \end{equation*}
  such that each constituent co-niche in $\dbl{D}$
  \begin{equation*}
    \begin{tikzcd}[row sep=scriptsize]
      {x_i} & {y_j} \\
      {w_{f_0(i)}} & {z_{g_0(j)}}
      \arrow["{m_a}", "\shortmid"{marking}, from=1-1, to=1-2]
      \arrow["{f_i}"', from=1-1, to=2-1]
      \arrow["{g_j}", from=1-2, to=2-2]
    \end{tikzcd},
    \qquad (i \xproto{a} j) : (I \xproto{A} J),
  \end{equation*}
  can be filled by an extension cell. Then the co-niche in $\DblFam(\dbl{D})$ is
  also fillable by an extension cell.
\end{proposition}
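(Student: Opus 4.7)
The plan is to build the extension cell in $\DblFam(\dbl{D})$ pointwise, keeping the indexing span $A$ fixed while updating its legs by postcomposition with $f_0$ and $g_0$. First, for each $a: i \proto j$ in $A$, invoke the hypothesis to fix an extension cell $\theta_a$ filling the pointwise co-niche in $\dbl{D}$, with chosen target proarrow $n_a: w_{f_0(i)} \proto z_{g_0(j)}$. Assemble these into a proarrow $(A, \vec{n}): (K, \vec{w}) \proto (L, \vec{z})$ in $\DblFam(\dbl{D})$ indexed by the span $K \xfrom{f_0\ell} A \xto{g_0 r} L$, and let $\Theta$ be the cell whose span-map component is the identity on $A$ (with sides $f_0,g_0$) and whose family of constituent cells is $(\theta_a)_{a \in A}$. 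By construction, $\Theta$ fills the given co-niche.

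To verify that $\Theta$ is opcartesian, consider an arbitrary cell $\beta$ in $\DblFam(\dbl{D})$ with top $(A, \vec{m})$, bottom some $(B, \vec{p}): (K', \vec{w}') \proto (L', \vec{z}')$, and left and right sides obtained by postcomposing $(f_0, f)$ and $(g_0, g)$ with arrows $(h_0, h): (K, \vec{w}) \to (K', \vec{w}')$ and $(k_0, k): (L, \vec{z}) \to (L', \vec{z}')$. The span-map component $\beta_0: A \to B$ satisfies $\ell'\beta_0 = h_0 f_0 \ell$ and $r'\beta_0 = k_0 g_0 r$, so setting $\gamma_0 := \beta_0$ yields a valid span-map from the updated legs $(f_0\ell, g_0 r)$ to $(\ell', r')$ over $(h_0, k_0)$. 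Pointwise, each constituent $\beta_a$ is a cell in $\dbl{D}$ whose left and right arrows factor through $f_i$ and $g_j$; the universal property of $\theta_a$ produces a unique cell $\gamma_a$ with top $n_a$, bottom $p_{\beta_0(a)}$, left $h_{f_0(i)}$, and right $k_{g_0(j)}$, such that $\beta_a$ equals $\theta_a$ vertically composed with $\gamma_a$. Packaging $\gamma_0$ with the family $(\gamma_a)$ yields the desired factoring cell, whose uniqueness is inherited from the pointwise uniqueness.

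There is no serious obstacle to this argument; the only conceptual choice is to keep $A$ itself as the indexing span for the extension proarrow, which forces the span-map component of $\Theta$ to be the identity and thereby reduces everything to a pointwise calculation in $\dbl{D}$. The coherence check that the family $(\gamma_a)$ really does assemble into a cell of $\DblFam(\dbl{D})$ is automatic, since the boundary of each $\gamma_a$ is pinned down by the universal property to match $\gamma_0$ and the prescribed sides $(h, k)$.
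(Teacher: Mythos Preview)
Your proof is correct and is essentially the same as the paper's: both keep the apex $A$ fixed, push its legs along $f_0$ and $g_0$, take the identity on $A$ as the span-map component of the candidate extension cell, and then reduce both existence and uniqueness of the factorization to the pointwise universal properties of the chosen extension cells $\theta_a$ in $\dbl{D}$.
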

\begin{proof}
  From the extension in $\Span$
  \begin{equation*}
    \begin{tikzcd}[row sep=scriptsize]
      I & A & J \\
      K & A & L
      \arrow["\ell"', from=1-2, to=1-1]
      \arrow["r", from=1-2, to=1-3]
      \arrow["{f_0}"', from=1-1, to=2-1]
      \arrow["{g_0}", from=1-3, to=2-3]
      \arrow[Rightarrow, no head, from=1-2, to=2-2]
      \arrow["{f_0 \circ \ell}", from=2-2, to=2-1]
      \arrow["{g_0 \circ r}"', from=2-2, to=2-3]
    \end{tikzcd}
  \end{equation*}
  together with the family of extensions in $\dbl{D}$
  \begin{equation*}
    \begin{tikzcd}
      {x_i} & {y_j} \\
      {w_{f_0(i)}} & {z_{g_0(j)}}
      \arrow[""{name=0, anchor=center, inner sep=0}, "{m_a}", "\shortmid"{marking}, from=1-1, to=1-2]
      \arrow["{f_i}"', from=1-1, to=2-1]
      \arrow["{g_j}", from=1-2, to=2-2]
      \arrow[""{name=1, anchor=center, inner sep=0}, "{n_a}"', "\shortmid"{marking}, from=2-1, to=2-2]
      \arrow["{\ext_a}"{description}, draw=none, from=0, to=1]
    \end{tikzcd},
    \qquad (i \xproto{a} j) : (I \xproto{A} J),
  \end{equation*}
  we obtain a family of cells in $\dbl{D}$:
  \begin{equation*}
    \begin{tikzcd}
      {(I,\vec{x})} & {(J,\vec{y})} \\
      {(K,\vec{w})} & {(L,\vec{z})}
      \arrow[""{name=0, anchor=center, inner sep=0}, "{(A,\vec{m})}", "\shortmid"{marking}, from=1-1, to=1-2]
      \arrow["{(f_0,f)}"', from=1-1, to=2-1]
      \arrow["{(g_0,g)}", from=1-2, to=2-2]
      \arrow[""{name=1, anchor=center, inner sep=0}, "{(A,\vec{n})}"', "\shortmid"{marking}, from=2-1, to=2-2]
      \arrow["{(1_A, \ext)}"{description}, draw=none, from=0, to=1]
    \end{tikzcd}.
  \end{equation*}
  We verify that this family satisfies the universal property of an extension
  cell in $\DblFam(\dbl{D})$. Given a cell in $\DblFam(\dbl{D})$ as on the left
  \begin{equation*}
    \begin{tikzcd}
      {(I,\vec{x})} & {(J,\vec{y})} \\
      {(K,\vec{w})} & {(L,\vec{z})} \\
      {(K',\vec{w}')} & {(L', \vec{z}')}
      \arrow[""{name=0, anchor=center, inner sep=0}, "{(A,\vec{m})}", "\shortmid"{marking}, from=1-1, to=1-2]
      \arrow["{(f_0,f)}"', from=1-1, to=2-1]
      \arrow["{(g_0,g)}", from=1-2, to=2-2]
      \arrow["{(h_0,h)}"', from=2-1, to=3-1]
      \arrow["{(k_0, k)}", from=2-2, to=3-2]
      \arrow[""{name=1, anchor=center, inner sep=0}, "{(B,\vec{p})}"', "\shortmid"{marking}, from=3-1, to=3-2]
      \arrow["{(\alpha_0,\alpha)}"{description}, draw=none, from=0, to=1]
    \end{tikzcd}
    \quad=\quad
    \begin{tikzcd}
      {(I,\vec{x})} & {(J,\vec{y})} \\
      {(K,\vec{w})} & {(L,\vec{z})} \\
      {(K',\vec{w}')} & {(L', \vec{z}')}
      \arrow[""{name=0, anchor=center, inner sep=0}, "{(A,\vec{m})}", "\shortmid"{marking}, from=1-1, to=1-2]
      \arrow["{(f_0,f)}"', from=1-1, to=2-1]
      \arrow["{(g_0,g)}", from=1-2, to=2-2]
      \arrow["{(h_0,h)}"', from=2-1, to=3-1]
      \arrow["{(k_0, k)}", from=2-2, to=3-2]
      \arrow[""{name=1, anchor=center, inner sep=0}, "{(B,\vec{p})}"', "\shortmid"{marking}, from=3-1, to=3-2]
      \arrow[""{name=2, anchor=center, inner sep=0}, "{(A,\vec{n})}"', "\shortmid"{marking}, from=2-1, to=2-2]
      \arrow["{(1_A, \ext)}"{description}, draw=none, from=0, to=2]
      \arrow["{\exists !}"{description, pos=0.6}, draw=none, from=2, to=1]
    \end{tikzcd},
  \end{equation*}
  we must show that there is a unique factorization as on the right. But this
  follow directly by applying the universal properties of each of the
  constituent extension cells:
  \begin{equation*}
    \begin{tikzcd}
      {x_i} & {y_j} \\
      {w_{f_0(i)}} & {z_{g_0(j)}} \\
      {w'_{h_0(f_0(i))}} & {z'_{k_0(g_0(j))}}
      \arrow[""{name=0, anchor=center, inner sep=0}, "{m_a}", "\shortmid"{marking}, from=1-1, to=1-2]
      \arrow["{f_i}"', from=1-1, to=2-1]
      \arrow["{g_j}", from=1-2, to=2-2]
      \arrow["{h_{f_0(i)}}"', from=2-1, to=3-1]
      \arrow["{k_{g_0(j)}}", from=2-2, to=3-2]
      \arrow[""{name=1, anchor=center, inner sep=0}, "{p_{\alpha_0(a)}}"', "\shortmid"{marking}, from=3-1, to=3-2]
      \arrow["{\alpha_a}"{description}, draw=none, from=0, to=1]
    \end{tikzcd}
    =
    \begin{tikzcd}
      {x_i} & {y_j} \\
      {w_{f_0(i)}} & {z_{g_0(j)}} \\
      {w'_{h_0(f_0(i))}} & {z'_{k_0(g_0(j))}}
      \arrow[""{name=0, anchor=center, inner sep=0}, "{m_a}", "\shortmid"{marking}, from=1-1, to=1-2]
      \arrow["{f_i}"', from=1-1, to=2-1]
      \arrow["{g_j}", from=1-2, to=2-2]
      \arrow["{h_{f_0(i)}}"', from=2-1, to=3-1]
      \arrow["{k_{g_0(j)}}", from=2-2, to=3-2]
      \arrow[""{name=1, anchor=center, inner sep=0}, "{p_{\alpha_0(a)}}"', "\shortmid"{marking}, from=3-1, to=3-2]
      \arrow[""{name=2, anchor=center, inner sep=0}, "{n_a}"', "\shortmid"{marking}, from=2-1, to=2-2]
      \arrow["{\ext_a}"{description}, draw=none, from=0, to=2]
      \arrow["{\exists !}"{description, pos=0.6}, draw=none, from=2, to=1]
    \end{tikzcd},
    \qquad (i \xproto{a} j) : (I \xproto{A} J).
  \end{equation*}
\end{proof}

\begin{corollary} \label{cor:equipment-fam}
  If a double category $\dbl{D}$ is an equipment, then $\DblFam(\dbl{D})$ is
  also an equipment.
\end{corollary}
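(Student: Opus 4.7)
The proof should be essentially immediate from \cref{prop:extensions-fam}, once one recalls the standard characterization of equipments. The plan is to invoke the well-known theorem (see \cite{shulman2008,shulman2010}) that a pseudo double category is an equipment if and only if every co-niche admits a filler by an extension (opcartesian) cell; dually and equivalently, if and only if every niche admits a restriction, or if and only if every arrow has a companion and conjoint. Hence, it suffices to exhibit all extensions in $\DblFam(\dbl{D})$.

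To that end, I would begin with an arbitrary co-niche in $\DblFam(\dbl{D})$
\begin{equation*}
  (I,\vec{x}) \xproto{(A,\vec{m})} (J,\vec{y}), \qquad
  (f_0,f): (I,\vec{x}) \to (K,\vec{w}), \qquad
  (g_0,g): (J,\vec{y}) \to (L,\vec{z}),
\end{equation*}
and observe that for each $a: i \proto j$ in $A$, the induced co-niche $x_i \xproto{m_a} y_j$, $f_i: x_i \to w_{f_0(i)}$, $g_j: y_j \to z_{g_0(j)}$ in $\dbl{D}$ admits a filler by an extension cell, by the hypothesis that $\dbl{D}$ is an equipment. The hypotheses of \cref{prop:extensions-fam} are thus satisfied, and the proposition directly delivers an extension cell in $\DblFam(\dbl{D})$ filling the original co-niche.

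There is no real obstacle here; the argument is essentially a bookkeeping exercise once \cref{prop:extensions-fam} is in hand. The only minor conceptual point to flag is the appeal to the equivalence between the various formulations of ``equipment'': having all extensions, having all restrictions, and having all companions and conjoints. Since this equivalence is standard, I would simply cite it and not reprove it; alternatively, one could spell out a dual version of \cref{prop:extensions-fam} for restrictions, whose proof is formally identical, to keep the argument fully self-contained.
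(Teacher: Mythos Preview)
Your proposal is correct and follows essentially the same approach as the paper: both invoke \cref{prop:extensions-fam} together with the characterization of an equipment as a double category in which every co-niche can be filled by an extension cell (equivalently, in which the source--target pairing is an opfibration, \cite[Theorem 4.1]{shulman2008}). The paper's proof is simply a one-line citation of this characterization, while you spell out the unpacking of an arbitrary co-niche into its components; the content is the same.
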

\begin{proof}
  This follows immediately from Proposition \ref{prop:extensions-fam} using the
  characterization of an equipment as a double category whose source-target
  pairing is an opfibration \citep[\mbox{Theorem 4.1}]{shulman2008}.
\end{proof}

\begin{proposition}[Companions and conjoints of families]
  \label{prop:companions-fam}
  Suppose $(f_0,f): (I,\vec{x}) \to (J,\vec{y})$ is a family of arrows in a
  double category $\dbl{D}$ such that each component $f_i: x_i \to y_{f_0(i)}$
  has a companion $(f_i)_!: x_i \proto y_{f_0(i)}$ in $\dbl{D}$. Then the family
  of arrows $(f_0,f)$ has a companion in $\DblFam(\dbl{D})$, namely the family
  of proarrows $((f_0)_!, f_!): (I,\vec{x}) \proto (J,\vec{y})$, where
  $(f_0)_! = (I = I \xto{f_0} J)$ is the companion of $f_0$ in $\Span$ and
  $(f_!)_i \coloneqq (f_i)_!$ for each $i \in I$.

  Dually, if each component $f_i$ has a conjoint $f_i^*: y_{f_0(i)} \proto x_i$
  in $\dbl{D}$, then the family of arrows $(f_0,f)$ has a conjoint in
  $\DblFam(\dbl{D})$, namely the family of proarrows $(f_0^*, f^*)$ indexed by
  the conjoint span $f_0^* = (J \xfrom{f_0} I = I)$.
\end{proposition}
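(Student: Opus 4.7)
The plan is to reduce the companion claim to Proposition~\ref{prop:extensions-fam}. Recall that in any double category, an arrow $f: x \to y$ admits a companion precisely when the co-niche with top proarrow $1_x$, left leg $1_x$, and right leg $f$ can be filled by an extension (opcartesian) cell: this extension then serves as the binding cell $\eta: 1_x \Rightarrow f_!$ of the companion, and the counit is recovered from its universal property. Dually, $f$ admits a conjoint exactly when the analogous niche can be filled by a restriction (cartesian) cell.

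To produce the companion of $(f_0, f)$, I would apply Proposition~\ref{prop:extensions-fam} to the co-niche in $\DblFam(\dbl{D})$ whose top proarrow is the identity on $(I,\vec{x})$ (indexed by the identity span $I \xfrom{1_I} I \xto{1_I} I$), whose left leg is the identity arrow on $(I,\vec{x})$, and whose right leg is $(f_0, f)$. Each constituent co-niche in $\dbl{D}$ has top $1_{x_i}$, left identity, and right $f_i$, and by hypothesis it is filled by the binding cell of the componentwise companion $(f_i)_!$, which is an extension cell. Proposition~\ref{prop:extensions-fam} therefore delivers an extension cell in $\DblFam(\dbl{D})$. Inspecting its construction, the resulting indexing span is the extension of $I \xfrom{1_I} I \xto{1_I} I$ along $(1_I, f_0)$ in $\Span$, namely $I \xfrom{1_I} I \xto{f_0} J$---which is precisely the companion $(f_0)_!$ in $\Span$---and the proarrow components are $((f_i)_!)_{i \in I}$. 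This identifies the output with the claimed family $((f_0)_!, f_!)$, and the supplied extension cell is its binding cell as a companion of $(f_0, f)$.

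The conjoint claim follows by the dual argument. Either one formulates a restriction analogue of Proposition~\ref{prop:extensions-fam} by mirroring the proof (replacing extensions by restrictions and reversing the vertical direction) and applies it to the niche with bottom the identity on $(J,\vec{y})$, left leg $(f_0,f)$, and right leg identity; or one passes to an appropriate opposite double category and reuses the companion case. Either route yields a conjoint indexed by $f_0^* = (J \xfrom{f_0} I \xto{1_I} I)$ with components $((f_i)^*)_{i \in I}$. The only real point demanding care is keeping the various indexing spans straight and verifying that the extension span produced by Proposition~\ref{prop:extensions-fam} is indeed the companion $(f_0)_!$ in $\Span$; this is pure bookkeeping, guided by the fact that the canonical projection $\DblFam(\dbl{D}) \to \Span$ is a strict double functor, so it strictly preserves the shape of the extensions constructed in the proof.
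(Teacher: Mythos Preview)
Your approach is correct and is essentially the paper's own argument: the paper likewise observes that companions and conjoints are special extension cells and invokes Proposition~\ref{prop:extensions-fam} directly. Two small remarks. First, for the conjoint case you do not need a restriction analogue of Proposition~\ref{prop:extensions-fam} or an opposite-category maneuver: the conjoint \emph{unit} $\eta:\id_x \Rightarrow f^*$ (left leg $f$, right leg $1_x$) is itself an extension cell, so Proposition~\ref{prop:extensions-fam} applies to conjoints just as it does to companions---this is how the paper handles both cases in one stroke. Second, your description of the conjoint niche has the legs swapped: the counit $\varepsilon: f^* \Rightarrow \id_{(J,\vec{y})}$ has left leg the identity on $(J,\vec{y})$ and right leg $(f_0,f)$. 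The paper additionally spells out the binding cells explicitly, but only to have the formulas available for later use.
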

\begin{proof}
  Since companions and conjoints are special kinds of extension cells, this
  result is a direct consequence of Proposition \ref{prop:extensions-fam}. We sketch a
  direct proof anyway since it is useful to have formulas for the binding cells
  of companions and conjoints in $\DblFam(\dbl{D})$. The unit and counit cells
  for a companion pair in $\DblFam(\dbl{D})$,
  \begin{equation*}
    \begin{tikzcd}
      {(I,\vec{x})} & {(I,\vec{x})} \\
      {(I,\vec{x})} & {(J,\vec{y})}
      \arrow["{(f_0,f)}", from=1-2, to=2-2]
      \arrow[""{name=0, anchor=center, inner sep=0}, "{((f_0)_!, f_!)}"', "\shortmid"{marking}, from=2-1, to=2-2]
      \arrow[Rightarrow, no head, from=1-1, to=2-1]
      \arrow[""{name=1, anchor=center, inner sep=0}, "{\id_{(I,\vec{x})}}", "\shortmid"{marking}, from=1-1, to=1-2]
      \arrow["{(1_I,\eta)}"{description}, draw=none, from=1, to=0]
    \end{tikzcd}
    \qquad\text{and}\qquad
    \begin{tikzcd}
      {(I,\vec{x})} & {(J,\vec{y})} \\
      {(J,\vec{y})} & {(J,\vec{y})}
      \arrow["{(f_0,f)}"', from=1-1, to=2-1]
      \arrow[""{name=0, anchor=center, inner sep=0}, "{\id_{(J,\vec{y})}}"', "\shortmid"{marking}, from=2-1, to=2-2]
      \arrow[Rightarrow, no head, from=1-2, to=2-2]
      \arrow[""{name=1, anchor=center, inner sep=0}, "{((f_0)_!, f_!)}", "\shortmid"{marking}, from=1-1, to=1-2]
      \arrow["{(f_0,\varepsilon)}"{description}, draw=none, from=1, to=0]
    \end{tikzcd},
  \end{equation*}
  are the families of units and counits in $\dbl{D}$
  \begin{equation*}
    \begin{tikzcd}
      {x_i} & {x_i} \\
      {x_i} & {y_{f_0(i)}}
      \arrow["{f_i}", from=1-2, to=2-2]
      \arrow[""{name=0, anchor=center, inner sep=0}, "{\id_{x_i}}", "\shortmid"{marking}, from=1-1, to=1-2]
      \arrow[Rightarrow, no head, from=1-1, to=2-1]
      \arrow[""{name=1, anchor=center, inner sep=0}, "{(f_i)_!}"', "\shortmid"{marking}, from=2-1, to=2-2]
      \arrow["{\eta_i}"{description}, draw=none, from=0, to=1]
    \end{tikzcd}
    \qquad\text{and}\qquad
    \begin{tikzcd}
      {x_i} & {y_{f_0(i)}} \\
      {y_{f_0(i)}} & {y_{f_0(i)}}
      \arrow["{f_i}"', from=1-1, to=2-1]
      \arrow[""{name=0, anchor=center, inner sep=0}, "{\id_{y_{f_0(i)}}}"', "\shortmid"{marking}, from=2-1, to=2-2]
      \arrow[Rightarrow, no head, from=1-2, to=2-2]
      \arrow[""{name=1, anchor=center, inner sep=0}, "{(f_i)_!}", "\shortmid"{marking}, from=1-1, to=1-2]
      \arrow["{\varepsilon_i}"{description}, draw=none, from=1, to=0]
    \end{tikzcd},
    \qquad i \in I.
  \end{equation*}
  The equations for the companion pair in $\DblFam(\dbl{D})$ follow immediately
  by applying the companion equations componentwise in $\dbl{D}$. Dually, the
  unit and counit for a conjoint pair in $\DblFam(\dbl{D})$,
  \begin{equation*}
    \begin{tikzcd}
      {(I,\vec{x})} & {(I,\vec{x})} \\
      {(J,\vec{y})} & {(I,\vec{x})}
      \arrow[""{name=0, anchor=center, inner sep=0}, "{(f_0^*, f^*)}"', "\shortmid"{marking}, from=2-1, to=2-2]
      \arrow["{(f_0,f)}"', from=1-1, to=2-1]
      \arrow[""{name=1, anchor=center, inner sep=0}, "{\id_{(I,\vec{x})}}", "\shortmid"{marking}, from=1-1, to=1-2]
      \arrow[Rightarrow, no head, from=1-2, to=2-2]
      \arrow["{(1_I, \eta)}"{description}, draw=none, from=1, to=0]
    \end{tikzcd}
    \qquad\text{and}\qquad
    \begin{tikzcd}
      {(J,\vec{y})} & {(I,\vec{x})} \\
      {(J,\vec{y})} & {(J,\vec{y})}
      \arrow[""{name=0, anchor=center, inner sep=0}, "{(f_0^*, f^*)}", "\shortmid"{marking}, from=1-1, to=1-2]
      \arrow["{(f_0,f)}", from=1-2, to=2-2]
      \arrow[Rightarrow, no head, from=1-1, to=2-1]
      \arrow[""{name=1, anchor=center, inner sep=0}, "{\id_{(J,\vec{y})}}"', "\shortmid"{marking}, from=2-1, to=2-2]
      \arrow["{(f_0, \varepsilon)}"{description}, draw=none, from=0, to=1]
    \end{tikzcd},
  \end{equation*}
  are the families of units and counits in $\dbl{D}$
  \begin{equation*}
    \begin{tikzcd}
      {x_i} & {x_i} \\
      {y_{f_0(i)}} & {x_i}
      \arrow[""{name=0, anchor=center, inner sep=0}, "{\id_{x_i}}", "\shortmid"{marking}, from=1-1, to=1-2]
      \arrow["{f_i}"', from=1-1, to=2-1]
      \arrow[""{name=1, anchor=center, inner sep=0}, "{f_i^*}"', "\shortmid"{marking}, from=2-1, to=2-2]
      \arrow[Rightarrow, no head, from=1-2, to=2-2]
      \arrow["{\eta_i}"{description}, draw=none, from=0, to=1]
    \end{tikzcd}
    \qquad\text{and}\qquad
    \begin{tikzcd}
      {y_{f_0(i)}} & {x_i} \\
      {y_{f_0(i)}} & {y_{f_0(i)}}
      \arrow[""{name=0, anchor=center, inner sep=0}, "{f_i^*}", "\shortmid"{marking}, from=1-1, to=1-2]
      \arrow["{f_i}", from=1-2, to=2-2]
      \arrow[""{name=1, anchor=center, inner sep=0}, "{\id_{y_{f_0(i)}}}"', "\shortmid"{marking}, from=2-1, to=2-2]
      \arrow[Rightarrow, no head, from=1-1, to=2-1]
      \arrow["{\varepsilon_i}"{description}, draw=none, from=0, to=1]
    \end{tikzcd},
    \qquad i \in I.
  \end{equation*}
\end{proof}

\section{Coproducts in double categories}
\label{sec:coproducts}

Following the analogy now established between families in categories and in
double categories, we define coproducts in a double category. The
characterization of coproducts in a category $\cat{C}$ as left adjoint to the
embedding $\Delta: \cat{C} \to \Fam(\cat{C})$
(Proposition \ref{prop:coproducts-as-adjoint}) becomes the \emph{definition} of coproducts
in a double category. Given a double category $\dbl{D}$, denote by
$\Delta \coloneqq \Delta_{\dbl{D}}: \dbl{D} \to \DblFam(\dbl{D})$ the double
functor sending each object $x \in \dbl{D}$ to the singleton family $(1,x)$ and
each proarrow $m: x \proto y$ in $\dbl{D}$ to the singleton family
$(1 \xfrom{!} 1 \xto{!} 1, m): (1,x) \proto (1,y)$.

\begin{definition}[Double coproducts] \label{def:dbl-coproducts}
  A double category $\dbl{D}$ \define{has colax coproducts} if the double
  functor $\Delta: \dbl{D} \to \DblFam(\dbl{D})$ has a colax left adjoint:
  \begin{equation*}
    \begin{tikzcd}
      {\dbl{D}} & {\DblFam(\dbl{D})}
      \arrow[""{name=0, anchor=center, inner sep=0}, "\Sigma"', curve={height=18pt}, from=1-2, to=1-1]
      \arrow[""{name=1, anchor=center, inner sep=0}, "\Delta"', curve={height=18pt}, from=1-1, to=1-2]
      \arrow["\dashv"{anchor=center, rotate=-90}, draw=none, from=0, to=1]
    \end{tikzcd}.
  \end{equation*}
  The double category $\dbl{D}$ \define{has strong coproducts} when the left
  adjoint $\Sigma: \DblFam(\dbl{D}) \to \dbl{D}$ is a \emph{pseudo} double
  functor. Colax and strong \define{finite coproducts} in $\dbl{D}$ are defined
  analogously, replacing the double category $\DblFam(\dbl{D})$ with
  $\DblFinFam(\dbl{D})$.
\end{definition}

Recall that a double adjunction is in general between a colax double functor on
the left and a lax double functor on the right \citep{grandis2004},
\citep[\S{4.3}]{grandis2019}. In this case, the right adjoint is pseudo, so we
can see the adjunction as being in the 2-category $\DblColax$ of double
categories, colax double functors, and natural transformations. On the other
hand, nothing requires the left adjoint to be pseudo and we will see examples
where it is not.

Another description of double coproducts involves less data than the definition
above and hence is more convenient to check in examples.

\begin{proposition}[Double coproducts via universal arrows]
  \label{prop:dbl-coproducts-universal}
  A double category $\dbl{D}$ has colax coproducts if and only if
  \begin{enumerate}[(i)]
    \item for every object family $(I,\vec{x})$ in $\dbl{D}$, there is a choice
      of object $\Sigma \vec{x} \coloneq \Sigma(I,\vec{x})$ in $\dbl{D}$ and a
      universal arrow
      \begin{equation*}
        (!, \iota): (I, \vec{x}) \to \Delta \Sigma \vec{x}
      \end{equation*}
      from $(I,\vec{x})$ to the functor
      $\Delta_0: \dbl{D}_0 \to \Fam(\dbl{D}_0)$;
    \item for every proarrow family
      $(A,\vec{m}): (I,\vec{x}) \proto (J,\vec{y})$ in $\dbl{D}$, there is a
      choice of proarrow
      $\Sigma \vec{m} \coloneqq \Sigma(A,\vec{m}): \Sigma\vec{x} \proto \Sigma\vec{y}$
      in $\dbl{D}$, compatible with the choices on objects, and a universal
      arrow
      \begin{equation*}
        \begin{tikzcd}
          {(I, \vec{x})} & {(J, \vec{y})} \\
          {\Delta \Sigma \vec{x}} & {\Delta \Sigma \vec{y}}
          \arrow[""{name=0, anchor=center, inner sep=0}, "{\Delta \Sigma \vec{m}}"', "\shortmid"{marking}, from=2-1, to=2-2]
          \arrow["{(!,\iota)}"', from=1-1, to=2-1]
          \arrow["{(!,\iota)}", from=1-2, to=2-2]
          \arrow[""{name=1, anchor=center, inner sep=0}, "{(A, \vec{m})}", "\shortmid"{marking}, from=1-1, to=1-2]
          \arrow["{(!,\iota)}"{description}, draw=none, from=1, to=0]
        \end{tikzcd}
      \end{equation*}
      from $(A,\vec{m})$ to the functor
      $\Delta_1: \dbl{D}_1 \to \DblFam(\dbl{D})_1$.
  \end{enumerate}
  Moreover, in this case, the coproducts in $\dbl{D}$ are strong if and only if,
  for every pair of composable families
  $(I,\vec{x}) \xproto{(A,\vec{m})} (J,\vec{y}) \xproto{(B,\vec{n})} (K,\vec{z})$
  and for every family $(I,\vec{x})$, the canonical comparison cells
  \begin{equation*}
    \begin{tikzcd}
      {\Sigma\vec{x}} && {\Sigma\vec{z}} \\
      {\Sigma\vec{x}} & {\Sigma\vec{y}} & {\Sigma\vec{z}}
      \arrow[""{name=0, anchor=center, inner sep=0}, "{\Sigma(\vec{m} \odot \vec{n})}", "\shortmid"{marking}, from=1-1, to=1-3]
      \arrow[Rightarrow, no head, from=1-1, to=2-1]
      \arrow[Rightarrow, no head, from=1-3, to=2-3]
      \arrow["{\Sigma\vec{m}}"', "\shortmid"{marking}, from=2-1, to=2-2]
      \arrow["{\Sigma\vec{n}}"', "\shortmid"{marking}, from=2-2, to=2-3]
      \arrow["{\Sigma_{\vec{m},\vec{n}}}"{description, pos=0.6}, draw=none, from=0, to=2-2]
    \end{tikzcd}
    \qquad\text{and}\qquad
    \begin{tikzcd}
      {\Sigma\vec{x}} & {\Sigma\vec{x}} \\
      {\Sigma\vec{x}} & {\Sigma\vec{x}}
      \arrow[""{name=0, anchor=center, inner sep=0}, "{\id_{\Sigma\vec{x}}}"', "\shortmid"{marking}, from=2-1, to=2-2]
      \arrow[Rightarrow, no head, from=1-1, to=2-1]
      \arrow[Rightarrow, no head, from=1-2, to=2-2]
      \arrow[""{name=1, anchor=center, inner sep=0}, "{\Sigma(\id_{\vec{x}})}", "\shortmid"{marking}, from=1-1, to=1-2]
      \arrow["{\Sigma_{\vec{x}}}"{description}, draw=none, from=1, to=0]
    \end{tikzcd}
  \end{equation*}
  are isomorphisms in $\dbl{D}_1$, where the composite family
  $(A \times_J B, \vec{m} \odot \vec{n})$ has been defined in Equation
  \eqref{eq:proarrow-family-composition}.
\end{proposition}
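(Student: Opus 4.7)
The plan is to mirror the proof of \cref{prop:coproducts-as-adjoint}, adapting it via the analogous equivalence between double adjunctions and universal arrows in the Grandis--Paré theory of double adjunctions \cite{grandis2004,grandis2019}. In that theory, a double adjunction $\Sigma \dashv \Delta$ with $\Sigma$ colax and $\Delta$ pseudo is determined by, and equivalent to, a system of universal arrows from each object and each proarrow in the domain of $\Delta$ (here, $\DblFam(\dbl{D})$) to $\Delta$ itself. Since $\DblFam(\dbl{D})_0 = \Fam(\dbl{D}_0)$ and $\Delta_0$ is the ordinary family embedding on objects, condition (i) is exactly the 1-categorical universal-arrow criterion for $\Delta_0$, producing by the argument of \cref{prop:coproducts-as-adjoint} the action $\Sigma_0$ of the sought left adjoint on objects and arrows of $\dbl{D}$. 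Likewise, condition (ii) applies the same criterion to $\Delta_1$, with the compatibility clause on objects ensuring that $\Sigma \vec{m}$ has source $\Sigma \vec{x}$ and target $\Sigma \vec{y}$, so that $(\Sigma_0, \Sigma_1)$ provide the underlying data of the sought colax double functor $\Sigma: \DblFam(\dbl{D}) \to \dbl{D}$.

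To equip $\Sigma$ with its colax structure, the laxators and unitors are forced by the universal property in (ii). Given a composite proarrow family $\vec{m} \odot \vec{n}$, the external composite of the two universal cells at $\vec{m}$ and $\vec{n}$ is itself a cell whose boundary matches the universal-arrow requirement at $\vec{m} \odot \vec{n}$; the unique factorization in $\dbl{D}_1$ this produces is the desired comparison $\Sigma_{\vec{m},\vec{n}}: \Sigma(\vec{m} \odot \vec{n}) \Rightarrow \Sigma \vec{m} \odot \Sigma \vec{n}$. Dually, the identity cell on $\id_{\Delta \Sigma \vec{x}}$ induces the unique unitor $\Sigma_{\vec{x}}: \Sigma(\id_{\vec{x}}) \Rightarrow \id_{\Sigma \vec{x}}$. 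The coherence axioms of a colax double functor, together with the converse direction (extracting universal arrows from a given colax adjunction $\Sigma \dashv \Delta$), then follow from the uniqueness clauses of the universal properties by routine diagram chasing.

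Finally, the claim about strong coproducts is immediate from this construction: $\Sigma$ is pseudo by definition exactly when its laxators and unitors are invertible, i.e., when each $\Sigma_{\vec{m},\vec{n}}$ and $\Sigma_{\vec{x}}$ is an isomorphism in $\dbl{D}_1$. The main obstacle I anticipate is verifying cleanly that the Grandis--Paré correspondence really does factor as two independent 1-categorical universal-arrow statements---one on $\dbl{D}_0$ and one on $\dbl{D}_1$---with the colax structure produced automatically; this is careful bookkeeping with the double-categorical definitions of adjunction and naturality, but introduces no genuinely new ideas beyond the 1-categorical case.
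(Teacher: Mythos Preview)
Your proposal is correct and follows essentially the same approach as the paper: both reduce the statement to the Grandis--Par\'e equivalence between double adjunctions and universal arrows \cite[Theorem 3.6]{grandis2004}, \cite[Theorem 4.3.6]{grandis2019}. The paper's proof is a one-line citation of that theorem, while you have usefully unpacked how the correspondence specializes here (separating the $\dbl{D}_0$- and $\dbl{D}_1$-level universal arrows and noting that the colax comparison cells are induced by universality), but there is no substantive difference in strategy.
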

\begin{proof}
  This is a direct application of the equivalence between adjunctions and
  universal arrows for double categories, stated for right adjoints in
  \citep[Theorem 3.6]{grandis2004} and \citep[Theorem 4.3.6]{grandis2019}.
\end{proof}

Note that condition (i) in Proposition \ref{prop:dbl-coproducts-universal} says
nothing more than that the underlying category $\dbl{D}_0$ has coproducts (cf.\
Proposition \ref{prop:coproducts-as-adjoint}). Condition (ii) is more elaborate.
It implies, but is stronger than, the statement that $\dbl{D}_1$ has coproducts.
First, it must be possible to choose coproducts in $\dbl{D}_1$ compatibly with
coproducts in $\dbl{D}_0$. Moreover, depending on the indexing span, condition
(ii) encompasses notions of coproduct going beyond coproducts in $\dbl{D}_1$,
such as local coproducts. Stated explicitly, the universal property in (ii) says
that for any family of cells indexed by the span $A: I \proto J$ and having the
form on the left,
\begin{equation*}
  \begin{tikzcd}
	{x_i} & {y_j} \\
	w & z
	\arrow[""{name=0, anchor=center, inner sep=0}, "{m_a}"{inner sep=.8ex}, "\shortmid"{marking}, from=1-1, to=1-2]
	\arrow["{f_i}"', from=1-1, to=2-1]
	\arrow["{g_j}", from=1-2, to=2-2]
	\arrow[""{name=1, anchor=center, inner sep=0}, "n"'{inner sep=.8ex}, "\shortmid"{marking}, from=2-1, to=2-2]
	\arrow["{\alpha_a}"{description}, draw=none, from=0, to=1]
  \end{tikzcd}
  \quad=\quad
  \begin{tikzcd}
	{x_i} & {y_j} \\
	{\Sigma\vec{x}} & {\Sigma\vec{y}} \\
	w & z
	\arrow[""{name=0, anchor=center, inner sep=0}, "{m_a}"{inner sep=.8ex}, "\shortmid"{marking}, from=1-1, to=1-2]
	\arrow["{\iota_i}"', from=1-1, to=2-1]
	\arrow["{\iota_j}", from=1-2, to=2-2]
	\arrow[""{name=1, anchor=center, inner sep=0}, "{\Sigma\vec{m}}"{inner sep=.8ex}, "\shortmid"{marking}, from=2-1, to=2-2]
	\arrow[dashed, from=2-1, to=3-1]
	\arrow[dashed, from=2-2, to=3-2]
	\arrow[""{name=2, anchor=center, inner sep=0}, "n"'{inner sep=.8ex}, "\shortmid"{marking}, from=3-1, to=3-2]
	\arrow["{\iota_a}"{description, pos=0.4}, draw=none, from=0, to=1]
	\arrow[between={0.3}{0.7}, Rightarrow, dashed, from=1, to=2]
  \end{tikzcd},
  \qquad (i \xproto{a} j) : (I \xproto{A} J),
\end{equation*}
there exists a unique cell factoring each cell $\alpha_a$ through the
corresponding coprojection $\iota_a$, as shown on the right.

The canonical comparison cells used in the final statement of Proposition
\ref{prop:dbl-coproducts-universal} are the unique solutions, given by the
universal properties of universal arrows, to the equations
\begin{equation*}
  \begin{tikzcd}
    {x_i} && {z_k} \\
    {\Sigma\vec{x}} && {\Sigma\vec{z}} \\
    {\Sigma\vec{x}} & {\Sigma\vec{y}} & {\Sigma\vec{z}}
    \arrow[""{name=0, anchor=center, inner sep=0}, "{\Sigma(\vec{m} \odot \vec{n})}", "\shortmid"{marking}, from=2-1, to=2-3]
    \arrow[Rightarrow, no head, from=2-1, to=3-1]
    \arrow[Rightarrow, no head, from=2-3, to=3-3]
    \arrow["{\Sigma\vec{m}}"', "\shortmid"{marking}, from=3-1, to=3-2]
    \arrow["{\Sigma\vec{n}}"', "\shortmid"{marking}, from=3-2, to=3-3]
    \arrow["{\iota_i}"', from=1-1, to=2-1]
    \arrow["{\iota_k}", from=1-3, to=2-3]
    \arrow[""{name=1, anchor=center, inner sep=0}, "{m_a \odot m_b}", "\shortmid"{marking}, from=1-1, to=1-3]
    \arrow["{\Sigma_{\vec{m},\vec{n}}}"{description, pos=0.6}, draw=none, from=0, to=3-2]
    \arrow["{\iota_{(a,b)}}"{description, pos=0.4}, draw=none, from=1, to=0]
  \end{tikzcd}
  =
  \begin{tikzcd}
    {x_i} & {y_j} & {z_k} \\
    {\Sigma\vec{x}} & {\Sigma\vec{y}} & {\Sigma\vec{z}}
    \arrow[""{name=0, anchor=center, inner sep=0}, "{m_a}", "\shortmid"{marking}, from=1-1, to=1-2]
    \arrow[""{name=1, anchor=center, inner sep=0}, "{m_b}", "\shortmid"{marking}, from=1-2, to=1-3]
    \arrow["{\iota_i}"', from=1-1, to=2-1]
    \arrow["{\iota_j}"{description}, from=1-2, to=2-2]
    \arrow[""{name=2, anchor=center, inner sep=0}, "{\Sigma{\vec{m}}}"', "\shortmid"{marking}, from=2-1, to=2-2]
    \arrow["{\iota_k}", from=1-3, to=2-3]
    \arrow[""{name=3, anchor=center, inner sep=0}, "{\Sigma\vec{n}}"', "\shortmid"{marking}, from=2-2, to=2-3]
    \arrow["{\iota_a}"{description}, draw=none, from=0, to=2]
    \arrow["{\iota_b}"{description}, draw=none, from=1, to=3]
  \end{tikzcd},
  \qquad (i \xproto{a} j \xproto{b} k) : (I \xproto{A} J \xproto{B} K),
\end{equation*}
where the cells $\iota_{(a,b)}$ are the hypothesized coprojections into
$\Sigma(A \times_J B, \vec{m} \odot \vec{n})$, along with the equations
\begin{equation*}
  \begin{tikzcd}
    {x_i} & {x_i} \\
    {\Sigma\vec{x}} & {\Sigma\vec{x}} \\
    {\Sigma\vec{x}} & {\Sigma\vec{x}}
    \arrow[""{name=0, anchor=center, inner sep=0}, "{\id_{\Sigma\vec{x}}}"', "\shortmid"{marking}, from=3-1, to=3-2]
    \arrow[Rightarrow, no head, from=2-1, to=3-1]
    \arrow[Rightarrow, no head, from=2-2, to=3-2]
    \arrow[""{name=1, anchor=center, inner sep=0}, "{\Sigma(\id_{\vec{x}})}", "\shortmid"{marking}, from=2-1, to=2-2]
    \arrow["{\iota_i}"', from=1-1, to=2-1]
    \arrow["{\iota_i}", from=1-2, to=2-2]
    \arrow[""{name=2, anchor=center, inner sep=0}, "{\id_{x_i}}", "\shortmid"{marking}, from=1-1, to=1-2]
    \arrow["{\Sigma_{\vec{x}}}"{description}, draw=none, from=1, to=0]
    \arrow["{\iota_i}"{description, pos=0.4}, draw=none, from=2, to=1]
  \end{tikzcd}
  =
  \begin{tikzcd}
    {x_i} & {x_i} \\
    {\Sigma\vec{x}} & {\Sigma\vec{x}}
    \arrow[""{name=0, anchor=center, inner sep=0}, "{\id_{x_i}}", "\shortmid"{marking}, from=1-1, to=1-2]
    \arrow["{\iota_i}"', from=1-1, to=2-1]
    \arrow["{\iota_i}", from=1-2, to=2-2]
    \arrow[""{name=1, anchor=center, inner sep=0}, "{\id_{\Sigma\vec{x}}}"', "\shortmid"{marking}, from=2-1, to=2-2]
    \arrow["{\id_{\iota_i}}"{description}, draw=none, from=0, to=1]
  \end{tikzcd},
  \qquad i \in I.
\end{equation*}
When the assignments in Proposition \ref{prop:dbl-coproducts-universal} are
extended to form a colax double functor $\Sigma: \DblFam(\dbl{D}) \to \dbl{D}$, the
cells above become the composition and identity comparison cells for the colax
functor.

Under mild conditions, double categories of spans have coproducts.

\begin{theorem}[Coproducts of spans] \label{thm:coproducts-span}
  Let $\cat{S}$ be a category with pullbacks and (finite) coproducts. Then the
  double category $\Span{\cat{S}}$ has colax (finite) coproducts.

  Moreover, if $\cat{S}$ is an (finitary or infinitary) extensive category, then
  the (finite or arbitrary) coproducts in $\Span{\cat{S}}$ are strong.
\end{theorem}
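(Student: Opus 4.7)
The plan is to verify the two conditions of \cref{prop:dbl-coproducts-universal} directly, and then, in the strong case, identify the composition comparison cell of $\Sigma$ with the canonical distributivity morphism whose invertibility characterizes extensivity.

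First, for condition (i), recall that $\Span{\cat{S}}_0 = \cat{S}$, which has coproducts by hypothesis. I therefore set $\Sigma \vec{x} \coloneqq \bigsqcup_{i \in I} x_i$ with the usual coprojections $\iota_i: x_i \to \Sigma\vec{x}$, and the required universal arrow from $(I, \vec{x})$ to $\Delta_0$ is immediate from the universal property of the coproduct in $\cat{S}$.

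For condition (ii), given a proarrow family $(A, \vec{m}): (I, \vec{x}) \proto (J, \vec{y})$ with component spans $m_a = \bigl(x_i \xleftarrow{\ell_a} s_a \xrightarrow{r_a} y_j\bigr)$ for $a: i \proto j$, I define $\Sigma\vec{m}$ to be the span in $\cat{S}$ with apex $\bigsqcup_{a \in A} s_a$, left leg the copair of the composites $\iota_i \circ \ell_a$, and right leg the copair of the composites $\iota_j \circ r_a$. The candidate universal arrow from $(A, \vec{m})$ to $\Delta \Sigma \vec{m}$ is the $A$-indexed family of cells in $\Span{\cat{S}}$ whose $a$-component is the coprojection $s_a \hookrightarrow \bigsqcup_{a \in A} s_a$. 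To verify universality, I consider any cell from $(A, \vec{m})$ into $\Delta n$ for some span $n = (z \leftarrow s \to w)$, with supporting arrow families $(f_i: x_i \to z)$ and $(g_j: y_j \to w)$; such a cell supplies a span-map $\phi_a: s_a \to s$ for each $a$, and copairing these yields the unique arrow $[\phi_a]_{a \in A}: \bigsqcup_{a \in A} s_a \to s$ providing the required factorization, with compatibility of legs following from the universal property of coproducts in $\cat{S}$.

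For the strong claim, by the final statement of \cref{prop:dbl-coproducts-universal} it suffices to check that the composition and unit comparison cells are invertible in $\Span{\cat{S}}_1$. The unit comparison is literally an identity, since $\Sigma(\id_{\vec{x}})$ has apex $\bigsqcup_i x_i = \Sigma\vec{x}$ with identity legs. For the composition comparison at $(A, \vec{m})$ and $(B, \vec{n})$ with components $n_b = (y_j \leftarrow t_b \to z_k)$, unpacking the external composition in $\DblFam(\Span{\cat{S}})$ from \cref{def:dbl-fam} together with the pullback composition of spans shows that the comparison cell is precisely the canonical morphism
\[
  \bigsqcup_{(a,b) \in A \times_J B} \bigl(s_a \times_{y_{r(a)}} t_b\bigr) \longrightarrow \Bigl(\bigsqcup_{a \in A} s_a\Bigr) \times_{\bigsqcup_{j \in J} y_j} \Bigl(\bigsqcup_{b \in B} t_b\Bigr)
\]
in $\cat{S}$, expressing the distribution of pullbacks over indexed coproducts. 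This map is an isomorphism for every composable pair of families precisely when $\cat{S}$ is (finitary or infinitary) extensive, giving the claim. The main obstacle is the bookkeeping in this identification: one must carefully track how the iterated pullback of indexed-coproduct spans decomposes along the components of the spans $A$ and $B$ and recognize the induced map as the distributivity morphism defining extensivity; everything else reduces to routine applications of the universal properties of coproducts and pullbacks in $\cat{S}$.
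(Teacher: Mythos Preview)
Your proof is correct and follows essentially the same approach as the paper: the construction of $\Sigma\vec{m}$ via coproducts of apexes and copairings, the verification of the universal property, and the identification of the composition comparison with the canonical distributivity map are all as in the paper's argument. The paper differs only in that it carries out in detail the ``bookkeeping'' you flag at the end, decomposing the comparison map into pieces that are seen to be isomorphisms using separately the disjointness and pullback-stability of coproducts (the two ingredients of extensivity); your phrase ``precisely when $\cat{S}$ is extensive'' overstates what is needed and what you argue, since only the forward implication is required and you do not establish the converse.
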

\begin{proof}
  We first show that $\Span{\cat{S}}$ has colax coproducts. By assumption, the
  category $\Span{\cat{S}}_0 = \cat{S}$ has coproducts. Suppose that
  $(A,\vec{m}): (I,\vec{x}) \proto (J,\vec{y})$ is a family of spans in
  $\cat{S}$, say of the form $m_a = (x_i \xfrom{\ell_a} s_a \xto{r_a} y_j)$ for
  each $a: i \proto j$. We take the coproduct
  $\Sigma\vec{s} = \sum_{a \in A} s_a$ in $\cat{S}$ of the apexes and then form
  the span $\Sigma\vec{x} \from \Sigma\vec{s} \to \Sigma\vec{y}$ by taking the
  copairing in $\cat{S}$ of the morphisms
  \begin{equation*}
    \Sigma\vec{x} \xfrom{\iota_i} x_i \xfrom{\ell_a} s_a
      \xto{r_a} y_j \xto{\iota_j} \Sigma\vec{y},
    \qquad (i \xproto{a} j): (I \xproto{A} J).
  \end{equation*}
  The accompanying family of coprojections is
  \begin{equation*}
    \begin{tikzcd}
      {(I,\vec{x})} & {(J,\vec{y})} \\
      {\Delta \Sigma \vec{x}} & {\Delta \Sigma \vec{y}}
      \arrow[""{name=0, anchor=center, inner sep=0}, "{\Delta \Sigma \vec{m}}"', "\shortmid"{marking}, from=2-1, to=2-2]
      \arrow[""{name=1, anchor=center, inner sep=0}, "{(A,\vec{m})}", "\shortmid"{marking}, from=1-1, to=1-2]
      \arrow["\iota"', from=1-1, to=2-1]
      \arrow["\iota", from=1-2, to=2-2]
      \arrow["\iota"{description}, draw=none, from=1, to=0]
    \end{tikzcd}
    \qquad\leftrightsquigarrow\qquad
    \begin{tikzcd}
      {x_i} & {s_a} & {y_j} \\
      {\Sigma\vec{x}} & {\Sigma\vec{s}} & {\Sigma\vec{y}}
      \arrow[from=2-2, to=2-1]
      \arrow[from=2-2, to=2-3]
      \arrow["{\iota_i}"', from=1-1, to=2-1]
      \arrow["{\ell_a}"', from=1-2, to=1-1]
      \arrow["{\iota_a}"', from=1-2, to=2-2]
      \arrow["{r_a}", from=1-2, to=1-3]
      \arrow["{\iota_j}", from=1-3, to=2-3]
    \end{tikzcd},
    \qquad (i \xproto{a} j): (I \xproto{A} J).
  \end{equation*}

  As for the universal property, suppose that $n = (w \xfrom{\ell} t \xto{r} z)$
  is another span in $\cat{S}$ along with a family of span maps
  \begin{equation*}
    \begin{tikzcd}
      {(I,\vec{x})} & {(J,\vec{y})} \\
      {\Delta w} & {\Delta z}
      \arrow[""{name=0, anchor=center, inner sep=0}, "{\Delta n}"', "\shortmid"{marking}, from=2-1, to=2-2]
      \arrow[""{name=1, anchor=center, inner sep=0}, "{(A,\vec{m})}", "\shortmid"{marking}, from=1-1, to=1-2]
      \arrow["f"', from=1-1, to=2-1]
      \arrow["g", from=1-2, to=2-2]
      \arrow["h"{description}, draw=none, from=1, to=0]
    \end{tikzcd}
    \qquad\leftrightsquigarrow\qquad
    \begin{tikzcd}
      {x_i} & {s_a} & {y_j} \\
      w & t & z
      \arrow["{\ell_a}"', from=1-2, to=1-1]
      \arrow["{r_a}", from=1-2, to=1-3]
      \arrow["{h_a}"', from=1-2, to=2-2]
      \arrow["{f_i}"', from=1-1, to=2-1]
      \arrow["{g_i}", from=1-3, to=2-3]
      \arrow["\ell", from=2-2, to=2-1]
      \arrow["r"', from=2-2, to=2-3]
    \end{tikzcd},
    \qquad (i \xproto{a} j): (I \xproto{A} J).
  \end{equation*}
  Taking the copairings $f \coloneqq [f_i]_{i \in I}$,
  $g \coloneqq [g_j]_{j \in J}$, and $h \coloneqq [h_a]_{a \in A}$, we get a
  unique map of spans
  \begin{equation*}
    \begin{tikzcd}
      {\Sigma\vec{x}} & {\Sigma\vec{s}} & {\Sigma\vec{y}} \\
      w & t & z
      \arrow["f"', from=1-1, to=2-1]
      \arrow["\ell", from=2-2, to=2-1]
      \arrow["r"', from=2-2, to=2-3]
      \arrow[from=1-2, to=1-1]
      \arrow["h"', from=1-2, to=2-2]
      \arrow["g", from=1-3, to=2-3]
      \arrow[from=1-2, to=1-3]
    \end{tikzcd}
  \end{equation*}
  that factors through the coprojections. Therefore, by
  Proposition \ref{prop:dbl-coproducts-universal}, the double category $\Span{\cat{S}}$ has
  colax coproducts.

  Now suppose that $\cat{S}$ is an extensive category. Under the assumption that
  $\cat{S}$ has coproducts and pullbacks, extensivity amounts to the properties
  that coproducts in $\cat{S}$ are disjoint and stable under pullback
  \citep[Proposition 17]{lack2006}, cf.\ \citep[Proposition 2.14]{carboni1993}. We
  need to see that $\Span{\cat{S}}$ has \emph{strong} coproducts. Given
  composable families of spans
  $(I,\vec{x}) \xproto{(A,\vec{m})} (J,\vec{y}) \xproto{(B,\vec{n})} (K,\vec{z})$,
  with families of apexes $\vec{s}$ and $\vec{t}$, we must show that the
  canonical comparison map in $\cat{S}$
  \begin{equation*}
    \begin{tikzcd}
      {\Sigma(\vec{s} \times_{\vec{y}} \vec{t})} \\
      {\Sigma\vec{s} \times_{\Sigma\vec{y}} \Sigma\vec{t}}
      \arrow[from=1-1, to=2-1]
    \end{tikzcd}
    \qquad\leftrightsquigarrow\qquad
    \begin{tikzcd}
      & {s_a \times_{y_j} t_b} \\
      {s_a} & {\Sigma\vec{s} \times_{\Sigma\vec{y}} \Sigma\vec{t}} & {t_b} \\
      {\Sigma\vec{s}} & {y_j} & {\Sigma\vec{t}} \\
      & {\Sigma\vec{y}}
      \arrow["{\iota_a}"', from=2-1, to=3-1]
      \arrow["{\pi_1}"', from=1-2, to=2-1]
      \arrow["{\pi_2}", from=1-2, to=2-3]
      \arrow["{\iota_b}", from=2-3, to=3-3]
      \arrow[from=2-2, to=3-1]
      \arrow[from=2-2, to=3-3]
      \arrow[from=3-1, to=4-2]
      \arrow[from=3-3, to=4-2]
      \arrow["\lrcorner"{anchor=center, pos=0.125, rotate=-45}, draw=none, from=2-2, to=4-2]
      \arrow["{\sigma_{a,b}}"', dashed, from=1-2, to=2-2]
      \arrow["{r_a}"'{pos=0.7}, from=2-1, to=3-2, crossing over]
      \arrow["{\iota_j}"', from=3-2, to=4-2]
      \arrow["{\ell_b}"{pos=0.7}, from=2-3, to=3-2, crossing over]
    \end{tikzcd},
    \quad (i \xproto{a} j \xproto{b} k) : (I \xproto{A} J \xproto{B} K).
  \end{equation*}
  on the left, which is determined by the family of comparisons $\sigma_{a,b}$
  on the right, is an isomorphism. This we do by decomposing it into several
  parts
  \begin{equation*}
    \begin{tikzcd}
      {\displaystyle\sum_{\mathclap{i \xproto{a} j \xproto{b} k}} s_a \times_{y_j} t_b} && {\displaystyle\sum_{\mathclap{i \xproto{a} j \xproto{b} k}} s_a \times_{\Sigma\vec{y}} t_b} \\
      {\Sigma\vec{s} \times_{\Sigma\vec{y}} \Sigma\vec{t}} && {\displaystyle\sum_{\substack{a: i \proto j \\ b: j' \proto k}} s_a \times_{\Sigma\vec{y}} t_b}
      \arrow["\cong", from=2-3, to=2-1]
      \arrow[dashed, from=1-1, to=2-1]
      \arrow["\cong", from=1-1, to=1-3]
      \arrow["\cong", from=1-3, to=2-3]
    \end{tikzcd}
    \qquad\leftrightsquigarrow\qquad
    \begin{tikzcd}
      {s_a \times_{y_j} t_b} & {s_a \times_{\Sigma\vec{y}} t_b} \\
      {\Sigma\vec{s} \times_{\Sigma\vec{y}} \Sigma\vec{t}} & {s_a \times_{\Sigma\vec{y}} t_b}
      \arrow["{\phi_{a,b}}", from=1-1, to=1-2]
      \arrow["{\sigma_{a,b}}"', dashed, from=1-1, to=2-1]
      \arrow["{\psi_{a,b}}", from=2-2, to=2-1]
      \arrow[Rightarrow, no head, from=1-2, to=2-2]
    \end{tikzcd}
  \end{equation*}
  that we individually verify to be isomorphisms. Starting from the end, the
  final comparison morphism in this sequence
  \begin{equation*}
    \begin{tikzcd}
      {\displaystyle\sum_{\substack{a: i \proto j \\ b: j' \proto k}} s_a \times_{\Sigma\vec{y}} t_b} \\
      {\Sigma\vec{s} \times_{\Sigma\vec{y}} \Sigma\vec{t}}
      \arrow["\cong"', from=1-1, to=2-1]
    \end{tikzcd}
    \qquad\leftrightsquigarrow\qquad
    \begin{tikzcd}[sep=scriptsize]
      & {s_a} & {s_a \times_{\Sigma\vec{y}} t_b} & {t_b} \\
      {y_j} & {\Sigma\vec{s}} & {\Sigma\vec{s} \times_{\Sigma\vec{y}} \Sigma\vec{t}} & {\Sigma\vec{t}} & {y_{j'}} \\
      & {\Sigma\vec{y}} && {\Sigma\vec{y}}
      \arrow["{\iota_a}"', from=1-2, to=2-2]
      \arrow["{\pi_1}"', from=1-3, to=1-2]
      \arrow["{\pi_2}", from=1-3, to=1-4]
      \arrow["{\iota_b}", from=1-4, to=2-4]
      \arrow["{\pi_1}"', from=2-3, to=2-2]
      \arrow[from=2-2, to=3-2]
      \arrow["{\psi_{a,b}}"', dashed, from=1-3, to=2-3]
      \arrow["{r_a}"', from=1-2, to=2-1]
      \arrow["{\iota_j}"', from=2-1, to=3-2]
      \arrow[""{name=0, anchor=center, inner sep=0}, Rightarrow, no head, from=3-2, to=3-4]
      \arrow["{\pi_2}", from=2-3, to=2-4]
      \arrow[from=2-4, to=3-4]
      \arrow["{\ell_b}", from=1-4, to=2-5]
      \arrow["{\iota_{j'}}", from=2-5, to=3-4]
      \arrow["\lrcorner"{anchor=center, pos=0.125, rotate=-45}, draw=none, from=2-3, to=0]
    \end{tikzcd},
    \quad \begin{aligned}
      &a: i \proto j \\
      &b: j' \proto k,
    \end{aligned}
  \end{equation*}
  is an isomorphism because coproducts in $\cat{S}$ are stable under pullback.
  Next, we analyze the pullbacks $s_a \times_{\Sigma\vec{y}} t_b$ involved by
  cases. In the case that $j = j'$, i.e., $(a,b) \in A \times_J B$, we form the
  pasting of pullback squares
  \begin{equation*}
    \begin{tikzcd}
      {s_a \times_{y_j} t_b} & {t_b} & {t_b} \\
      {s_a} & {y_j} & {y_j} \\
      {s_a} & {y_j} & {\Sigma\vec{y}}
      \arrow["{r_a}"', from=2-1, to=2-2]
      \arrow[from=1-1, to=2-1]
      \arrow[from=1-1, to=1-2]
      \arrow["{\ell_b}", from=1-2, to=2-2]
      \arrow["\lrcorner"{anchor=center, pos=0.125}, draw=none, from=1-1, to=2-2]
      \arrow[Rightarrow, no head, from=2-1, to=3-1]
      \arrow["{r_a}"', from=3-1, to=3-2]
      \arrow["{\iota_j}"', from=3-2, to=3-3]
      \arrow["{\iota_j}", from=2-3, to=3-3]
      \arrow[Rightarrow, no head, from=2-2, to=3-2]
      \arrow[Rightarrow, no head, from=2-2, to=2-3]
      \arrow["\lrcorner"{anchor=center, pos=0.125}, draw=none, from=2-2, to=3-3]
      \arrow[Rightarrow, no head, from=1-2, to=1-3]
      \arrow["{\ell_b}", from=1-3, to=2-3]
    \end{tikzcd},
  \end{equation*}
  where the bottom right square is a pullback, i.e., the coprojection
  $\iota_j: y_j \to \Sigma\vec{y}$ is monic, because coproducts in $\cat{S}$ are
  disjoint. Thus, the overall square is also a pullback and we obtain a
  canonical isomorphism
  $\phi_{a,b}: s_a \times_{y_j} t_b \xto{\cong} s_a \times_{\Sigma\vec{y}} t_b$.
  Otherwise, $j \neq j'$ and the pullback of
  $y_j \xto{\iota_j} \Sigma\vec{y} \xfrom{\iota_{j'}} y_{j'}$ is the initial
  object, again because coproducts in $\cat{S}$ are disjoint. This calculation
  verifies the first two isomorphisms in the decomposition and completes the
  proof.
\end{proof}

\begin{corollary}
  The double category $\Span$ has strong coproducts, and $\FinSpan$ has strong
  finite coproducts.
\end{corollary}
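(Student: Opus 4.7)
The plan is to observe that this corollary is an immediate instance of \cref{thm:coproducts-span} applied to the specific base categories $\cat{S} = \Set$ and $\cat{S} = \FinSet$. By definition, $\Span = \Span{\Set}$ and $\FinSpan = \Span{\FinSet}$, so the strong (respectively finite) coproduct structure transfers directly once the hypotheses of the theorem are verified.

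For the first claim, I would note that $\Set$ is complete and cocomplete, so in particular it has pullbacks and all coproducts. It is also the paradigmatic example of an (infinitary) extensive category: coproducts in $\Set$ are given by disjoint union, which is manifestly disjoint, and pullbacks along coprojections are computed pointwise, making coproducts stable under pullback. Therefore \cref{thm:coproducts-span} applies and $\Span = \Span{\Set}$ has strong coproducts. For the second claim, the same reasoning applies to $\FinSet$: it has pullbacks and finite coproducts, and it inherits finitary extensivity from $\Set$, since the disjoint union of finitely many finite sets is finite and the relevant pullbacks of finite sets are finite. Hence \cref{thm:coproducts-span} gives that $\FinSpan = \Span{\FinSet}$ has strong finite coproducts.

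There is no real obstacle here; the work was done in proving \cref{thm:coproducts-span}. The only thing worth writing explicitly in the proof is the identification $\Span = \Span{\Set}$ and $\FinSpan = \Span{\FinSet}$ together with a one-line reminder that $\Set$ and $\FinSet$ are extensive, so that the hypotheses of the theorem are met.
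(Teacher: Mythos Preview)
Your proposal is correct and takes essentially the same approach as the paper: the paper's proof is simply the one-line observation that $\Set$ is infinitary extensive and $\FinSet$ is finitary extensive, which is exactly what you invoke to apply \cref{thm:coproducts-span}. Your version just spells out a bit more of why these categories are extensive.
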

\begin{proof}
  The category $\cat{S} = \Set$ is infinitary extensive, and $\FinSet$ is
  finitary extensive.
\end{proof}

Double categories of matrices also have strong coproducts. This is true without
further assumptions as distributivity is needed to construct a double category
in the first place. By a \emph{distributive monoidal category}, we will mean a
symmetric monoidal category with arbitrary coproducts over which the tensor
distributes separately in each variable.

\begin{proposition}[Coproducts of matrices]
  For any distributive monoidal category $\catV$, the double category of
  $\catV$-matrices, $\Mat{\catV}$, has strong coproducts.
\end{proposition}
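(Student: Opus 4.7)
The plan is to apply \cref{prop:dbl-coproducts-universal}, constructing the coproducts objectwise as disjoint unions of sets and proarrow-wise using coproducts in $\catV$; distributivity of $\otimes$ over coproducts in $\catV$ will ensure that the result is strong.

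For condition (i), the underlying category $\Mat{\catV}_0 = \Set$ has all coproducts, so set $\Sigma(I,\vec{x}) \coloneqq \sqcup \vec{x}$. For condition (ii), a proarrow family $(A,\vec{m}): (I,\vec{x}) \proto (J,\vec{y})$ assigns to each $a: i \proto j$ an $(x_i \times y_j)$-indexed family of objects $m_a(u,v)$ in $\catV$; define the matrix $\Sigma\vec{m}: \sqcup\vec{x} \proto \sqcup\vec{y}$ entrywise by
\[
  (\Sigma\vec{m})\bigl((i,u),(j,v)\bigr) \;\coloneqq\; \sum_{a: i \proto j} m_a(u,v),
\]
the coproduct in $\catV$ over the fibre of $A$ above $(i,j)$. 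The coprojection cell has span-map $A \to (\sqcup\vec{x}) \times (\sqcup\vec{y})$ sending $a \mapsto (\ell(a), r(a))$, and its component at $a \in A$ is the cell in $\Mat{\catV}$ whose $(u,v)$-entry is the canonical coprojection $m_a(u,v) \to \sum_{a': i \proto j} m_{a'}(u,v)$ in $\catV$. The universal property reduces entry by entry to that of the coproduct in $\catV$.

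Strongness amounts to checking that the two comparison cells of \cref{prop:dbl-coproducts-universal} are invertible. For composable proarrow families $(I,\vec{x}) \xproto{(A,\vec{m})} (J,\vec{y}) \xproto{(B,\vec{n})} (K,\vec{z})$, direct unwinding of the definitions yields
\[
  \Sigma(\vec{m} \odot \vec{n})\bigl((i,u),(k,w)\bigr)
   \;=\; \sum_{\substack{(a,b):\\ i \proto j \proto k}} \sum_{v \in y_j} m_a(u,v) \otimes n_b(v,w)
\]
and
\[
  (\Sigma\vec{m} \odot \Sigma\vec{n})\bigl((i,u),(k,w)\bigr)
   \;=\; \sum_{(j,v) \in \sqcup\vec{y}} \Bigl(\sum_{a: i \proto j} m_a(u,v)\Bigr) \otimes \Bigl(\sum_{b: j \proto k} n_b(v,w)\Bigr).
\]
Distributing $\otimes$ over the inner coproducts and reassociating identifies these expressions canonically, and this identification is precisely the composition comparison cell $\Sigma_{\vec{m},\vec{n}}$. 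The unitor comparison $\Sigma_{\vec{x}}$ is analogously an isomorphism: both $\Sigma(\id_{\vec{x}})$ and $\id_{\sqcup\vec{x}}$ agree entry-by-entry, being $I_\catV$ on the diagonal of $\sqcup\vec{x}$ and an empty coproduct off it.

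The proof is essentially formal bookkeeping of nested coproducts; the key point is that matrix composition in $\Mat{\catV}$ is built from coproducts and tensors in $\catV$ rather than from pullbacks in $\Set$, so the compatibility between $\Sigma$ and $\odot$ falls out directly from the distributivity hypothesis, with no extensivity assumption required.
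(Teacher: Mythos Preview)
Your proof is correct and follows essentially the same approach as the paper: you invoke \cref{prop:dbl-coproducts-universal}, build $\Sigma\vec{m}$ entrywise via coproducts in $\catV$ over the fibre of the indexing span, and verify strongness by expanding both sides of the composition comparison and invoking distributivity of $\otimes$ over coproducts. The one place your write-up is slightly muddled is the phrase ``span-map $A \to (\sqcup\vec{x}) \times (\sqcup\vec{y})$'' for the coprojection cell; what you actually need there is just the family of cells in $\Mat{\catV}$ whose feet are the inclusions $\iota_i,\iota_j$ and whose entries are the coprojections in $\catV$, which you do go on to describe correctly. As a small bonus, you explicitly check the unitor comparison $\Sigma_{\vec{x}}$, which the paper leaves implicit.
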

\begin{proof}
  Clearly, the underlying category $\Mat{\catV}_0 = \Set$ has coproducts. Given
  a family of $\catV$-matrices $(A,\vec{M}): (I,\vec{X}) \proto (J,\vec{Y})$,
  which have the form $M_a: X_i \times Y_j \to \catV$ for each element
  $a: i \proto j$, the coproduct
  $\Sigma\vec{M}: \Sigma\vec{X} \proto \Sigma\vec{Y}$ is the $\catV$-matrix
  $\Sigma\vec{M}: \Sigma\vec{X} \times \Sigma\vec{Y} \to \catV$ defined
  pointwise by the coproducts in $\catV$
  \begin{equation*}
    \Sigma\vec{M}(x_i, y_j) \coloneqq \sum_{a: i \proto j} M_a(x_i,y_j),
    \qquad \begin{aligned}
      & i \in I,\ j \in J, \\
      & x_i \in X_i,\ y_j \in Y_j.
    \end{aligned}
  \end{equation*}
  Here, for \emph{fixed} $i$ and $j$, the sum ranges over all $a \in A$ such
  that $\ell(a) = i$ and $r(a) = j$. The accompanying coprojections
  \begin{equation*}
    \begin{tikzcd}
      {X_i} & {Y_j} \\
      {\Sigma\vec{X}} & {\Sigma\vec{Y}}
      \arrow["{\iota_i}"', from=1-1, to=2-1]
      \arrow[""{name=0, anchor=center, inner sep=0}, "{M_a}", "\shortmid"{marking}, from=1-1, to=1-2]
      \arrow["{\iota_j}", from=1-2, to=2-2]
      \arrow[""{name=1, anchor=center, inner sep=0}, "{\Sigma\vec{M}}"', "\shortmid"{marking}, from=2-1, to=2-2]
      \arrow["{\iota_a}"{description}, draw=none, from=0, to=1]
    \end{tikzcd},
    \qquad (i \xproto{a} j) : (I \xproto{A} J),
  \end{equation*}
  are the maps of matrices with components given by coprojections in $\catV$
  \begin{equation*}
    M_a(x_i, y_j) \xto{\iota_a}
      \sum_{a': i \proto j} M_{a'}(x_i, y_j) = \Sigma\vec{M}(x_i, y_j),
    \qquad \begin{aligned}
      & a: i \proto j, \\
      & x_i \in X_i,\ y_j \in Y_j.
    \end{aligned}
  \end{equation*}
  The universal property of the coproduct in $\Mat{\catV}$ is easily verified
  using the universal properties of coproducts in $\Set$ and in $\catV$.

  So $\Mat{\catV}$ has colax coproducts. To see that it has strong coproducts,
  take composable families of $\catV$-matrices
  $(I,\vec{X}) \xproto{(A,\vec{M})} (J,\vec{Y}) \xproto{(B,\vec{N})} (K,\vec{Z})$.
  The domain of the comparison cell
  $\Sigma(\vec{M} \odot \vec{N}) \to \Sigma\vec{M} \odot \Sigma\vec{N}$ has
  elements
  \begin{equation*}
    \Sigma(\vec{M} \odot \vec{N})(x_i, z_k)
    = \sum_{i \xproto{a} j \xproto{b} k}
      (M_a \odot N_b)(x_i, z_k)
    = \sum_{i \xproto{a} j \xproto{b} k} \sum_{y_j \in Y_j}
      M_a(x_i, y_j) \otimes N_b(y_j, z_k)
  \end{equation*}
  for each choice of $i \in I$, $k \in K$, $x_i \in X_i$, and $z_k \in Z_k$,
  where the outer sum ranges over all $(a,b) \in A \times_J B$ such that
  $\ell(a) = i$ and $r(b) = k$. Rearranging the sums, the right-hand side is
  isomorphic in $\catV$ to
  \begin{equation*}
    \sum_{j \in J} \sum_{y_j \in Y_j} \sum_{a: i \proto j} \sum_{b: j \proto k}
      M_a(x_i, y_j) \otimes M_b(y_j, z_k).
  \end{equation*}
  Using the distributivity of tensors over coproducts in $\catV$, this is in
  turn isomorphic to
  \begin{equation*}
    \sum_{j \in J} \sum_{y_j \in Y_j}
      \Sigma\vec{M}(x_i, y_j) \otimes \Sigma\vec{N}(y_j, z_k)
    \cong (\Sigma\vec{M} \odot \Sigma\vec{N})(x_i, z_k).
  \end{equation*}
  The composite of these isomorphisms is one component of the comparison cell
  between matrices. Since all the components are isomorphisms, the comparison
  cell is itself an isomorphism.
\end{proof}

From the equivalence $\Mat \simeq \Span$, we obtain an independent proof of the
previous corollary.

\begin{corollary}
  The double category $\Mat$ has strong coproducts.
\end{corollary}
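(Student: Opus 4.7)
The plan is to invoke the preceding proposition with $\catV = \Set$ equipped with its cartesian monoidal structure. What must be verified is that $\Set$ qualifies as a distributive monoidal category in the sense just used: it has all small coproducts (disjoint unions), and cartesian product distributes over them separately in each variable. This distributivity is standard; for instance, $X \times (-): \Set \to \Set$ is a left adjoint (to $(-)^X$) and therefore preserves all colimits, in particular coproducts. The preceding proposition then immediately yields that $\Mat = \Mat{\Set}$ has strong coproducts.

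The only bookkeeping concern is to confirm that the $\Mat$ appearing in the corollary is indeed $\Mat{\Set}$ with $\Set$ carrying its cartesian monoidal structure, so that the proposition specializes on the nose. Once the conventions match, no substantive obstacle arises; there is nothing to ``grind'' because all the work is packaged into the universal property already established for general distributive $\catV$.

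As the sentence preceding the corollary indicates, a second route is available: one could transport the strong coproducts on $\Span$ (established in the prior corollary) along the equivalence $\Mat \simeq \Span$ of double categories. Since strong coproducts are specified by a double adjunction whose left adjoint is a pseudo double functor, and double adjunctions are preserved by equivalences of double categories, this yields an independent proof, effectively running the earlier corollary in reverse. I would mention this alternative but, since specializing the proposition is more direct and matches the build-up of the section, I would present the specialization as the primary argument.
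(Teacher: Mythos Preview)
Your proposal is correct and takes essentially the same approach as the paper: specialize the preceding proposition to $\catV = \Set$ with its cartesian monoidal structure, which is distributive. The paper's proof is the single sentence ``The cartesian monoidal category $\catV = \Set$ is distributive,'' and your additional justification (via the left adjoint $X \times (-)$) and mention of the alternative route through $\Mat \simeq \Span$ are accurate elaborations.
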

\begin{proof}
  The cartesian monoidal category $\catV = \Set$ is distributive.
\end{proof}

\section{Functors between double categories with coproducts}
\label{sec:preserving-coproducts}

We have yet to characterize $\DblFam(\dbl{D})$ as the free coproduct completion
of the double category $\dbl{D}$, as it must be. To even state such a result, we
need a notion not just of coproducts but of double functors that preserve
coproducts. Unlike in the one-dimensional case, choices must be made to define
an ambient 2-category for the universal property of the free coproduct
completion: between colax and strong coproducts, between colax and pseudo double
functors. We begin by saying what it means for a colax functor to preserve
coproducts, turning the characterization in Proposition \ref{prop:preserve-coproducts} of
preserving ordinary coproducts into the \emph{definition} of preserving double
coproducts.

\begin{construction}[Functorality of double families]
  \label{def:dbl-fam-functor}
  Given a colax double functor $F: \dbl{D} \to \dbl{E}$, there is another colax
  double functor
  \begin{equation*}
    \DblFam(F): \DblFam(\dbl{D}) \to \DblFam(\dbl{E})
  \end{equation*}
  that acts on families of objects and proarrows in $\dbl{D}$ by applying $F$
  elementwise to objects and proarrows, while preserving indexing sets and
  spans. Specifically, $\DblFam(F)$ sends
  \begin{itemize}
    \item an object family $\vec{x}: I \to \dbl{D}_0$ in $\dbl{D}$ to the object
      family $F_0 \circ \vec{x}: I \to \dbl{E}_0$ in $\dbl{E}$;
    \item a proarrow family $(A,\vec{m}): (I,\vec{x}) \proto (J,\vec{y})$ in
    $\dbl{D}$ to the proarrow family in $\dbl{E}$:
    \begin{equation*}
      \begin{tikzcd}
        I & A & J \\
        {\dbl{D}_0} & {\dbl{D}_1} & {\dbl{D}_0} \\
        {\dbl{E}_0} & {\dbl{E}_1} & {\dbl{E}_0}
        \arrow["{F_0}"', from=2-1, to=3-1]
        \arrow["\ell"', from=1-2, to=1-1]
        \arrow["{\vec{x}}"', from=1-1, to=2-1]
        \arrow["{\vec{m}}"', from=1-2, to=2-2]
        \arrow["s"', from=2-2, to=2-1]
        \arrow["{F_1}"', from=2-2, to=3-2]
        \arrow["s", from=3-2, to=3-1]
        \arrow["t", from=2-2, to=2-3]
        \arrow["{F_0}", from=2-3, to=3-3]
        \arrow["t"', from=3-2, to=3-3]
        \arrow["{\vec{y}}", from=1-3, to=2-3]
        \arrow["r", from=1-2, to=1-3]
      \end{tikzcd}.
    \end{equation*}
  \end{itemize}
  Similarly, $\DblFam(F)$ acts on families of arrows and cells by applying $F$
  elementwise to arrows and cells. The composition and identity comparisons for
  $\DblFam(F)$ are families of composition and identity comparisons for $F$, to
  be described in greater detail in Section \ref{sec:lax-functors} when they are
  needed. In particular, $\DblFam(F)$ is pseudo whenever $F$ is.
\end{construction}

Given that the double functor $\DblFam(F)$ acts by post-composition with $F$, it
is only a mild abuse of notation to write
$F\vec{x} \coloneqq \DblFam(F)(I,\vec{x})$ and
$F\vec{m} \coloneqq \DblFam(F)(A,\vec{m})$ for this ``elementwise'' application,
and we will often do so.

\begin{definition}[Preservation of double coproducts]
  \label{def:preserve-dbl-coproducts}
  Let $\dbl{D}$ and $\dbl{E}$ be double categories with colax (possibly strong)
  coproducts. A colax functor $F: \dbl{D} \to \dbl{E}$ \define{preserves
    coproducts} if the induced morphism of adjunctions
  \begin{equation*}
    \begin{tikzcd}
      {\DblFam(\dbl{D})} & {\dbl{D}} \\
      {\DblFam(\dbl{E})} & {\dbl{E}}
      \arrow[""{name=0, anchor=center, inner sep=0}, "{\Sigma \dashv \Delta}", "\shortmid"{marking}, from=1-1, to=1-2]
      \arrow["F", from=1-2, to=2-2]
      \arrow["{\DblFam(F)}"', from=1-1, to=2-1]
      \arrow[""{name=1, anchor=center, inner sep=0}, "{\Sigma \dashv \Delta}"', "\shortmid"{marking}, from=2-1, to=2-2]
      \arrow["{(\Phi,1)}"{description}, draw=none, from=0, to=1]
    \end{tikzcd}
  \end{equation*}
  is strong.
\end{definition}

Here the morphism of adjunctions $(\Phi, 1)$ is a cell in the double category of
adjunctions \citep[\S{3.1.5}]{grandis2019} in $\DblColax$, the 2-category of
double categories, colax double functors, and natural transformations. It
consists of the mate pair
\begin{equation*}
  \begin{tikzcd}
    {\DblFam(\dbl{D})} & {\dbl{D}} \\
    {\DblFam(\dbl{E})} & {\dbl{E}}
    \arrow["F", from=1-2, to=2-2]
    \arrow["{\Sigma_{\dbl{D}}}", from=1-1, to=1-2]
    \arrow["{\DblFam(F)}"', from=1-1, to=2-1]
    \arrow["{\Sigma_{\dbl{E}}}"', from=2-1, to=2-2]
    \arrow["\Phi", shorten <=6pt, shorten >=6pt, Rightarrow, from=2-1, to=1-2]
  \end{tikzcd}
  \qquad\leftrightsquigarrow\qquad
  \begin{tikzcd}
    {\DblFam(\dbl{D})} & {\dbl{D}} \\
    {\DblFam(\dbl{E})} & {\dbl{E}}
    \arrow["F", from=1-2, to=2-2]
    \arrow["{\Delta_{\dbl{D}}}"', from=1-2, to=1-1]
    \arrow["{\DblFam(F)}"', from=1-1, to=2-1]
    \arrow["{\Delta_{\dbl{E}}}", from=2-2, to=2-1]
    \arrow["1", shorten <=9pt, shorten >=9pt, Rightarrow, from=1-1, to=2-2]
  \end{tikzcd},
\end{equation*}
where the transformation on the right is the identity. The components of $\Phi$
are, at an object family $(I,\vec{x})$ in $\dbl{D}$, the usual coproduct
comparison $\Phi_{\vec{x}}: \Sigma F \vec{x} \to F \Sigma \vec{x}$ and, at a proarrow family
$(A,\vec{m}): (I,\vec{x}) \proto (J,\vec{y})$ in $\dbl{D}$, the cell on the left
\begin{equation*}
  \begin{tikzcd}
    {\Sigma F \vec{x}} & {\Sigma F \vec{y}} \\
    {F \Sigma \vec{x}} & {F \Sigma \vec{y}}
    \arrow["{\Phi_{\vec{x}}}"', from=1-1, to=2-1]
    \arrow[""{name=0, anchor=center, inner sep=0}, "{\Sigma F \vec{m}}", "\shortmid"{marking}, from=1-1, to=1-2]
    \arrow["{\Phi_{\vec{y}}}", from=1-2, to=2-2]
    \arrow[""{name=1, anchor=center, inner sep=0}, "{F \Sigma \vec{m}}"', "\shortmid"{marking}, from=2-1, to=2-2]
    \arrow["{\Phi_{\vec{m}}}"{description}, draw=none, from=0, to=1]
  \end{tikzcd}
  \qquad\leftrightsquigarrow\qquad
  \begin{tikzcd}
    {F x_i} & {F y_j} \\
    {F\Sigma\vec{x}} & {F\Sigma\vec{y}}
    \arrow["{F \iota_i}"', from=1-1, to=2-1]
    \arrow[""{name=0, anchor=center, inner sep=0}, "{F m_a}", "\shortmid"{marking}, from=1-1, to=1-2]
    \arrow["{F \iota_j}", from=1-2, to=2-2]
    \arrow[""{name=1, anchor=center, inner sep=0}, "{F\Sigma\vec{m}}"', "\shortmid"{marking}, from=2-1, to=2-2]
    \arrow["{F \iota_a}"{description}, draw=none, from=0, to=1]
  \end{tikzcd},
  \quad (i \xproto{a} j) : (I \xproto{A} J).
\end{equation*}
that is uniquely determined by the family of cells on the right, using the
universal property of the coproduct of $F\vec{m}$ in $\dbl{E}$. By definition,
the morphism of adjunctions $(\Phi, 1)$ is \define{strong} just when $\Phi$ is a
natural isomorphism. Thus, a colax functor $F$ preserves coproducts just when
the canonical comparisons $\Phi_{\vec{x}}$ and $\Phi_{\vec{m}}$ are all isomorphisms
in $\dbl{E}_0$ and $\dbl{E}_1$, respectively.

\begin{example}[Preserving coproducts of spans]
  \label{ex:preserve-coproducts-span}
  As observed in \citep[\S{C3.11}]{grandis2019}, for any functor
  $F: \cat{C} \to \cat{D}$ between categories with pullbacks, there is a unitary
  colax functor
  \begin{equation*}
    \Span{F}: \Span{\cat{C}} \to \Span{\cat{D}}
  \end{equation*}
  that applies $F$ pointwise to spans and maps between them, and $\Span{F}$ is
  pseudo if and only if $F$ preserves pullbacks.

  Suppose that $\cat{C}$ and $\cat{D}$ have coproducts, so that $\Span{\cat{C}}$
  and $\Span{\cat{D}}$ have colax coproducts (Theorem \ref{thm:coproducts-span}). Then
  the colax functor $\Span{F}$ preserves coproducts if and only if $F$ preserves
  coproducts in the usual sense.
\end{example}

To express that $\DblFam(\dbl{D})$ is the free coproduct cocompletion of
$\dbl{D}$, we work in the 2-category of double categories with colax coproducts,
colax functors that preserve coproducts, and natural transformations. However,
$\DblFam(\dbl{D})$ itself has strong coproducts, which turns out to be important
in the proof.

\begin{theorem}[Free double coproduct completion]
  \label{thm:free-dbl-coproduct-completion}
  For any double category $\dbl{D}$, the double category $\DblFam(\dbl{D})$ has
  strong coproducts. Moreover, $\DblFam(\dbl{D})$ is the free coproduct
  completion of $\dbl{D}$ in the sense that, for any colax functor
  $F: \dbl{D} \to \dbl{E}$ into a double category $\dbl{E}$ with colax
  coproducts, there exists a coproduct-preserving colax functor
  $\hat{F}: \DblFam(\dbl{D}) \to \dbl{E}$ making the triangle
  \begin{equation} \label{eq:free-dbl-coproduct-completion}
    \begin{tikzcd}
      & {\DblFam(\dbl{D})} \\
      {\dbl{D}} & {\dbl{E}}
      \arrow["{\hat{F}}", dashed, from=1-2, to=2-2]
      \arrow["\Delta", from=2-1, to=1-2]
      \arrow["F"', from=2-1, to=2-2]
    \end{tikzcd}
  \end{equation}
  commute, and $\hat{F}$ is unique up to natural isomorphism.
\end{theorem}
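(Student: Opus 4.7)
The plan is to mirror the proof of \cref{thm:free-coproduct-completion}, first verifying strong coproducts in $\DblFam(\dbl{D})$ via the universal-arrow characterization of \cref{prop:dbl-coproducts-universal}, and then constructing $\hat F$ as the composite $\Sigma_{\dbl{E}} \circ \DblFam(F)$.

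For the first claim, I would describe the coproduct of a family of object-families indexed by a set $I$ in $\DblFam(\dbl{D})$, with members $(U_i, \vec{x}_i)_{i \in I}$, as the concatenation $(\sqcup_{i \in I} U_i,\ [\vec{x}_i]_{i \in I})$, with coprojection arrows given by set inclusions paired with identity families of arrows in $\dbl{D}$, exactly as in the proof of \cref{thm:free-coproduct-completion}. For a family of proarrow-families indexed by a span $I \xfrom{\ell} A \xto{r} J$ with members $(B_a, \vec{n}_a): (U_{\ell(a)}, \vec{x}_{\ell(a)}) \proto (V_{r(a)}, \vec{y}_{r(a)})$, the coproduct is indexed by the coproduct span computed in $\Span$ of the $(B_a, \ell_a, r_a)$, with componentwise proarrow data inherited from the $\vec{n}_a$ through the inclusions. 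The universal property reduces to the componentwise universal properties of coproducts in $\Set$ and in $\Span$. For strength, the external composition of proarrow families in $\DblFam(\dbl{D})$ factors into (a) pullback of indexing spans in $\Set$ and (b) componentwise external composition in $\dbl{D}$; the comparison cell required by \cref{prop:dbl-coproducts-universal} is then the isomorphism witnessing that pullbacks distribute over disjoint unions of sets, i.e., the extensivity of $\Set$ already invoked in \cref{thm:coproducts-span}, paired with identity cells on the $\dbl{D}$-side.

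For the universal property, I would define $\hat F := \Sigma_{\dbl{E}} \circ \DblFam(F)$, a composite of colax double functors ($\DblFam(F)$ by \cref{def:dbl-fam-functor} and $\Sigma_{\dbl{E}}$ since $\dbl{E}$ has colax coproducts), hence itself colax. On a singleton family $\Delta_{\dbl{D}} x = (1, x)$, one has $\hat F(1, x) = \Sigma_{\dbl{E}}(1, Fx) = Fx$ under the standard convention that singleton coproducts in $\dbl{E}$ are identities, so triangle \cref{eq:free-dbl-coproduct-completion} commutes strictly; the same holds for arrows, proarrows, and cells. Coproduct-preservation follows from the observation that the coproduct in $\DblFam(\dbl{D})$ of a family of families is a concatenation; $\hat F$ sends it to an iterated coproduct in $\dbl{E}$, which is canonically isomorphic to a single coproduct, yielding an invertible mate cell $\Phi$ as in \cref{def:preserve-dbl-coproducts}. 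Uniqueness up to natural isomorphism follows because every object family $(I, \vec{x})$ in $\DblFam(\dbl{D})$ is canonically the coproduct $\Sigma_{i \in I} \Delta x_i$ and every proarrow family $(A, \vec{m})$ is canonically the coproduct $\Sigma_{a \in A} \Delta m_a$, so any coproduct-preserving colax $\hat F'$ extending $F$ must send these to $\Sigma_{i \in I} F x_i$ and $\Sigma_{a \in A} F m_a$, determined up to the canonical isomorphisms among choices of coproducts in $\dbl{E}$.

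The main obstacle is verifying the strength, as opposed to mere existence, of the coproducts in $\DblFam(\dbl{D})$: one must unwind the two-level indexing of sets inside sets and spans inside spans, and check that the composition comparison splits cleanly into the extensivity isomorphism in $\Set$ together with identity cells in $\dbl{D}$, so that no additional hypothesis on $\dbl{D}$ is needed. Once this bookkeeping is settled, the remaining steps follow formally from the universal properties and the colax structure of $\Sigma_{\dbl{E}}$.
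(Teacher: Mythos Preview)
Your proof of the first claim (strong coproducts in $\DblFam(\dbl{D})$) is essentially the same as the paper's, including the appeal to extensivity of $\Set$ for strength.

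For the universal property, you take a genuinely different route. The paper argues \emph{top-down}: any coproduct-preserving colax $\hat F$ extending $F$ is determined on singletons by $F$ and on general families by coproduct-preservation, and it then invokes the dual of \cref{lem:laxator-unitor-product} (together with the strength of coproducts in $\DblFam(\dbl{D})$) to pin down the colax comparison cells of $\hat F$. You instead construct $\hat F$ \emph{bottom-up} as the explicit composite $\Sigma_{\dbl{E}} \circ \DblFam(F)$, which gives the colax structure for free and makes existence immediate. This is a cleaner existence argument; what it buys you is that the colaxators come packaged with the composite rather than having to be reconstructed. What the paper's approach buys is that uniqueness is handled more carefully: the lemma it cites shows that the colaxators and counitors of any competing $\hat F'$ are forced by those of $F$, something your uniqueness paragraph elides (you determine $\hat F'$ only on objects, arrows, proarrows, and cells, not on its comparison cells). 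This is a real gap in your uniqueness argument, though a small one that the paper's \cref{lem:laxator-unitor-product} would fill.

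Two smaller points. Your claim that the triangle commutes \emph{strictly} relies on choosing singleton coproducts in $\dbl{E}$ to be identities; the paper does not make this convention, so commutativity is only up to isomorphism (which suffices). And your ``$\hat F$ preserves coproducts'' step is really an associativity-of-coproducts statement ($\Sigma_i \Sigma_{u \in U_i} \cong \Sigma_{\sqcup U_i}$) in $\dbl{E}$; this holds, but it is worth saying so rather than leaving it as ``canonically isomorphic.''
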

\begin{proof}
  We first show that $\DblFam(\dbl{D})$ has strong coproducts, which are
  reminiscent of coproducts in $\Span{\cat{S}}$ (Theorem \ref{thm:coproducts-span}). As
  reviewed in Theorem \ref{thm:free-coproduct-completion}, the underlying category
  $\DblFam(\dbl{D})_0 = \Fam(\dbl{D}_0)$ has coproducts and is in fact the free
  coproduct completion of $\dbl{D}_0$. So suppose that
  \begin{equation*}
    (A,(\vec{S},\vecvec{m})): (I,(\vec{U},\vecvec{x})) \proto (J,(\vec{V},\vecvec{y}))
  \end{equation*}
  is a family of proarrows in $\DblFam(\dbl{D})$, i.e., a proarrow in
  $\DblFam(\DblFam(\dbl{D}))$. This data comprises a span of sets
  $I \xfrom{\ell} A \xto{r} J$ together with a family of proarrows
  \begin{equation*}
    (S_a, \vec{m}_a): (U_i, \vec{x}_i) \proto (V_j, \vec{y}_j),
    \qquad (i \xproto{a} j) : (I \xproto{A} J),
  \end{equation*}
  in $\DblFam(\dbl{D})$. Each of the latter consists, in turn, of a span of sets
  $U_i \xfrom{\ell_a} S_a \xto{r_a} V_j$ together with a family of proarrows
  \begin{equation*}
    m_{a,s}: x_{i,u} \proto y_{j,v},
    \qquad (u \xproto{s} v) : (U_i \xproto{S_a} V_j),
  \end{equation*}
  in $\dbl{D}$. Now, a coproduct
  $\Sigma(\vec{S},\vecvec{m}): \Sigma(\vec{U},\vecvec{x}) \proto \Sigma(\vec{V},\vecvec{y})$
  of this family of families is constructed by taking the span
  $\sqcup\vec{U} \from \sqcup\vec{S} \to \sqcup\vec{V}$ defined by the
  copairings of the functions
  \begin{equation*}
    \sqcup\vec{U} \xfrom{\iota_i} U_i \xfrom{\ell_a} S_a
      \xto{r_a} V_j \xto{\iota_j} \sqcup\vec{V},
    \qquad (i \xproto{a} j) : (I \xproto{A} J),
  \end{equation*}
  together with the family of proarrows in $\dbl{D}$
  \begin{equation*}
    \begin{tikzcd}
      {\sqcup\vec{U}} & {\sqcup\vec{S}} & {\sqcup\vec{V}} \\
      {\dbl{D}_0} & {\dbl{D}_1} & {\dbl{D}_0}
      \arrow["{\vec{x}}"', from=1-1, to=2-1]
      \arrow[from=1-2, to=1-1]
      \arrow[from=1-2, to=1-3]
      \arrow["{\vec{m}}"', from=1-2, to=2-2]
      \arrow["{\vec{y}}", from=1-3, to=2-3]
      \arrow["s", from=2-2, to=2-1]
      \arrow["t"', from=2-2, to=2-3]
    \end{tikzcd},
  \end{equation*}
  where again the copairings $\vec{x} \coloneqq [\vec{x}_i]_{i \in I}$,
  $\vec{y} \coloneqq [\vec{y}_j]_{j \in J}$, and
  $\vec{m} \coloneqq [\vec{m}_a]_{a \in A}$ are defined using the universal
  property of coproducts in $\Set$. For each element $a: i \proto j$, the
  coprojection
  \begin{equation*}
    \begin{tikzcd}
      {(U_i, \vec{x}_i)} & {(V_j, \vec{y}_j)} \\
      {(\sqcup \vec{U}, \vec{x})} & {(\sqcup \vec{V}, \vec{y})}
      \arrow["{(\iota_i, 1_{\vec{x}_i})}"', from=1-1, to=2-1]
      \arrow[""{name=0, anchor=center, inner sep=0}, "{(S_a, \vec{m}_a)}", "\shortmid"{marking}, from=1-1, to=1-2]
      \arrow[""{name=1, anchor=center, inner sep=0}, "{(\sqcup \vec{S}, \vec{m})}"', "\shortmid"{marking}, from=2-1, to=2-2]
      \arrow["{(\iota_j, 1_{\vec{y}_j})}", from=1-2, to=2-2]
      \arrow["{(\iota_a, 1_{\vec{m}_a})}"{description}, draw=none, from=0, to=1]
    \end{tikzcd}
  \end{equation*}
  in $\DblFam(\dbl{D})$ consists of the map of spans
  \begin{equation*}
    \begin{tikzcd}
      {U_i} & {S_a} & {V_j} \\
      {\sqcup\vec{U}} & {\sqcup\vec{S}} & {\sqcup\vec{V}}
      \arrow["{\iota_i}"', from=1-1, to=2-1]
      \arrow["{\ell_a}"', from=1-2, to=1-1]
      \arrow["{r_a}", from=1-2, to=1-3]
      \arrow["{\iota_a}"', from=1-2, to=2-2]
      \arrow["{\iota_j}", from=1-3, to=2-3]
      \arrow[from=2-2, to=2-1]
      \arrow[from=2-2, to=2-3]
    \end{tikzcd},
    \hspace{1in}
    \begin{tikzcd}
      {x_{i,u}} & {y_{j,v}} \\
      {x_{\iota_i(u)}} & {y_{\iota_j(v)}}
      \arrow[Rightarrow, no head, from=1-1, to=2-1]
      \arrow[""{name=0, anchor=center, inner sep=0}, "{m_{a,s}}", "\shortmid"{marking}, from=1-1, to=1-2]
      \arrow[Rightarrow, no head, from=1-2, to=2-2]
      \arrow[""{name=1, anchor=center, inner sep=0}, "{m_{\iota_a(s)}}"', "\shortmid"{marking}, from=2-1, to=2-2]
      \arrow["1"{description}, draw=none, from=0, to=1]
    \end{tikzcd},
    \quad (u \xproto{s} v) : (U_i \xproto{S_a} V_j),
  \end{equation*}
  on the left and along with the family of identity cells in $\dbl{D}$ on the
  right. The universal property of the coproduct in $\DblFam(\dbl{D})$ follows
  easily from that of coproducts in $\Set$.

  We only sketch the proof that coproducts in $\DblFam(\dbl{D})$ are strong,
  being similar to the proof that coproducts in $\Span{\cat{S}}$ are strong
  (Theorem \ref{thm:coproducts-span}). For any composable pair of proarrows
  \begin{equation*}
    \begin{tikzcd}
      {(I,(\vec{U},\vecvec{x}))} & {(J,(\vec{V},\vecvec{y}))} & {(K,(\vec{W},\vecvec{z}))}
      \arrow["{(A,(\vec{S},\vecvec{m}))}", "\shortmid"{marking}, from=1-1, to=1-2]
      \arrow["{(B,(\vec{T},\vecvec{n}))}", "\shortmid"{marking}, from=1-2, to=1-3]
    \end{tikzcd}
  \end{equation*}
  in $\DblFam(\DblFam(\dbl{D}))$, we have canonical bijections
  \begin{equation*}
    \begin{tikzcd}
      {\displaystyle \bigsqcup_{(a,b) \in A \times_J B} S_a \times_{V_j} T_b} & {\displaystyle \bigsqcup_{(a,b) \in A \times B} S_a \times_{\sqcup \vec{V}} T_b} & {\sqcup\vec{S} \times_{\sqcup\vec{V}} \sqcup\vec{T}}
      \arrow["\cong", from=1-1, to=1-2]
      \arrow["\cong", from=1-2, to=1-3]
    \end{tikzcd}
  \end{equation*}
  due to the extensivity of $\Set$. The map between indexing spans in the
  canonical comparison cell is thus a bijection. Moreover, the components of
  that comparison cell are just the identities on the proarrow composites
  $m_{a,s} \odot n_{b,t}$.

  It remains to prove the universal property of the free coproduct completion.
  Any colax functor $\hat{F}: \DblFam(\dbl{D}) \to \dbl{E}$ making the triangle
  \eqref{eq:free-dbl-coproduct-completion} commute is uniquely determined on
  singleton families of objects and proarrows, and on maps between them, by
  $F: \dbl{D} \to \dbl{E}$. But arbitrary families of objects and proarrows in
  $\dbl{D}$ are coproducts in $\DblFam(\dbl{D})$ of singleton families, so if
  $\hat{F}$ is to preserve coproducts, it must send these to coproducts of fixed
  objects and proarrows in $\dbl{E}$. By functorality and the universal property
  of coproducts, $\hat{F}$ is then also determined on arrows and cells in
  $\DblFam(\dbl{D})$. Finally, the composition and identity comparisons of
  $\hat{F}$ are uniquely determined by those of $F$, using the dual of
  Lemma \ref{lem:laxator-unitor-product}, proved in detail later, along with the fact
  that coproducts in $\DblFam(\dbl{D})$ are strong. So, since $\dbl{E}$ is
  assumed to have colax coproducts, we can choose coproducts in $\dbl{E}$ to
  define $\hat{F}$, and such $\hat{F}$ is unique up to the choice of coproducts,
  hence up to natural isomorphism commuting with the triangle
  \eqref{eq:free-dbl-coproduct-completion}.
\end{proof}

\begin{corollary}
  The double category $\Span$ is the free coproduct completion of the terminal
  double category. Similarly, $\FinSpan$ is the free finite coproduct completion
  of the terminal double category.
\end{corollary}
\begin{proof}
  Immediate from the preceding theorem and Corollary \ref{cor:span-via-fam}.
\end{proof}

\section{Lax products in double categories}
\label{sec:products}

We turn now from coproducts to products in double categories. Of course,
products are dual to coproducts, so for certain abstract purposes nothing
further need be said. For other purposes that is not enough. Certainly, to
understand any examples, a concrete description of products in double categories
is needed, and we will give one. But there is a deeper divergence between double
products and coproducts. Famously, the symmetry arising from duality is partly
broken in category theory due to the special role played by the category $\Set$,
which is far from being self-dual. The same is true in double category theory
due to the special role played by the double category $\Span$, but to an even
greater degree. We will see that the prototypical double categories of spans and
of matrices have strong coproducts, as shown in Section \ref{sec:coproducts},
but have \emph{lax} products.

Let us recall the principle of duality in double category theory. The
\define{oppositization} 2-functor $\op: \Cat^\co \to \Cat$ sends a category to
its opposite category, preserving the orientation of functors and reversing that
of natural transformations. It also preserves 2-pullbacks. Thus, the
\define{opposite} $\dbl{D}^\op$ of a double category $\dbl{D}$ can be defined by
applying the oppositization 2-functor to $\dbl{D}$, viewed as a pseudocategory
in $\Cat$. So the categories underlying $\dbl{D}^\op$ are
$(\dbl{D}^\op)_0 = (\dbl{D}_0)^\op$ and $(\dbl{D}^\op)_1 = (\dbl{D}_1)^\op$.
Similarly, the \define{opposite} $F^\op: \dbl{D}^\op \to \dbl{E}^\op$ of a lax
or colax double functor $F: \dbl{D} \to \dbl{E}$ has underlying functors
$(F^\op)_0 = (F_0)^\op$ and $(F^\op)_1 = (F_1)^\op$. But the orientations of the
composition and identity comparisons are reversed, so that a lax functor has a
colax opposite and vice versa. The orientations of natural transformations
between double functors are also reserved. Altogether, then, there are
\define{oppositization} 2-functors
\begin{equation*}
  \op: \DblLax^\co \to \DblColax
  \qquad\text{and}\qquad
  \op: \DblColax^\co \to \DblLax
\end{equation*}
that are inverse to each other. These both restrict on pseudo double functors to
an involutive 2-functor $\op: \Dbl^\co \to \Dbl$.

\begin{definition}[Contravariant double families]
  The \define{contravariant double category of families} in a double category
  $\dbl{D}$ is the double category $\DblFamOp(\dbl{D})$ defined by the relation
  \begin{equation*}
    \DblFamOp(\dbl{D})^\op = \DblFam(\dbl{D}^\op).
  \end{equation*}
  For contrast, $\DblFam(\dbl{D})$ can also be called the \define{covariant}
  double category of families.
\end{definition}

The covariant and contravariant double categories of families have the same
objects and proarrows but different arrows and cells. Concretely, the double
category $\DblFamOp(\dbl{D})$ has
\begin{itemize}
  \item as objects, an \define{indexing set} $I$ together with an $I$-indexed
    family $\vec{x}: I \to \dbl{D}_0$, consisting of objects $x_i$ in $\dbl{D}$
    for $i \in I$;
  \item as arrows $(I, \vec{x}) \to (J, \vec{y})$, a function $f_0: J \to I$
    together with a $J$-indexed family $f$ of morphisms in $\dbl{D}$ of the form
    $f_j: x_{f_0(j)} \to y_j$ for $j \in J$;
  \item as proarrows $(I, \vec{x}) \proto (J, \vec{y})$, an \define{indexing
    span} $I \xfrom{\ell} A \xto{r} J$ together with an $A$-indexed family
    $\vec{m}: A \to \dbl{D}_1$, consisting of proarrows in $\dbl{D}$ of the form
    $m_a: x_i \proto y_j$ for each $a: i \proto j$;
  \item as cells
    $\inlineCell{(I,\vec{x})}{(J,\vec{y})}{(K,\vec{w})}{(L,\vec{z})}{(A,\vec{m})}{(B,\vec{n})}{(f_0,f)}{(g_0,g)}{}$,
    a map of spans $(f_0,\alpha_0,g_0)$ as shown on the left below together with
    a $B$-indexed family $\alpha$ of cells in $\dbl{D}$ of the form on the
    right:
    \begin{equation*}
      \begin{tikzcd}
        I & A & J \\
        K & B & L
        \arrow["{f_0}", from=2-1, to=1-1]
        \arrow["{g_0}"', from=2-3, to=1-3]
        \arrow["\ell"', from=1-2, to=1-1]
        \arrow["\ell", from=2-2, to=2-1]
        \arrow["r"', from=2-2, to=2-3]
        \arrow["{\alpha_0}", from=2-2, to=1-2]
        \arrow["r", from=1-2, to=1-3]
      \end{tikzcd}
      \hspace{1in}
      \begin{tikzcd}
        {x_{f_0(k)}} & {y_{g_0(\ell)}} \\
        {w_k} & {z_{\ell}}
        \arrow["{f_k}"', from=1-1, to=2-1]
        \arrow["{g_{\ell}}", from=1-2, to=2-2]
        \arrow[""{name=0, anchor=center, inner sep=0}, "{m_{\alpha_0(b)}}", "\shortmid"{marking}, from=1-1, to=1-2]
        \arrow[""{name=1, anchor=center, inner sep=0}, "{n_b}"', "\shortmid"{marking}, from=2-1, to=2-2]
        \arrow["{\alpha_b}"{description}, draw=none, from=0, to=1]
      \end{tikzcd},
      \quad (k \xproto{b} \ell) : (K \xproto{B} L).
    \end{equation*}
\end{itemize}
In this description, recall from Section \ref{sec:dbl-family-construction} that
given a span $I \xleftarrow{\ell} A \xrightarrow{r} J$, the notation
$a: i \proto j$ or $(i \xproto{a} j) : (I \xproto{A} J)$ refers to any element
$a \in A$ of the span's apex and asserts that $i = \ell(a)$ and $j = r(a)$.

The duality principle for double categories extends to double adjunctions. Like
any 2-functor, the oppositization 2-functor $\op: \Cat^\co \to \Cat$ preserves
adjunctions, but the ``co'' implies that left and right are exchanged. So an
adjunction $F \dashv G$ becomes an adjunction $G^\op \dashv F^\op$. Similarly,
the oppositization 2-functor $\op: \DblLax^\co \to \DblColax$ turns a pseudo-lax
adjunction $F \dashv G$ into a colax-pseudo adjunction $G^\op \dashv F^\op$,
whereas the 2-functor $\op: \DblColax^\co \to \DblLax$ turns a colax-pseudo
adjunction into a pseudo-lax adjunction. The following definition is thus seen
to be dual to Definition \ref{def:dbl-coproducts} in the sense that lax products in
$\dbl{D}$ are colax coproducts in $\dbl{D}^\op$.

\begin{definition}[Double products] \label{def:dbl-products}
  A double category $\dbl{D}$ \define{has lax products} if the double functor
  $\Delta: \dbl{D} \to \DblFamOp(\dbl{D})$ has a lax right adjoint:
  \begin{equation*}
    \begin{tikzcd}
      {\dbl{D}} & {\DblFamOp(\dbl{D})}
      \arrow[""{name=0, anchor=center, inner sep=0}, "\Delta", curve={height=-18pt}, from=1-1, to=1-2]
      \arrow[""{name=1, anchor=center, inner sep=0}, "\Pi", curve={height=-18pt}, from=1-2, to=1-1]
      \arrow["\dashv"{anchor=center, rotate=-90}, draw=none, from=0, to=1]
    \end{tikzcd}.
  \end{equation*}
  The double category $\dbl{D}$ \define{has strong products} if the right
  adjoint $\Pi: \DblFamOp(\dbl{D}) \to \dbl{D}$ is pseudo.
\end{definition}

As noted in the Introduction, double products of this kind were first proposed
in a talk by Robert Paré \citep{pare2009}. We proceed with some straightforward
dualizations of Section \ref{sec:coproducts}.

\begin{proposition}[Double products via universal arrows]
  \label{prop:dbl-products-universal}
  A double category $\dbl{D}$ has lax products if and only if
  \begin{enumerate}[(i)]
    \item for every object family $(I,\vec{x})$ in $\dbl{D}$, there is a choice
      of object $\Pi \vec{x} \coloneq \Pi(I,\vec{x})$ in $\dbl{D}$ and a
      universal arrow
      \begin{equation*}
        (!,\pi): \Delta \Pi \vec{x} \to (I, \vec{x})
      \end{equation*}
      from the functor $\Delta_0: \dbl{D}_0 \to \FamOp(\dbl{D}_0)$ to
      $(I,\vec{x})$;
    \item for every proarrow family
      $(A,\vec{m}): (I,\vec{x}) \proto (J,\vec{y})$ in $\dbl{D}$, there is a
      choice of proarrow
      $\Pi \vec{m} \coloneqq \Pi(A,\vec{m}): \Pi\vec{x} \proto \Pi\vec{y}$ in
      $\dbl{D}$, compatible with the choices above, and a universal arrow
      \begin{equation*}
        \begin{tikzcd}
          {\Delta \Pi \vec{x}} & {\Delta \Pi \vec{y}} \\
          {(I, \vec{x})} & {(J, \vec{y})}
          \arrow[""{name=0, anchor=center, inner sep=0}, "{\Delta \Pi \vec{m}}", "\shortmid"{marking}, from=1-1, to=1-2]
          \arrow["{(!,\pi)}"', from=1-1, to=2-1]
          \arrow["{(!,\pi)}", from=1-2, to=2-2]
          \arrow[""{name=1, anchor=center, inner sep=0}, "{(A, \vec{m})}"', "\shortmid"{marking}, from=2-1, to=2-2]
          \arrow["{(!,\pi)}"{description}, draw=none, from=0, to=1]
        \end{tikzcd}
      \end{equation*}
      from the functor $\Delta_1: \dbl{D}_1 \to \DblFamOp(\dbl{D})_1$ to
      $(A,\vec{m})$.
  \end{enumerate}
  Moreover, in this case, the products are strong if and only if, for every pair
  of composable families
  $(I,\vec{x}) \xproto{(A,\vec{m})} (J,\vec{y}) \xproto{(B,\vec{n})} (K,\vec{z})$
  and for every family $(I,\vec{x})$, the canonical comparison cells
  \begin{equation*}
    \begin{tikzcd}
      {\Pi\vec{x}} & {\Pi\vec{y}} & {\Pi\vec{z}} \\
      {\Pi\vec{x}} && {\Pi\vec{z}}
      \arrow[""{name=0, anchor=center, inner sep=0}, "{\Pi(\vec{m} \odot \vec{n})}"', "\shortmid"{marking}, from=2-1, to=2-3]
      \arrow[Rightarrow, no head, from=2-1, to=1-1]
      \arrow[Rightarrow, no head, from=2-3, to=1-3]
      \arrow["{\Pi\vec{m}}", "\shortmid"{marking}, from=1-1, to=1-2]
      \arrow["{\Pi\vec{n}}", "\shortmid"{marking}, from=1-2, to=1-3]
      \arrow["{\Pi_{\vec{m},\vec{n}}}"{description, pos=0.6}, draw=none, from=0, to=1-2]
    \end{tikzcd}
    \qquad\text{and}\qquad
    \begin{tikzcd}
      {\Pi\vec{x}} & {\Pi\vec{x}} \\
      {\Pi\vec{x}} & {\Pi\vec{x}}
      \arrow[""{name=0, anchor=center, inner sep=0}, "{\id_{\Pi\vec{x}}}", "\shortmid"{marking}, from=1-1, to=1-2]
      \arrow[Rightarrow, no head, from=2-1, to=1-1]
      \arrow[Rightarrow, no head, from=2-2, to=1-2]
      \arrow[""{name=1, anchor=center, inner sep=0}, "{\Pi(\id_{\vec{x}})}"', "\shortmid"{marking}, from=2-1, to=2-2]
      \arrow["{\Pi_{\vec{x}}}"{description}, draw=none, from=1, to=0]
    \end{tikzcd}
  \end{equation*}
  are isomorphisms in $\dbl{D}_1$.
\end{proposition}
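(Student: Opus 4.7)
The plan is to derive this proposition either by straight dualization of \cref{prop:dbl-coproducts-universal} or, equally directly, by applying the equivalence between double adjunctions and universal arrows on the right-adjoint side (the version cited from \cite[Theorem 3.6]{grandis2004} and \cite[Theorem 4.3.6]{grandis2019}). Since \cref{def:dbl-products} posits $\Pi$ as a \emph{right} adjoint to $\Delta$, the right-adjoint form of the characterization applies immediately: to give such a lax right adjoint is equivalent to supplying, for every object of $\DblFamOp(\dbl{D})_0$ and for every object of $\DblFamOp(\dbl{D})_1$, a universal arrow from $\Delta_0$ (resp.\ $\Delta_1$) to it.

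First I would unpack these two conditions in the concrete terms of the double family construction. Since $\Delta: \dbl{D} \to \DblFamOp(\dbl{D})$ sends $x$ to the singleton family $(1,x)$ and $m: x \proto y$ to the singleton proarrow family indexed by $1 \xfrom{!} 1 \xto{!} 1$, a universal arrow from $\Delta_0$ to $(I,\vec{x})$ automatically has indexing map $!: I \to 1$ and reduces to a cone of projections $\pi_i: \Pi\vec{x} \to x_i$ with the usual universal property; this recovers condition (i) and shows $\dbl{D}_0$ has products. Likewise, a universal arrow from $\Delta_1$ to $(A,\vec{m})$ has indexing span uniquely $1 \xfrom{!} A \xto{!} 1$ and amounts to a proarrow $\Pi\vec{m}: \Pi\vec{x} \proto \Pi\vec{y}$ equipped with a family of projection cells $\pi_a$, compatible with the object-level projections, that enjoy the corresponding one-dimensional universal property among cells of $\dbl{D}_1$. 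Invoking the cited theorem then assembles these two families of universal arrows into the lax right adjoint $\Pi$, with its laxators and unitors uniquely determined by the universality of the proarrow-level universal arrows.

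For the \emph{moreover} clause, I would use that a double adjunction is strong (i.e.\ the lax right adjoint is pseudo) exactly when the canonical composition and identity comparisons of $\Pi$ are invertible. Those comparisons $\Pi_{\vec{m},\vec{n}}$ and $\Pi_{\vec{x}}$ are precisely the unique cells of $\dbl{D}_1$ obtained by feeding the composite projection data $\pi_a \odot \pi_b$ and $\id_{\pi_i}$ into the universal property of the universal arrows for $\Pi(\vec{m} \odot \vec{n})$ and $\Pi(\id_{\vec{x}})$, respectively; so ``lax right adjoint is pseudo'' translates verbatim into the invertibility of these cells in $\dbl{D}_1$.

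The main potential obstacle is purely bookkeeping: ensuring that the reversal of $0$-cell and $1$-cell directions built into $\DblFamOp$ matches the source/target conventions for a right-adjoint universal arrow, so that in particular the lax (rather than colax) direction of $\Pi_{\vec{m},\vec{n}}: \Pi\vec{m} \odot \Pi\vec{n} \To \Pi(\vec{m} \odot \vec{n})$ emerges correctly. But since \cref{prop:dbl-coproducts-universal} was proved by the same cited theorem and $\Pi$ on $\dbl{D}$ is literally $\Sigma^\op$ on $\dbl{D}^\op$, no new work beyond this orientation check is required, and I would either present the argument directly as above or simply invoke \cref{prop:dbl-coproducts-universal} applied to $\dbl{D}^\op$ under the identification $\DblFamOp(\dbl{D})^\op = \DblFam(\dbl{D}^\op)$.
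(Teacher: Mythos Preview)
Your proposal is correct and matches the paper's approach exactly: the paper's proof consists of the single sentence that the proposition is dual to \cref{prop:dbl-coproducts-universal}, or alternatively follows immediately from \cite[Theorem 3.6]{grandis2004} and \cite[Theorem 4.3.6]{grandis2019}. Your additional unpacking of the universal arrows and the orientation check is sound and, if anything, more detailed than what the paper provides.
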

\begin{proof}
  This proposition is dual to Proposition \ref{prop:dbl-coproducts-universal};
  alternatively, it follows immediately from \citep[\mbox{Theorem
    3.6}]{grandis2004} or \citep[\mbox{Theorem 4.3.6}]{grandis2019}.
\end{proof}

The canonical comparison cells in the final statement of the proposition are the
unique solutions to the equations
\begin{equation} \label{eq:product-composition-comparison}
  \begin{tikzcd}
    {\Pi\vec{x}} & {\Pi\vec{y}} & {\Sigma\vec{z}} \\
    {\Pi\vec{x}} && {\Pi\vec{z}} \\
    {x_i} && {z_k}
    \arrow[""{name=0, anchor=center, inner sep=0}, "{\Pi(\vec{m} \odot \vec{n})}"', "\shortmid"{marking}, from=2-1, to=2-3]
    \arrow[Rightarrow, no head, from=1-1, to=2-1]
    \arrow[Rightarrow, no head, from=1-3, to=2-3]
    \arrow["{\Pi\vec{m}}", "\shortmid"{marking}, from=1-1, to=1-2]
    \arrow["{\Pi\vec{n}}", "\shortmid"{marking}, from=1-2, to=1-3]
    \arrow["{\pi_i}"', from=2-1, to=3-1]
    \arrow["{\pi_k}", from=2-3, to=3-3]
    \arrow[""{name=1, anchor=center, inner sep=0}, "{m_a \odot m_b}"', "\shortmid"{marking}, from=3-1, to=3-3]
    \arrow["{\Pi_{\vec{m},\vec{n}}}"{description, pos=0.4}, draw=none, from=1-2, to=0]
    \arrow["{\pi_{(a,b)}}"{description, pos=0.6}, draw=none, from=0, to=1]
  \end{tikzcd}
  =
  \begin{tikzcd}
    {\Pi\vec{x}} & {\Pi\vec{y}} & {\Pi\vec{z}} \\
    {x_i} & {y_j} & {z_k}
    \arrow[""{name=0, anchor=center, inner sep=0}, "{m_a}"', "\shortmid"{marking}, from=2-1, to=2-2]
    \arrow[""{name=1, anchor=center, inner sep=0}, "{m_b}"', "\shortmid"{marking}, from=2-2, to=2-3]
    \arrow["{\pi_i}"', from=1-1, to=2-1]
    \arrow["{\pi_j}"{description}, from=1-2, to=2-2]
    \arrow["{\pi_k}", from=1-3, to=2-3]
    \arrow[""{name=2, anchor=center, inner sep=0}, "{\Pi\vec{n}}", "\shortmid"{marking}, from=1-2, to=1-3]
    \arrow[""{name=3, anchor=center, inner sep=0}, "{\Pi\vec{m}}", "\shortmid"{marking}, from=1-1, to=1-2]
    \arrow["{\pi_b}"{description}, draw=none, from=2, to=1]
    \arrow["{\pi_a}"{description}, draw=none, from=3, to=0]
  \end{tikzcd},
  \qquad (i \xproto{a} j \xproto{b} k) : (I \xproto{A} J \xproto{B} K).
\end{equation}
and
\begin{equation} \label{eq:product-identity-comparison}
  \begin{tikzcd}
    {\Pi\vec{x}} & {\Pi\vec{x}} \\
    {\Pi\vec{x}} & {\Pi\vec{x}} \\
    {x_i} & {x_i}
    \arrow[""{name=0, anchor=center, inner sep=0}, "{\id_{\Pi\vec{x}}}", "\shortmid"{marking}, from=1-1, to=1-2]
    \arrow[Rightarrow, no head, from=1-1, to=2-1]
    \arrow[Rightarrow, no head, from=1-2, to=2-2]
    \arrow[""{name=1, anchor=center, inner sep=0}, "{\Pi(\id_{\vec{x}})}"', "\shortmid"{marking}, from=2-1, to=2-2]
    \arrow["{\pi_i}"', from=2-1, to=3-1]
    \arrow["{\pi_i}", from=2-2, to=3-2]
    \arrow[""{name=2, anchor=center, inner sep=0}, "{\id_{x_i}}"', "\shortmid"{marking}, from=3-1, to=3-2]
    \arrow["{\pi_i}"{description, pos=0.6}, draw=none, from=1, to=2]
    \arrow["{\Pi_{\vec{x}}}"{description}, draw=none, from=0, to=1]
  \end{tikzcd}
  =
  \begin{tikzcd}
    {\Pi\vec{x}} & {\Pi\vec{x}} \\
    {x_i} & {x_i}
    \arrow[""{name=0, anchor=center, inner sep=0}, "{\id_{x_i}}"', "\shortmid"{marking}, from=2-1, to=2-2]
    \arrow["{\pi_i}"', from=1-1, to=2-1]
    \arrow["{\pi_i}", from=1-2, to=2-2]
    \arrow[""{name=1, anchor=center, inner sep=0}, "{\id_{\Pi\vec{x}}}", "\shortmid"{marking}, from=1-1, to=1-2]
    \arrow["{\id_{\pi_i}}"{description}, draw=none, from=1, to=0]
  \end{tikzcd},
  \qquad i \in I,
\end{equation}
afforded by the universal properties of the universal arrows. The comparison
cells form the laxators and unitors of the lax double functor
$\Pi: \DblFamOp(\dbl{D}) \to \dbl{D}$.

Double categories of spans are the prime example of double categories with lax
products, and we will see that certain products of spans \emph{are} genuinely
lax. As preparation to describe products of spans, recall that the category of
elements construction has a right adjoint \citep[\mbox{Proposition 8}]{pare1990}.

\begin{construction}[Adjoint elements constructions]
  \label{def:elements-construction}
  Fixing a category $\cat{C}$, there is a functor
  $\int^{\cat{C}}: \Set^{\cat{C}} \to \CatOne$ that sends each copresheaf $X$ on
  $\cat{C}$ to its category of elements $\int X$. Conversely, for any category
  $\cat{X}$, there is a copresheaf $\El_{/\cat{C}}(\cat{X})$ on $\cat{C}$ whose
  elements of type $c \in \cat{C}$ are the generalized elements of $\cat{X}$ of
  shape $c/\cat{C}$. In other words,
  \begin{equation*}
    \El_{/\cat{C}}(\cat{X}) \coloneqq
    \CatOne(-/\cat{C}, \cat{X}): \cat{C} \to \Set.
  \end{equation*}
  This construction is functorial in $\cat{X}$, and it is right adjoint to the
  category of elements:
  \begin{equation*}
    \begin{tikzcd}
      {\Set^{\cat{C}}} & \CatOne
      \arrow[""{name=0, anchor=center, inner sep=0}, "{\int^{\cat{C}}}", curve={height=-12pt}, from=1-1, to=1-2]
      \arrow[""{name=1, anchor=center, inner sep=0}, "{\El_{/\cat{C}}}", curve={height=-12pt}, from=1-2, to=1-1]
      \arrow["\dashv"{anchor=center, rotate=-90}, draw=none, from=0, to=1]
    \end{tikzcd}.
  \end{equation*}
\end{construction}

\begin{theorem}[Products of spans] \label{thm:products-span}
  Let $\cat{S}$ be a (finitely) complete category. Then the double category
  $\Span{\cat{S}}$ has lax (finite) products.
\end{theorem}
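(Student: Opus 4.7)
The plan is to apply Proposition \ref{prop:dbl-products-universal} by exhibiting universal arrows on objects and on proarrows, dualizing the construction of coproducts in Theorem \ref{thm:coproducts-span}. For an object family $(I,\vec{x})$ in $\Span{\cat{S}}$, set $\Pi\vec{x} \coloneqq \prod_{i \in I} x_i$, the ordinary product in $\cat{S} = \Span{\cat{S}}_0$; its projections $\pi_i: \Pi\vec{x} \to x_i$ furnish a universal arrow $(!,\pi): \Delta\Pi\vec{x} \to (I,\vec{x})$ by the ordinary universal property of products.

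The substantive step is the construction of universal arrows on proarrow families. Given $(A,\vec{m}): (I,\vec{x}) \proto (J,\vec{y})$ with components $m_a = (x_i \xfrom{\ell_a} s_a \xto{r_a} y_j)$ for each $a: i \proto j$, I construct the apex of the product span as the pullback in $\cat{S}$
\begin{equation*}
  \Pi\vec{s} \coloneqq \Bigl(\prod_{a \in A} s_a\Bigr) \times_{\prod_{a \in A}(x_{\ell(a)} \times y_{r(a)})} \bigl(\Pi\vec{x} \times \Pi\vec{y}\bigr),
\end{equation*}
where the top leg is induced by the pairings $(\ell_a,r_a): s_a \to x_{\ell(a)} \times y_{r(a)}$ and the bottom leg by the pairs of projections $(\pi_{\ell(a)},\pi_{r(a)})$. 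The second pullback projection supplies the legs of the product span $\Pi\vec{m} = (\Pi\vec{x} \leftarrow \Pi\vec{s} \to \Pi\vec{y})$, and composing the first pullback projection with the projection of $\prod_{a \in A} s_a$ onto $s_a$ supplies the components $\pi_a: \Pi\vec{m} \to m_a$ of the universal arrow.

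The universal property then follows by unwinding the limit construction. Any cell $\Delta n \to (A,\vec{m})$ in $\DblFamOp(\Span{\cat{S}})$ for a span $n = (w \leftarrow t \to z)$ provides arrow families $f_i: w \to x_i$ and $g_j: z \to y_j$ together with span maps $h_a: n \to m_a$ satisfying the expected compatibility with the legs. The universal property of products in $\cat{S}$ induces unique arrows $w \to \Pi\vec{x}$ and $z \to \Pi\vec{y}$, and the universal property of the pullback defining $\Pi\vec{s}$ then yields a unique $t \to \Pi\vec{s}$ compatible with these choices, giving the required factorization. In the finite case only finite products and pullbacks appear, so finite completeness of $\cat{S}$ suffices. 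The main obstacle is arriving at the correct definition of $\Pi\vec{s}$; once formulated, the verification is essentially mechanical. Note that the products are only lax in general: composition of spans pulls back over individual $y_j$'s while composition of the product spans pulls back over $\Pi\vec{y}$, producing a canonical but in general non-invertible comparison $\Pi\vec{m} \odot \Pi\vec{n} \to \Pi(\vec{m} \odot \vec{n})$.
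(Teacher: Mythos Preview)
Your argument is correct. Both you and the paper construct the apex of the product span as the same limit in $\cat{S}$ and then verify the universal property of \cref{prop:dbl-products-universal} directly from the universal property of that limit; the difference is packaging. The paper sets up the adjunction of \cref{def:elements-construction} to identify the family of spans with a single diagram $M: \int F \to \cat{S}$ over the category of elements of the indexing span, and takes $\Pi\vec{m} = \lim M$. Your pullback
\[
\Pi\vec{s} \;=\; \Bigl(\textstyle\prod_{a} s_a\Bigr)\times_{\prod_{a}(x_{\ell(a)}\times y_{r(a)})}\bigl(\Pi\vec{x}\times\Pi\vec{y}\bigr)
\]
is an explicit computation of exactly this limit: a cone over $M$ is precisely a pair of maps into $\prod_a s_a$ and $\Pi\vec{x}\times\Pi\vec{y}$ that agree in $\prod_a(x_{\ell(a)}\times y_{r(a)})$. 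Your route is more elementary in that it avoids introducing the $\El_{/\cat{span}}$ adjunction, while the paper's formulation pays off later in \cref{prop:iso-strong-products-span}, where the same $\int F$ machinery is reused to compare $\Pi\vec{m}\odot\Pi\vec{n}$ with $\Pi(\vec{m}\odot\vec{n})$ via a morphism of copresheaves; with your presentation that comparison would have to be redone by hand.
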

\begin{proof}
  Consider the infinitary case. By assumption, the underlying category
  $\Span{\cat{S}}_0 = \cat{S}$ has products. Suppose, then, that
  $(A,\vec{m}): (I,\vec{x}) \proto (J,\vec{y})$ is a family of spans in
  $\cat{S}$. Writing $\cat{span} \coloneqq \mbox{$\{\bullet \from \bullet \to \bullet\}$}$ for the
  walking span, the span of indexing sets $I \from A \to J$ can be identified with
  a copresheaf $F: \cat{span} \to \Set$, and the family of spans in $\cat{S}$ on
  the left
  \begin{equation*}
    \begin{tikzcd}
      I & A & J \\
      {\cat{S}} & {\Span{\cat{S}}_1} & {\cat{S}}
      \arrow["{\vec{x}}"', from=1-1, to=2-1]
      \arrow["s", from=2-2, to=2-1]
      \arrow[from=1-2, to=1-1]
      \arrow["t"', from=2-2, to=2-3]
      \arrow[from=1-2, to=1-3]
      \arrow["{\vec{y}}", from=1-3, to=2-3]
      \arrow["{\vec{m}}"', from=1-2, to=2-2]
    \end{tikzcd}
    \qquad\leftrightsquigarrow\qquad
    \begin{tikzcd}
      {\cat{span}} & \Set
      \arrow[""{name=0, anchor=center, inner sep=0}, "F", curve={height=-18pt}, from=1-1, to=1-2]
      \arrow[""{name=1, anchor=center, inner sep=0}, "{\El_{/\cat{span}}(\cat{S})}"', curve={height=18pt}, from=1-1, to=1-2]
      \arrow["{\vec{m}}"', shorten <=5pt, shorten >=5pt, Rightarrow, from=0, to=1]
    \end{tikzcd}
  \end{equation*}
  can be identified with a natural transformation as on the right. The latter is
  in turn equivalent, via the adjunction in Construction
  \ref{def:elements-construction}, to a diagram $M: \int F \to \cat{S}$ whose shape
  is the category of elements of $F: \cat{span} \to \Set$. We now construct the
  product of the spans $(A,\vec{m}): (I,\vec{x}) \proto (J,\vec{y})$ by taking
  limits of the diagrams in $\cat{S}$ shown on the left below:
  \begin{equation*}
    \begin{tikzcd}
      I & {\int F} & J \\
      & {\cat{S}}
      \arrow[hook, from=1-1, to=1-2]
      \arrow["{\vec{x}}"', from=1-1, to=2-2]
      \arrow["M"'{pos=0.4}, from=1-2, to=2-2]
      \arrow["{\vec{y}}", from=1-3, to=2-2]
      \arrow[hook', from=1-3, to=1-2]
    \end{tikzcd}
    \qquad\xmapsto{\lim}\qquad
    \Pi\vec{x} \from \Pi\vec{m} \to \Pi\vec{y}.
  \end{equation*}
  Writing the original spans as $m_a = (x_i \xfrom{\ell_a} s_a \xto{r_a} y_j)$ for
  each $a: i \proto j$, the apex of the product span can be depicted
  schematically as
  \begin{equation*}
    \Pi\vec{m} \coloneqq \lim_{\int F} M =
    \lim_{\substack{\text{$i$ or $j$ or} \\ a: i' \proto j'}} \left(
      \begin{tikzcd}
        {x_i} & {s_a} & {y_j} \\
        {x_{i'}} & {} & {y_{j'}}
        \arrow["\vdots"{description}, draw=none, from=1-1, to=2-1]
        \arrow["{\ell_a}"', from=1-2, to=2-1]
        \arrow["{r_a}", from=1-2, to=2-3]
        \arrow["\vdots"{description}, draw=none, from=1-3, to=2-3]
        \arrow["\vdots"{description}, draw=none, from=1-2, to=2-2]
      \end{tikzcd}
    \right).
  \end{equation*}
  The projection cells are the evident maps of spans
  \begin{equation*}
    \begin{tikzcd}
      {\Pi\vec{x}} & {\Pi\vec{m}} & {\Pi\vec{y}} \\
      {x_i} & {s_a} & {y_k}
      \arrow["{\pi_i}"', from=1-1, to=2-1]
      \arrow["{\ell_a}", from=2-2, to=2-1]
      \arrow["{r_a}"', from=2-2, to=2-3]
      \arrow["{\pi_a}"', from=1-2, to=2-2]
      \arrow[from=1-2, to=1-1]
      \arrow[from=1-2, to=1-3]
      \arrow["{\pi_j}", from=1-3, to=2-3]
    \end{tikzcd},
    \qquad (i \xproto{a} j) : (I \xproto{A} J),
  \end{equation*}
  where the squares commute because the apex $\Pi\vec{m}$ of the top span is, in
  particular, the apex of a cone over the diagram $M: \int F \to \cat{S}$.

  To prove the universal property, suppose given families of maps
  $f: \Delta x \to \vec{x}$ and $g: \Delta y \to \vec{y}$ along with a family of
  maps of spans
  \begin{equation*}
    \begin{tikzcd}
      x & s & y \\
      {x_i} & {s_a} & {y_j}
      \arrow["{f_i}"', from=1-1, to=2-1]
      \arrow["\ell"', from=1-2, to=1-1]
      \arrow["r", from=1-2, to=1-3]
      \arrow["{h_a}"', from=1-2, to=2-2]
      \arrow["{\ell_a}", from=2-2, to=2-1]
      \arrow["{g_j}", from=1-3, to=2-3]
      \arrow["{r_a}"', from=2-2, to=2-3]
    \end{tikzcd},
    \qquad (i \xproto{a} j) : (I \xproto{A} J).
  \end{equation*}
  Taken together, the maps $f_i \circ l$, $g_j \circ r$, and $h_a$, constitute a
  cone over the diagram $M: \int F \to \cat{S}$ with apex $s$. By the universal
  property of the limit $\Pi\vec{m} = \lim M$ in $\cat{S}$, there exists a
  unique map $h: s \to \Pi\vec{m}$ factoring each $f_i \circ \ell$ through
  $\pi_i$, each $g_j \circ r$ through $\pi_j$, and each $h_a$ through $\pi_a$.
  Putting $f \coloneqq \pair{f_i}_{i \in I}$ and
  $g \coloneqq \pair{g_j}_{j \in J}$, this is equivalent to there being a unique
  map of spans
  \begin{equation*}
    \begin{tikzcd}
      x & s & y \\
      {\Pi\vec{x}} & {\Pi\vec{m}} & {\Pi\vec{y}}
      \arrow["h"', dashed, from=1-2, to=2-2]
      \arrow["f"', from=1-1, to=2-1]
      \arrow[from=2-2, to=2-1]
      \arrow["\ell"', from=1-2, to=1-1]
      \arrow["r", from=1-2, to=1-3]
      \arrow["g", from=1-3, to=2-3]
      \arrow[from=2-2, to=2-3]
    \end{tikzcd}
  \end{equation*}
  factoring the span map $(f_i, h_a, g_j)$ through the projection
  $(\pi_i, \pi_a, \pi_j)$ for each $a: i \proto j$.
\end{proof}

\begin{corollary}[Coproducts of cospans]
  Let $\cat{S}$ be a (finitely) cocomplete category. Then the double category
  $\Cospan{\cat{S}}$ has colax (finite) coproducts.
\end{corollary}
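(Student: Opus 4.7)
The plan is to deduce this immediately by dualizing the preceding theorem on products of spans. The key observation is the identification of double categories
\[
  \Cospan{\cat{S}} \;\cong\; \Span{\cat{S}^\op}^\op.
\]
Unpacking the right-hand side: $(\Span{\cat{S}^\op})^\op$ has as objects $(\cat{S}^\op)^\op = \cat{S}$, and as proarrows spans in $\cat{S}^\op$, which are cospans in $\cat{S}$. The reversal of the category $\Span{\cat{S}^\op}_1$ converts maps of spans in $\cat{S}^\op$ (whose three component arrows all point from the second span to the first, viewed in $\cat{S}$) into maps of cospans in $\cat{S}$ in the standard orientation. The external composition in $(\Span{\cat{S}^\op})^\op$ is composition of spans in $\cat{S}^\op$, i.e., pullback in $\cat{S}^\op$, which is pushout in $\cat{S}$, matching the external composition in $\Cospan{\cat{S}}$.

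Next, the hypothesis that $\cat{S}$ is (finitely) cocomplete is equivalent to $\cat{S}^\op$ being (finitely) complete. By the preceding theorem (\cref{thm:products-span}) applied to $\cat{S}^\op$, the double category $\Span{\cat{S}^\op}$ has lax (finite) products.

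Finally, by the duality principle established before \cref{def:dbl-products} — spelled out there as ``lax products in $\dbl{D}$ are colax coproducts in $\dbl{D}^\op$'' — a lax right adjoint $\Delta_{\dbl{D}} \dashv \Pi$ in $\dbl{D} = \Span{\cat{S}^\op}$ transports under the 2-functor $\op: \DblLax^\co \to \DblColax$ to a colax left adjoint $\Sigma \dashv \Delta_{\dbl{D}^\op}$ in $\dbl{D}^\op = \Cospan{\cat{S}}$, with $\Sigma = \Pi^\op$. So $\Cospan{\cat{S}}$ has colax (finite) coproducts, as required.

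There is no real obstacle; the only thing to verify carefully — but which I would relegate to a parenthetical remark rather than belabor — is the identification $\Cospan{\cat{S}} \cong (\Span{\cat{S}^\op})^\op$, since the directionality of cells under oppositization of the category $\dbl{D}_1$ of proarrows is the kind of point that can be mis-stated. Everything else is a mechanical application of dualization.
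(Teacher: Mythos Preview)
Your proof is correct and is essentially the paper's own argument: the paper writes the one-line dualization $\Cospan{\cat{S}}^\op = \Span{\cat{S}^\op}$ and invokes \cref{thm:products-span}, which is exactly the identification and application you spell out in more detail.
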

\begin{proof}
  This follows by duality, since $\Cospan{\cat{S}}^\op = \Span{\cat{S}^\op}$.
\end{proof}

\begin{corollary}
  The double category $\Span$ has lax products, and $\FinSpan$ has lax finite
  products. Dually, $\Cospan$ has colax coproducts and $\FinCospan$ has colax
  finite coproducts.
\end{corollary}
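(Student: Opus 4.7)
The plan is to observe that this corollary is an immediate instantiation of the two preceding results at the base categories $\Set$ and $\FinSet$, so essentially no new work is needed. First I would apply \cref{thm:products-span} to $\cat{S} = \Set$: since $\Set$ is complete, the theorem yields that $\Span = \Span{\Set}$ has lax products. Applying the same theorem to $\cat{S} = \FinSet$, which is finitely complete, gives that $\FinSpan = \Span{\FinSet}$ has lax finite products.

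For the dual half, I would simply invoke the ``Coproducts of cospans'' corollary (stated immediately above) with $\cat{S} = \Set$ and $\cat{S} = \FinSet$, noting that both categories are (finitely) cocomplete. This yields colax coproducts in $\Cospan$ and colax finite coproducts in $\FinCospan$. Alternatively, one can appeal directly to the duality $\Cospan{\cat{S}}^\op = \Span{\cat{S}^\op}$, combined with the principle that colax coproducts in $\dbl{D}$ are exactly lax products in $\dbl{D}^\op$ (as discussed in the paragraph preceding \cref{def:dbl-products}), applied to the complete categories $\Set^\op$ and $\FinSet^\op$.

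There is no real obstacle here: the entire content of the corollary is the verification that $\Set$ and $\FinSet$ satisfy the hypotheses of the two previous results, which is standard. The proof will therefore be a single sentence citing \cref{thm:products-span} and its cospan dual.
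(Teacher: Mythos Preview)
Your proposal is correct and matches the paper's approach exactly: the paper's proof is the single sentence ``The category $\Set$ is complete, and $\FinSet$ is finitely complete,'' which is precisely the instantiation of \cref{thm:products-span} (and implicitly its cospan dual) that you describe.
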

\begin{proof}
  The category $\Set$ is complete, and $\FinSet$ is finitely complete.
\end{proof}

We now prove that double categories of matrices have lax products when the base
category has products. We do not yet assume that the monoidal product in the
base category is the cartesian product.

\begin{proposition}[Products of matrices] \label{prop:products-mat}
  For any distributive monoidal category $\catV$ with (finite) products, the
  double category $\Mat{\catV}$ has lax (finite) products.
\end{proposition}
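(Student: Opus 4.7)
The plan is to apply \cref{prop:dbl-products-universal} and construct product objects, product proarrows, and their universal projections directly in $\Mat{\catV}$, dually to the coproduct construction of the previous proposition. Condition (i) is automatic since $\Mat{\catV}_0 = \Set$ has (finite) products: I set $\Pi\vec{X} \coloneqq \prod_{i \in I} X_i$ with the set-theoretic projections $\pi_i$.

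For condition (ii), given a family of $\catV$-matrices $(A,\vec{M}): (I,\vec{X}) \proto (J,\vec{Y})$ with components $M_a: X_i \times Y_j \to \catV$ for each $a: i \proto j$, I define the product matrix pointwise by the (finite) products in $\catV$,
\[
  \Pi\vec{M}(\vec{x}, \vec{y}) \coloneqq \prod_{a: i \proto j} M_a(x_i, y_j),
  \qquad \vec{x} \in \Pi\vec{X},\ \vec{y} \in \Pi\vec{Y},
\]
with the shorthand $x_i = \pi_i(\vec{x})$ and $y_j = \pi_j(\vec{y})$. The projection cells $\pi_a$ have as components the projections of the product in $\catV$. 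To verify the universal property in $\Mat{\catV}$, given a competing proarrow $N: X \proto Y$ together with a family of cells over arrows $f_i: X \to X_i$ and $g_j: Y \to Y_j$, one takes the pairings $\pair{f_i}: X \to \Pi\vec{X}$ and $\pair{g_j}: Y \to \Pi\vec{Y}$ in $\Set$ and then, pointwise at each $(x,y) \in X \times Y$, applies the universal property of the product in $\catV$ to extract the unique factoring map $N(x,y) \to \Pi\vec{M}(\pair{f_i}(x), \pair{g_j}(y))$.

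The principal point to understand — and the reason products in $\Mat{\catV}$ are only lax — is that composition of matrices is a coend (summation over a middle object), and indexed products do not commute with sums over an infinite index, even when $\catV$ is distributive. Concretely, for composable families $(A, \vec{M})$ and $(B, \vec{N})$, the two sides of the composition-comparison are
\[
  (\Pi\vec{M} \odot \Pi\vec{N})(\vec{x}, \vec{z}) = \sum_{\vec{y} \in \Pi\vec{Y}} \prod_a M_a(x_i, y_{r(a)}) \otimes \prod_b N_b(y_{\ell(b)}, z_k)
\]
and
\[
  \Pi(\vec{M} \odot \vec{N})(\vec{x}, \vec{z}) = \prod_{(a,b) \in A \times_J B} \sum_{y_j \in Y_j} M_a(x_i, y_j) \otimes N_b(y_j, z_k),
\]
and the canonical map between them, built from distributivity in $\catV$ together with the $\pi_a$, $\pi_b$ and the middle-object projections $\pi_j: \Pi\vec{Y} \to Y_j$, goes only in the lax direction required by \eqref{eq:product-composition-comparison} and is generically non-invertible. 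By the ``moreover'' clause of \cref{prop:dbl-products-universal}, this failure of strength is exactly what one expects, and the construction yields the claimed lax products. The only real obstacle is bookkeeping: confirming that the distributivity-based comparison cells genuinely satisfy \eqref{eq:product-composition-comparison} and \eqref{eq:product-identity-comparison} that uniquely determine the laxator and unitor of $\Pi$.
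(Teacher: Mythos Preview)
Your approach is essentially the same as the paper's: apply \cref{prop:dbl-products-universal}, use products in $\Set$ for condition (i), define $\Pi\vec{M}$ pointwise via products in $\catV$ for condition (ii), and take the projection cells to be the componentwise projections in $\catV$. The paper's proof stops there, simply noting that the universal property follows from that of products in $\catV$.

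Your additional paragraph on the laxator is superfluous for this statement and reflects a slight misreading of \cref{prop:dbl-products-universal}. Conditions (i) and (ii) alone already give lax products; the comparison cells $\Pi_{\vec{m},\vec{n}}$ and $\Pi_{\vec{x}}$ are \emph{defined} by equations \eqref{eq:product-composition-comparison} and \eqref{eq:product-identity-comparison} via the universal property you have just established, so there is nothing to ``confirm'' and no distributivity is needed at this stage. (Distributivity only enters in \cref{prop:iso-strong-products-mat}, where one shows certain comparisons are invertible.) So the ``only real obstacle'' you identify is not an obstacle at all.
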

\begin{proof}
  Clearly, the underlying category $\Mat{\catV}_0 = \Set$ has products. Given a
  family $(A,\vec{M}): (I,\vec{X}) \proto (J,\vec{Y})$ of $\catV$-matrices, the
  product $\Pi\vec{M}: \Pi\vec{X} \proto \Pi\vec{Y}$ is the $\catV$-matrix
  defined pointwise by the products in $\catV$
  \begin{equation*}
    \Pi\vec{M}(\vec{x}, \vec{y}) \coloneqq \prod_{a: i \proto j} M_a(x_i, y_j),
    \qquad \vec{x} \in \Pi\vec{X} = \prod_{i \in I} X_i,\
      \vec{y} \in \Pi\vec{Y} = \prod_{j \in J} Y_j,
  \end{equation*}
  where each product in $\catV$ ranges over all
  $(i \xproto{a} j) : (I \xproto{A} J)$. The associated projection cell shown on
  the left
  \begin{equation*}
    \begin{tikzcd}
      {\Pi\vec{X}} & {\Pi\vec{Y}} \\
      {X_i} & {Y_j}
      \arrow["{\pi_i}"', from=1-1, to=2-1]
      \arrow[""{name=0, anchor=center, inner sep=0}, "{\Pi\vec{M}}", "\shortmid"{marking}, from=1-1, to=1-2]
      \arrow["{\pi_j}", from=1-2, to=2-2]
      \arrow[""{name=1, anchor=center, inner sep=0}, "{M_a}"', "\shortmid"{marking}, from=2-1, to=2-2]
      \arrow["{\pi_a}"{description}, draw=none, from=0, to=1]
    \end{tikzcd}
    \qquad\leftrightsquigarrow\qquad
    \begin{tikzcd}[row sep=small]
      {\Pi\vec{M}(\vec{x},\vec{y})} \\
      {M_a(x_i, y_j)} \\
      {M_a(\pi_i(\vec{x}), \pi_j(\vec{y}))}
      \arrow[Rightarrow, no head, from=2-1, to=3-1]
      \arrow["{\pi_a}"', from=1-1, to=2-1]
    \end{tikzcd},
    \quad \begin{aligned}
      & \vec{x} \in \Pi\vec{X} \\
      & \vec{y} \in \Pi\vec{Y}
    \end{aligned},
  \end{equation*}
  is given by the family of projection maps in $\catV$ on the right. The
  universal property follows straightforwardly from that of products in $\catV$.
\end{proof}

Products in a double category specialize to reproduce a number of familiar
concepts. Two especially important cases are parallel products and local
products, both highlighted in Paré's original talk on double products
\citep{pare2009}. We remark that parallel products have also been defined as
limits of identity loose transformations \citep[\S{4.3}]{grandis1999}, whereas
local products have been defined using Kan extensions in double categories
\citep[\S{7.5}]{grandis2008}.

\begin{example}[Parallel products] \label{ex:parallel-products}
  A \define{parallel product} in a double category $\dbl{D}$ is a product in
  $\dbl{D}$ of a family of proarrows of form
  $(\id_I,\vec{m}): (I,\vec{x}) \proto (I,\vec{y})$, where $\id_I$ is the
  identity span \mbox{$I \xfrom{1} I \xto{1} I$}. The product
  $\Pi\vec{m}: \Pi\vec{x} \to \Pi\vec{y}$ is traditionally denoted
  \begin{equation*}
    \prod_{i \in I} m_i: \prod_{i \in I} x_i \proto \prod_{i \in I} y_i.
  \end{equation*}
  Let us isolate this special case by saying that a double category $\dbl{D}$
  \define{has lax parallel products} if, for every set $I$, the diagonal double
  functor $\Delta_I: \dbl{D} \to \dbl{D}^I$ has a lax right adjoint
  $\times_I: \dbl{D}^I \to \dbl{D}$, and \define{has strong parallel products}
  if the right adjoints are pseudo. With this terminology, a double category is
  \define{precartesian} in the sense of Aleiferi \citep[\S{4.1}]{aleiferi2018}
  just when it has lax finite parallel products, and it is \define{cartesian}
  just when it has strong finite parallel products.

  In the double category $\Span{\cat{S}}$, a parallel product of spans is
  computed pointwise as the products in $\cat{S}$ of the apexes and of the left
  and right feet. This can be seen directly, or as a special case of
  Theorem \ref{thm:products-span}. In $\Mat{\catV}$, the product of matrices
  $M_i: X_i \proto Y_i$, $i \in I$, is
  \begin{equation*}
    \Pi\vec{M}(\vec{x}, \vec{y}) = \prod_{i \in I} M_i(x_i, y_i),
    \qquad \vec{x} \in \prod_{i \in I} X_i,\ \vec{y} \in \prod_{i \in I} Y_i.
  \end{equation*}
\end{example}

\begin{example}[Local products] \label{ex:local-products}
  A \define{local product} in a double category $\dbl{D}$ is a product in
  $\dbl{D}$ of a proarrow family of form $(t_I,\vec{m}): (1,x) \proto (1,y)$,
  where $t_I$ is the span $1 \xfrom{!} I \xto{!} 1$. The local product
  $\Pi\vec{m} \coloneqq \Pi(t_I,\vec{m})$ can be denoted
  $\prod_{i \in I} m_i: x \proto y$.

  As the name suggests, a local product in a double category $\dbl{D}$ is, in
  particular, a local product in the underlying bicategory of $\dbl{D}$, i.e., a
  product in the hom-category $\dbl{D}(x,y)$ of proarrows $x \proto y$ and
  globular cells between them. However, it is important to realize that the
  double-categorical universal property is stronger than the bicategorical one.
  The double-categorical universal property says that for any arrows
  $f: w \to x$ and $g: z \to y$ and any family of cells
  $\inlineCell{w}{z}{x}{y}{n}{m_i}{f}{g}{\alpha_i}$ for $i \in I$, there exists
  a unique cell $\alpha$ factoring each cell $\alpha_i$ through the projection
  $\pi_i$:
  \begin{equation*}
    \begin{tikzcd}
      w & z \\
      x & y
      \arrow["f"', from=1-1, to=2-1]
      \arrow["g", from=1-2, to=2-2]
      \arrow[""{name=0, anchor=center, inner sep=0}, "n", "\shortmid"{marking}, from=1-1, to=1-2]
      \arrow[""{name=1, anchor=center, inner sep=0}, "{m_i}"', "\shortmid"{marking}, from=2-1, to=2-2]
      \arrow["{\alpha_i}"{description}, draw=none, from=0, to=1]
    \end{tikzcd}
    =
    \begin{tikzcd}
      w & z \\
      x & y \\
      x & y
      \arrow["f"', from=1-1, to=2-1]
      \arrow["g", from=1-2, to=2-2]
      \arrow[""{name=0, anchor=center, inner sep=0}, "n", "\shortmid"{marking}, from=1-1, to=1-2]
      \arrow[""{name=1, anchor=center, inner sep=0}, "{\Pi\vec{m}}", "\shortmid"{marking}, from=2-1, to=2-2]
      \arrow[Rightarrow, no head, from=2-1, to=3-1]
      \arrow[Rightarrow, no head, from=2-2, to=3-2]
      \arrow[""{name=2, anchor=center, inner sep=0}, "{m_i}"', "\shortmid"{marking}, from=3-1, to=3-2]
      \arrow["\alpha"{description, pos=0.4}, draw=none, from=0, to=1]
      \arrow["{\pi_i}"{description}, draw=none, from=1, to=2]
    \end{tikzcd},
    \qquad i \in I.
  \end{equation*}
  The bicategorical universal property restricts $f$ and $g$ to be identity
  arrows.

  For example, in $\Span{\cat{S}}$, the local product of spans
  $m_i = (x \xfrom{\ell_i} s_i \xto{r_i} y)$, $i \in I$, has as its apex the
  limit of the diagram in $\cat{S}$
  \begin{equation*}
    \begin{tikzcd}[row sep=tiny]
      & {s_i} \\
      x & \vdots & y \\
      & {s_{i'}}
      \arrow["{\ell_i}"', from=1-2, to=2-1]
      \arrow["{\ell_{i'}}", from=3-2, to=2-1]
      \arrow["{r_i}", from=1-2, to=2-3]
      \arrow["{r_{i'}}"', from=3-2, to=2-3]
    \end{tikzcd}
  \end{equation*}
  (Theorem \ref{thm:products-span}) or equivalently the product of the objects
  $(s_i, \pair{\ell_i,r_i})_{i \in I}$ in the slice category
  \mbox{$\cat{S}/(x \times y)$}. In $\Mat{\catV}$, the local product of matrices
  $M_i: X \proto Y$, $i \in I$, is simply the pointwise product in $\catV$:
  \begin{equation*}
    \Pi\vec{M}(x,y) = \prod_{i \in I} M_i(x,y), \qquad x \in X,\ y \in Y.
  \end{equation*}
\end{example}

Another special class of products in a double category, those built out of
identity proarrows, will be studied Section \ref{sec:restrictions}. Before that, we
need to determine when products can be expected to commute with external
composition.

\section{Iso-strong products in double categories}
\label{sec:iso-strong-products}

It is easy to find examples showing that products in a double category $\dbl{S}$
of spans or matrices constitute a genuinely lax functor
$\Pi: \DblFamOp(\dbl{S}) \to \dbl{S}$. Indeed, a ``generic'' choice of proarrow
families has this property. Here is a simple example involving local products.
Let $\catV$ be a distributive category and let
\begin{equation*}
  \begin{tikzcd}
    X & Y & Z
    \arrow["{M_1}", "\shortmid"{marking}, curve={height=-12pt}, from=1-1, to=1-2]
    \arrow["{M_2}"', "\shortmid"{marking}, curve={height=12pt}, from=1-1, to=1-2]
    \arrow["N", "\shortmid"{marking}, from=1-2, to=1-3]
  \end{tikzcd}
\end{equation*}
be $\catV$-matrices. Then, on the one hand, we have
\begin{equation} \label{eq:lax-local-product-1}
  (\Pi\vec{M} \odot N)(x,z) = ((M_1 \times M_2) \odot N)(x,z) \cong
    \sum_{y \in Y} M_1(x,y) \times M_2(x,y) \times N(y,z)
\end{equation}
for each $x \in X$ and $z \in Z$. On the other hand, by distributivity, we have
\begin{equation} \label{eq:lax-local-product-2}
  \begin{aligned}
  \Pi(\vec{M} \odot \Delta N)(x,z)
    &= ((M_1 \odot N) \times (M_2 \odot N))(x,z) \\
    &\cong \sum_{(y,y') \in Y \times Y}
      M_1(x,y) \times N(y,z) \times M_2(x,y') \times N(y',z).
  \end{aligned}
\end{equation}
In the canonical comparison, each constituent map in $\catV$ sends the summand
indexed by $y$ in Equation \eqref{eq:lax-local-product-1} to the summand indexed
by the diagonal pair $(y,y)$ in \eqref{eq:lax-local-product-2}. These maps are
hardly ever isomorphisms.

The class of proarrow products expected to commute, up to isomorphism, with
external composition and identities are singled out in the following definition.

\begin{definition}[Iso-strong products] \label{def:iso-strong-products}
  A double category $\dbl{D}$ has \define{iso-strong products} when
  \begin{enumerate}[(i),noitemsep]
    \item $\dbl{D}$ has lax products;
    \item for every pair of proarrow families
      $(I,\vec{x}) \xproto{(A,\vec{m})} (J,\vec{y}) \xproto{(B,\vec{n})} (K,\vec{z})$
      in $\dbl{D}$ such that the legs $A \to J$ and $J \from B$ are isomorphisms
      (bijections), the composition comparison cell
      \begin{equation*}
        \Pi_{\vec{m},\vec{n}}: \Pi\vec{m} \odot \Pi\vec{n} \to \Pi(\vec{m} \odot \vec{n})
      \end{equation*}
      is an isomorphism; and
    \item for every object family $(I,\vec{x})$ in $\dbl{D}$, the identity
      comparison cell
      \begin{equation*}
        \Pi_{\vec{x}}: \id_{\Pi\vec{x}} \to \Pi(\id_{\vec{x}})
      \end{equation*}
      is an isomorphism.
  \end{enumerate}
\end{definition}

A double category with iso-strong finite products has, in particular, strong
finite parallel products (Example \ref{ex:parallel-products}) and hence is a
cartesian double category in the sense of \citep{aleiferi2018}.

\begin{remark}[Normal and unitary products] \label{def:normal-products}
  The last condition in Definition \ref{def:iso-strong-products} can be called having
  \define{normal lax products} since it is equivalent to the existence of a
  \emph{normal} lax functor \mbox{$\Pi: \DblFamOp(\dbl{D}) \to \dbl{D}$}. It is
  reasonable to include in the definition of iso-strong products because it
  involves the identity family $\id_{(I,\vec{x})} = (\id_I, \id_{\vec{x}})$,
  whose indexing span has bijective legs.

  As a practical matter, in double categories with normal lax products, it is
  straightforward to choose products so that
  $\Pi: \DblFamOp(\dbl{D}) \to \dbl{D}$ is a \emph{unitary} lax functor
  \citep[\S{3.5.2}]{grandis2019}, i.e., the equations
  $\Pi(\id_{\vec{x}}) = \id_{\Pi\vec{x}}$ hold strictly. A double category with
  such a choice of products is said to be equipped with \define{unitary lax
    products}. We will not assume that such a choice as been made, although it
  can be convenient to do so.
\end{remark}

\begin{proposition}[Iso-strong products of spans]
  \label{prop:iso-strong-products-span}
  Let $\cat{S}$ be (finitely) complete category. Then the double category
  $\Span{\cat{S}}$ has iso-strong (finite) products.
\end{proposition}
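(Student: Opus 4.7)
The plan is to invoke \cref{thm:products-span} for the lax product structure and then verify the two isomorphism conditions in \cref{def:iso-strong-products} by direct calculation with the explicit limit description of $\Pi$ given there.

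For the identity comparison $\Pi_{\vec{x}}$, I would unfold the construction of \cref{thm:products-span} applied to the identity family $\id_{(I,\vec{x})}$. Its indexing span is $I \xfrom{1} I \xto{1} I$ and its components are the identity spans $\id_{x_i}$, so the indexing diagram from \cref{thm:products-span} whose limit defines $\Pi(\id_{\vec{x}})$ decomposes into one connected component per $i \in I$, each a trivial diagram sending three copies of $i$ to $x_i$ via identity morphisms. The resulting limit is $\prod_{i \in I} x_i = \Pi\vec{x}$ with identity legs, which is exactly $\id_{\Pi\vec{x}}$, so $\Pi_{\vec{x}}$ is (canonically) the identity.

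For the composition comparison, let $(A,\vec{m}): (I,\vec{x}) \proto (J,\vec{y})$ and $(B,\vec{n}): (J,\vec{y}) \proto (K,\vec{z})$ be composable families with $r: A \to J$ and $\ell: B \to J$ bijections, and write $m_a = (x_{i(a)} \xfrom{\ell_a} s_a \xto{r_a} y_{r(a)})$ and $n_b = (y_{\ell(b)} \xfrom{\ell'_b} t_b \xto{r'_b} z_{k(b)})$. I would argue by Yoneda, identifying maps $w \to \Pi\vec{m} \odot \Pi\vec{n}$ with maps $w \to \Pi(\vec{m} \odot \vec{n})$ naturally in test objects $w \in \cat{S}$. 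Unpacking the first, and using that external composition in $\Span{\cat{S}}$ is pullback, such a map amounts to families $w \to x_i,\ w \to s_a,\ w \to y_j,\ w \to t_b,\ w \to z_k$ satisfying the expected triangle commutativities. The bijectivity of $r$ and $\ell$ forces each component $w \to y_j$ to equal both $r_a \circ (w \to s_a)$ and $\ell'_b \circ (w \to t_b)$ for the unique $a, b$ with $r(a) = \ell(b) = j$; eliminating the $y_j$-components leaves families $w \to x_i,\ w \to s_a,\ w \to t_b,\ w \to z_k$ satisfying $\ell_a \circ (w \to s_a) = (w \to x_{i(a)})$, $r'_b \circ (w \to t_b) = (w \to z_{k(b)})$, and the gluing condition $r_a \circ (w \to s_a) = \ell'_b \circ (w \to t_b)$ for each paired $(a,b)$.

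Unpacking $w \to \Pi(\vec{m} \odot \vec{n})$ similarly, using that $\vec{m} \odot \vec{n}$ is indexed by $A \times_J B$ with components $s_a \times_{y_j} t_b$, a generalized element consists of families $w \to x_i,\ w \to z_k$ together with, for each $(a,b) \in A \times_J B$, a pair of maps into $s_a$ and $t_b$ agreeing after projection to $y_j$, subject to the same triangle compatibilities. Under the bijection hypothesis $A \times_J B \cong A \cong B$, the two descriptions become identical, and the induced natural bijection on points is precisely the inverse of $\Pi_{\vec{m},\vec{n}}$. The main obstacle is combinatorial bookkeeping: laying out the two limit descriptions carefully so that bijectivity makes the identification transparent; conceptually, bijectivity eliminates the overcounting that makes $\Pi_{\vec{m},\vec{n}}$ genuinely lax in general.
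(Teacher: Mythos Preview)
Your argument is correct and essentially self-contained: both the normality check and the Yoneda computation for the composition comparison go through as you describe, and under the bijectivity hypothesis the $y_j$-components really are redundant, so the two sets of generalized elements coincide and the bijection is indeed inverse to $\Pi_{\vec{m},\vec{n}}$.

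The paper takes a different, more packaged route to the composition comparison. Rather than comparing generalized elements by hand, it observes that both $\Pi\vec{m} \odot \Pi\vec{n}$ and $\Pi(\vec{m} \odot \vec{n})$ arise as limits of diagrams $D, D': \int F, \int F' \to \cat{S}$ obtained via the adjunction between the category of elements and its right adjoint (\cref{def:elements-construction}), where $F, F'$ are copresheaves on the walking composable pair of spans $\cat{span}_\odot$. The comparison cell then comes from a morphism of copresheaves $F' \To F$ whose components are the projections $\pi_A, \pi_J, \pi_B$ out of $A \times_J B$; when the adjacent legs are bijections these projections are all bijections, so $F' \To F$ is a natural isomorphism and the induced map of limits is an isomorphism by functoriality. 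Your approach is more elementary and avoids the elements-construction machinery, at the cost of the explicit bookkeeping you acknowledge; the paper's approach localizes the combinatorics entirely in the single morphism $F' \To F$, making it transparent that bijectivity of the adjacent legs is precisely what is needed and that no other condition will do.
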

\begin{proof}
  That $\Span{\cat{S}}$ has lax products has been shown in
  Theorem \ref{thm:products-span}, and that they are normal is both obvious and known
  from the study of $\Span{\cat{S}}$ as a cartesian double category. We
  explicitly construct the composition comparison $\Pi_{\vec{m},\vec{n}}$ for
  families of spans
  $(I,\vec{x}) \xproto{(A,\vec{m})} (J,\vec{y}) \xproto{(B,\vec{n})} (K,\vec{z})$.

  By reducing the pullback of limits to a single limit, the composite of
  products $\Pi\vec{m} \odot \Pi\vec{n}$ can be described similarly to the proof
  of Theorem \ref{thm:products-span}. Continuing to use that notation, let
  \begin{equation*}
    \cat{span}_\odot \coloneqq \cat{span} +_{\cat{1}} \cat{span}
      = \{\bullet \from \bullet \to \bullet \from \bullet \to \bullet\}
  \end{equation*}
  be the walking pair of composable spans. Then the given data on the left
  \begin{equation*}
    \begin{tikzcd}[column sep=scriptsize]
      I & A & J & B & K \\
      {\cat{S}} & {\Span{\cat{S}}_1} & {\cat{S}} & {\Span{\cat{S}}_1} & {\cat{S}}
      \arrow["s", from=2-2, to=2-1]
      \arrow["t"', from=2-2, to=2-3]
      \arrow["s", from=2-4, to=2-3]
      \arrow["t"', from=2-4, to=2-5]
      \arrow["{\vec{z}}", from=1-5, to=2-5]
      \arrow[from=1-4, to=1-5]
      \arrow[from=1-4, to=1-3]
      \arrow[from=1-2, to=1-3]
      \arrow[from=1-2, to=1-1]
      \arrow["{\vec{n}}"', from=1-4, to=2-4]
      \arrow["{\vec{y}}"', from=1-3, to=2-3]
      \arrow["{\vec{m}}"', from=1-2, to=2-2]
      \arrow["{\vec{x}}"', from=1-1, to=2-1]
    \end{tikzcd}
    \quad\leftrightsquigarrow\quad
    \begin{tikzcd}
      {\cat{span}_\odot} & \Set
      \arrow[""{name=0, anchor=center, inner sep=0}, "F", curve={height=-18pt}, from=1-1, to=1-2]
      \arrow[""{name=1, anchor=center, inner sep=0}, "{\El_{/\cat{span}_\odot}(\cat{S})}"', curve={height=18pt}, from=1-1, to=1-2]
      \arrow[shorten <=5pt, shorten >=5pt, Rightarrow, from=0, to=1]
    \end{tikzcd}
  \end{equation*}
  can be identified with a copresheaf $F: \cat{span}_\odot \to \Set$ along with
  a natural transformation as on the right, which is in turn equivalent to a
  diagram $D: \int F \to \cat{S}$ by Construction \ref{def:elements-construction}. The apex
  of the composite span $\Pi\vec{m} \odot \Pi\vec{n}$ is isomorphic to the limit
  of the diagam $D$ in $\cat{S}$.

  The product of composites $\Pi(\vec{m} \odot \vec{n})$ can also be described
  in this way by reducing the limit of pullbacks to a single limit. As before,
  the data on the left
  \begin{equation*}
    \begin{tikzcd}
      I & {A \times_J B} & {A \times_J B} & {A \times_J B} & K \\
      {\cat{S}} & {\Span{\cat{S}}_1} & {\cat{S}} & {\Span{\cat{S}}_1} & {\cat{S}}
      \arrow[from=1-2, to=1-1]
      \arrow["{\vec{x}}"', from=1-1, to=2-1]
      \arrow[Rightarrow, no head, from=1-2, to=1-3]
      \arrow[Rightarrow, no head, from=1-4, to=1-3]
      \arrow[from=1-4, to=1-5]
      \arrow["{\vec{y} \circ \pi_J}"', from=1-3, to=2-3]
      \arrow["{\vec{z}}", from=1-5, to=2-5]
      \arrow["s", from=2-2, to=2-1]
      \arrow["t"', from=2-2, to=2-3]
      \arrow["s", from=2-4, to=2-3]
      \arrow["t"', from=2-4, to=2-5]
      \arrow["{\vec{m} \circ \pi_A}"', from=1-2, to=2-2]
      \arrow["{\vec{n} \circ \pi_B}"', from=1-4, to=2-4]
    \end{tikzcd}
    \quad\leftrightsquigarrow\quad
    \begin{tikzcd}
      {\cat{span}_\odot} & \Set
      \arrow[""{name=0, anchor=center, inner sep=0}, "{F'}", curve={height=-18pt}, from=1-1, to=1-2]
      \arrow[""{name=1, anchor=center, inner sep=0}, "{\El_{/\cat{span}_\odot}(\cat{S})}"', curve={height=18pt}, from=1-1, to=1-2]
      \arrow[shorten <=5pt, shorten >=5pt, Rightarrow, from=0, to=1]
    \end{tikzcd}
  \end{equation*}
  is equivalent to a copresheaf $F'$ and a natural transformation as on the
  right, which is equivalent to a diagram $D': \int F' \to \cat{S}$. The apex of
  the span $\Pi(\vec{m} \odot \vec{n})$ is isomorphic to the limit of $D'$ in
  $\cat{S}$.

  By the functorality of the elements construction, the morphism of copresheaves
  \begin{equation*}
    \begin{tikzcd}
      I & A & J & B & K \\
      I & {A \times_J B} & {A \times_J B} & {A \times_J B} & K
      \arrow[from=2-2, to=2-1]
      \arrow[Rightarrow, no head, from=2-2, to=2-3]
      \arrow[Rightarrow, no head, from=2-4, to=2-3]
      \arrow[from=2-4, to=2-5]
      \arrow[Rightarrow, no head, from=2-1, to=1-1]
      \arrow[Rightarrow, no head, from=2-5, to=1-5]
      \arrow["{\pi_B}", from=2-4, to=1-4]
      \arrow["{\pi_A}", from=2-2, to=1-2]
      \arrow["{\pi_J}", from=2-3, to=1-3]
      \arrow[from=1-2, to=1-1]
      \arrow[from=1-2, to=1-3]
      \arrow[from=1-4, to=1-3]
      \arrow[from=1-4, to=1-5]
    \end{tikzcd}
    \quad\leftrightsquigarrow\quad
    \begin{tikzcd}
      {\cat{span}_\odot} & \Set
      \arrow[""{name=0, anchor=center, inner sep=0}, "F", curve={height=-18pt}, from=1-1, to=1-2]
      \arrow[""{name=1, anchor=center, inner sep=0}, "{F'}"', curve={height=18pt}, from=1-1, to=1-2]
      \arrow[shorten <=5pt, shorten >=5pt, Rightarrow, from=1, to=0]
    \end{tikzcd}
  \end{equation*}
  induces a functor $\int F' \to \int F$ that is also a morphism $D' \to D$ in
  the slice category $\CatOne/\cat{S}$ of diagrams in $\cat{S}$. The
  functorality of limits with respect to morphisms in $(\CatOne/\cat{S})^\op$
  yields the apex map in the comparison cell
  $\Pi_{\vec{m},\vec{n}}: \Pi\vec{m} \odot \Pi\vec{n} \to \Pi(\vec{m} \odot \vec{n})$.
  Now, if the legs $A \to J$ and $J \from B$ are isomorphisms, then so are all
  the projections $\pi_A$, $\pi_B$, and $\pi_J$ out of the pullback
  $A \times_J B$. Thus, $F' \To F$ is a natural isomorphism and, by the
  functorality of the subsequent constructions, the comparison cell
  $\Pi_{\vec{m},\vec{n}}$ is also an isomorphism.
\end{proof}

\begin{corollary}[Iso-strong coproducts of cospans]
  Let $\cat{S}$ be a (finitely) cocomplete category. Then the double category
  $\Cospan{\cat{S}}$ has iso-strong (finite) coproducts.
\end{corollary}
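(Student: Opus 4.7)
The plan is to deduce the corollary from \cref{prop:iso-strong-products-span} by the same duality argument used in the preceding corollary on coproducts of cospans. Since $\cat{S}$ is (finitely) cocomplete if and only if $\cat{S}^\op$ is (finitely) complete, \cref{prop:iso-strong-products-span} gives that $\Span{\cat{S}^\op}$ has iso-strong (finite) products. Combined with the identification $\Cospan{\cat{S}}^\op = \Span{\cat{S}^\op}$, it then suffices to check that iso-strong lax products in a double category $\dbl{D}$ correspond, under oppositization, to iso-strong colax coproducts in $\dbl{D}^\op$.

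For this I appeal to the general duality principle for double categories developed just before \cref{def:dbl-products}. The oppositization 2-functor $\op: \DblLax^{\co} \to \DblColax$ converts a pseudo-lax adjunction $\Delta \dashv \Pi$ on $\dbl{D}$ into a colax-pseudo adjunction $\Sigma \dashv \Delta$ on $\dbl{D}^\op$ with $\Sigma = \Pi^\op$, which is exactly the translation between \cref{def:dbl-products} and \cref{def:dbl-coproducts}. Thus condition (i) of \cref{def:iso-strong-products}, the existence of lax products, transports automatically, and it remains to verify that conditions (ii) and (iii) are likewise preserved.

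Under oppositization, the composition comparison cells $\Pi_{\vec{m},\vec{n}}$ of $\Pi$ correspond to the composition comparison cells $\Sigma_{\vec{m},\vec{n}}$ of $\Sigma = \Pi^\op$, and similarly for the identity comparisons; because $\op$ is an involutive 2-functor it sends isomorphisms to isomorphisms in both directions. The condition isolating iso-strong products, namely that both legs $A \to J$ and $J \from B$ of the indexing spans are bijections, is symmetric in left and right legs, hence invariant under the swap of source and target induced on indexing spans when passing between $\DblFamOp(\dbl{D})$ and $\DblFam(\dbl{D}^\op)$. The only real obstacle is bookkeeping: one must track how the indexing spans of proarrow families in $\DblFamOp(\dbl{D})$ are identified with those in $\DblFam(\dbl{D}^\op)$ and confirm that a bijective-legs hypothesis on one side is verbatim the bijective-legs hypothesis on the other. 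Given the explicit side-by-side descriptions of $\DblFam$ and $\DblFamOp$ preceding \cref{def:dbl-products}, this identification is immediate, and the corollary follows.
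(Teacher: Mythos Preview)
Your proposal is correct and follows the same duality route as the paper, which states the corollary without proof because it is the evident dual of \cref{prop:iso-strong-products-span} via $\Cospan{\cat{S}}^\op = \Span{\cat{S}^\op}$, exactly as in the earlier corollary on coproducts of cospans. Your additional paragraph verifying that the iso-strong condition is self-dual is a reasonable elaboration of a step the paper leaves implicit.
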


The proof makes it clear that essentially the \emph{only} condition under which
products of spans, or dually coproducts of cospans, commute with external
composition is the iso-strong condition in Definition \ref{def:iso-strong-products}.

Under reasonable hypotheses, double categories of matrices also have iso-strong
products. By a \emph{distributive category}, we will mean a category with finite
products and arbitrary coproducts, over which the products distribute.

\begin{proposition}[Iso-strong products of matrices]
  \label{prop:iso-strong-products-mat}
  For any distributive category $\catV$, the double category $\Mat{\catV}$ has
  iso-strong finite products.
\end{proposition}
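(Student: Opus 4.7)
We already know from \cref{prop:products-mat} that $\Mat{\catV}$ has lax finite products, since a distributive category is, in particular, a distributive monoidal category with finite products (the tensor being the cartesian product). The plan is therefore to verify the two remaining conditions (ii) and (iii) of \cref{def:iso-strong-products} by direct pointwise computation in $\catV$, exploiting distributivity.

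For condition (iii), recall that the identity matrix $\id_X$ has entries equal to the terminal object $1$ on the diagonal and to the initial object $0$ off it. Hence, for any $\vec{x}, \vec{x}' \in \Pi\vec{X}$,
\[
  \Pi(\id_{\vec{X}})(\vec{x}, \vec{x}')
  = \prod_{i \in I} \id_{X_i}(x_i, x'_i),
\]
which equals $1$ when $\vec{x} = \vec{x}'$ and, when $\vec{x} \neq \vec{x}'$, contains an initial factor and so is itself initial by the elementary consequence $X \times 0 \cong 0$ of distributivity over the empty coproduct. This agrees pointwise with $\id_{\Pi\vec{X}}$, making the comparison $\Pi_{\vec{X}}$ an isomorphism.

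For condition (ii), let the indexing spans be $I \xfrom{\ell_A} A \xto{r_A} J \xfrom{\ell_B} B \xto{r_B} K$ with $r_A$ and $\ell_B$ bijective. Then the pullback $A \times_J B$ is canonically in bijection with $J$ via $j \mapsto (r_A^{-1}(j), \ell_B^{-1}(j))$, so after reindexing
\[
  \Pi(\vec{M} \odot \vec{N})(\vec{x}, \vec{z})
  \cong \prod_{j \in J} \sum_{y \in Y_j}
    M_{r_A^{-1}(j)}(x_{\ell_A r_A^{-1}(j)}, y) \times N_{\ell_B^{-1}(j)}(y, z_{r_B \ell_B^{-1}(j)}),
\]
whereas $(\Pi\vec{M} \odot \Pi\vec{N})(\vec{x}, \vec{z})$ is the sum over $\vec{y} \in \Pi\vec{Y}$ of the corresponding finite product over $J$ of the same terms. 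Since $J$ is finite, iterating the distributive law interchanges the product and the coproducts and yields a canonical isomorphism between the two expressions; a routine check against \eqref{eq:product-composition-comparison} identifies it with the component of $\Pi_{\vec{M},\vec{N}}$. The only substantive point is recognizing that the iso-strong hypothesis is precisely what collapses $A \times_J B$ to $J$, so that the comparison reduces to finitary distributivity; the remaining steps are mechanical.
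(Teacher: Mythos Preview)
Your proof is correct and follows essentially the same route as the paper: after invoking \cref{prop:products-mat} for lax products, you verify normality via the pointwise identities $1\times 1\cong 1$ and $1\times 0\cong 0$, and for the composition comparison you reindex $A\times_J B$ by $J$ using the bijective legs and then invoke finitary distributivity to interchange $\sum_{\vec y\in\Pi\vec Y}\prod_{j\in J}$ with $\prod_{j\in J}\sum_{y_j\in Y_j}$. The only differences are cosmetic (your $r_A^{-1}(j)$ is the paper's inverse function $a:J\to A$), and your explicit check that the resulting isomorphism agrees with the canonical comparison of \eqref{eq:product-composition-comparison} is a small bonus the paper leaves implicit.
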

\begin{proof}
  That $\Mat{\catV}$ has lax finite products has been shown under weaker
  hypotheses in Proposition \ref{prop:products-mat}, and that they are normal under the
  present hypotheses follows from the isomorphisms $1 \times 1 \cong 1$ and
  $1 \times 0 \cong 0$ in $\catV$. So suppose that
  $(I,\vec{X}) \xto{(A,\vec{M})} (J,\vec{Y}) \xto{(B,\vec{N})} (K,\vec{Z})$ are
  families of $\catV$-matrices such that the legs $A \to J$ and $B \to J$ are
  isomorphisms. Denoting the inverse functions by $a: J \to A$ and $b: J \to B$,
  we have the isomorphisms
  \begin{equation*}
    (\Pi\vec{M} \odot \Pi\vec{N})(\vec{x},\vec{z}) \cong
      \sum_{\vec{y} \in \Pi\vec{Y}} \prod_{j \in J}
        M_{a(j)}(x_{\ell(a(j))}, y_j) \times N_{b(j)}(y_j, z_{r(b(j))})
  \end{equation*}
  and
  \begin{equation*}
    \Pi(\vec{M} \odot \vec{N})(\vec{x},\vec{z}) \cong
      \prod_{j \in J} \sum_{y_j \in Y_j}
        M_{a(j)}(x_{\ell(a(j))}, y_j) \times N_{b(j)}(y_j, z_{r(b(j))})
  \end{equation*}
  in $\catV$ for each $\vec{x} \in \Pi\vec{X}$ and $\vec{y} \in \Pi\vec{Y}$. The
  canonical distributivity morphism
  \begin{equation*}
    \sum_{\vec{y} \in \Pi\vec{Y}} \prod_{j \in J} \longrightarrow
      \prod_{j \in J} \sum_{y_j \in Y_j}
  \end{equation*}
  from the first object in $\catV$ to the second, involving the dependent
  product $\Pi\vec{Y} = \prod_{j \in J} Y_j$ of sets, is an isomorphism since
  $\catV$ is a distributive category.
\end{proof}

\section{Restrictions in double categories with products}
\label{sec:restrictions}

In any double category with lax products, one can ask about the proarrows that
arise as products of identity proarrows for different choices of indexing span.
Such proarrows and accompanying cells carry a surprisingly intricate structure,
all following from the universal property of products. An example will
illustrate the general situation.

\begin{example}[Diagonal proarrows] \label{ex:diagonal-proarrows}
  Let $\dbl{D}$ be a double category with lax finite products. The
  \define{diagonal proarrow} on an object $x \in \dbl{D}$ is the product
  \begin{equation} \label{eq:diagonal-proarrow}
    \delta_x \coloneqq
    \Pi\left(
      \begin{tikzcd}[cramped,row sep=tiny,column sep=small]
        & x \\
        x \\
        & x
        \arrow["{\id_x}", "\shortmid"{marking}, from=2-1, to=1-2]
        \arrow["{\id_x}"', "\shortmid"{marking}, from=2-1, to=3-2]
      \end{tikzcd}
    \right) \coloneqq
    \Pi(1 \xfrom{!} 2 = 2,\ (\id_x, \id_x)),
  \end{equation}
  where the diagram of proarrows on the left is shorthand for the family of
  proarrows on the right. Besides the proarrow $\delta_x: x \proto x^2$, this
  product consists of two projection cells
  \begin{equation*}
    \begin{tikzcd}
      x & {x^2} \\
      x & x
      \arrow[""{name=0, anchor=center, inner sep=0}, "{\delta_x}", "\shortmid"{marking}, from=1-1, to=1-2]
      \arrow["{\pi_i}", from=1-2, to=2-2]
      \arrow[Rightarrow, no head, from=1-1, to=2-1]
      \arrow[""{name=1, anchor=center, inner sep=0}, "{\id_x}"', "\shortmid"{marking}, from=2-1, to=2-2]
      \arrow["{\pi_i}"{description}, draw=none, from=0, to=1]
    \end{tikzcd},
    \qquad i = 1,2.
  \end{equation*}
  From the diagonal $\delta_x$ and the parallel product $\id_x^2$, we can form
  \define{unit} and \define{counit} cells
  \begin{equation*}
    \begin{tikzcd}
      x & x \\
      x & {x^2}
      \arrow[""{name=0, anchor=center, inner sep=0}, "{\id_x}", "\shortmid"{marking}, from=1-1, to=1-2]
      \arrow[Rightarrow, no head, from=1-1, to=2-1]
      \arrow["{\Delta_x}", from=1-2, to=2-2]
      \arrow[""{name=1, anchor=center, inner sep=0}, "{\delta_x}"', "\shortmid"{marking}, from=2-1, to=2-2]
      \arrow["\eta"{description}, draw=none, from=0, to=1]
    \end{tikzcd}
    \qquad\text{and}\qquad
    \begin{tikzcd}
      x & {x^2} \\
      {x^2} & {x^2}
      \arrow[""{name=0, anchor=center, inner sep=0}, "{\delta_x}", "\shortmid"{marking}, from=1-1, to=1-2]
      \arrow["{\Delta_x}"', from=1-1, to=2-1]
      \arrow[Rightarrow, no head, from=1-2, to=2-2]
      \arrow[""{name=1, anchor=center, inner sep=0}, "{\id_x^2}"', "\shortmid"{marking}, from=2-1, to=2-2]
      \arrow["\varepsilon"{description}, draw=none, from=0, to=1]
    \end{tikzcd},
  \end{equation*}
  where $\Delta_x: x \to x^2$ is the usual diagonal map. The unit cell is the
  unique solution to the equations
  \begin{equation*}
    \begin{tikzcd}[row sep=scriptsize]
      x & x \\
      x & {x^2} \\
      x & x
      \arrow[""{name=0, anchor=center, inner sep=0}, "{\id_x}", "\shortmid"{marking}, from=1-1, to=1-2]
      \arrow[Rightarrow, no head, from=1-1, to=2-1]
      \arrow["{\Delta_x}", from=1-2, to=2-2]
      \arrow[""{name=1, anchor=center, inner sep=0}, "{\delta_x}", "\shortmid"{marking}, from=2-1, to=2-2]
      \arrow["{\pi_i}", from=2-2, to=3-2]
      \arrow[""{name=2, anchor=center, inner sep=0}, "{\id_x}"', "\shortmid"{marking}, from=3-1, to=3-2]
      \arrow[Rightarrow, no head, from=2-1, to=3-1]
      \arrow["{\pi_i}"{description}, draw=none, from=1, to=2]
      \arrow["\eta"{description, pos=0.4}, draw=none, from=0, to=1]
    \end{tikzcd}
    = 1_{\id_x} = \id_{1_x},
    \qquad i = 1,2,
  \end{equation*}
  given by the universal property of the product $\delta_x$. Similarly, the
  counit cell is the unique solution to the equations
  \begin{equation*}
    \begin{tikzcd}[row sep=scriptsize]
      x & {x^2} \\
      {x^2} & {x^2} \\
      x & x
      \arrow["{\Delta_x}"', from=1-1, to=2-1]
      \arrow["{\pi_i}"', from=2-1, to=3-1]
      \arrow[Rightarrow, no head, from=1-2, to=2-2]
      \arrow[""{name=0, anchor=center, inner sep=0}, "{\delta_x}", "\shortmid"{marking}, from=1-1, to=1-2]
      \arrow[""{name=1, anchor=center, inner sep=0}, "{\id_x^2}", "\shortmid"{marking}, from=2-1, to=2-2]
      \arrow["{\pi_i}", from=2-2, to=3-2]
      \arrow[""{name=2, anchor=center, inner sep=0}, "{\id_x}"', "\shortmid"{marking}, from=3-1, to=3-2]
      \arrow["{\pi_i}"{description}, draw=none, from=1, to=2]
      \arrow["\varepsilon"{description, pos=0.4}, draw=none, from=0, to=1]
    \end{tikzcd}
    =
    \begin{tikzcd}[sep=scriptsize]
      x & {x^2} \\
      x & x
      \arrow[""{name=0, anchor=center, inner sep=0}, "{\delta_x}", "\shortmid"{marking}, from=1-1, to=1-2]
      \arrow["{\pi_i}", from=1-2, to=2-2]
      \arrow[Rightarrow, no head, from=1-1, to=2-1]
      \arrow[""{name=1, anchor=center, inner sep=0}, "{\id_x}"', "\shortmid"{marking}, from=2-1, to=2-2]
      \arrow["{\pi_i}"{description}, draw=none, from=0, to=1]
    \end{tikzcd},
    \qquad i = 1,2,
  \end{equation*}
  given by the universal property of the parallel product $\id_x^2$.

  Now suppose that the double category $\dbl{D}$ has \emph{normal} lax finite
  products (Remark \ref{def:normal-products}). Then the codomain
  $\id_x^2: x^2 \proto x^2$ of the counit cell can be interchanged with
  $\id_{x^2}: x^2 \proto x^2$. We obtain the data that would make the arrow
  $\Delta_x: x \to x^2$ and proarrow $\delta_x: x \proto x^2$ into a companion
  pair, provided the cells satisfy the two compatibility equations. Later, we
  will prove that they do by a more abstract approach.

  Conversely, suppose that we start with a cartesian equipment $\dbl{E}$. For
  simplicity, assume that we have made a unitary choice of finite parallel
  products, so that $\id_x^I = \id_{x^I}$ for each finite set $I$ and object
  $x \in \dbl{E}$. Define the \define{diagonal proarrow}
  $\delta_x: x \proto x^2$ to be a companion to the diagonal arrow
  $\Delta_x: x \to x^2$. Then there is a counit cell of the form
  \begin{equation*}
    \begin{tikzcd}
      x & {x^2} \\
      {x^2} & {x^2}
      \arrow[""{name=0, anchor=center, inner sep=0}, "{\delta_x}", "\shortmid"{marking}, from=1-1, to=1-2]
      \arrow["{\Delta_x}"', from=1-1, to=2-1]
      \arrow[Rightarrow, no head, from=1-2, to=2-2]
      \arrow[""{name=1, anchor=center, inner sep=0}, "{\id_x^2}"', "\shortmid"{marking}, from=2-1, to=2-2]
      \arrow["\varepsilon"{description}, draw=none, from=0, to=1]
    \end{tikzcd},
  \end{equation*}
  which is also the restriction of the parallel product $\id_x^2 = \id_{x^2}$
  along the arrows $\Delta_x$ and $1_{x^2}$. Using the projections for the
  product $\id_x^2$, we can form a new pair of \define{projections}
  \begin{equation*}
    \begin{tikzcd}
      x & {x^2} \\
      x & x
      \arrow[""{name=0, anchor=center, inner sep=0}, "{\delta_x}", "\shortmid"{marking}, from=1-1, to=1-2]
      \arrow["{\pi_i}", from=1-2, to=2-2]
      \arrow[Rightarrow, no head, from=1-1, to=2-1]
      \arrow[""{name=1, anchor=center, inner sep=0}, "{\id_x}"', "\shortmid"{marking}, from=2-1, to=2-2]
      \arrow["{\pi_i}"{description}, draw=none, from=0, to=1]
    \end{tikzcd}
    \quad\coloneqq\quad
    \begin{tikzcd}[row sep=scriptsize]
      x & {x^2} \\
      {x^2} & {x^2} \\
      x & x
      \arrow[""{name=0, anchor=center, inner sep=0}, "{\delta_x}", "\shortmid"{marking}, from=1-1, to=1-2]
      \arrow["{\Delta_x}"', from=1-1, to=2-1]
      \arrow[Rightarrow, no head, from=1-2, to=2-2]
      \arrow[""{name=1, anchor=center, inner sep=0}, "{\id_x^2}", "\shortmid"{marking}, from=2-1, to=2-2]
      \arrow["{\pi_i}", from=2-2, to=3-2]
      \arrow["{\pi_i}"', from=2-1, to=3-1]
      \arrow[""{name=2, anchor=center, inner sep=0}, "{\id_x}"', "\shortmid"{marking}, from=3-1, to=3-2]
      \arrow["\varepsilon"{description, pos=0.4}, draw=none, from=0, to=1]
      \arrow["{\pi_i}"{description}, draw=none, from=1, to=2]
    \end{tikzcd},
    \qquad i = 1,2.
  \end{equation*}
  We obtain the data that would make $\delta_x$ into a product in $\dbl{E}$ as
  in Equation \eqref{eq:diagonal-proarrow}, provided the universal property is satisfied.
  That is indeed the case, as we will show later.
\end{example}

The rest of this section is dedicated to systematizing this example and
explaining the connection between cartesian equipments and double categories
with finite products. The key observations are that when a double category has
lax products, restrictions are closed under products, and when the products are
normal, companions and conjoints are also closed under products. The latter
result is quicker to prove, so we begin with it.

\begin{proposition}[Products of companions and conjoints]
  \label{prop:companions-product}
  Let $\dbl{D}$ be a double category with normal lax products. Suppose that
  $(f_0,f): (I,\vec{x}) \to (J,\vec{y})$ is a family of arrows
  $f_j: x_{f_0(j)} \to y_j$ in $\dbl{D}$ each having a companion
  $(f_j)_!: x_{f_0(j)} \proto y_j$. Then the product arrow
  $\Pi(f_0,f): \Pi\vec{x} \to \Pi\vec{y}$ also has a companion, namely the
  product $\Pi(f_0^*, f_!): \Pi\vec{x} \proto \Pi\vec{y}$ of the family of
  companions $(f_!)_j \coloneqq (f_j)_!$ indexed by the conjoint span
  $f_0^* = (I \xfrom{f_0} J = J)$.

  Dually, if each component $f_j$ has a conjoint $f_j^*: y_j \proto x_{f_0(j)}$,
  then the product arrow $\Pi(f_0,f): \Pi\vec{x} \to \Pi\vec{y}$ also has a
  conjoint, namely the product $\Pi((f_0)_!, f^*): \Pi\vec{y} \proto \Pi\vec{x}$
  of the family of conjoints $(f^*)_j \coloneqq f_j^*$ indexed by the companion
  span $(f_0)_! = (J = J \xto{f_0} I)$.
\end{proposition}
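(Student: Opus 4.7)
The plan is to first produce a companion for $(f_0, f)$ inside $\DblFamOp(\dbl{D})$ by a direct dualization of \cref{prop:companions-fam}, and then transport it down to $\dbl{D}$ through the lax right adjoint $\Pi$, using normality of the lax products to repair the unit and counit along the way.

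First I would construct the companion in $\DblFamOp(\dbl{D})$ by the same componentwise recipe as in the proof of \cref{prop:companions-fam}, adjusted for the reversed variance of arrows in the contravariant setting. The companion proarrow is $(f_0^*, f_!) \colon (I, \vec{x}) \proto (J, \vec{y})$ as displayed in the statement; the binding unit $\tilde{\eta}$ and counit $\tilde{\varepsilon}$ are $J$-indexed families whose components are the unit and counit cells of the individual companions $(f_j)_!$ in $\dbl{D}$.

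Next, I would apply $\Pi$ and define candidate unit and counit cells in $\dbl{D}$ by
\begin{equation*}
\eta \;\coloneqq\; \Pi(\tilde{\eta}) \circ \Pi_{\vec{x}},
\qquad
\varepsilon \;\coloneqq\; \Pi_{\vec{y}}^{-1} \circ \Pi(\tilde{\varepsilon}),
\end{equation*}
where the composition is vertical composition in $\dbl{D}_1$ and $\Pi_{\vec{x}}, \Pi_{\vec{y}}$ are the identity unitors of the lax functor $\Pi$, invertible by normality. The vertical companion equation $\varepsilon \circ \eta = 1_{\Pi(f_0, f)}$ is then essentially formal: by interchange, the inserted $\Pi_{\vec{x}}, \Pi_{\vec{y}}$ pair off trivially with their inverses, leaving $\Pi(\tilde{\varepsilon} \circ \tilde{\eta})$, which equals $\Pi(1_{(f_0, f)}) = 1_{\Pi(f_0, f)}$ by the companion equation in $\DblFamOp(\dbl{D})$.

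The hard part will be the external companion equation $\eta \odot \varepsilon = 1_{\Pi(f_0^*, f_!)}$, since this is where the composition laxator of $\Pi$ must be threaded through. I would attack it using interchange together with the standard lax-functor coherence relating the laxators $\Pi_{\id_{(I, \vec{x})}, -}$ and $\Pi_{-, \id_{(J, \vec{y})}}$ to the identity unitors $\Pi_{\vec{x}}, \Pi_{\vec{y}}$. Normality is exactly what forces those laxators to be invertible, letting the equation reduce to $\Pi(\tilde{\eta} \odot \tilde{\varepsilon}) = \Pi(1_{(f_0^*, f_!)})$, again immediate. The conjoint half of the statement then follows by a formally dual argument in $\dbl{D}^\op$, under which the conjoint span $f_0^*$ turns into the companion span $(f_0)_!$ and companions and conjoints swap.
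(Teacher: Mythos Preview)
Your approach is correct and follows the same two-step decomposition as the paper: first construct the companion of $(f_0,f)$ in $\DblFamOp(\dbl{D})$ by dualizing \cref{prop:companions-fam}, then push it down to $\dbl{D}$ via $\Pi$. The paper, however, completes the second step in a single sentence by invoking a known lemma: normal lax functors between double categories preserve companions and conjoints (Dawson--Par\'e--Pronk, Proposition~3.8). What you are doing in the second half of your proposal is essentially reproving that lemma in this particular instance.

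Two small remarks on your hand computation. In the vertical equation, the unitors $\Pi_{\vec{x}}$ and $\Pi_{\vec{y}}^{-1}$ do not literally ``pair off with their inverses''---they live over different objects. What actually makes the equation go through is the naturality of the unitor, i.e.\ $\Pi(\id_{(f_0,f)}) \cdot \Pi_{\vec{x}} = \Pi_{\vec{y}} \cdot \id_{\Pi(f_0,f)}$, which you would need to cite. For the external equation you correctly identify that the lax-functor unitality coherence is the crux; the argument works, but it is exactly the content of the cited lemma. So your route is sound but less economical; citing the preservation result gives the conclusion in one line and highlights that nothing special about $\Pi$ is being used beyond its being normal lax.
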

\begin{proof}
  Since conjoints in a double category are companions in the opposite double
  category and vice versa, and since
  $\DblFamOp(\dbl{D})^\op = \DblFam(\dbl{D}^\op)$, the dual of
  Proposition \ref{prop:companions-fam} states that the companion and conjoint of the arrow
  $(f_0,f)$ in $\DblFamOp(\dbl{D})$ are $(f_0^*, f_!)$ and $((f_0)_!, f^*)$,
  respectively. The result follows since the product operation
  $\Pi: \DblFamOp(\dbl{D}) \to \dbl{D}$, being a normal lax functor, preserves
  companions and conjoints \citep[\mbox{Proposition 3.8}]{dawson2010}.
\end{proof}

\begin{corollary}[Companions and conjoints of pairings]
  Let $f_i: x \to y_i$ be a family of arrows with common domain in a double
  category with normal lax products. If each arrow $f_i$ has a companion, then
  so does the pairing $\pair{f_i}_{i \in I}: x \to \prod_{i \in I} y_i$, and
  likewise for conjoints.
\end{corollary}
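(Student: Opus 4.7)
The plan is to specialize \cref{prop:companions-product} to the case of a singleton source family. Realize the given family $(f_i: x \to y_i)_{i \in I}$ as an arrow
\[
  (!, f): (1, x) \to (I, \vec{y})
\]
in $\DblFamOp(\dbl{D})$, where $\vec{y}: I \to \dbl{D}_0$ is defined by $\vec{y}(i) = y_i$ and $!: I \to 1$ is the unique function. The components of this arrow are precisely the given $f_i: x_{!(i)} = x \to y_i$, so the hypothesis of the corollary is exactly that every component of $(!, f)$ has a companion.

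The next step is to identify the product arrow $\Pi(!, f): x \to \prod_{i \in I} y_i$ with the pairing $\pair{f_i}_{i \in I}$. This is a purely one-dimensional fact: on underlying categories, \cref{def:dbl-products} restricts to the adjunction $\Delta_0 \dashv \Pi_0$ (the dual of \cref{prop:coproducts-as-adjoint}), under which $(!, f)$ corresponds, via the universal property of the product, to the unique arrow into $\prod_{i \in I} y_i$ whose composite with each projection $\pi_i$ is $f_i$, that is, to the pairing.

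With this identification in place, \cref{prop:companions-product} delivers the companion immediately: since each $f_i$ has a companion by hypothesis, so does $\Pi(!, f) = \pair{f_i}_{i \in I}$, and it is explicitly the product of the companions $(f_i)_!$ indexed by the conjoint span $(!)^* = (1 \xfrom{!} I = I)$. The conjoint case follows dually from the second half of the same proposition, yielding a conjoint indexed by the companion span $(!)_! = (I = I \xto{!} 1)$. There is no substantive obstacle, since the corollary is essentially a reindexing of \cref{prop:companions-product}; the only check required is the routine identification of $\Pi$ with pairing on the underlying category.
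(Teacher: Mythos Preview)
Your proposal is correct and follows exactly the paper's approach: the paper's proof is the single sentence ``This is the special case of \cref{prop:companions-product} with reindexing map of the form $f_0: I \xto{!} 1$,'' and you have simply unpacked this by writing out the arrow $(!,f):(1,x)\to(I,\vec{y})$ in $\DblFamOp(\dbl{D})$ and verifying that $\Pi(!,f)$ is the pairing $\pair{f_i}_{i\in I}$.
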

\begin{proof}
  This is the special case of Proposition \ref{prop:companions-product} with reindexing map
  of the form $f_0: I \xto{!} 1$.
\end{proof}

\begin{corollary}[Structure proarrows between products]
  \label{cor:structure-proarrows}
  Let $\dbl{D}$ be a double category with normal lax products. For any families
  of objects $(I,\vec{x})$ and $(J,\vec{y})$ in $\dbl{D}$ and function between
  indexing sets $f_0: J \to I$ such that $\vec{x} \circ f_0 = \vec{y}$, the
  product arrow $\Pi(f_0) \coloneqq \Pi(f_0,1): \Pi\vec{x} \to \Pi\vec{y}$ has
  both a companion $\Pi(f_0)_! = \Pi(f_0^*,\id): \Pi\vec{x} \proto \Pi\vec{y}$
  and a conjoint $\Pi(f_0)^* = \Pi((f_0)_!,\id): \Pi\vec{y} \proto \Pi\vec{x}$.
\end{corollary}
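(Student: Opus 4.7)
The plan is to invoke \cref{prop:companions-product} directly with the family of arrows $(f_0, 1): (I, \vec{x}) \to (J, \vec{y})$ in $\DblFamOp(\dbl{D})$ whose reindexing is the given $f_0: J \to I$ and whose $j$-component is an identity morphism. The hypothesis $\vec{x} \circ f_0 = \vec{y}$ forces $x_{f_0(j)} = y_j$ for every $j \in J$, so the identity $1_{y_j}: x_{f_0(j)} \to y_j$ is well-typed and assembles into an arrow of families whose product is, by definition, $\Pi(f_0, 1) = \Pi(f_0)$.

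Next, I would observe that each component $1_{y_j}$ trivially has a companion and a conjoint in $\dbl{D}$, both given by the identity proarrow $\id_{y_j}: y_j \proto y_j$, with unit and counit cells being identity cells. Thus the hypothesis of \cref{prop:companions-product} is satisfied for the family $(f_0, 1)$, with $(f_!)_j = \id_{y_j}$ and $(f^*)_j = \id_{y_j}$.

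Applying the proposition then yields that $\Pi(f_0)$ has a companion, namely the product $\Pi(f_0^*, \id)$ of the identity proarrow family indexed by the conjoint span $f_0^* = (I \xfrom{f_0} J = J)$, and dually a conjoint, namely the product $\Pi((f_0)_!, \id)$ of the identity proarrow family indexed by the companion span $(f_0)_! = (J = J \xto{f_0} I)$. These match the expressions in the statement.

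There is essentially no obstacle: the entire content is packaged into the preceding proposition, and the only step is recognizing $(f_0, 1)$ as the correct arrow of families to feed into it. The only mild point worth flagging is that the hypothesis $\vec{x} \circ f_0 = \vec{y}$ is exactly what makes the identity components well-typed, so that the companion/conjoint families in question are literally families of identity proarrows.
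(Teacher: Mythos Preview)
The proposal is correct and follows essentially the same approach as the paper: both recognize this as the special case of \cref{prop:companions-product} in which each component arrow is an identity, invoking the standard fact that identity arrows have identity proarrows as companions and conjoints.
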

\begin{proof}
  This is another special case of Proposition \ref{prop:companions-product} since, for any
  object $x$ in a double category, the identity arrow $1_x: x \to x$ always has
  the identity proarrow $\id_x: x \proto x$ as both its companion and its
  conjoint \citep[\mbox{Lemma 3.12}]{shulman2010}.
\end{proof}

Such results can be stated more succinctly using a special notation for
reindexed families of objects and proarrows. Given a function $f_0: J \to I$ and
an $I$-indexed family of objects $\vec{x}$, denote by
$f_0^*(\vec{x}) \coloneq \vec{x} \circ f_0$ the $J$-indexed family obtained by
precomposing $\vec{x}$ with $f_0$. Similarly, given an $I$-indexed parallel
family of proarrows $\vec{m}: \vec{x} \proto \vec{y}$, denote by
$f_0^* \vec{m}: f_0^* \vec{x} \proto f_0^* \vec{y}$ the $J$-indexed parallel
family of proarrows obtained by precomposing all of $\vec{x}$, $\vec{y}$, and
$\vec{m}$ with $f_0$.

With this notation, Corollary \ref{cor:structure-proarrows} can be restated as
follows: for any function \mbox{$f_0: J \to I$}, the \define{structure arrow}
$\Pi(f_0): \Pi\vec{x} \to \Pi f_0^* \vec{x}$ between products has both a companion
$\Pi(f_0)_!: \Pi\vec{x} \proto \Pi f_0^* \vec{x}$ and a conjoint
$\Pi(f_0)^*: \Pi f_0^* \vec{x} \proto \Pi\vec{x}$. This corollary encompasses familiar
structure arrows like diagonals and projections. Specifically, in any double
category with normal lax finite products,
\begin{itemize}[noitemsep]
  \item diagonals $\Delta_x: x \to x^2$ have companions $\delta_x: x \proto x^2$
    and conjoints $\delta_x^*: x^2 \proto x$, via the unique function
    $f_0: 2 \xto{!} 1$ as constructed directly in Example \ref{ex:diagonal-proarrows};
  \item deletion arrows $!_x: x \to 1$ have companions
    $\varepsilon_x: x \proto 1$ and conjoints $\varepsilon_x^*: 1 \proto x$, via
    the function $f_0: 0 \xto{!} 1$;
  \item projections $\pi_x: x \times y \to x$ and $\pi_y: x \times y \to y$ have
    companions and conjoints, via the two inclusions $f_0: 1 \hookrightarrow 2$;
    and
  \item braidings $\sigma_{x,y}: x \times y \to y \times x$ have companions and
    conjoints, via the swap function $f_0: 2 \xto{\cong} 2$, which are mutually
    inverse up to globular isomorphism \citep[\mbox{Lemma 3.20}]{shulman2010}.
\end{itemize}
In fact, since companions and conjoints are functorial up to isomorphism, these
special cases essentially exhaust the content of Corollary \ref{cor:structure-proarrows}
for finite products.

As hinted by Example \ref{ex:diagonal-proarrows}, the normality assumption in
Proposition \ref{prop:companions-product} and its corollaries can be discarded while
retaining nearly the same universal property. We will see that structure
proarrows such as the diagonal $\delta_x: x \proto x^2$ and codiagonal
$\delta_x^*: x^2 \proto x$, while not necessarily companions or conjoints in a
double category with merely lax finite products, are still restrictions of
parallel products of identity proarrows.

\begin{proposition}[Restricting along structure arrows between products]
  \label{prop:restrictions-product}
  Let $\dbl{D}$ be a double category with lax products. Suppose given a niche in
  $\DblFamOp(\dbl{D})$
  \begin{equation*}
    \begin{tikzcd}[row sep=scriptsize]
      {(I,\vec{x})} & {(J,\vec{y})} \\
      {(K,\vec{w})} & {(L,\vec{z})}
      \arrow["{(B,\vec{n})}"', "\shortmid"{marking}, from=2-1, to=2-2]
      \arrow["{(f_0,f)}"', from=1-1, to=2-1]
      \arrow["{(g_0,g)}", from=1-2, to=2-2]
    \end{tikzcd}
  \end{equation*}
  such that each constituent niche in $\dbl{D}$
  \begin{equation*}
    \begin{tikzcd}[row sep=scriptsize]
      {x_{f_0(k)}} & {y_{g_0(\ell)}} \\
      {w_k} & {z_\ell}
      \arrow["{f_k}"', from=1-1, to=2-1]
      \arrow["{g_\ell}", from=1-2, to=2-2]
      \arrow["{n_b}"', "\shortmid"{marking}, from=2-1, to=2-2]
    \end{tikzcd},
    \qquad (k \xproto{b} \ell) : (K \xproto{B} L),
  \end{equation*}
  has a restriction. Then $\dbl{D}$ has a restriction cell of the form
  \begin{equation} \label{eq:product-restriction}
    \begin{tikzcd}[column sep=large]
      {\Pi\vec{x}} & {\Pi\vec{y}} \\
      {\Pi\vec{w}} & {\Pi\vec{z}}
      \arrow[""{name=0, anchor=center, inner sep=0}, "{\Pi\vec{n}}"', "\shortmid"{marking}, from=2-1, to=2-2]
      \arrow["{\Pi(f_0,f)}"', from=1-1, to=2-1]
      \arrow["{\Pi(g_0,g)}", from=1-2, to=2-2]
      \arrow[""{name=1, anchor=center, inner sep=0}, "{\Pi(\vec{n}(f,g))}", "\shortmid"{marking}, from=1-1, to=1-2]
      \arrow["\res"{description}, draw=none, from=1, to=0]
    \end{tikzcd}.
  \end{equation}
  Here the domain of the restriction cell is the product of the proarrow family
  $(B, \vec{n}(f,g))$, indexed by the span
  $I \xfrom{f_0} K \from B \to L \xto{g_0} J$ and comprising the restricted
  proarrows $n_b(f_k, g_\ell): x_{f_0(k)} \proto y_{g_0(\ell)}$ for each element
  $b: k \proto \ell$.
\end{proposition}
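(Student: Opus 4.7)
The plan is to construct the cell $\res$ using the universal property of the product $\Pi\vec{n}$, and then verify its universal property as a restriction using the universal properties of the product $\Pi(\vec{n}(f,g))$ together with the componentwise restriction cells $\res_b$ in $\dbl{D}$.

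For the construction, observe that the product $\Pi(\vec{n}(f,g))$ is indexed by the composite span $I \xfrom{f_0 \circ \ell} B \xto{g_0 \circ r} J$, so its projection at an element $b : k \proto \ell$ of $B$ is a cell $\pi_b \colon \Pi(\vec{n}(f,g)) \to n_b(f_k, g_\ell)$ whose left and right sides are $\pi_{f_0(k)}$ and $\pi_{g_0(\ell)}$. Composing this projection vertically with the given restriction $\res_b \colon n_b(f_k, g_\ell) \to n_b$ yields a cell into $n_b$ whose sides are $f_k \circ \pi_{f_0(k)}$ and $g_\ell \circ \pi_{g_0(\ell)}$. By the defining universal property of the structure arrows $\Pi(f_0,f)$ and $\Pi(g_0,g)$ (cf.\ \cref{prop:dbl-products-universal}), these coincide with $\pi_k \circ \Pi(f_0,f)$ and $\pi_\ell \circ \Pi(g_0,g)$, respectively. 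The resulting $B$-indexed family of cells into the $n_b$ is therefore shaped correctly to invoke the universal property of the product $\Pi\vec{n}$, producing a unique cell $\res \colon \Pi(\vec{n}(f,g)) \to \Pi\vec{n}$ of the form~\eqref{eq:product-restriction} whose vertical composite with $\pi_b$ recovers the vertical composite of $\res_b$ with $\pi_b$, for every $b$.

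To verify the universal property of this restriction, suppose given an arbitrary cell $\beta \colon m \to \Pi\vec{n}$ whose left and right sides factor as $\Pi(f_0,f) \circ h$ and $\Pi(g_0,g) \circ k$ for some arrows $h$ and $k$. For each $b \in B$, the vertical composite of $\beta$ with the projection $\pi_b$ of $\Pi\vec{n}$ is a cell into $n_b$ whose sides factor through $f_k$ and $g_\ell$ via $\pi_{f_0(k)} \circ h$ and $\pi_{g_0(\ell)} \circ k$, so the universal property of the restriction $\res_b$ produces a unique cell $\gamma_b \colon m \to n_b(f_k, g_\ell)$ with these sides factoring the composite through $\res_b$. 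The family $\{\gamma_b\}_{b \in B}$ is then appropriately shaped to invoke the universal property of $\Pi(\vec{n}(f,g))$, yielding a unique $\beta' \colon m \to \Pi(\vec{n}(f,g))$ with sides $h$ and $k$ whose vertical composite with $\pi_b$ is $\gamma_b$, for each $b$. A short calculation then shows that the vertical composite of $\res$ with $\beta'$ agrees with $\beta$ after composition with every projection $\pi_b$ of $\Pi\vec{n}$, so the universal property of $\Pi\vec{n}$ forces the two to coincide. Uniqueness of $\beta'$ is immediate from uniqueness at each stage of the construction.

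The main obstacle is notational rather than conceptual: one must carefully distinguish the projections of $\Pi(\vec{n}(f,g))$, whose sides $\pi_{f_0(k)}$ and $\pi_{g_0(\ell)}$ reflect its indexing span $I \xfrom{f_0 \circ \ell} B \xto{g_0 \circ r} J$, from those of $\Pi\vec{n}$, whose sides are $\pi_k$ and $\pi_\ell$. Once this bookkeeping is in place, the proof reduces to iteratively invoking the universal properties of products and restrictions, in the spirit of \cref{prop:extensions-fam}.
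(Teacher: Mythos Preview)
Your proof is correct and follows essentially the same approach as the paper: construct the cell via the universal property of $\Pi\vec{n}$ applied to the composites $\res_b \cdot \pi_b$, then verify the restriction universal property by decomposing via projections of $\Pi\vec{n}$, invoking the componentwise restrictions $\res_b$, and pairing via $\Pi(\vec{n}(f,g))$. The paper additionally frames the construction as applying the functor $\Pi$ to the restriction cell $(1_B,\res)$ in $\DblFamOp(\dbl{D})$ and remarks that the argument would be immediate were $\dbl{D}$ an equipment, but the actual verification is the same as yours.
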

\begin{proof}
  By the dual of Proposition \ref{prop:extensions-fam}, the family of restriction cells
  \begin{equation*}
    \begin{tikzcd}[column sep=large]
      {(I,\vec{x})} & {(J,\vec{y})} \\
      {(K,\vec{w})} & {(L,\vec{z})}
      \arrow[""{name=0, anchor=center, inner sep=0}, "{(B,\vec{n})}"', "\shortmid"{marking}, from=2-1, to=2-2]
      \arrow["{(f_0,f)}"', from=1-1, to=2-1]
      \arrow["{(g_0,g)}", from=1-2, to=2-2]
      \arrow[""{name=1, anchor=center, inner sep=0}, "{(B, \vec{n}(f,g))}", "\shortmid"{marking}, from=1-1, to=1-2]
      \arrow["{(1_B, \res)}"{description}, draw=none, from=1, to=0]
    \end{tikzcd}
  \end{equation*}
  is itself a restriction cell in $\DblFamOp(\dbl{D})$. We define the required
  cell \eqref{eq:product-restriction} to be the product of this family of
  restrictions. We must show that the product is itself a restriction.

  Now, \emph{if} the double category $\dbl{D}$ were an equipment, then
  $\DblFamOp(\dbl{D})$ would also be an equipment by the dual of
  Corollary \ref{cor:equipment-fam}. The result would then follow immediately since the
  product operation $\Pi: \DblFamOp(\dbl{D}) \to \dbl{D}$, like any lax functor
  between equipments, preserves restrictions \citep[\mbox{Proposition
    6.4}]{shulman2008}. However, we are \emph{not} assuming that $\dbl{D}$ is an
  equipment, and the cited proof does not go through when that assumption is
  dropped. We instead present a more specialized analysis that uses the
  universal properties of both products and restrictions.

  We must show that any cell $\alpha$ of the form on the left
  \begin{equation} \label{eq:product-restriction-univeral-prop}
    \begin{tikzcd}
      x & y \\
      {\Pi\vec{x}} & {\Pi\vec{y}} \\
      {\Pi\vec{w}} & {\Pi\vec{z}}
      \arrow["{\Pi(f_0,f)}"', from=2-1, to=3-1]
      \arrow["q"', from=1-1, to=2-1]
      \arrow[""{name=0, anchor=center, inner sep=0}, "m", "\shortmid"{marking}, from=1-1, to=1-2]
      \arrow[""{name=1, anchor=center, inner sep=0}, "{\Pi\vec{n}}"', "\shortmid"{marking}, from=3-1, to=3-2]
      \arrow["{\Pi(g_0,g)}", from=2-2, to=3-2]
      \arrow["r", from=1-2, to=2-2]
      \arrow["\alpha"{description}, draw=none, from=0, to=1]
    \end{tikzcd}
    \quad=\quad
    \begin{tikzcd}
      x & y \\
      {\Pi\vec{x}} & {\Pi\vec{y}} \\
      {\Pi\vec{w}} & {\Pi\vec{z}}
      \arrow["{\Pi(f_0,f)}"', from=2-1, to=3-1]
      \arrow["q"', from=1-1, to=2-1]
      \arrow["r", from=1-2, to=2-2]
      \arrow[""{name=0, anchor=center, inner sep=0}, "m", "\shortmid"{marking}, from=1-1, to=1-2]
      \arrow[""{name=1, anchor=center, inner sep=0}, "{\Pi\vec{n}}"', "\shortmid"{marking}, from=3-1, to=3-2]
      \arrow["{\Pi(g_0,g)}", from=2-2, to=3-2]
      \arrow[""{name=2, anchor=center, inner sep=0}, "{\Pi \vec{n}(f,g)}", "\shortmid"{marking}, from=2-1, to=2-2]
      \arrow["\res"{description}, draw=none, from=2, to=1]
      \arrow["{\exists!\ \bar\alpha}"{description, pos=0.4}, draw=none, from=0, to=2]
    \end{tikzcd}
  \end{equation}
  has a unique factorization through the claimed restriction cell as on the
  right. Decomposing the arrows $q = \pair{q_i}_{i \in I}$ and
  $r = \pair{r_j}_{j \in J}$ into pairings, the given cell $\alpha$ is the
  pairing of the cells
  \begin{equation*}
    \begin{tikzcd}[column sep=scriptsize]
      x & y \\
      {x_{f_0(k)}} & {y_{g_0(\ell)}} \\
      {w_k} & {z_\ell}
      \arrow["{q_{f_0(k)}}"', from=1-1, to=2-1]
      \arrow[""{name=0, anchor=center, inner sep=0}, "m", "\shortmid"{marking}, from=1-1, to=1-2]
      \arrow["{r_{g_0(\ell)}}", from=1-2, to=2-2]
      \arrow["{f_k}"', from=2-1, to=3-1]
      \arrow["{g_\ell}", from=2-2, to=3-2]
      \arrow[""{name=1, anchor=center, inner sep=0}, "{n_b}"', "\shortmid"{marking}, from=3-1, to=3-2]
      \arrow["{\alpha_b}"{description}, draw=none, from=0, to=1]
    \end{tikzcd}
    \quad\coloneqq\quad
    \begin{tikzcd}
      x & y \\
      {\Pi\vec{w}} & {\Pi\vec{z}} \\
      {w_k} & {z_\ell}
      \arrow[""{name=0, anchor=center, inner sep=0}, "m", "\shortmid"{marking}, from=1-1, to=1-2]
      \arrow[""{name=1, anchor=center, inner sep=0}, "{\Pi\vec{n}}", "\shortmid"{marking}, from=2-1, to=2-2]
      \arrow[from=1-1, to=2-1]
      \arrow[from=1-2, to=2-2]
      \arrow["{\pi_k}"', from=2-1, to=3-1]
      \arrow["{\pi_\ell}", from=2-2, to=3-2]
      \arrow[""{name=2, anchor=center, inner sep=0}, "{n_b}"', "\shortmid"{marking}, from=3-1, to=3-2]
      \arrow["\alpha"{description, pos=0.4}, draw=none, from=0, to=1]
      \arrow["{\pi_b}"{description}, draw=none, from=1, to=2]
    \end{tikzcd},
    \qquad (k \xproto{b} \ell): (K \xproto{B} L),
  \end{equation*}
  with respect to the product $\Pi\vec{n}: \Pi\vec{w} \proto \Pi\vec{z}$. For
  each $b: k \proto \ell$, the universal property of the restriction
  $n_b(f_k, g_\ell)$ then gives a unique factorization
  \begin{equation*}
    \begin{tikzcd}[column sep=scriptsize]
      x & y \\
      {x_{f_0(k)}} & {y_{g_0(\ell)}} \\
      {w_k} & {z_\ell}
      \arrow["{q_{f_0(k)}}"', from=1-1, to=2-1]
      \arrow[""{name=0, anchor=center, inner sep=0}, "m", "\shortmid"{marking}, from=1-1, to=1-2]
      \arrow["{r_{g_0(\ell)}}", from=1-2, to=2-2]
      \arrow["{f_k}"', from=2-1, to=3-1]
      \arrow["{g_\ell}", from=2-2, to=3-2]
      \arrow[""{name=1, anchor=center, inner sep=0}, "{n_b}"', "\shortmid"{marking}, from=3-1, to=3-2]
      \arrow["{\alpha_b}"{description}, draw=none, from=0, to=1]
    \end{tikzcd}
    \quad=\quad
    \begin{tikzcd}
      x & y \\
      {x_{f_0(k)}} & {y_{g_0(\ell)}} \\
      {w_k} & {z_\ell}
      \arrow["{q_{f_0(k)}}"', from=1-1, to=2-1]
      \arrow[""{name=0, anchor=center, inner sep=0}, "m", "\shortmid"{marking}, from=1-1, to=1-2]
      \arrow["{r_{g_0(\ell)}}", from=1-2, to=2-2]
      \arrow["{f_k}"', from=2-1, to=3-1]
      \arrow["{g_\ell}", from=2-2, to=3-2]
      \arrow[""{name=1, anchor=center, inner sep=0}, "{n_b}"', "\shortmid"{marking}, from=3-1, to=3-2]
      \arrow[""{name=2, anchor=center, inner sep=0}, "{n_b(f_k,g_\ell)}", "\shortmid"{marking}, from=2-1, to=2-2]
      \arrow["{\res_b}"{description}, draw=none, from=2, to=1]
      \arrow["{\exists!\ \bar\alpha_b}"{description, pos=0.4}, draw=none, from=0, to=2]
    \end{tikzcd}.
  \end{equation*}
  Finally, we define the needed cell $\bar\alpha$ to be the pairing of the cells
  $\bar\alpha_b$ with respect to the product
  $\Pi(\vec{n}(f,g)): \Pi\vec{x} \proto \Pi\vec{y}$, i.e., the unique solution
  to the equations
  \begin{equation*}
    \begin{tikzcd}
      x & y \\
      {\Pi\vec{x}} & {\Pi\vec{y}} \\
      {x_{f_0(k)}} & {y_{g_0(\ell)}}
      \arrow["q"', from=1-1, to=2-1]
      \arrow["r", from=1-2, to=2-2]
      \arrow[""{name=0, anchor=center, inner sep=0}, "m", "\shortmid"{marking}, from=1-1, to=1-2]
      \arrow[""{name=1, anchor=center, inner sep=0}, "{\Pi \vec{n}(f,g)}", "\shortmid"{marking}, from=2-1, to=2-2]
      \arrow["{\pi_{f_0(k)}}"', from=2-1, to=3-1]
      \arrow["{\pi_{g_0(\ell)}}", from=2-2, to=3-2]
      \arrow[""{name=2, anchor=center, inner sep=0}, "{n_b(f_k, g_\ell)}"', "\shortmid"{marking}, from=3-1, to=3-2]
      \arrow["\bar\alpha"{description, pos=0.4}, draw=none, from=0, to=1]
      \arrow["{\pi_b}"{description}, draw=none, from=1, to=2]
    \end{tikzcd}
    \quad=\quad
    \begin{tikzcd}
      x & y \\
      {x_{f_0(k)}} & {y_{g_0(\ell)}}
      \arrow["{q_{f_0(k)}}"', from=1-1, to=2-1]
      \arrow[""{name=0, anchor=center, inner sep=0}, "m", "\shortmid"{marking}, from=1-1, to=1-2]
      \arrow["{r_{g_0(\ell)}}", from=1-2, to=2-2]
      \arrow[""{name=1, anchor=center, inner sep=0}, "{n_b(f_k,g_\ell)}"', "\shortmid"{marking}, from=2-1, to=2-2]
      \arrow["{\bar\alpha_b}"{description}, draw=none, from=0, to=1]
    \end{tikzcd},
    \qquad (k \xproto{b} \ell): (K \xproto{B} L).
  \end{equation*}

  By the above relations, supplemented by the equation
  \begin{equation*}
    \begin{tikzcd}
      {\Pi\vec{x}} & {\Pi\vec{y}} \\
      {x_{f_0(k)}} & {y_{g_0(\ell)}} \\
      {w_k} & {z_\ell}
      \arrow["{f_k}"', from=2-1, to=3-1]
      \arrow["{g_\ell}", from=2-2, to=3-2]
      \arrow[""{name=0, anchor=center, inner sep=0}, "{n_b}"', "\shortmid"{marking}, from=3-1, to=3-2]
      \arrow[""{name=1, anchor=center, inner sep=0}, "{n_b(f_k,g_\ell)}", "\shortmid"{marking}, from=2-1, to=2-2]
      \arrow["{\pi_{f_0(k)}}"', from=1-1, to=2-1]
      \arrow["{\pi_{g_0(\ell)}}", from=1-2, to=2-2]
      \arrow[""{name=2, anchor=center, inner sep=0}, "{\Pi\vec{n}(f,g)}", "\shortmid"{marking}, from=1-1, to=1-2]
      \arrow["{\res_b}"{description}, draw=none, from=1, to=0]
      \arrow["{\pi_b}"{description, pos=0.4}, draw=none, from=2, to=1]
    \end{tikzcd}
    \quad=\quad
    \begin{tikzcd}
      {\Pi\vec{x}} & {\Pi\vec{y}} \\
      {\Pi\vec{w}} & {\Pi\vec{z}} \\
      {w_k} & {z_\ell}
      \arrow[""{name=0, anchor=center, inner sep=0}, "{n_b}"', "\shortmid"{marking}, from=3-1, to=3-2]
      \arrow[""{name=1, anchor=center, inner sep=0}, "{\Pi\vec{n}(f,g)}", "\shortmid"{marking}, from=1-1, to=1-2]
      \arrow["{\Pi(f_0,f)}"', from=1-1, to=2-1]
      \arrow["{\Pi(g_0,g)}", from=1-2, to=2-2]
      \arrow[""{name=2, anchor=center, inner sep=0}, "{\Pi\vec{n}}", "\shortmid"{marking}, from=2-1, to=2-2]
      \arrow["{\pi_k}"', from=2-1, to=3-1]
      \arrow["{\pi_\ell}", from=2-2, to=3-2]
      \arrow["\res"{description, pos=0.4}, draw=none, from=1, to=2]
      \arrow["{\pi_b}"{description}, draw=none, from=2, to=0]
    \end{tikzcd}
  \end{equation*}
  which is a consequence of the functorality of products, the post-composite of
  Equation \eqref{eq:product-restriction-univeral-prop} with the projection
  $\pi_b: \Pi\vec{n} \to n_b$ holds for every $b: k \proto \ell$, hence
  Equation \eqref{eq:product-restriction-univeral-prop} holds by the universal property of
  the product $\Pi\vec{n}$. This proves the existence part of the universal
  property of the restriction. Uniqueness follows from the uniqueness in each
  step of the above construction.
\end{proof}

We can go further to characterize products in a double category in terms of
restrictions and parallel products (Example \ref{ex:parallel-products}). The preceding
proposition gives one half of the proof.

\begin{theorem}[Double products via restrictions]
  \label{thm:dbl-products-characterization}
  A double category has lax (finite) products if and only if it has
  \begin{enumerate}[(i),nosep]
    \item lax (finite) parallel products, and
    \item restrictions of parallel products along structure arrows between
      products.
  \end{enumerate}
  Moreover, in this case, the products are iso-strong if and only if the
  parallel products are strong.
\end{theorem}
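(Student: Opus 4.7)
The plan is to prove the biconditional in two directions and then settle the iso-strength claim. The key construction throughout is
\begin{equation*}
  \Pi\vec{m} \ :=\ \bigl[\textstyle\prod_{a \in A} m_a\bigr]\bigl(\Pi(\ell), \Pi(r)\bigr)
\end{equation*}
for a proarrow family $(A, \vec{m}): (I, \vec{x}) \proto (J, \vec{y})$ with indexing span legs $\ell, r$.

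For the forward direction ($\Rightarrow$), condition (i) is trivial: parallel products are products indexed by identity spans $\id_I$. For (ii), view a parallel product $\prod_a m_a$ as $\Pi(\id_A, \vec{m})$ and the structure arrows as products of arrow families whose components are identities. Then \cref{prop:restrictions-product} yields the desired restriction, since the component niches in $\dbl{D}$ trivially admit restrictions, their sides being identities.

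For the reverse direction ($\Leftarrow$), assume (i) and (ii) and apply \cref{prop:dbl-products-universal}. Object products come from (i) at the $0$-level. For proarrow families, use the displayed construction: the parallel product exists by (i) and the restriction by (ii). The projection $\pi_a: \Pi\vec{m} \to m_a$ is the external pasting of the restriction cell with the parallel product projection. The universal property is verified by a two-step argument: a family $\alpha_a: p \to m_a$ with sides factoring through projections pairs up by (i) to a cell $p \to \prod_a m_a$ with sides factoring through $\Pi(\ell), \Pi(r)$, which then factors uniquely by (ii) through $\Pi\vec{m}$.

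For iso-strength, ($\Rightarrow$) is immediate since identity indexing spans have bijective legs. For ($\Leftarrow$), suppose parallel products are strong. The identity comparison unwinds to $\Pi(\id_{\vec{x}}) = [\prod_i \id_{x_i}](1_{\Pi\vec{x}}, 1_{\Pi\vec{x}}) = \prod_i \id_{x_i} \cong \id_{\Pi\vec{x}}$, using that restriction along identity arrows is the identity cell together with strong parallel products. For the composition comparison, suppose the legs $r_A: A \to J$ and $\ell_B: B \to J$ are bijections, so the pullback $C := A \times_J B$ is bijective with both $A$ and $B$. Strong parallel products give $\prod_c (m \odot n)_c \cong \prod_a m_a \odot \prod_b n_b$ over the middle $\Pi\vec{y}$, so $\Pi(\vec{m} \odot \vec{n})$ is canonically identified with the restriction of $\prod_a m_a \odot \prod_b n_b$ along $(\Pi(\ell_A), \Pi(r_B))$. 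The main obstacle is showing that $\Pi\vec{m} \odot \Pi\vec{n}$ is this same restriction, i.e., that the external composite of the two defining restriction cells is itself cartesian---that restrictions distribute over composition here. This is nontrivial without a full equipment structure, but I would verify it by chasing universal properties: any cell into $\prod_a m_a \odot \prod_b n_b$ with sides factoring through $(\Pi(\ell_A), \Pi(r_B))$ transfers under the strong-parallel isomorphism and the parallel-product universal property to a family of cells into composites $(m \odot n)_c = m_a \odot n_{b(a)}$, which paste from the projections of $\Pi\vec{m} \odot \Pi\vec{n}$ to yield the required unique factorization. The canonical comparison $\Pi_{\vec{m},\vec{n}}$ is then the induced isomorphism between two restrictions of the same proarrow along the same arrows.
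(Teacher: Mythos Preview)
Your forward direction, reverse direction, and the easy half of the iso-strength claim are correct and match the paper's argument closely: both use \cref{prop:restrictions-product} for the forward implication and the two-step pairing/restriction factorization for the converse.

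The gap is in the hard half of the iso-strength claim. Your proposed verification that the external composite of the two restriction cells is itself cartesian does not go through as sketched. The step ``which paste from the projections of $\Pi\vec{m} \odot \Pi\vec{n}$ to yield the required unique factorization'' is the problem: from a family of cells $\alpha_c : p \to m_a \odot n_b$ you need to produce a cell $p \to \Pi\vec{m} \odot \Pi\vec{n}$, but there is no universal property allowing you to build a cell into a \emph{composite} proarrow from data of this kind. The universal properties of $\Pi\vec{m}$ and $\Pi\vec{n}$ separately do not help, since a cell into $m_a \odot n_b$ cannot be decomposed into cells into $m_a$ and $n_b$. What you have actually shown is that the family $(\alpha_c)$ determines a unique cell into $\Pi(\vec{m} \odot \vec{n})$---but that is the codomain of the comparison $\Pi_{\vec{m},\vec{n}}$, not its domain, so the argument is circular.

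The paper avoids this by a different mechanism. First it reduces (via isomorphisms of indexing spans) to the case where the inner legs are \emph{identities}, so the two restrictions are along $(\Pi(f_0), 1)$ and $(1, \Pi(g_0))$ respectively. Then, crucially, it invokes \cref{cor:structure-proarrows}: since strong parallel products are in particular normal, the structure arrows $\Pi(f_0)$ and $\Pi(g_0)$ have companions and conjoints. Using Shulman's formula expressing restrictions via companions and conjoints, the two restriction cells become $\Pi(f_0)_! \odot \Pi\vec{m}$ and $\Pi\vec{n} \odot \Pi(g_0)^*$, whose external composite is manifestly again of the form $\Pi(f_0)_! \odot (-) \odot \Pi(g_0)^*$ and hence a restriction of the middle composite $\Pi\vec{m} \odot \Pi\vec{n} \cong \Pi(\vec{m} \odot \vec{n})$ of parallel products. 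The companion/conjoint machinery is what lets you compose the restriction cells without needing to decompose cells into composites; your direct chase cannot substitute for it.
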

\begin{proof}
  For the proof of the first characterization, let
  $(A,\vec{m}): (I,\vec{x}) \proto (J,\vec{y})$ be any family of proarrows in a
  double category $\dbl{D}$, indexed by the span $I \xfrom{f_0} A \xto{g_0} J$.

  Suppose $\dbl{D}$ has lax products. Then the forward implication follows
  immediately from Proposition \ref{prop:restrictions-product}. However, it will
  be instructive to unpack this further. For every $a \in A$, the proarrow
  $m_a: x_{f_0(a)} \proto y_{g_0(a)}$ trivially has a restriction along identity
  arrows, namely itself. Thus, by Proposition \ref{prop:restrictions-product},
  the product $\Pi\vec{m}: \Pi\vec{x} \proto \Pi\vec{y}$ is the restriction of the
  parallel product $\Pi\vec{m}: \Pi f_0^* \vec{x} \to \Pi g_0^* \vec{y}$ along the
  structure arrows $\Pi(f_0)$ and $\Pi(g_0)$:
  \begin{equation} \label{eq:parallel-product-restriction}
    \begin{tikzcd}
      {\Pi\vec{x}} & {\Pi\vec{y}} \\
      {\Pi f_0^* \vec{x}} & {\Pi g_0^* \vec{y}}
      \arrow["{\Pi(f_0)}"', from=1-1, to=2-1]
      \arrow["{\Pi(g_0)}", from=1-2, to=2-2]
      \arrow[""{name=0, anchor=center, inner sep=0}, "{\Pi\vec{m}}"', "\shortmid"{marking}, from=2-1, to=2-2]
      \arrow[""{name=1, anchor=center, inner sep=0}, "{\Pi\vec{m}}", "\shortmid"{marking}, from=1-1, to=1-2]
      \arrow["\res"{description}, draw=none, from=1, to=0]
    \end{tikzcd}.
  \end{equation}
  We have shown that an \emph{arbitrary} product in $\dbl{D}$ admits a canonical
  decomposition as the restriction of a parallel product along structure arrows.

  Now suppose $\dbl{D}$ is a double category with lax parallel products and
  restrictions of these along structure arrows. We show that, conversely, the
  proarrow $\Pi\vec{m}: \Pi\vec{x} \proto \Pi\vec{y}$ now \emph{defined} to be the
  restriction \eqref{eq:parallel-product-restriction} is a product of the family
  $(A,\vec{m}): (I,\vec{x}) \proto (J,\vec{y})$ when it is equipped with the
  projection cells
  \begin{equation*}
    \begin{tikzcd}
      {\Pi\vec{x}} & {\Pi\vec{y}} \\
      {x_i} & {y_j}
      \arrow[""{name=0, anchor=center, inner sep=0}, "{\Pi\vec{m}}", "\shortmid"{marking}, from=1-1, to=1-2]
      \arrow["{\pi_i}"', from=1-1, to=2-1]
      \arrow["{\pi_j}", from=1-2, to=2-2]
      \arrow[""{name=1, anchor=center, inner sep=0}, "{m_a}"', "\shortmid"{marking}, from=2-1, to=2-2]
      \arrow["{\pi_a}"{description}, draw=none, from=0, to=1]
    \end{tikzcd}
    \quad\coloneqq\quad
    \begin{tikzcd}
      {\Pi\vec{x}} & {\Pi\vec{y}} \\
      {\Pi f_0^* \vec{x}} & {\Pi g_0^* \vec{y}} \\
      {x_{f_0(a)}} & {y_{g_0(a)}}
      \arrow[""{name=0, anchor=center, inner sep=0}, "{\Pi\vec{m}}", "\shortmid"{marking}, from=1-1, to=1-2]
      \arrow[""{name=1, anchor=center, inner sep=0}, "{m_a}"', "\shortmid"{marking}, from=3-1, to=3-2]
      \arrow["{\Pi(f_0)}"', from=1-1, to=2-1]
      \arrow["{\Pi(g_0)}", from=1-2, to=2-2]
      \arrow["{\pi_a}"', from=2-1, to=3-1]
      \arrow["{\pi_a}", from=2-2, to=3-2]
      \arrow[""{name=2, anchor=center, inner sep=0}, "{\Pi\vec{m}}", "\shortmid"{marking}, from=2-1, to=2-2]
      \arrow["\res"{description, pos=0.4}, draw=none, from=0, to=2]
      \arrow["{\pi_a}"{description}, draw=none, from=2, to=1]
    \end{tikzcd},
    \qquad (i \xproto{a} j): (I \xproto{A} J).
  \end{equation*}
  To prove the universal property, suppose given a family of cells
  $\inlineCell{w}{z}{x_i}{y_j}{n}{m_a}{f_i}{g_j}{\alpha_a}$ indexed by elements
  \mbox{$a: i \proto j$}. Abbreviating the families $h_a \coloneqq f_{f_0(a)}$ and
  $k_a \coloneqq g_{g_0(a)}$, we have equations
  \begin{equation*}
    \Pi(f_0) \circ \pair{f_i}_{i \in I} = \pair{h_a}_{a \in A}
    \qquad\text{and}\qquad
    \Pi(g_0) \circ \pair{g_j}_{j \in J} = \pair{k_a}_{a \in A}.
  \end{equation*}
  We can therefore form the pairing with respect to the parallel product as on
  the left
  \begin{equation*}
    \begin{tikzcd}
      w & z \\
      {\Pi f_0^* \vec{x}} & {\Pi g_0^* \vec{y}}
      \arrow[""{name=0, anchor=center, inner sep=0}, "{\Pi\vec{m}}"', "\shortmid"{marking}, from=2-1, to=2-2]
      \arrow["{\pair{h_a}_{a \in A}}"', from=1-1, to=2-1]
      \arrow["{\pair{k_a}_{a \in A}}", from=1-2, to=2-2]
      \arrow[""{name=1, anchor=center, inner sep=0}, "n", "\shortmid"{marking}, from=1-1, to=1-2]
      \arrow["{\pair{\alpha_a}_{a \in A}}"{description}, draw=none, from=1, to=0]
    \end{tikzcd}
    \quad=\quad
    \begin{tikzcd}
      w & z \\
      {\Pi\vec{x}} & {\Pi\vec{y}} \\
      {\Pi f_0^* \vec{x}} & {\Pi g_0^* \vec{y}}
      \arrow[""{name=0, anchor=center, inner sep=0}, "{\Pi\vec{m}}"', "\shortmid"{marking}, from=3-1, to=3-2]
      \arrow[""{name=1, anchor=center, inner sep=0}, "n", "\shortmid"{marking}, from=1-1, to=1-2]
      \arrow["{\pair{f_i}_{i \in I}}"', from=1-1, to=2-1]
      \arrow["{\pair{g_j}_{j \in J}}", from=1-2, to=2-2]
      \arrow[""{name=2, anchor=center, inner sep=0}, "{\Pi\vec{m}}", "\shortmid"{marking}, from=2-1, to=2-2]
      \arrow["{\Pi(f_0)}"', from=2-1, to=3-1]
      \arrow["{\Pi(g_0)}", from=2-2, to=3-2]
      \arrow["{\exists!\ \alpha}"{description, pos=0.4}, draw=none, from=1, to=2]
      \arrow["\res"{description}, draw=none, from=2, to=0]
    \end{tikzcd}.
  \end{equation*}
  and then factorize it through the restriction cell to obtain a unique cell
  $\alpha$ as on the right. By construction, the cell $\alpha$ is a solution to
  the equations
  \begin{equation*}
    \begin{tikzcd}
      w & z \\
      {\Pi\vec{x}} & {\Pi\vec{y}} \\
      {x_i} & {y_j}
      \arrow[""{name=0, anchor=center, inner sep=0}, "n", "\shortmid"{marking}, from=1-1, to=1-2]
      \arrow["{\pair{f_i}_{i \in I}}"', from=1-1, to=2-1]
      \arrow["{\pair{g_j}_{j \in J}}", from=1-2, to=2-2]
      \arrow[""{name=1, anchor=center, inner sep=0}, "{\Pi\vec{m}}", "\shortmid"{marking}, from=2-1, to=2-2]
      \arrow["{\pi_i}"', from=2-1, to=3-1]
      \arrow["{\pi_j}", from=2-2, to=3-2]
      \arrow[""{name=2, anchor=center, inner sep=0}, "{m_a}"', "\shortmid"{marking}, from=3-1, to=3-2]
      \arrow["\alpha"{description, pos=0.4}, draw=none, from=0, to=1]
      \arrow["{\pi_a}"{description}, draw=none, from=1, to=2]
    \end{tikzcd}
    \quad=\quad
    \begin{tikzcd}
      w & z \\
      {x_i} & {y_j}
      \arrow[""{name=0, anchor=center, inner sep=0}, "n", "\shortmid"{marking}, from=1-1, to=1-2]
      \arrow[""{name=1, anchor=center, inner sep=0}, "{m_a}"', "\shortmid"{marking}, from=2-1, to=2-2]
      \arrow["{f_i}"', from=1-1, to=2-1]
      \arrow["{g_j}", from=1-2, to=2-2]
      \arrow["{\alpha_a}"{description}, draw=none, from=0, to=1]
    \end{tikzcd},
    \qquad (i \xproto{a} j): (I \xproto{A} J),
  \end{equation*}
  and $\alpha$ is the unique solution by the uniqueness property above. Using
  the description of lax products as universal arrows in
  Proposition \ref{prop:dbl-products-universal}, we conclude that $\dbl{D}$ has lax
  products. This completes the proof characterizing lax products.

  To characterize iso-strong products, we need only prove the reverse
  implication. Suppose that a double category $\dbl{D}$ has strong parallel
  products and restrictions of these along structure arrows. Then, by the
  foregoing, $\dbl{D}$ already has lax products, and the products are normal by
  assumption, so we just need to establish the iso-strong condition on
  composites (Definition \ref{def:iso-strong-products}).

  First, notice that the general case of composing along bijective legs can be
  reduced to the special case of composing along legs that are identities. Given
  indexing spans $I \xfrom{\ell_A} A \xto{r_A} J$ and
  $J \xfrom{\ell_B} A \xto{r_B} K$, where $r_A$ and $\ell_B$ are bijections as in
  Definition \ref{def:iso-strong-products}, the maps of spans
  \begin{equation*}
    \begin{tikzcd}
      I & A & J \\
      I & J & J
      \arrow["{\ell_A}"', from=1-2, to=1-1]
      \arrow["{r_A}", from=1-2, to=1-3]
      \arrow[Rightarrow, no head, from=1-3, to=2-3]
      \arrow[Rightarrow, no head, from=2-2, to=2-3]
      \arrow[Rightarrow, no head, from=1-1, to=2-1]
      \arrow["{r_A}"', from=1-2, to=2-2]
      \arrow["{\ell_a \circ r_A^{-1}}", from=2-2, to=2-1]
    \end{tikzcd}
    \qquad\text{and}\qquad
    \begin{tikzcd}
      J & B & K \\
      J & J & K
      \arrow["{\ell_B}"', from=1-2, to=1-1]
      \arrow["{r_B}", from=1-2, to=1-3]
      \arrow[Rightarrow, no head, from=1-1, to=2-1]
      \arrow[Rightarrow, no head, from=2-2, to=2-1]
      \arrow[Rightarrow, no head, from=1-3, to=2-3]
      \arrow["{r_B \circ \ell_B^{-1}}"', from=2-2, to=2-3]
      \arrow["{\ell_B}", from=1-2, to=2-2]
    \end{tikzcd}
  \end{equation*}
  are isomorphisms. Thus, they induce isomorphisms in $\DblFamOp(\dbl{D})$
  between families indexed by those spans and, by the functorality of products,
  between products of such families. By the naturality axiom for laxators, the
  composition comparisons for products indexed by the original spans are
  invertible precisely when the comparisons for products indexed by the specials
  spans $I \from J = J$ and $J = J \to K$ are invertible.

  So assume that we have families of proarrows
  $(A,\vec{m}): (I,\vec{x}) \proto (A,\vec{y})$ and
  $(A,\vec{n}): (A,\vec{y}) \proto (J,\vec{z})$ indexed by spans
  $I \xfrom{f_0} A = A$ and $A = A \xto{g_0} J$, where $f_0$ and $g_0$ are
  arbitrary functions. We can express the restrictions giving the products of
  these families as
  \begin{equation*}
    \begin{tikzcd}
      {\Pi\vec{x}} & {\Pi f_0^*\vec{x}} & {\Pi\vec{y}} \\
      {\Pi f_0^* \vec{x}} & {\Pi f_0^* \vec{x}} & {\Pi\vec{y}}
      \arrow["{\Pi(f_0)}"', from=1-1, to=2-1]
      \arrow[""{name=0, anchor=center, inner sep=0}, "{\Pi(f_0)_!}", "\shortmid"{marking}, from=1-1, to=1-2]
      \arrow[Rightarrow, no head, from=1-2, to=2-2]
      \arrow[""{name=1, anchor=center, inner sep=0}, "\shortmid"{marking}, Rightarrow, no head, from=2-1, to=2-2]
      \arrow[""{name=2, anchor=center, inner sep=0}, "{\Pi\vec{m}}"', "\shortmid"{marking}, from=2-2, to=2-3]
      \arrow[""{name=3, anchor=center, inner sep=0}, "{\Pi\vec{m}}", "\shortmid"{marking}, from=1-2, to=1-3]
      \arrow[Rightarrow, no head, from=1-3, to=2-3]
      \arrow["\res"{description}, draw=none, from=0, to=1]
      \arrow["1"{description}, draw=none, from=3, to=2]
    \end{tikzcd}
    \qquad\text{and}\qquad
    \begin{tikzcd}
      {\Pi\vec{y}} & {\Pi g_0^* \vec{z}} & {\Pi\vec{z}} \\
      {\Pi\vec{y}} & {\Pi g_0^*\vec{z}} & {\Pi g_0^*\vec{z}}
      \arrow[Rightarrow, no head, from=1-1, to=2-1]
      \arrow[""{name=0, anchor=center, inner sep=0}, "{\Pi\vec{n}}", "\shortmid"{marking}, from=1-1, to=1-2]
      \arrow[""{name=1, anchor=center, inner sep=0}, "{\Pi\vec{n}}"', "\shortmid"{marking}, from=2-1, to=2-2]
      \arrow[Rightarrow, no head, from=1-2, to=2-2]
      \arrow[""{name=2, anchor=center, inner sep=0}, "\shortmid"{marking}, Rightarrow, no head, from=2-2, to=2-3]
      \arrow[""{name=3, anchor=center, inner sep=0}, "{\Pi(g_0)^*}", "\shortmid"{marking}, from=1-2, to=1-3]
      \arrow["{\Pi(g_0)}", from=1-3, to=2-3]
      \arrow["1"{description}, draw=none, from=0, to=1]
      \arrow["\res"{description}, draw=none, from=3, to=2]
    \end{tikzcd},
  \end{equation*}
  using Corollary \ref{cor:structure-proarrows} and the formula for general restrictions
  in terms of companions and conjoints \citep[\mbox{Equation 4.7}]{shulman2008}.
  But, by the same formula, the composite cell
  \begin{equation*}
    \begin{tikzcd}
      {\Pi\vec{x}} & {\Pi f_0^*\vec{x}} & {\Pi\vec{y}} & {\Pi g_0^* \vec{z}} & {\Pi\vec{z}} \\
      {\Pi f_0^* \vec{x}} & {\Pi f_0^* \vec{x}} & {\Pi\vec{y}} & {\Pi g_0^*\vec{z}} & {\Pi g_0^* \vec{z}}
      \arrow["{\Pi(f_0)}"', from=1-1, to=2-1]
      \arrow[""{name=0, anchor=center, inner sep=0}, "{\Pi(f_0)_!}", "\shortmid"{marking}, from=1-1, to=1-2]
      \arrow[Rightarrow, no head, from=1-2, to=2-2]
      \arrow[""{name=1, anchor=center, inner sep=0}, "\shortmid"{marking}, Rightarrow, no head, from=2-1, to=2-2]
      \arrow["{\Pi\vec{m}}"', "\shortmid"{marking}, from=2-2, to=2-3]
      \arrow["{\Pi\vec{m}}", "\shortmid"{marking}, from=1-2, to=1-3]
      \arrow["{\Pi\vec{n}}"', "\shortmid"{marking}, from=2-3, to=2-4]
      \arrow["{\Pi\vec{m}}", "\shortmid"{marking}, from=1-3, to=1-4]
      \arrow["1"{description}, draw=none, from=1-3, to=2-3]
      \arrow[Rightarrow, no head, from=1-4, to=2-4]
      \arrow[""{name=2, anchor=center, inner sep=0}, "\shortmid"{marking}, Rightarrow, no head, from=2-4, to=2-5]
      \arrow["{\Pi(g_0)}", from=1-5, to=2-5]
      \arrow[""{name=3, anchor=center, inner sep=0}, "{\Pi(g_0)^*}", "\shortmid"{marking}, from=1-4, to=1-5]
      \arrow["\res"{description}, draw=none, from=0, to=1]
      \arrow["\res"{description}, draw=none, from=3, to=2]
    \end{tikzcd}
  \end{equation*}
  is a restriction of the composite of parallel products
  $\Pi\vec{m} \odot \Pi\vec{n} \cong \Pi(\vec{m} \odot \vec{n}): \Pi f_0^* \vec{x} \proto \Pi g_0^* \vec{z}$.
  This completes the proof of the iso-strong condition and the theorem.
\end{proof}

\begin{corollary}
  Any precartesian equipment has lax finite products, and any cartesian
  equipment has iso-strong finite products.
\end{corollary}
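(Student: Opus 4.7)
The plan is to obtain the corollary as an immediate consequence of the characterization theorem \cref{thm:dbl-products-characterization}, together with the definitions of (pre)cartesian and equipment. The theorem reduces the existence of lax or iso-strong finite products to two ingredients: existence of lax or strong finite parallel products, and existence of restrictions of those parallel products along structure arrows between products. Both ingredients are directly furnished by the hypotheses.

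More precisely, I would first recall, following \cref{ex:parallel-products}, that a precartesian double category is exactly a double category with lax finite parallel products, and a cartesian double category is exactly one with strong finite parallel products. Thus a (pre)cartesian equipment already supplies ingredient (i) of \cref{thm:dbl-products-characterization}. For ingredient (ii), I would invoke the definition of an equipment: every niche can be filled by a restriction cell. In particular, for any parallel product $\Pi\vec{m}: \Pi f_0^* \vec{x} \proto \Pi g_0^* \vec{y}$ and any pair of structure arrows $\Pi(f_0): \Pi\vec{x} \to \Pi f_0^*\vec{x}$ and $\Pi(g_0): \Pi\vec{y} \to \Pi g_0^*\vec{y}$, the resulting niche has a restriction.

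Applying \cref{thm:dbl-products-characterization} then gives lax finite products in the precartesian case. In the cartesian case, the parallel products are strong, so the ``moreover'' clause of the theorem yields iso-strong finite products. The proof is essentially a two-line verification, so the only ``step'' is citing the theorem once the two hypotheses have been matched up with its antecedents; there is no real obstacle to overcome beyond this bookkeeping.
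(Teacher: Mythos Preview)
Your proposal is correct and matches the paper's approach exactly: the corollary is stated immediately after \cref{thm:dbl-products-characterization} without a separate proof, so it is intended as a direct application of that theorem, with the (pre)cartesian hypothesis supplying condition (i) via \cref{ex:parallel-products} and the equipment hypothesis supplying condition (ii).
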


Since the double categories of spans in a finitely complete category, and of
matrices valued in a distributive category, are known to be cartesian
equipments, we obtain independent proofs of
Propositions \ref{prop:iso-strong-products-span} and \ref{prop:iso-strong-products-mat}. Nevertheless,
the proofs in Sections \ref{sec:products} and \ref{sec:iso-strong-products} are useful in giving
unbiased formulas for products in these double categories. On the other hand, we
have not directly shown that double categories of relations or of profunctors
have iso-strong finite products. That follows from the corollary since these
double categories are known to be cartesian equipments.

We emphasize the converse to the corollary is not true: a double category with
products, even strong ones, need not be an equipment. Here is a simple
counterexample. Let $\dbl{D}$ be the strict double category freely generated by
a nontrival arrow $f: x \to y$. Then $f$ does not have a companion since
$\dbl{D}$ does not even have a proarrow $x \proto y$. Now the double category
$\DblFamOp(\dbl{D})$ has strong products, being the free product completion of
$\dbl{D}$ (Theorem \ref{thm:free-dbl-coproduct-completion}), yet the image of $f$ under
the embedding $\Delta: \dbl{D} \to \DblFamOp(\dbl{D})$ still does not have a
companion. Again, $\DblFamOp(\dbl{D})$ does not even have a proarrow
$\Delta x \proto \Delta y$. So the concepts of a cartesian equipment and a
double category with finite products do not coincide.

\section{Lax functors between double categories with products}
\label{sec:lax-functors}

Though related to cartesian equipments, as we have just seen, double categories
with finite products turn out to be better behaved as domains of lax functors. A
lax functor between equipments preserves restrictions, while dually a colax
functor between equipments preserves extensions \citep[\mbox{Proposition
  6.4}]{shulman2008}. Only a pseudo double functor preserves the full range of
operations available in an equipment. Specifically, the axioms of a lax functor
between equipments seem to only weakly constrain the laxators for general
composites of companions and conjoints. By contrast, although a generic double
category with iso-strong products has certain companions and conjoints, the
corresponding laxators are well controlled. The reason is that these special
restrictions are actually products, as shown in Section \ref{sec:restrictions}, and lax
functors interact well with products, as we will show in this section.

A lax functor $F$ can be defined to preserve products when its opposite, the
colax functor $F^\op$, preserves coproducts
(Definition \ref{def:preserve-dbl-coproducts}). Or, more directly:

\begin{definition}[Preservation of double products]
  \label{def:preserve-dbl-products}
  Let $\dbl{D}$ and $\dbl{E}$ be double categories with lax products. A lax
  functor $F: \dbl{D} \to \dbl{E}$ \define{preserves products} if the morphism
  of adjunctions
  \begin{equation*}
    \begin{tikzcd}
      {\dbl{D}} & {\DblFamOp(\dbl{D})} \\
      {\dbl{E}} & {\DblFamOp(\dbl{E})}
      \arrow[""{name=0, anchor=center, inner sep=0}, "{\Delta \dashv \Pi}", "\shortmid"{marking}, from=1-1, to=1-2]
      \arrow["F"', from=1-1, to=2-1]
      \arrow["{\DblFamOp(F)}", from=1-2, to=2-2]
      \arrow[""{name=1, anchor=center, inner sep=0}, "{\Delta \dashv \Pi}"', "\shortmid"{marking}, from=2-1, to=2-2]
      \arrow["{(1,\Phi)}"{description}, draw=none, from=0, to=1]
    \end{tikzcd}
  \end{equation*}
  is strong.
\end{definition}

To elaborate, the components of the natural transformation
\begin{equation*}
  \begin{tikzcd}
    {\DblFamOp(\dbl{D})} & {\dbl{D}} \\
    {\DblFamOp(\dbl{E})} & {\dbl{E}}
    \arrow["F", from=1-2, to=2-2]
    \arrow["{\Pi_{\dbl{D}}}", from=1-1, to=1-2]
    \arrow["{\DblFamOp(F)}"', from=1-1, to=2-1]
    \arrow["{\Pi_{\dbl{E}}}"', from=2-1, to=2-2]
    \arrow["\Phi"', shorten <=7pt, shorten >=7pt, Rightarrow, from=1-2, to=2-1]
  \end{tikzcd}
\end{equation*}
are, at the family of objects $(I,\vec{x})$ in $\dbl{D}$, the usual canonical
comparison $\Phi_{\vec{x}}: F \Pi \vec{x} \to \Pi F \vec{x}$ between products,
and at the family of proarrows $(A,\vec{m}): (I,\vec{x}) \proto (J,\vec{y})$
in $\dbl{D}$, the comparison cell
$\Phi_{\vec{m}}: F \Pi \vec{m} \to \Pi F \vec{m}$ that is the unique solution to
the equations
\begin{equation*}
  \begin{tikzcd}
    {F \Pi \vec{x}} & {F \Pi \vec{y}} \\
    {\Pi F \vec{x}} & {\Pi F \vec{y}} \\
    {Fx_i} & {Fy_j}
    \arrow[""{name=0, anchor=center, inner sep=0}, "{F\Pi \vec{m}}", "\shortmid"{marking}, from=1-1, to=1-2]
    \arrow["{\Phi_{\vec{x}}}"', from=1-1, to=2-1]
    \arrow["{\Phi_{\vec{y}}}", from=1-2, to=2-2]
    \arrow[""{name=1, anchor=center, inner sep=0}, "{\Pi F \vec{m}}", "\shortmid"{marking}, from=2-1, to=2-2]
    \arrow["{\pi_i}"', from=2-1, to=3-1]
    \arrow["{\pi_j}", from=2-2, to=3-2]
    \arrow[""{name=2, anchor=center, inner sep=0}, "{F m_a}"', "\shortmid"{marking}, from=3-1, to=3-2]
    \arrow["{\pi_a}"{description}, draw=none, from=1, to=2]
    \arrow["{\Phi_{\vec{m}}}"{description, pos=0.4}, draw=none, from=0, to=1]
  \end{tikzcd}
  \quad=\quad
  \begin{tikzcd}
    {F \Pi \vec{x}} & {F \Pi \vec{y}} \\
    {Fx_i} & {Fy_j}
    \arrow[""{name=0, anchor=center, inner sep=0}, "{F\Pi \vec{m}}", "\shortmid"{marking}, from=1-1, to=1-2]
    \arrow[""{name=1, anchor=center, inner sep=0}, "{F m_a}"', "\shortmid"{marking}, from=2-1, to=2-2]
    \arrow["{F\pi_i}"', from=1-1, to=2-1]
    \arrow["{F\pi_j}", from=1-2, to=2-2]
    \arrow["{F\pi_a}"{description}, draw=none, from=0, to=1]
  \end{tikzcd},
  \qquad (i \xproto{a} j): (I \xproto{A} J).
\end{equation*}
A lax functor $F: \dbl{D} \to \dbl{E}$ preserves products just when all
components $\Phi_{\vec{x}}$ and $\Phi_{\vec{m}}$ are isomorphisms in $\dbl{E}_0$ and
$\dbl{E}_1$, respectively.

\begin{example}[Preserving products of spans]
  Recalling Example \ref{ex:preserve-coproducts-span}, any pullback-preserving functor
  $F: \cat{C} \to \cat{D}$ between categories with pullbacks induces a (pseudo)
  double functor
  \begin{equation*}
    \Span{F}: \Span{\cat{C}} \to \Span{\cat{D}}
  \end{equation*}
  between double categories of spans. Suppose that $\cat{C}$ and $\cat{D}$ also
  have products, i.e., are complete categories, so that $\Span{\cat{C}}$ and
  $\Span{\cat{D}}$ have iso-strong products. Then $\Span{F}$ preserves products
  if and only if $F$ preserves products, i.e., is a continuous functor.
\end{example}

Let $F: \dbl{D} \to \dbl{E}$ be a lax functor between double categories. To state
the next lemma, we need an explicit description of the comparisons for the lax
functor
\begin{equation*}
  \DblFamOp(F): \DblFamOp(\dbl{D}) \to \DblFamOp(\dbl{E}),
\end{equation*}
given by dual of Construction \ref{def:dbl-fam-functor}, that applies $F$
elementwise to families. The laxator of $\DblFamOp(F)$ at a pair of proarrow
families
$(I,\vec{x}) \xproto{(A,\vec{m})} (J,\vec{y}) \xproto{(B,\vec{n})} (K,\vec{z})$
in $\dbl{D}$, denoted $F_{\vec{m},\vec{n}}$ on the left
\begin{equation*}
  \begin{tikzcd}[column sep=scriptsize]
    {F\vec{x}} & {F\vec{y}} & {F\vec{z}} \\
    {F\vec{x}} && {F\vec{z}}
    \arrow[""{name=0, anchor=center, inner sep=0}, "{F(\vec{m} \odot \vec{n})}"', "\shortmid"{marking}, from=2-1, to=2-3]
    \arrow[Rightarrow, no head, from=1-1, to=2-1]
    \arrow[Rightarrow, no head, from=1-3, to=2-3]
    \arrow["{F\vec{m}}", "\shortmid"{marking}, from=1-1, to=1-2]
    \arrow["{F\vec{n}}", "\shortmid"{marking}, from=1-2, to=1-3]
    \arrow["{F_{\vec{m},\vec{n}}}"{description, pos=0.4}, draw=none, from=1-2, to=0]
  \end{tikzcd}
  \quad\leftrightsquigarrow\quad
  \begin{tikzcd}[column sep=scriptsize]
    {F x_i} & {F y_j} & {F z_k} \\
    {F x_i} && {F z_k}
    \arrow[""{name=0, anchor=center, inner sep=0}, "{F(m_a \odot n_b)}"', "\shortmid"{marking}, from=2-1, to=2-3]
    \arrow[Rightarrow, no head, from=1-1, to=2-1]
    \arrow[Rightarrow, no head, from=1-3, to=2-3]
    \arrow["{F m_a}", "\shortmid"{marking}, from=1-1, to=1-2]
    \arrow["{F n_b}", "\shortmid"{marking}, from=1-2, to=1-3]
    \arrow["{F_{m_a,n_b}}"{description, pos=0.4}, draw=none, from=1-2, to=0]
  \end{tikzcd},
  \quad (i \xproto{a} j \xproto{b} k) : (I \xproto{A} J \xproto{B} K),
\end{equation*}
is the family of laxators of $F$ on the right, indexed by pairs
$(a,b) \in A \times_J B$. Similarly, the unitor of $\DblFamOp(F)$ at an object
family $(I,\vec{x})$ in $\dbl{D}$, denoted $F_{\vec{x}}$ on the left
\begin{equation*}
  \begin{tikzcd}
    {F\vec{x}} & {F\vec{x}} \\
    {F\vec{x}} & {F\vec{x}}
    \arrow[Rightarrow, no head, from=1-1, to=2-1]
    \arrow[""{name=0, anchor=center, inner sep=0}, "{\id_{F\vec{x}}}", "\shortmid"{marking}, from=1-1, to=1-2]
    \arrow[Rightarrow, no head, from=1-2, to=2-2]
    \arrow[""{name=1, anchor=center, inner sep=0}, "{F \id_{\vec{x}}}"', "\shortmid"{marking}, from=2-1, to=2-2]
    \arrow["{F_{\vec{x}}}"{description}, draw=none, from=0, to=1]
  \end{tikzcd}
  \qquad\leftrightsquigarrow\qquad
  \begin{tikzcd}
    {Fx_i} & {Fx_i} \\
    {Fx_i} & {Fx_i}
    \arrow[""{name=0, anchor=center, inner sep=0}, "{\id_{Fx_i}}", "\shortmid"{marking}, from=1-1, to=1-2]
    \arrow[""{name=1, anchor=center, inner sep=0}, "{F \id_{x_i}}"', "\shortmid"{marking}, from=2-1, to=2-2]
    \arrow[Rightarrow, no head, from=1-1, to=2-1]
    \arrow[Rightarrow, no head, from=1-2, to=2-2]
    \arrow["{F_{x_i}}"{description}, draw=none, from=0, to=1]
  \end{tikzcd},
  \quad i \in I,
\end{equation*}
is the family of unitors of $F$ on the right. With this notation, we state an
important technical lemma relating laxators and unitors of products to products
of laxators and unitors.

\begin{lemma}[Laxators and unitors for products]
  \label{lem:laxator-unitor-product}
  Let $F: \dbl{D} \to \dbl{E}$ be a lax functor between double categories with
  lax products. For any composable families of proarrows
  $(I,\vec{x}) \xproto{(A,\vec{m})} (J,\vec{y}) \xproto{(B,\vec{n})} (K,\vec{z})$
  in $\dbl{D}$, we have
  \begin{equation} \label{eq:laxator-product}
    \begin{tikzcd}
      {F\Pi\vec{x}} & {F\Pi\vec{y}} & {F\Pi\vec{z}} \\
      {F\Pi\vec{x}} && {F\Pi\vec{z}} \\
      {F\Pi\vec{x}} && {F\Pi\vec{z}} \\
      {\Pi F\vec{x}} && {\Pi F\vec{z}}
      \arrow["{F\Pi\vec{m}}"{inner sep=.8ex}, "\shortmid"{marking}, from=1-1, to=1-2]
      \arrow[equals, from=1-1, to=2-1]
      \arrow["{F\Pi\vec{n}}"{inner sep=.8ex}, "\shortmid"{marking}, from=1-2, to=1-3]
      \arrow[equals, from=1-3, to=2-3]
      \arrow[""{name=0, anchor=center, inner sep=0}, "{F(\Pi\vec{m} \odot \Pi\vec{n})}"'{inner sep=.8ex}, "\shortmid"{marking}, from=2-1, to=2-3]
      \arrow[equals, from=2-1, to=3-1]
      \arrow[equals, from=2-3, to=3-3]
      \arrow[""{name=1, anchor=center, inner sep=0}, "{F\Pi(\vec{m} \odot \vec{n})}"'{inner sep=.8ex}, "\shortmid"{marking}, from=3-1, to=3-3]
      \arrow["{\Phi_{\vec{x}}}"', from=3-1, to=4-1]
      \arrow["{\Phi_{\vec{z}}}", from=3-3, to=4-3]
      \arrow[""{name=2, anchor=center, inner sep=0}, "{\Pi F(\vec{m} \odot \vec{n})}"'{inner sep=.8ex}, "\shortmid"{marking}, from=4-1, to=4-3]
      \arrow["{F_{\Pi\vec{m},\Pi\vec{n}}}"{description, pos=0.4}, draw=none, from=1-2, to=0]
      \arrow["{F\Pi_{\vec{m},\vec{n}}}"{description, pos=0.6}, draw=none, from=0, to=1]
      \arrow["{\Phi_{\vec{m} \odot \vec{n}}}"{description, pos=0.6}, draw=none, from=1, to=2]
    \end{tikzcd}
    \quad=\quad
    \begin{tikzcd}
      {F\Pi\vec{x}} & {F\Pi\vec{y}} & {F\Pi\vec{z}} \\
      {\Pi F\vec{x}} & {\Pi F\vec{y}} & {\Pi F\vec{z}} \\
      {\Pi F\vec{x}} && {\Pi F\vec{z}} \\
      {\Pi F\vec{x}} && {\Pi F\vec{z}}
      \arrow[""{name=0, anchor=center, inner sep=0}, "{F\Pi\vec{m}}", "\shortmid"{marking}, from=1-1, to=1-2]
      \arrow[""{name=1, anchor=center, inner sep=0}, "{F\Pi\vec{n}}", "\shortmid"{marking}, from=1-2, to=1-3]
      \arrow["{\Phi_{\vec{x}}}"', from=1-1, to=2-1]
      \arrow["{\Phi_{\vec{y}}}"{description}, from=1-2, to=2-2]
      \arrow["{\Phi_{\vec{z}}}", from=1-3, to=2-3]
      \arrow[""{name=2, anchor=center, inner sep=0}, "{\Pi F\vec{m}}"', "\shortmid"{marking}, from=2-1, to=2-2]
      \arrow[""{name=3, anchor=center, inner sep=0}, "{\Pi F\vec{n}}"', "\shortmid"{marking}, from=2-2, to=2-3]
      \arrow[Rightarrow, no head, from=2-1, to=3-1]
      \arrow[Rightarrow, no head, from=2-3, to=3-3]
      \arrow[""{name=4, anchor=center, inner sep=0}, "{\Pi(F\vec{m} \odot F\vec{n})}"', "\shortmid"{marking}, from=3-1, to=3-3]
      \arrow[Rightarrow, no head, from=3-1, to=4-1]
      \arrow[Rightarrow, no head, from=3-3, to=4-3]
      \arrow[""{name=5, anchor=center, inner sep=0}, "{\Pi F(\vec{m} \odot \vec{n})}"', "\shortmid"{marking}, from=4-1, to=4-3]
      \arrow["{\Phi_{\vec{m}}}"{description}, draw=none, from=0, to=2]
      \arrow["{\Phi_{\vec{n}}}"{description}, draw=none, from=1, to=3]
      \arrow["{\Pi_{F\vec{m}, F\vec{n}}}"{description}, draw=none, from=2-2, to=4]
      \arrow["{\Pi F_{\vec{m},\vec{n}}}"{description, pos=0.6}, draw=none, from=4, to=5]
    \end{tikzcd}.
  \end{equation}
  Similarly, for any family of objects $(I,\vec{x})$ in $\dbl{D}$, we have
  \begin{equation} \label{eq:unitor-product}
    \begin{tikzcd}
      {F\Pi\vec{x}} & {F\Pi\vec{x}} \\
      {F\Pi\vec{x}} & {F\Pi\vec{x}} \\
      {F\Pi\vec{x}} & {F\Pi\vec{x}} \\
      {\Pi F\vec{x}} & {\Pi F\vec{x}}
      \arrow[""{name=0, anchor=center, inner sep=0}, "{\id_{F\Pi\vec{x}}}", "\shortmid"{marking}, from=1-1, to=1-2]
      \arrow[Rightarrow, no head, from=1-1, to=2-1]
      \arrow[Rightarrow, no head, from=1-2, to=2-2]
      \arrow[""{name=1, anchor=center, inner sep=0}, "{F\id_{\Pi\vec{x}}}"', "\shortmid"{marking}, from=2-1, to=2-2]
      \arrow[Rightarrow, no head, from=2-1, to=3-1]
      \arrow[Rightarrow, no head, from=2-2, to=3-2]
      \arrow[""{name=2, anchor=center, inner sep=0}, "{F\Pi\id_{\vec{x}}}"', "\shortmid"{marking}, from=3-1, to=3-2]
      \arrow["{\Phi_{\vec{x}}}"', from=3-1, to=4-1]
      \arrow["{\Phi_{\vec{x}}}", from=3-2, to=4-2]
      \arrow[""{name=3, anchor=center, inner sep=0}, "{\Pi F\id_{\vec{x}}}"', "\shortmid"{marking}, from=4-1, to=4-2]
      \arrow["{F_{\Pi\vec{x}}}"{description}, draw=none, from=0, to=1]
      \arrow["{F\Pi_{\vec{x}}}"{description, pos=0.6}, draw=none, from=1, to=2]
      \arrow["{\Phi_{\id_{\vec{x}}}}"{description, pos=0.6}, draw=none, from=2, to=3]
    \end{tikzcd}
    \quad=\quad
    \begin{tikzcd}
      {F\Pi\vec{x}} & {F\Pi\vec{x}} \\
      {\Pi F\vec{x}} & {\Pi F\vec{x}} \\
      {\Pi F\vec{x}} & {\Pi F\vec{x}} \\
      {\Pi F\vec{x}} & {\Pi F\vec{x}}
      \arrow[""{name=0, anchor=center, inner sep=0}, "{\id_{F\Pi\vec{x}}}", "\shortmid"{marking}, from=1-1, to=1-2]
      \arrow[""{name=1, anchor=center, inner sep=0}, "{\Pi F\id_{\vec{x}}}"', "\shortmid"{marking}, from=4-1, to=4-2]
      \arrow[Rightarrow, no head, from=2-1, to=3-1]
      \arrow[Rightarrow, no head, from=2-2, to=3-2]
      \arrow["{\Phi_{\vec{x}}}"', from=1-1, to=2-1]
      \arrow["{\Phi_{\vec{x}}}", from=1-2, to=2-2]
      \arrow[""{name=2, anchor=center, inner sep=0}, "{\id_{\Pi F\vec{x}}}"', "\shortmid"{marking}, from=2-1, to=2-2]
      \arrow[""{name=3, anchor=center, inner sep=0}, "{\Pi \id_{F\vec{x}}}"', "\shortmid"{marking}, from=3-1, to=3-2]
      \arrow[Rightarrow, no head, from=3-1, to=4-1]
      \arrow[Rightarrow, no head, from=3-2, to=4-2]
      \arrow["{\id_{\Phi_{\vec{x}}}}"{description}, draw=none, from=0, to=2]
      \arrow["{\Pi_{F\vec{x}}}"{description, pos=0.6}, draw=none, from=2, to=3]
      \arrow["{\Pi F_{\vec{x}}}"{description, pos=0.6}, draw=none, from=3, to=1]
    \end{tikzcd}.
  \end{equation}
  In particular, if $F$ preserves products and the domain $\dbl{D}$ has strong
  products, then the laxator $F_{\Pi\vec{m},\Pi\vec{n}}$ for a composite of
  products is uniquely determined by the product of laxators
  $\Pi F_{\vec{m},\vec{n}}$ and the unitor $F_{\Pi\vec{x}}$ for a product is
  uniquely determined by the product of unitors $\Pi F_{\vec{x}}$.
\end{lemma}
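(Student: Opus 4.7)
The plan is to verify both equations by exploiting the universal property of a product, namely that, by \cref{prop:dbl-products-universal}, two globular cells into a product proarrow agree if and only if they agree after post-composition with each projection.

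For the laxator equation \eqref{eq:laxator-product}, both sides are globular cells with common boundary and codomain the product $\Pi F(\vec{m} \odot \vec{n})$. I would fix a composable pair $(i \xproto{a} j \xproto{b} k) : (I \xproto{A} J \xproto{B} K)$ and show that post-composing either side with the projection $\pi_{(a,b)}$ yields the same cell with codomain $F(m_a \odot n_b)$. For the left-hand side, the defining property of $\Phi$ on proarrow families rewrites $\Phi_{\vec{m} \odot \vec{n}}$ followed by $\pi_{(a,b)}$ as $F \pi_{(a,b)}$. Using functoriality of $F$, this collapses together with $F \Pi_{\vec{m},\vec{n}}$ into $F$ applied to $\Pi_{\vec{m},\vec{n}}$ followed by $\pi_{(a,b)}$, which the defining equation \eqref{eq:product-composition-comparison} identifies with $F(\pi_a \odot \pi_b)$. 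A final application of the naturality of the laxator $F_{-,-}$ turns $F_{\Pi\vec{m},\Pi\vec{n}}$ followed by $F(\pi_a \odot \pi_b)$ into $(F\pi_a \odot F\pi_b)$ followed by $F_{m_a,n_b}$. For the right-hand side, one peels off cells from the bottom in reverse order: naturality of the family of projections applied to the cell $F_{\vec{m},\vec{n}}$ in $\DblFamOp(\dbl{E})$ moves $\Pi F_{\vec{m},\vec{n}}$ past $\pi_{(a,b)}$ to yield $\pi_{(a,b)}$ followed by $F_{m_a,n_b}$; the defining equation for the comparison $\Pi_{F\vec{m},F\vec{n}}$ in $\dbl{E}$ replaces $\Pi_{F\vec{m},F\vec{n}}$ followed by $\pi_{(a,b)}$ by $\pi_a \odot \pi_b$; and the defining property of $\Phi$ at each index absorbs $\Phi_{\vec{m}} \odot \Phi_{\vec{n}}$ followed by $\pi_a \odot \pi_b$ into $F\pi_a \odot F\pi_b$. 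Both sides reduce to the same cell, establishing \eqref{eq:laxator-product}.

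The unitor equation \eqref{eq:unitor-product} follows by the same template: both sides are cells into $\Pi F(\id_{\vec{x}})$, so it suffices to check agreement after composing with each projection $\pi_i$ for $i \in I$, using the defining equation \eqref{eq:product-identity-comparison} for the identity comparisons and otherwise the same tools as above. For the ``in particular'' claim, note that if $F$ preserves products then the cells $\Phi_{\vec{x}}$, $\Phi_{\vec{z}}$, and $\Phi_{\vec{m} \odot \vec{n}}$ are invertible; if in addition $\dbl{D}$ has strong products then $\Pi_{\vec{m},\vec{n}}$ and $\Pi_{\vec{x}}$ are invertible, and hence so are $F \Pi_{\vec{m},\vec{n}}$ and $F \Pi_{\vec{x}}$ under $F$. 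Every factor in \eqref{eq:laxator-product} other than $F_{\Pi\vec{m},\Pi\vec{n}}$ and $\Pi F_{\vec{m},\vec{n}}$ is then invertible, so the equation expresses $F_{\Pi\vec{m},\Pi\vec{n}}$ uniquely in terms of $\Pi F_{\vec{m},\vec{n}}$ together with data depending only on $\Phi$ and on the products in $\dbl{E}$; the unitor case \eqref{eq:unitor-product} is analogous.

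The main obstacle is bookkeeping rather than conceptual difficulty: the argument involves six layers of cells and three distinct notions of naturality (of $\Phi$, of the laxator $F_{-,-}$, and of the projections of products in $\dbl{E}$), and one must carefully distinguish the comparison cells $\Pi_{-,-}$ and $\Pi_{-}$ in the two target double categories $\dbl{D}$ and $\dbl{E}$ from the laxators and unitors of the lax functors $F$, $\DblFamOp(F)$, and $\Pi$ themselves. Organizing the computation as two universal-property checks against projections keeps these layers manageable and reduces the verification to direct appeals to \eqref{eq:product-composition-comparison}, \eqref{eq:product-identity-comparison}, and the definitions of $\Phi$ and of a lax functor.
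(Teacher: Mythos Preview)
Your proposal is correct and follows essentially the same approach as the paper: both verify \eqref{eq:laxator-product} and \eqref{eq:unitor-product} by post-composing with an arbitrary projection and reducing each side using the defining equations for $\Phi$, \eqref{eq:product-composition-comparison}, \eqref{eq:product-identity-comparison}, and the naturality of the laxators and unitors of $F$. The only cosmetic difference is that the paper invokes the naturality of $F_{-,-}$ as the final step equating the two reduced sides, whereas you apply it to the left-hand side first; the content is identical.
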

\begin{proof}
  The proof is a direct calculation, even if the notation is heavy. Beginning
  with the laxators, choose elements $i \xproto{a} j \xproto{b} k$ of the
  indexing spans. Then post-composing the left-hand side of
  Equation \eqref{eq:laxator-product} with the projection $\pi_{(a,b)}$ in $\dbl{E}$ gives
  \begin{equation*}
    \begin{tikzcd}[column sep=scriptsize]
      {F\Pi\vec{x}} & {F\Pi\vec{y}} & {F\Pi\vec{z}} \\
      {\Pi F\vec{x}} && {\Pi F\vec{z}} \\
      {Fx_i} && {Fz_k}
      \arrow["{F\Pi\vec{m}}", "\shortmid"{marking}, from=1-1, to=1-2]
      \arrow["{F\Pi\vec{n}}", "\shortmid"{marking}, from=1-2, to=1-3]
      \arrow[""{name=0, anchor=center, inner sep=0}, "{\Pi F(\vec{m} \odot \vec{n})}"', "\shortmid"{marking}, from=2-1, to=2-3]
      \arrow["{\Phi_{\vec{x}}}"', from=1-1, to=2-1]
      \arrow["{\Phi_{\vec{x}}}", from=1-3, to=2-3]
      \arrow["{\pi_i}"', from=2-1, to=3-1]
      \arrow["{\pi_k}", from=2-3, to=3-3]
      \arrow[""{name=1, anchor=center, inner sep=0}, "{F(m_a \odot n_b)}"', "\shortmid"{marking}, from=3-1, to=3-3]
      \arrow["{\text{LHS}}"{description, pos=0.4}, draw=none, from=1-2, to=0]
      \arrow["{\pi_{(a,b)}}"{description, pos=0.6}, draw=none, from=0, to=1]
    \end{tikzcd}
    =
    \begin{tikzcd}[column sep=scriptsize]
      {F\Pi\vec{x}} & {F\Pi\vec{y}} & {F\Pi\vec{z}} \\
      {F\Pi\vec{x}} && {F\Pi\vec{z}} \\
      {F\Pi\vec{x}} && {F\Pi\vec{z}} \\
      {F x_i} && {F z_k}
      \arrow["{F\Pi\vec{m}}", "\shortmid"{marking}, from=1-1, to=1-2]
      \arrow["{F\Pi\vec{n}}", "\shortmid"{marking}, from=1-2, to=1-3]
      \arrow[Rightarrow, no head, from=1-1, to=2-1]
      \arrow[Rightarrow, no head, from=1-3, to=2-3]
      \arrow[""{name=0, anchor=center, inner sep=0}, "{F(\Pi\vec{m} \odot \Pi\vec{n})}"', "\shortmid"{marking}, from=2-1, to=2-3]
      \arrow[Rightarrow, no head, from=2-1, to=3-1]
      \arrow[Rightarrow, no head, from=2-3, to=3-3]
      \arrow[""{name=1, anchor=center, inner sep=0}, "{F\Pi(\vec{m} \odot \vec{n})}"', "\shortmid"{marking}, from=3-1, to=3-3]
      \arrow["{F\pi_i}"', from=3-1, to=4-1]
      \arrow["{F\pi_k}", from=3-3, to=4-3]
      \arrow[""{name=2, anchor=center, inner sep=0}, "{F(m_a \odot n_b)}"', "\shortmid"{marking}, from=4-1, to=4-3]
      \arrow["{F_{\Pi\vec{m},\Pi\vec{n}}}"{description, pos=0.4}, draw=none, from=1-2, to=0]
      \arrow["{F\Pi_{\vec{m},\vec{n}}}"{description, pos=0.6}, draw=none, from=0, to=1]
      \arrow["{F\pi_{(a,b)}}"{description, pos=0.6}, draw=none, from=1, to=2]
    \end{tikzcd}
    =
    \begin{tikzcd}[column sep=scriptsize]
      {F\Pi\vec{x}} & {F\Pi\vec{y}} & {F\Pi\vec{z}} \\
      {F\Pi\vec{x}} && {F\Pi\vec{z}} \\
      {F x_i} && {F z_k}
      \arrow["{F\Pi\vec{m}}", "\shortmid"{marking}, from=1-1, to=1-2]
      \arrow["{F\Pi\vec{n}}", "\shortmid"{marking}, from=1-2, to=1-3]
      \arrow[Rightarrow, no head, from=1-1, to=2-1]
      \arrow[Rightarrow, no head, from=1-3, to=2-3]
      \arrow[""{name=0, anchor=center, inner sep=0}, "{F(\Pi\vec{m} \odot \Pi\vec{n})}"', "\shortmid"{marking}, from=2-1, to=2-3]
      \arrow[""{name=1, anchor=center, inner sep=0}, "{F(m_a \odot n_b)}"', "\shortmid"{marking}, from=3-1, to=3-3]
      \arrow["{F\pi_i}"', from=2-1, to=3-1]
      \arrow["{F\pi_k}", from=2-3, to=3-3]
      \arrow["{F_{\Pi\vec{m},\Pi\vec{n}}}"{description, pos=0.4}, draw=none, from=1-2, to=0]
      \arrow["{F(\pi_a \odot \pi_b)}"{description, pos=0.6}, draw=none, from=0, to=1]
    \end{tikzcd},
  \end{equation*}
  where the last equality uses Equation
  \eqref{eq:product-composition-comparison}. Meanwhile, post-composing the
  right-hand side of Equation \eqref{eq:laxator-product} with this projection
  gives \begingroup \allowdisplaybreaks
  \begin{align*}
    \begin{tikzcd}[ampersand replacement=\&,column sep=scriptsize]
      {F\Pi\vec{x}} \& {F\Pi\vec{y}} \& {F\Pi\vec{z}} \\
      {\Pi F\vec{x}} \&\& {\Pi F\vec{z}} \\
      {Fx_i} \&\& {Fz_k}
      \arrow["{F\Pi\vec{m}}", "\shortmid"{marking}, from=1-1, to=1-2]
      \arrow["{F\Pi\vec{n}}", "\shortmid"{marking}, from=1-2, to=1-3]
      \arrow[""{name=0, anchor=center, inner sep=0}, "{\Pi F(\vec{m} \odot \vec{n})}"', "\shortmid"{marking}, from=2-1, to=2-3]
      \arrow["{\Phi_{\vec{x}}}"', from=1-1, to=2-1]
      \arrow["{\Phi_{\vec{x}}}", from=1-3, to=2-3]
      \arrow["{\pi_i}"', from=2-1, to=3-1]
      \arrow["{\pi_k}", from=2-3, to=3-3]
      \arrow[""{name=1, anchor=center, inner sep=0}, "{F(m_a \odot n_b)}"', "\shortmid"{marking}, from=3-1, to=3-3]
      \arrow["{\text{RHS}}"{description, pos=0.4}, draw=none, from=1-2, to=0]
      \arrow["{\pi_{(a,b)}}"{description, pos=0.6}, draw=none, from=0, to=1]
    \end{tikzcd}
    &=
    \begin{tikzcd}[ampersand replacement=\&,column sep=scriptsize]
      {F\Pi\vec{x}} \& {F\Pi\vec{y}} \& {F\Pi\vec{z}} \\
      {\Pi F\vec{x}} \& {\Pi F\vec{y}} \& {\Pi F\vec{z}} \\
      {\Pi F\vec{x}} \&\& {\Pi F\vec{z}} \\
      {Fx_i} \& {Fy_j} \& {Fz_k} \\
      {Fx_i} \&\& {Fz_k}
      \arrow[""{name=0, anchor=center, inner sep=0}, "{F\Pi\vec{m}}", "\shortmid"{marking}, from=1-1, to=1-2]
      \arrow[""{name=1, anchor=center, inner sep=0}, "{F\Pi\vec{n}}", "\shortmid"{marking}, from=1-2, to=1-3]
      \arrow["{\Phi_{\vec{x}}}"', from=1-1, to=2-1]
      \arrow["{\Phi_{\vec{y}}}"{description}, from=1-2, to=2-2]
      \arrow["{\Phi_{\vec{z}}}", from=1-3, to=2-3]
      \arrow[""{name=2, anchor=center, inner sep=0}, "{\Pi F\vec{m}}"', "\shortmid"{marking}, from=2-1, to=2-2]
      \arrow[""{name=3, anchor=center, inner sep=0}, "{\Pi F\vec{n}}"', "\shortmid"{marking}, from=2-2, to=2-3]
      \arrow[Rightarrow, no head, from=2-1, to=3-1]
      \arrow[Rightarrow, no head, from=2-3, to=3-3]
      \arrow[""{name=4, anchor=center, inner sep=0}, "{\Pi(F\vec{m} \odot F\vec{n})}"', "\shortmid"{marking}, from=3-1, to=3-3]
      \arrow["{F m_a}"', "\shortmid"{marking}, from=4-1, to=4-2]
      \arrow["{F n_b}"', "\shortmid"{marking}, from=4-2, to=4-3]
      \arrow[""{name=5, anchor=center, inner sep=0}, "{F(m_a \odot n_b)}"', "\shortmid"{marking}, from=5-1, to=5-3]
      \arrow[Rightarrow, no head, from=4-1, to=5-1]
      \arrow[Rightarrow, no head, from=4-3, to=5-3]
      \arrow["{\pi_i}"', from=3-1, to=4-1]
      \arrow["{\pi_k}", from=3-3, to=4-3]
      \arrow["{\Phi_{\vec{m}}}"{description}, draw=none, from=0, to=2]
      \arrow["{\Phi_{\vec{n}}}"{description}, draw=none, from=1, to=3]
      \arrow["{\Pi_{F\vec{m}, F\vec{n}}}"{description}, draw=none, from=2-2, to=4]
      \arrow["{F_{m_a,n_b}}"{description}, draw=none, from=4-2, to=5]
      \arrow["{\pi_{(a,b)}}"{description, pos=0.7}, draw=none, from=4, to=4-2]
    \end{tikzcd}
    =
    \begin{tikzcd}[ampersand replacement=\&,column sep=scriptsize]
      {F\Pi\vec{x}} \& {F\Pi\vec{y}} \& {F\Pi\vec{z}} \\
      {\Pi F\vec{x}} \& {\Pi F\vec{y}} \& {\Pi F\vec{z}} \\
      {Fx_i} \& {Fy_j} \& {Fz_k} \\
      {Fx_i} \&\& {Fz_k}
      \arrow[""{name=0, anchor=center, inner sep=0}, "{F\Pi\vec{m}}", "\shortmid"{marking}, from=1-1, to=1-2]
      \arrow[""{name=1, anchor=center, inner sep=0}, "{F\Pi\vec{n}}", "\shortmid"{marking}, from=1-2, to=1-3]
      \arrow["{\Phi_{\vec{x}}}"', from=1-1, to=2-1]
      \arrow["{\Phi_{\vec{y}}}"{description}, from=1-2, to=2-2]
      \arrow["{\Phi_{\vec{z}}}", from=1-3, to=2-3]
      \arrow[""{name=2, anchor=center, inner sep=0}, "{\Pi F\vec{m}}"', "\shortmid"{marking}, from=2-1, to=2-2]
      \arrow[""{name=3, anchor=center, inner sep=0}, "{\Pi F\vec{n}}"', "\shortmid"{marking}, from=2-2, to=2-3]
      \arrow[""{name=4, anchor=center, inner sep=0}, "{F m_a}"', "\shortmid"{marking}, from=3-1, to=3-2]
      \arrow[""{name=5, anchor=center, inner sep=0}, "{F n_b}"', "\shortmid"{marking}, from=3-2, to=3-3]
      \arrow[""{name=6, anchor=center, inner sep=0}, "{F(m_a \odot n_b)}"', "\shortmid"{marking}, from=4-1, to=4-3]
      \arrow[Rightarrow, no head, from=3-1, to=4-1]
      \arrow[Rightarrow, no head, from=3-3, to=4-3]
      \arrow["{\pi_i}"', from=2-1, to=3-1]
      \arrow["{\pi_k}", from=2-3, to=3-3]
      \arrow["{\pi_j}"{description}, from=2-2, to=3-2]
      \arrow["{\Phi_{\vec{m}}}"{description}, draw=none, from=0, to=2]
      \arrow["{F_{m_a,n_b}}"{description}, draw=none, from=3-2, to=6]
      \arrow["{\pi_a}"{description}, draw=none, from=2, to=4]
      \arrow["{\Phi_{\vec{n}}}"{description}, draw=none, from=1, to=3]
      \arrow["{\pi_b}"{description}, draw=none, from=3, to=5]
    \end{tikzcd} \\
    &=
    \begin{tikzcd}[ampersand replacement=\&,column sep=scriptsize]
      {F\Pi\vec{x}} \& {F\Pi\vec{y}} \& {F\Pi\vec{z}} \\
      {Fx_i} \& {Fy_j} \& {Fz_k} \\
      {Fx_i} \&\& {Fz_k}
      \arrow[""{name=0, anchor=center, inner sep=0}, "{F\Pi\vec{m}}", "\shortmid"{marking}, from=1-1, to=1-2]
      \arrow[""{name=1, anchor=center, inner sep=0}, "{F\Pi\vec{n}}", "\shortmid"{marking}, from=1-2, to=1-3]
      \arrow[""{name=2, anchor=center, inner sep=0}, "{F m_a}"', "\shortmid"{marking}, from=2-1, to=2-2]
      \arrow[""{name=3, anchor=center, inner sep=0}, "{F n_b}"', "\shortmid"{marking}, from=2-2, to=2-3]
      \arrow[""{name=4, anchor=center, inner sep=0}, "{F(m_a \odot n_b)}"', "\shortmid"{marking}, from=3-1, to=3-3]
      \arrow[Rightarrow, no head, from=2-1, to=3-1]
      \arrow[Rightarrow, no head, from=2-3, to=3-3]
      \arrow["{F\pi_i}"', from=1-1, to=2-1]
      \arrow["{F\pi_j}"{description}, from=1-2, to=2-2]
      \arrow["{F\pi_k}", from=1-3, to=2-3]
      \arrow["{F_{m_a,n_b}}"{description}, draw=none, from=2-2, to=4]
      \arrow["{F\pi_a}"{description}, draw=none, from=0, to=2]
      \arrow["{F\pi_b}"{description}, draw=none, from=1, to=3]
    \end{tikzcd}
    =
    \begin{tikzcd}[ampersand replacement=\&,column sep=scriptsize]
      {F\Pi\vec{x}} \& {F\Pi\vec{y}} \& {F\Pi\vec{z}} \\
      {F\Pi\vec{x}} \&\& {F\Pi\vec{z}} \\
      {F x_i} \&\& {F z_k}
      \arrow["{F\Pi\vec{m}}", "\shortmid"{marking}, from=1-1, to=1-2]
      \arrow["{F\Pi\vec{n}}", "\shortmid"{marking}, from=1-2, to=1-3]
      \arrow[Rightarrow, no head, from=1-1, to=2-1]
      \arrow[Rightarrow, no head, from=1-3, to=2-3]
      \arrow[""{name=0, anchor=center, inner sep=0}, "{F(\Pi\vec{m} \odot \Pi\vec{n})}"', "\shortmid"{marking}, from=2-1, to=2-3]
      \arrow[""{name=1, anchor=center, inner sep=0}, "{F(m_a \odot n_b)}"', "\shortmid"{marking}, from=3-1, to=3-3]
      \arrow["{F\pi_i}"', from=2-1, to=3-1]
      \arrow["{F\pi_k}", from=2-3, to=3-3]
      \arrow["{F_{\Pi\vec{m},\Pi\vec{n}}}"{description, pos=0.4}, draw=none, from=1-2, to=0]
      \arrow["{F(\pi_a \odot \pi_b)}"{description, pos=0.6}, draw=none, from=0, to=1]
    \end{tikzcd},
  \end{align*}
  \endgroup
  where the last equation is the naturality of the laxators of $F$ with respect
  to the projection cells $\pi_a$ and $\pi_b$ in $\dbl{D}$. Thus, post-composite
  of Equation \eqref{eq:laxator-product} with the projection $\pi_{(a,b)}$ in $\dbl{E}$ is
  true for any choice of elements $i \xproto{a} j \xproto{b} k$. By the
  universal property of the product $\Pi F(\vec{m} \odot \vec{n})$,
  Equation \eqref{eq:laxator-product} itself is true.

  Turning to the unitors, fix an element $i \in I$. The post-composite of the
  left-hand side of Equation \eqref{eq:unitor-product} with the projection $\pi_i$ in
  $\dbl{E}$ is
  \begin{equation*}
    \begin{tikzcd}
      {F\Pi\vec{x}} & {F\Pi\vec{x}} \\
      {\Pi F\vec{x}} & {\Pi F\vec{x}} \\
      {Fx_i} & {Fx_i}
      \arrow[""{name=0, anchor=center, inner sep=0}, "{\id_{F\Pi\vec{x}}}", "\shortmid"{marking}, from=1-1, to=1-2]
      \arrow[""{name=1, anchor=center, inner sep=0}, "{\Pi F\id_{\vec{x}}}"', "\shortmid"{marking}, from=2-1, to=2-2]
      \arrow["{\Phi_{\vec{x}}}"', from=1-1, to=2-1]
      \arrow["{\Phi_{\vec{x}}}", from=1-2, to=2-2]
      \arrow[""{name=2, anchor=center, inner sep=0}, "{F\id_{x_i}}"', "\shortmid"{marking}, from=3-1, to=3-2]
      \arrow["{\pi_i}"', from=2-1, to=3-1]
      \arrow["{\pi_i}", from=2-2, to=3-2]
      \arrow["{\text{LHS}}"{description}, draw=none, from=0, to=1]
      \arrow["{\pi_i}"{description, pos=0.6}, draw=none, from=1, to=2]
    \end{tikzcd}
    =
    \begin{tikzcd}
      {F\Pi\vec{x}} & {F\Pi\vec{x}} \\
      {F\Pi\vec{x}} & {F\Pi\vec{x}} \\
      {F\Pi\vec{x}} & {F\Pi\vec{x}} \\
      {Fx_i} & {Fx_i}
      \arrow[""{name=0, anchor=center, inner sep=0}, "{\id_{F\Pi\vec{x}}}", "\shortmid"{marking}, from=1-1, to=1-2]
      \arrow[Rightarrow, no head, from=1-1, to=2-1]
      \arrow[Rightarrow, no head, from=1-2, to=2-2]
      \arrow[""{name=1, anchor=center, inner sep=0}, "{F\id_{\Pi\vec{x}}}"', "\shortmid"{marking}, from=2-1, to=2-2]
      \arrow[Rightarrow, no head, from=2-1, to=3-1]
      \arrow[Rightarrow, no head, from=2-2, to=3-2]
      \arrow[""{name=2, anchor=center, inner sep=0}, "{F\Pi\id_{\vec{x}}}"', "\shortmid"{marking}, from=3-1, to=3-2]
      \arrow["{F\pi_i}"', from=3-1, to=4-1]
      \arrow["{F\pi_i}", from=3-2, to=4-2]
      \arrow[""{name=3, anchor=center, inner sep=0}, "{F\id_{x_i}}"', "\shortmid"{marking}, from=4-1, to=4-2]
      \arrow["{F_{\Pi\vec{x}}}"{description}, draw=none, from=0, to=1]
      \arrow["{F\Pi_{\vec{x}}}"{description, pos=0.6}, draw=none, from=1, to=2]
      \arrow["{F\pi_i}"{description, pos=0.6}, draw=none, from=2, to=3]
    \end{tikzcd}
    =
    \begin{tikzcd}
      {F\Pi\vec{x}} & {F\Pi\vec{x}} \\
      {F\Pi\vec{x}} & {F\Pi\vec{x}} \\
      {Fx_i} & {Fx_i}
      \arrow[""{name=0, anchor=center, inner sep=0}, "{\id_{F\Pi\vec{x}}}", "\shortmid"{marking}, from=1-1, to=1-2]
      \arrow[Rightarrow, no head, from=1-1, to=2-1]
      \arrow[Rightarrow, no head, from=1-2, to=2-2]
      \arrow[""{name=1, anchor=center, inner sep=0}, "{F\id_{\Pi\vec{x}}}"', "\shortmid"{marking}, from=2-1, to=2-2]
      \arrow[""{name=2, anchor=center, inner sep=0}, "{F\id_{x_i}}"', "\shortmid"{marking}, from=3-1, to=3-2]
      \arrow["{F\pi_i}", from=2-2, to=3-2]
      \arrow["{F\pi_i}"', from=2-1, to=3-1]
      \arrow["{F_{\Pi\vec{x}}}"{description}, draw=none, from=0, to=1]
      \arrow["{F\id_{\pi_i}}"{description, pos=0.6}, draw=none, from=1, to=2]
    \end{tikzcd},
  \end{equation*}
  where the last equality uses Equation \eqref{eq:product-identity-comparison}.
  On the other hand, the post-composite with the right-hand side of Equation
  \eqref{eq:unitor-product} is
  \begin{equation*}
    \begin{tikzcd}
      {F\Pi\vec{x}} & {F\Pi\vec{x}} \\
      {\Pi F\vec{x}} & {\Pi F\vec{x}} \\
      {Fx_i} & {Fx_i}
      \arrow[""{name=0, anchor=center, inner sep=0}, "{\id_{F\Pi\vec{x}}}", "\shortmid"{marking}, from=1-1, to=1-2]
      \arrow[""{name=1, anchor=center, inner sep=0}, "{\Pi F\id_{\vec{x}}}"', "\shortmid"{marking}, from=2-1, to=2-2]
      \arrow["{\Phi_{\vec{x}}}"', from=1-1, to=2-1]
      \arrow["{\Phi_{\vec{x}}}", from=1-2, to=2-2]
      \arrow[""{name=2, anchor=center, inner sep=0}, "{F\id_{x_i}}"', "\shortmid"{marking}, from=3-1, to=3-2]
      \arrow["{\pi_i}"', from=2-1, to=3-1]
      \arrow["{\pi_i}", from=2-2, to=3-2]
      \arrow["{\text{RHS}}"{description}, draw=none, from=0, to=1]
      \arrow["{\pi_i}"{description, pos=0.6}, draw=none, from=1, to=2]
    \end{tikzcd}
    =
    \begin{tikzcd}
      {F\Pi\vec{x}} & {F\Pi\vec{x}} \\
      {\Pi F\vec{x}} & {\Pi F\vec{x}} \\
      {\Pi F\vec{x}} & {\Pi F\vec{x}} \\
      {Fx_i} & {Fx_i} \\
      {Fx_i} & {Fx_i}
      \arrow[""{name=0, anchor=center, inner sep=0}, "{\id_{F\Pi\vec{x}}}", "\shortmid"{marking}, from=1-1, to=1-2]
      \arrow[""{name=1, anchor=center, inner sep=0}, "{\id_{Fx_i}}"', "\shortmid"{marking}, from=4-1, to=4-2]
      \arrow[Rightarrow, no head, from=2-1, to=3-1]
      \arrow[Rightarrow, no head, from=2-2, to=3-2]
      \arrow["{\Phi_{\vec{x}}}"', from=1-1, to=2-1]
      \arrow["{\Phi_{\vec{x}}}", from=1-2, to=2-2]
      \arrow[""{name=2, anchor=center, inner sep=0}, "{\id_{\Pi F\vec{x}}}"', "\shortmid"{marking}, from=2-1, to=2-2]
      \arrow[""{name=3, anchor=center, inner sep=0}, "{\Pi \id_{F\vec{x}}}"', "\shortmid"{marking}, from=3-1, to=3-2]
      \arrow[Rightarrow, no head, from=4-1, to=5-1]
      \arrow["{\pi_i}"', from=3-1, to=4-1]
      \arrow["{\pi_i}", from=3-2, to=4-2]
      \arrow[Rightarrow, no head, from=4-2, to=5-2]
      \arrow[""{name=4, anchor=center, inner sep=0}, "{F \id_{x_i}}"', "\shortmid"{marking}, from=5-1, to=5-2]
      \arrow["{\id_{\Phi_{\vec{x}}}}"{description}, draw=none, from=0, to=2]
      \arrow["{\Pi_{F\vec{x}}}"{description, pos=0.6}, draw=none, from=2, to=3]
      \arrow["{F_{x_i}}"{description, pos=0.6}, draw=none, from=1, to=4]
      \arrow["{\pi_i}"{description, pos=0.6}, draw=none, from=3, to=1]
    \end{tikzcd}
    =
    \begin{tikzcd}
      {F\Pi\vec{x}} & {F\Pi\vec{x}} \\
      {\Pi F\vec{x}} & {\Pi F\vec{x}} \\
      {Fx_i} & {Fx_i} \\
      {Fx_i} & {Fx_i}
      \arrow[""{name=0, anchor=center, inner sep=0}, "{\id_{F\Pi\vec{x}}}", "\shortmid"{marking}, from=1-1, to=1-2]
      \arrow[""{name=1, anchor=center, inner sep=0}, "{\id_{Fx_i}}"', "\shortmid"{marking}, from=3-1, to=3-2]
      \arrow["{\Phi_{\vec{x}}}"', from=1-1, to=2-1]
      \arrow["{\Phi_{\vec{x}}}", from=1-2, to=2-2]
      \arrow[""{name=2, anchor=center, inner sep=0}, "{\id_{\Pi F\vec{x}}}"', "\shortmid"{marking}, from=2-1, to=2-2]
      \arrow[Rightarrow, no head, from=3-1, to=4-1]
      \arrow[Rightarrow, no head, from=3-2, to=4-2]
      \arrow[""{name=3, anchor=center, inner sep=0}, "{F \id_{x_i}}"', "\shortmid"{marking}, from=4-1, to=4-2]
      \arrow["{\pi_i}"', from=2-1, to=3-1]
      \arrow["{\pi_i}", from=2-2, to=3-2]
      \arrow["{\id_{\Phi_{\vec{x}}}}"{description}, draw=none, from=0, to=2]
      \arrow["{F_{x_i}}"{description, pos=0.6}, draw=none, from=1, to=3]
      \arrow["{\id_{\pi_i}}"{description, pos=0.6}, draw=none, from=2, to=1]
    \end{tikzcd}
    =
    \begin{tikzcd}
      {F\Pi\vec{x}} & {F\Pi\vec{x}} \\
      {Fx_i} & {Fx_i} \\
      {Fx_i} & {Fx_i}
      \arrow[""{name=0, anchor=center, inner sep=0}, "{\id_{F\Pi\vec{x}}}", "\shortmid"{marking}, from=1-1, to=1-2]
      \arrow[""{name=1, anchor=center, inner sep=0}, "{\id_{Fx_i}}"', "\shortmid"{marking}, from=2-1, to=2-2]
      \arrow[Rightarrow, no head, from=2-1, to=3-1]
      \arrow[Rightarrow, no head, from=2-2, to=3-2]
      \arrow[""{name=2, anchor=center, inner sep=0}, "{F \id_{x_i}}"', "\shortmid"{marking}, from=3-1, to=3-2]
      \arrow["{F\pi_i}"', from=1-1, to=2-1]
      \arrow["{F\pi_i}", from=1-2, to=2-2]
      \arrow["{F_{x_i}}"{description, pos=0.6}, draw=none, from=1, to=2]
      \arrow["{\id_{F\pi_i}}"{description}, draw=none, from=0, to=1]
    \end{tikzcd}.
  \end{equation*}
  But these are equal by the naturality of the unitors of $F$ with respect to
  the projection arrow $\pi_i: \Pi\vec{x} \to x_i$ in $\dbl{D}$. By the
  universal property of the product $\Pi F \id_{\vec{x}}$,
  Equation \eqref{eq:unitor-product} holds.
\end{proof}

An analogous lemma for lax functors between cartesian double categories has been
used to study models of cartesian double theories \citep[\mbox{Lemma
  5.2}]{lambert2024}. We will use Lemma \ref{lem:laxator-unitor-product} for the
same purpose when we define finite-product double theories and their models in
the next section.

\section{Finite-product double theories}
\label{sec:theories}

As an application of the tools developed so far, we extend the cartesian double
theories recently proposed in \citep{lambert2024} by allowing arbitrary finite
products instead of only finite parallel products as in a cartesian double
category. Double theories with finite products have greater expressivity,
encompassing among other things an important class of enriched categories, yet
require minimal modifications to the machinery producing a virtual double
category of models. This development brings us full circle to Paré's early work
on products in double categories \citep{pare2009}, which was also motivated by
double theories, albeit with different intended semantics and models.

\begin{definition}[Finite-product double theories and models]
  A \define{finite-product double theory} is a small, strict double category
  with strong finite products. A \define{morphism} of finite-product double
  theories is a strict double functor that preserves finite products.

  A \define{model} of a finite-product double theory $\dbl{T}$ in a double
  category $\dbl{S}$ with lax finite products is a lax double functor
  $M: \dbl{T} \to \dbl{S}$ that preserves finite products. If left unstated, the
  semantics for a model is assumed to be $\dbl{S} = \Span$.
\end{definition}

The name ``finite-product double theory'' is clearly intended to evoke the
finite-product theories familiar from categorical logic. Indeed, they categorify
that concept. The differing strengths of products in the syntax and the
semantics is important. A typical semantics, such as spans or matrices, will
have at most iso-strong products, whereas the theory must have strong products
in order to fully control the comparison cells of its models, as shown by
Lemma \ref{lem:laxator-unitor-product}.

All of the cartesian double theories in \citep{lambert2024} can be seen as
finite-product double theories that happen to use only finite parallel products.
The examples presented here take advantage of the extra flexibility afforded by
arbitrary finite products. To avoid confusion with parallel products, we write
local products using conjunctive notation, so that $m \wedge n: x \proto y$ is
the local product of two proarrows $m,n: x \proto y$ with common source and
target, and $\top \coloneqq \top_{x,y}: x \proto y$ is the local terminal
between two objects $x,y$. The same notation is used in the literature on
cartesian bicategories \citep[\S{3.1}]{carboni2008}.

\begin{example}[Categories enriched in commutative monoids]
  \label{ex:cmon-enrichment}
  The \define{theory of local commutative monoids} $\Th{\cat{lcMon}}$ is the
  finite-product double theory generated by a single object $x$ and two globular
  cells, the \define{local multiplication} and \define{unit}
  \begin{equation*}
    \begin{tikzcd}
      x & x \\
      x & x
      \arrow[""{name=0, anchor=center, inner sep=0}, "{\id_x \wedge \id_x}", "\shortmid"{marking}, from=1-1, to=1-2]
      \arrow[""{name=1, anchor=center, inner sep=0}, "{\id_x}"', "\shortmid"{marking}, from=2-1, to=2-2]
      \arrow[Rightarrow, no head, from=1-1, to=2-1]
      \arrow[Rightarrow, no head, from=1-2, to=2-2]
      \arrow["\mu"{description}, draw=none, from=0, to=1]
    \end{tikzcd}
    \qquad\text{and}\qquad
    \begin{tikzcd}
      x & x \\
      x & x
      \arrow[""{name=0, anchor=center, inner sep=0}, "\top", "\shortmid"{marking}, from=1-1, to=1-2]
      \arrow[""{name=1, anchor=center, inner sep=0}, "{\id_x}"', "\shortmid"{marking}, from=2-1, to=2-2]
      \arrow[Rightarrow, no head, from=1-1, to=2-1]
      \arrow[Rightarrow, no head, from=1-2, to=2-2]
      \arrow["\eta"{description}, draw=none, from=0, to=1]
    \end{tikzcd},
  \end{equation*}
  subject to the usual equations of associativity, unitality, and commutativity:
  \begin{equation*}
    \begin{tikzcd}[column sep=scriptsize]
      {\id_x \wedge \id_x \wedge \id_x} & {\id_x \wedge \id_x} \\
      {\id_x \wedge \id_x} & {\id_x}
      \arrow["{1 \wedge \mu}"', from=1-1, to=2-1]
      \arrow["{\mu \wedge 1}", from=1-1, to=1-2]
      \arrow["\mu"', from=2-1, to=2-2]
      \arrow["\mu", from=1-2, to=2-2]
    \end{tikzcd}
    \quad
    \begin{tikzcd}[column sep=scriptsize]
      {\top \wedge\id_x} & {\id_x \wedge \id_x} & {\id_x \wedge \top} \\
      & {\id_x}
      \arrow["\cong"', from=1-1, to=2-2]
      \arrow["\mu"', from=1-2, to=2-2]
      \arrow["{\eta \wedge 1}", from=1-1, to=1-2]
      \arrow["{1 \wedge \eta}"', from=1-3, to=1-2]
      \arrow["\cong", from=1-3, to=2-2]
    \end{tikzcd}
    \quad
    \begin{tikzcd}
      {\id_x \wedge \id_x} & {\id_x \wedge \id_x} \\
      & {\id_x}
      \arrow["{\sigma_{\id_x,\id_x}}", from=1-1, to=1-2]
      \arrow["\mu", from=1-2, to=2-2]
      \arrow["\mu"', from=1-1, to=2-2]
    \end{tikzcd}.
  \end{equation*}

  We claim that a model of the theory of local commutative monoids, namely a
  finite-product-preserving lax functor $M: \Th{\cat{lcMon}} \to \Span$, is
  precisely a category enriched in commutative monoids. First, the data
  $(Mx, M\id_x, M_{x,x}, M_x)$, comprising the image of $x$ and
  $\id_x: x \proto x$ and corresponding laxator and unitor, is a category
  \citep[\S{2}]{lambert2024}, call it $\cat{C}$. Moreover, Lemma
  \ref{lem:laxator-unitor-product} ensures that all of the other laxators and
  unitors of $M$ are uniquely determined by products of $M_{x,x}$ and $M_x$. The
  images of $\mu$ and $\eta$ under $M$ are natural transformations
  \citep[\S{2}]{lambert2024}, conventionally written in additive notation as
  \begin{equation*}
    +: \cat{C}(a,b)^2 \to \cat{C}(a,b)
    \qquad\text{and}\qquad
    0: 1 \to \cat{C}(a,b)
  \end{equation*}
  for each pair of objects $a,b \in \cat{C}$. The three equational axioms on
  $\mu$ and $\eta$ say that each hom-set of $\cat{C}$ is endowed with the
  structure of a commutative monoid. Finally, the naturality conditions say that
  for all morphisms $h: a' \to a$ and $k: b \to b'$ in $\cat{C}$, the diagrams
  \begin{equation*}
    \begin{tikzcd}
      {\cat{C}(a,b)^2} & {\cat{C}(a,b)} \\
      {\cat{C}(a',b')^2} & {\cat{C}(a',b')}
      \arrow["{\cat{C}(h,k)^2}"', from=1-1, to=2-1]
      \arrow["{+}", from=1-1, to=1-2]
      \arrow["{\cat{C}(h,k)}", from=1-2, to=2-2]
      \arrow["{+}"', from=2-1, to=2-2]
    \end{tikzcd}
    \qquad\text{and}\qquad
    \begin{tikzcd}
      1 & {\cat{C}(a,b)} \\
      & {\cat{C}(a',b')}
      \arrow["0", from=1-1, to=1-2]
      \arrow["0"', from=1-1, to=2-2]
      \arrow["{\cat{C}(h,k)}", from=1-2, to=2-2]
    \end{tikzcd}
  \end{equation*}
  commute, that is, for all morphisms $f,g: a \to b$ in $\cat{C}$,
  \begin{equation*}
    h \cdot (f + g) \cdot k = (h \cdot f \cdot k) + (h \cdot g \cdot k)
    \qquad\text{and}\qquad
    h \cdot 0_{a,b} \cdot k = 0_{a',b'}.
  \end{equation*}
  These equations are equivalent to the biadditivity that makes the category
  $\cat{C}$ be enriched in commutative monoids.
\end{example}

Enriched categories are usually defined for a base of enrichment that is a
monoidal category or a multicategory. Perhaps surprisingly, the double theory in
Example \ref{ex:cmon-enrichment} makes no reference to the monoidal category of
commutative monoids under its tensor product or to the multicategory of
commutative monoids and multi-additive maps. Instead, the naturality
requirements of a lax functor automatically ensure that composition is
biadditive. That models of double theories are functorial and natural by
construction is a key advantage compared with models of one-dimensional theories
such as finite-limit theories.

The property of commutative monoids that makes Example \ref{ex:cmon-enrichment}
work is that the category of commutative monoids is algebraic. Recall that an
\define{algebraic category} is a category concretely equivalent to the category
of models of an algebraic theory, which we assume to be single-sorted; see
\citep[\mbox{Chapter 11}]{adamek2010} or \citep[\mbox{Vol.\ 2}, \mbox{Chapter
  3}]{borceux1994}. In general, given a commutative algebraic theory $\cat{T}$,
we can construct a finite-product double theory whose models are categories
enriched in models of $\cat{T}$. This includes the important example of
categories enriched in abelian groups or, more generally, enriched in
$R$-modules over a commutative ring $R$.

\begin{construction}[Categories enriched in an algebraic category]
  \label{ex:algebraic-category-enrichment}
  Let $\cat{T}$ be an algebraic theory (not necessarily commutative) with
  generating object $x$. We define a finite-product double theory $\dbl{T}$
  generated by a single object, also denoted $x$, and by, for each morphism
  $f: x^m \to x^n$ in $\cat{T}$, a globular cell
  $F(f): \id_x^{\wedge m} \to \id_x^{\wedge n}$, where $\id_x^{\wedge n}$ denotes the $n$-fold
  local product $\id_x \wedge \cdots \wedge \id_x: x \proto x$ of the identity
  $\id_x: x \proto x$ with itself. We impose relations making the assignment
  $F: \cat{T} \to \dbl{T}(x,x)$ into a finite-product-preserving functor from
  $\cat{T}$ into the hom-category $\dbl{T}(x,x)$ of proarrows $x \proto x$ and
  globular cells. Abstractly, the double category $\dbl{T}$ with finite products
  is the \emph{delooping} of the category $\cat{T}$ with finite products.

  When the algebraic theory $\cat{T}$ is commutative, its category of models,
  $\cat{Mod}_{\cat{T}}$, has a symmetric monoidal closed structure
  \citep[\mbox{Vol.\ 2}, \mbox{Theorem 3.10.3}]{borceux1994}. Models of the
  finite-product double theory $\dbl{T}$ are then equivalent to categories
  enriched over $\cat{Mod}_{\cat{T}}$, similarly to Example \ref{ex:cmon-enrichment}.
  In fact, as Borceux remarks, the hypothesis of commutativity is not actually
  needed to construct the tensor product on $\cat{Mod}_{\cat{T}}$, only to make
  it well behaved; nor, as we have already noted, is the tensor product used at
  all in constructing the double theory.
\end{construction}

The enrichment construction can be combined with other double theories from
\citep{lambert2024} to present finite-product double theories whose models are,
say, monoidal categories or multicategories enriched over an algebraic category.
We leave these examples to the interested reader and turn to a rather different
application of finite products: using restrictions of products along structure
arrows (Theorem \ref{thm:dbl-products-characterization}) to axiomatize several
flavors of generalized multicategory \citep[\S{4}]{pisani2014},
\citep{shulman2016}.

\begin{example}[Cartesian and symmetric multicategories]
  The \define{theory of promonoids} is the finite-product double theory
  generated by one object $x$ and two proarrows, $m: x^2 \proto x$ and
  $j: 1 \proto x$, subject to axioms of associativity and unitality,
  \begin{equation*}
    \begin{tikzcd}
      {x^3} & {x^2} \\
      {x^2} & x
      \arrow["{(p \times \id_x)}", "\shortmid"{marking}, from=1-1, to=1-2]
      \arrow["p", "\shortmid"{marking}, from=1-2, to=2-2]
      \arrow["p"', "\shortmid"{marking}, from=2-1, to=2-2]
      \arrow["{(\id_x \times p)}"', "\shortmid"{marking}, from=1-1, to=2-1]
    \end{tikzcd}
    \qquad\text{and}\qquad
    \begin{tikzcd}
      {1 \times x} & {x^2} & {x \times 1} \\
      & x
      \arrow["{j \times \id_x}", "\shortmid"{marking}, from=1-1, to=1-2]
      \arrow["p"', "\shortmid"{marking}, from=1-2, to=2-2]
      \arrow["{\id_x \times j}"', "\shortmid"{marking}, from=1-3, to=1-2]
      \arrow["\cong"', "\shortmid"{marking}, from=1-1, to=2-2]
      \arrow["\cong", "\shortmid"{marking}, from=1-3, to=2-2]
    \end{tikzcd}.
  \end{equation*}
  The axioms ensure that, for each arity $n \geq 0$, there is a unique proarrow
  $p_n: x^n \proto x$ built out of the generators $p$ and $j$. For example,
  $p_1 = \id_x$ and $p_4 = (p \times p) \odot p$. So far we have simply recalled
  the cartesian double theory of promonoids \citep[\mbox{Theory
    6.9}]{lambert2024} but we now regard it as a finite-product double theory.
  Models of the theory are equivalent to multicategories.

  Let $\cat{F}$ be the skeleton of $\FinSet$ spanned by the sets
  $[n] \coloneqq \{1,\dots,n\}$ for each $n \geq 0$. The \define{theory of
    cartesian promonoids} is the finite-product double theory presented as the
  theory of promonoids augmented with, for each function $\sigma: [m] \to [n]$
  in $\cat{F}$, a globular cell
  \begin{equation*}
    \begin{tikzcd}[row sep=scriptsize]
      {x^n} & {x^m} & x \\
      {x^n} && x
      \arrow["{x^\sigma_!}", "\shortmid"{marking}, from=1-1, to=1-2]
      \arrow["{p_m}", "\shortmid"{marking}, from=1-2, to=1-3]
      \arrow[Rightarrow, no head, from=1-3, to=2-3]
      \arrow[Rightarrow, no head, from=1-1, to=2-1]
      \arrow[""{name=0, anchor=center, inner sep=0}, "{p_n}"', "\shortmid"{marking}, from=2-1, to=2-3]
      \arrow["{\rho(\sigma)}"{description, pos=0.4}, draw=none, from=1-2, to=0]
    \end{tikzcd},
  \end{equation*}
  called the \define{action by $\sigma$}, where
  $x^\sigma \coloneq \Pi(\sigma): x^n \to x^m$ is the structure arrow between
  products induced by $\sigma$ and $x^\sigma_! = \Pi(\sigma)_!: x^n \proto x^m$
  is its companion (Corollary \ref{cor:structure-proarrows}). The composite proarrow
  $p_m^\sigma \coloneqq x^\sigma_! \odot p_m$ is thus the restriction of $p_m$
  along the arrows $x^\sigma$ and $1_x$. The action cells obey the following
  relations.
  \begin{itemize}
    \item Functorality of action: for every pair of composable maps
      $[m] \xto{\sigma} [n] \xto{\tau} [q]$ in $\cat{F}$, we have
      \begin{equation*}
        \begin{tikzcd}[row sep=scriptsize]
          {x^q} & {x^n} & {x^m} & x \\
          {x^q} & {x^n} && x \\
          {x^q} &&& x
          \arrow[Rightarrow, no head, from=1-2, to=2-2]
          \arrow[""{name=0, anchor=center, inner sep=0}, "{x^\sigma_!}", "\shortmid"{marking}, from=1-2, to=1-3]
          \arrow["{p_m}", "\shortmid"{marking}, from=1-3, to=1-4]
          \arrow[Rightarrow, no head, from=1-4, to=2-4]
          \arrow[""{name=1, anchor=center, inner sep=0}, "{p_n}"', "\shortmid"{marking}, from=2-2, to=2-4]
          \arrow[Rightarrow, no head, from=2-4, to=3-4]
          \arrow[""{name=2, anchor=center, inner sep=0}, "{x^\tau_!}", "\shortmid"{marking}, from=1-1, to=1-2]
          \arrow[Rightarrow, no head, from=1-1, to=2-1]
          \arrow[""{name=3, anchor=center, inner sep=0}, "{x^\tau_!}"', "\shortmid"{marking}, from=2-1, to=2-2]
          \arrow[""{name=4, anchor=center, inner sep=0}, "{p_q}"', "\shortmid"{marking}, from=3-1, to=3-4]
          \arrow[Rightarrow, no head, from=2-1, to=3-1]
          \arrow["{\rho(\sigma)}"{description, pos=0.4}, draw=none, from=1-3, to=1]
          \arrow["1"{description}, draw=none, from=2, to=3]
          \arrow["{\rho(\tau)}"{description, pos=0.8}, draw=none, from=0, to=4]
        \end{tikzcd}
        \quad=\quad
        \begin{tikzcd}[row sep=scriptsize]
          {x^q} & {x^n} & {x^m} & x \\
          {x^q} && {x^m} & x \\
          {x^q} &&& x
          \arrow[Rightarrow, no head, from=2-1, to=3-1]
          \arrow[""{name=0, anchor=center, inner sep=0}, "{x^{\sigma \cdot \tau}_!}"', "\shortmid"{marking}, from=2-1, to=2-3]
          \arrow[""{name=1, anchor=center, inner sep=0}, "{p_m}"', "\shortmid"{marking}, from=2-3, to=2-4]
          \arrow[Rightarrow, no head, from=2-4, to=3-4]
          \arrow[""{name=2, anchor=center, inner sep=0}, "{p_q}"', "\shortmid"{marking}, from=3-1, to=3-4]
          \arrow[Rightarrow, no head, from=1-3, to=2-3]
          \arrow[Rightarrow, no head, from=1-4, to=2-4]
          \arrow[""{name=3, anchor=center, inner sep=0}, "{p_m}", "\shortmid"{marking}, from=1-3, to=1-4]
          \arrow[Rightarrow, no head, from=1-1, to=2-1]
          \arrow[""{name=4, anchor=center, inner sep=0}, "{x^\sigma_!}", "\shortmid"{marking}, from=1-2, to=1-3]
          \arrow["{x^\tau_!}", "\shortmid"{marking}, from=1-1, to=1-2]
          \arrow["1"{description}, draw=none, from=3, to=1]
          \arrow["\cong"{description, pos=0.4}, draw=none, from=1-2, to=0]
          \arrow["{\rho(\sigma \cdot \tau)}"{description, pos=0.8}, draw=none, from=4, to=2]
        \end{tikzcd},
      \end{equation*}
      and for every $n \geq 0$, the cell $\rho(1_{[n]})$ is the canonical
      isomorphism $x^{1_{[n]}}_! \odot p_n \cong p_n$.
    \item Naturality of action (i): for all $k \geq 0$ and maps
      $\sigma_i: [m_i] \to [n_i]$, $1 \leq i \leq k$, in $\cat{F}$, we have
      \begin{equation*}
        \begin{tikzcd}[row sep=scriptsize]
          {x^n} && {x^k} & x \\
          {x^n} && {x^k} & x \\
          {x^n} &&& x
          \arrow[""{name=0, anchor=center, inner sep=0}, "{p_{m_1}^{\sigma_1} \times \cdots \times p_{m_k}^{\sigma_k}}", "\shortmid"{marking}, from=1-1, to=1-3]
          \arrow[""{name=1, anchor=center, inner sep=0}, "{p_k}", "\shortmid"{marking}, from=1-3, to=1-4]
          \arrow[""{name=2, anchor=center, inner sep=0}, "{p_k}"', "\shortmid"{marking}, from=2-3, to=2-4]
          \arrow[Rightarrow, no head, from=1-3, to=2-3]
          \arrow[Rightarrow, no head, from=1-4, to=2-4]
          \arrow[Rightarrow, no head, from=1-1, to=2-1]
          \arrow[""{name=3, anchor=center, inner sep=0}, "{p_{n_1} \times \cdots \times p_{n_k}}"', "\shortmid"{marking}, from=2-1, to=2-3]
          \arrow[Rightarrow, no head, from=2-1, to=3-1]
          \arrow[Rightarrow, no head, from=2-4, to=3-4]
          \arrow[""{name=4, anchor=center, inner sep=0}, "{p_n}"', "\shortmid"{marking}, from=3-1, to=3-4]
          \arrow["{1_{p_k}}"{description}, draw=none, from=1, to=2]
          \arrow["{\rho(\sigma_1) \times \cdots \times \rho(\sigma_k)}"{description}, draw=none, from=0, to=3]
          \arrow["1"{description}, draw=none, from=2-3, to=4]
        \end{tikzcd}
        \quad=\quad
        \begin{tikzcd}[row sep=scriptsize]
          {x^n} && {x^k} & x \\
          {x^n} &&& x \\
          {x^n} &&& x
          \arrow[""{name=0, anchor=center, inner sep=0}, "{p_n}"', "\shortmid"{marking}, from=3-1, to=3-4]
          \arrow[""{name=1, anchor=center, inner sep=0}, "{p_m^{\sigma_1 + \cdots + \sigma_k}}", "\shortmid"{marking}, from=2-1, to=2-4]
          \arrow[Rightarrow, no head, from=2-1, to=3-1]
          \arrow[Rightarrow, no head, from=2-4, to=3-4]
          \arrow["{p_k}", "\shortmid"{marking}, from=1-3, to=1-4]
          \arrow[Rightarrow, no head, from=1-4, to=2-4]
          \arrow[Rightarrow, no head, from=1-1, to=2-1]
          \arrow["{p_{m_1}^{\sigma_1} \times \cdots \times p_{m_k}^{\sigma_k}}", "\shortmid"{marking}, from=1-1, to=1-3]
          \arrow["{\rho(\sigma_1 + \cdots + \sigma_k)}"{description}, draw=none, from=1, to=0]
          \arrow["\cong"{description, pos=0.4}, shift right=3, draw=none, from=1-3, to=1]
        \end{tikzcd},
      \end{equation*}
      where we put $m \coloneqq m_1 + \cdots + m_k$ and
      $n \coloneqq n_1 + \cdots + n_k$ and the isomorphism on the right-hand
      side is built out of the canonical isomorphisms
      (Proposition \ref{prop:companions-product})
      \begin{equation*}
        x^{\sigma_1}_! \times \cdots \times x^{\sigma_k}_!
          \cong (x^{\sigma_1} \times \cdots \times x^{\sigma_k})_!
          \cong x^{\sigma_1 + \cdots + \sigma_k}_!.
      \end{equation*}
    \item Naturality of action (ii): for every map $\sigma: [k] \to [\ell]$ in
      $\cat{F}$ and all $n_j \geq 0$, $1 \leq j \leq \ell$, we have
      \begin{equation*}
        \begin{tikzcd}[row sep=scriptsize]
          {x^n} && {x^\ell} & x \\
          {x^n} && {x^\ell} & x \\
          {x^n} &&& x
          \arrow[""{name=0, anchor=center, inner sep=0}, "{p_{n_1} \times \cdots \times p_{n_\ell}}", "\shortmid"{marking}, from=1-1, to=1-3]
          \arrow[""{name=1, anchor=center, inner sep=0}, "{p_{n_1} \times \cdots \times p_{n_\ell}}"', "\shortmid"{marking}, from=2-1, to=2-3]
          \arrow[Rightarrow, no head, from=1-1, to=2-1]
          \arrow[Rightarrow, no head, from=1-3, to=2-3]
          \arrow[""{name=2, anchor=center, inner sep=0}, "{p_k^\sigma}", "\shortmid"{marking}, from=1-3, to=1-4]
          \arrow[""{name=3, anchor=center, inner sep=0}, "{p_\ell}"', "\shortmid"{marking}, from=2-3, to=2-4]
          \arrow[Rightarrow, no head, from=1-4, to=2-4]
          \arrow[""{name=4, anchor=center, inner sep=0}, "{p_n}"', "\shortmid"{marking}, from=3-1, to=3-4]
          \arrow[Rightarrow, no head, from=2-1, to=3-1]
          \arrow[Rightarrow, no head, from=2-4, to=3-4]
          \arrow["1"{description}, draw=none, from=0, to=1]
          \arrow["{\rho(\sigma)}"{description}, draw=none, from=2, to=3]
          \arrow["1"{description}, draw=none, from=2-3, to=4]
        \end{tikzcd}
        \quad=\quad
        \begin{tikzcd}[row sep=scriptsize]
          {x^n} && {x^\ell} & x \\
          {x^n} &&& x \\
          {x^n} &&& x
          \arrow[""{name=0, anchor=center, inner sep=0}, "{p_m^{\sigma \wr (n_1,\dots,n_\ell)}}", "\shortmid"{marking}, from=2-1, to=2-4]
          \arrow[""{name=1, anchor=center, inner sep=0}, "{p_n}"', "\shortmid"{marking}, from=3-1, to=3-4]
          \arrow[Rightarrow, no head, from=2-1, to=3-1]
          \arrow[Rightarrow, no head, from=2-4, to=3-4]
          \arrow[Rightarrow, no head, from=1-4, to=2-4]
          \arrow[Rightarrow, no head, from=1-1, to=2-1]
          \arrow["{p_{n_1} \times \cdots \times p_{n_\ell}}", "\shortmid"{marking}, from=1-1, to=1-3]
          \arrow["{p_k^\sigma}", "\shortmid"{marking}, from=1-3, to=1-4]
          \arrow["{\rho(\sigma \wr (n_1,\dots,n_\ell))}"{description}, draw=none, from=0, to=1]
          \arrow["\cong"{description, pos=0.4}, shift right=3, draw=none, from=1-3, to=0]
        \end{tikzcd},
      \end{equation*}
      where we put $m \coloneqq n_{\sigma 1} + \cdots + n_{\sigma k}$ and
      $n \coloneqq n_1 + \cdots + n_\ell$, and the map
      $\sigma \wr (n_1,\dots,n_\ell): [m] \to [n]$ applies $\sigma$ blockwise
      and is the identity within each block of size $n_{\sigma i}$.

      The isomorphism on the right-hand side uses a canonical isomorphism
      between products obtained as follows. The family of proarrows
      $p_{n_j}: x^{n_j} \proto x$, $1 \leq j \leq \ell$, indexed by the identity
      span $\id_{[\ell]}$, and the family of identity proarrows $\id_x$, indexed
      by the span $\sigma^* = ([\ell] \xfrom{\sigma} [k] = [k])$, have as their
      composite the family $p_{n_{\sigma i}}: x^{n_{\sigma i}} \proto x$,
      $1 \leq i \leq k$, also indexed by the span $\sigma^*$. But the same is
      true for the family of identity proarrows
      $\id_{x^{n_{\sigma i}}} \cong \id_x^{n_{\sigma i}}$, $i \leq 1 \leq k$,
      indexed by $\sigma^*$, composed with the proarrow family
      $p_{n_{\sigma i}}: x^{n_{\sigma i}} \proto x$, $1 \leq i \leq k$, indexed
      by the identity span $\id_{[k]}$. Since a finite-product double theory has
      \emph{strong} finite products, we obtain a canonical isomorphism
      \begin{equation} \label{eq:th-generalized-multicategory-canonical-iso}
        (p_{n_1} \times \cdots \times p_{n_\ell}) \odot x^\sigma_! \cong
          x^{\sigma \wr (n_1,\dots,n_\ell)}_! \odot
          (p_{n_{\sigma 1}} \times \cdots \times p_{n_{\sigma k}}),
      \end{equation}
      which, by composing with the identity $1_{p_k}$ on the right, yields the
      isomorphism used in the right-hand side.
  \end{itemize}
  A model of the theory of cartesian promonoids is equivalent to a cartesian
  multicategory. This follows by reasoning similar to, but simpler than, that in
  \citep[\mbox{Proposition 6.21}]{lambert2024}, now appealing to Lemma
  \ref{lem:laxator-unitor-product} to control the model's laxators for
  companions.

  The \define{theory of symmetric promonoids} is defined in the same way except
  that the skeleton of $\FinSet$ is replaced by its core, the skeleton of the
  groupoid of finite sets and bijections. Models of the theory of symmetric
  promonoids are equivalent to symmetric multicategories.
\end{example}

\begin{remark}[Comparison with restriction sketches]
  Possessing restrictions of products along structure arrows, finite-product
  double theories subsume some, though not all, of the intended applications of
  ``restriction sketches,'' a provisional notion introduced in
  \citep[\mbox{Definition 6.17}]{lambert2024}. While, unlike an equipment or a
  restriction sketch, finite-product double theories do not allow restriction
  along \emph{arbitrary} arrows, they have the advantage over equipments of not
  producing uncontrolled laxators, as the additional laxators here are uniquely
  determined by Lemma \ref{lem:laxator-unitor-product}. And they have the
  advantage over restriction sketches of not needing to manually impose extra
  equations that should follow from properties of products and restrictions;
  compare the \emph{derived} Equation
  \eqref{eq:th-generalized-multicategory-canonical-iso} above with the
  \emph{postulated} \citep[\mbox{Equations 6.3 \& 6.4}]{lambert2024}.
\end{remark}

\section{Higher morphisms and double categories of models}
\label{sec:models}

If models of finite-product double theories are product-preserving lax functors,
then morphisms between models, and cells between those, ought to be the higher
morphisms between lax functors that also preserve finite products. In this final
section, we determine the sense in which lax natural transformations and modules
between lax functors, and modulations between squares of those, should or
already do preserve products. The definitions and lemmas are all more or less
obvious, following the pattern of Lemma \ref{lem:laxator-unitor-product} on lax
functors and having counterparts for cartesian double categories in previous
work \citep{lambert2024}. Nevertheless, it is important to state and prove these
results, as they ensure that the morphisms and cells between models of
finite-product double theories behave as intended and recover the expected
notions in examples of interest. We will conclude by showing that these higher
morphisms assemble into a unital virtual double category, and in particular a
2-category, of models of a finite-product double theory.

Natural transformations between product-preserving lax double functors require
no extra assumptions, just as ordinary natural transformations between
product-preserving functors do not. However, \emph{lax} natural transformations
between double functors, introduced in \citep[\S{7}]{lambert2024} to generalize
lax natural transformations between 2-functors, need an extra axiom that make
products be respected \emph{strictly}.

\begin{definition}[Product-preserving lax transformation]
  Let $F,G: \dbl{D} \to \dbl{E}$ be lax functors between double categories with
  lax products. A lax natural transformation $\alpha: F \To G$ \define{preserves
    products} if it is \emph{strictly} natural with respect to projections,
  meaning that for every family of objects $(I,\vec{x})$ in $\dbl{D}$ and every
  $i \in I$, the square
  \begin{equation*}
    \begin{tikzcd}
      {F\Pi\vec{x}} & {Fx_i} \\
      {G\Pi\vec{x}} & {Gx_i}
      \arrow["{\alpha_{\Pi\vec{x}}}"', from=1-1, to=2-1]
      \arrow["{F\pi_i}", from=1-1, to=1-2]
      \arrow["{\alpha_{x_i}}", from=1-2, to=2-2]
      \arrow["{G\pi_i}"', from=2-1, to=2-2]
    \end{tikzcd}
  \end{equation*}
  of arrows in $\dbl{E}$ commutes, and the naturality comparison for the
  projection $\pi_i$ has the form
  \begin{equation*}
    \begin{tikzcd}
      {F\Pi\vec{x}} & {F\Pi\vec{x}} \\
      {G\Pi\vec{x}} & {Fx_i} \\
      {Gx_i} & {Gx_i}
      \arrow["{\alpha_{\Pi\vec{x}}}"', from=1-1, to=2-1]
      \arrow["{G\pi_i}"', from=2-1, to=3-1]
      \arrow["{\alpha_{x_i}}", from=2-2, to=3-2]
      \arrow["{F\pi_i}", from=1-2, to=2-2]
      \arrow[""{name=0, anchor=center, inner sep=0}, "{G\id_{x_i}}"', "\shortmid"{marking}, from=3-1, to=3-2]
      \arrow[""{name=1, anchor=center, inner sep=0}, "{\id_{F\Pi\vec{x}}}", "\shortmid"{marking}, from=1-1, to=1-2]
      \arrow["{\alpha_{\pi_i}}"{description}, draw=none, from=1, to=0]
    \end{tikzcd}
    \quad=\quad
    \begin{tikzcd}[row sep=scriptsize]
      {F\Pi\vec{x}} & {F\Pi\vec{x}} \\
      {G\Pi\vec{x}} & {Fx_i} \\
      {Gx_i} & {Gx_i} \\
      {Gx_i} & {Gx_i}
      \arrow["{\alpha_{\Pi\vec{x}}}"', from=1-1, to=2-1]
      \arrow["{G\pi_i}"', from=2-1, to=3-1]
      \arrow["{\alpha_{x_i}}", from=2-2, to=3-2]
      \arrow["{F\pi_i}", from=1-2, to=2-2]
      \arrow[""{name=0, anchor=center, inner sep=0}, "{\id_{Gx_i}}", "\shortmid"{marking}, from=3-1, to=3-2]
      \arrow[""{name=1, anchor=center, inner sep=0}, "{\id_{F\Pi\vec{x}}}", "\shortmid"{marking}, from=1-1, to=1-2]
      \arrow[Rightarrow, no head, from=3-1, to=4-1]
      \arrow[Rightarrow, no head, from=3-2, to=4-2]
      \arrow[""{name=2, anchor=center, inner sep=0}, "{G\id_{x_i}}"', "\shortmid"{marking}, from=4-1, to=4-2]
      \arrow["\id"{description}, draw=none, from=1, to=0]
      \arrow["{G_{x_i}}"{description}, draw=none, from=0, to=2]
    \end{tikzcd}.
  \end{equation*}
\end{definition}

Lax natural transformation that restrict to strict natural transformations on
product projections have appeared previously in the literature on
two-dimensional categorical logic \citep[\mbox{Example 9.1}]{bourke2021}. The
$\mathcal{F}$-natural transformations from $\mathcal{F}$-category theory
\citep[\S{4.1}]{lack2012} are a general tool to handle such situations.

The next lemma generalizes \citep[\mbox{Lemma 7.11}]{lambert2024} on the
naturality comparisons of a cartesian lax natural transformation.

\begin{lemma}[Naturality comparisons for products]
  Let $F,G: \dbl{D} \to \dbl{E}$ be lax functors between double categories with
  lax products, and let $\alpha: F \To G$ be a product-preserving lax
  transformation. Then for every arrow $(f_0,f): (I,\vec{x}) \to (J,\vec{y})$ in
  $\DblFamOp(\dbl{D})$, we have
  \begin{equation} \label{eq:naturality-comparison-product}
    \begin{tikzcd}[column sep=large,row sep=scriptsize]
      {F\Pi\vec{x}} & {F\Pi\vec{x}} \\
      {G\Pi\vec{x}} & {F\Pi\vec{y}} \\
      {G\Pi\vec{y}} & {G\Pi\vec{y}} \\
      {G\Pi\vec{y}} & {G\Pi\vec{y}} \\
      {\Pi G\vec{y}} & {\Pi G\vec{y}}
      \arrow[""{name=0, anchor=center, inner sep=0}, "{\id_{F\Pi\vec{x}}}", "\shortmid"{marking}, from=1-1, to=1-2]
      \arrow["{\alpha_{\Pi\vec{x}}}"', from=1-1, to=2-1]
      \arrow["{G\Pi(f_0,f)}"', from=2-1, to=3-1]
      \arrow["{F\Pi(f_0,f)}", from=1-2, to=2-2]
      \arrow["{\alpha_{\Pi\vec{y}}}", from=2-2, to=3-2]
      \arrow[""{name=1, anchor=center, inner sep=0}, "{G\id_{\Pi\vec{y}}}", "\shortmid"{marking}, from=3-1, to=3-2]
      \arrow[Rightarrow, no head, from=3-1, to=4-1]
      \arrow[Rightarrow, no head, from=3-2, to=4-2]
      \arrow[""{name=2, anchor=center, inner sep=0}, "{G\Pi\id_{\vec{y}}}", "\shortmid"{marking}, from=4-1, to=4-2]
      \arrow["{\Psi_{\vec{y}}}"', from=4-1, to=5-1]
      \arrow["{\Psi_{\vec{y}}}", from=4-2, to=5-2]
      \arrow[""{name=3, anchor=center, inner sep=0}, "{\Pi G\id_{\vec{y}}}"', "\shortmid"{marking}, from=5-1, to=5-2]
      \arrow["{\alpha_{\Pi(f_0,f)}}"{description}, draw=none, from=0, to=1]
      \arrow["{G\Pi_{\vec{y}}}"{description, pos=0.4}, draw=none, from=1, to=2]
      \arrow["{\Psi_{\id_{\vec{y}}}}"{description}, draw=none, from=2, to=3]
    \end{tikzcd}
    =
    \begin{tikzcd}[column sep=large,row sep=scriptsize]
      {F\Pi\vec{x}} & {F\Pi\vec{x}} \\
      {\Pi F\vec{x}} & {\Pi F\vec{x}} \\
      {\Pi F\vec{x}} & {\Pi F\vec{x}} \\
      {\Pi G\vec{x}} & {\Pi F\vec{y}} \\
      {\Pi G\vec{y}} & {\Pi G\vec{y}}
      \arrow[""{name=0, anchor=center, inner sep=0}, "{\id_{F\Pi\vec{x}}}", "\shortmid"{marking}, from=1-1, to=1-2]
      \arrow["{\Phi_{\vec{x}}}"', from=1-1, to=2-1]
      \arrow["{\Phi_{\vec{x}}}", from=1-2, to=2-2]
      \arrow[""{name=1, anchor=center, inner sep=0}, "{\id_{\Pi F\vec{x}}}"', "\shortmid"{marking}, from=2-1, to=2-2]
      \arrow[""{name=2, anchor=center, inner sep=0}, "{\Pi \id_{F\vec{x}}}"', "\shortmid"{marking}, from=3-1, to=3-2]
      \arrow[Rightarrow, no head, from=2-1, to=3-1]
      \arrow[Rightarrow, no head, from=2-2, to=3-2]
      \arrow["{\Pi\alpha_{\vec{x}}}"', from=3-1, to=4-1]
      \arrow["{\Pi(f_0,Gf)}"', from=4-1, to=5-1]
      \arrow["{\Pi(f_0,Ff)}", from=3-2, to=4-2]
      \arrow["{\Pi\alpha_{\vec{y}}}", from=4-2, to=5-2]
      \arrow[""{name=3, anchor=center, inner sep=0}, "{\Pi G\id_{\vec{y}}}"', "\shortmid"{marking}, from=5-1, to=5-2]
      \arrow["{\Pi(f_0,\alpha_f)}"{description}, draw=none, from=2, to=3]
      \arrow["{\id_{\Phi_{\vec{x}}}}"{description}, draw=none, from=0, to=1]
      \arrow["{\Pi_{F\vec{x}}}"{description, pos=0.6}, draw=none, from=1, to=2]
    \end{tikzcd},
  \end{equation}
  where $\alpha_f$ denotes the family of naturality comparisons $\alpha_{f_j}$
  for the arrows $f_j: x_{f_0(j)} \to y_j$, $j \in J$.

  In particular, when $\dbl{D}$ has normal lax products and $G$ preserves
  products of identities, the naturality comparison for the product arrow
  $\Pi(f_0,f): \Pi\vec{x} \to \Pi\vec{y}$ is uniquely determined by the product
  $\Pi(f_0,\alpha_f)$ of naturality comparisons.
\end{lemma}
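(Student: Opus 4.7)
The proof plan is to follow exactly the strategy used in the proof of \cref{lem:laxator-unitor-product}. Both sides of \eqref{eq:naturality-comparison-product} are globular cells in $\dbl{E}$ whose codomain proarrow is the product $\Pi G\id_{\vec{y}}: \Pi G\vec{y} \proto \Pi G\vec{y}$. By the universal property of this product, it therefore suffices to show, for each $j \in J$, that post-composing both sides with the projection cell $\pi_j: \Pi G\id_{\vec{y}} \to G\id_{y_j}$ produces the same cell. After this reduction, both sides should collapse to the single naturality comparison $\alpha_{f_j}$ at the arrow $f_j: x_{f_0(j)} \to y_j$ in $\dbl{D}$, normalized by suitable unitors.

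On the right-hand side, each pairing $\Pi\alpha_{\vec{x}}$, $\Pi\alpha_{\vec{y}}$, $\Pi(f_0,Ff)$, $\Pi(f_0, Gf)$, and $\Pi(f_0, \alpha_f)$ projects under $\pi_j$ to its $j$-th component, while $\pi_j \circ \Phi_{\vec{x}} = F\pi_{f_0(j)}$ by the defining property of the product-preservation comparison for the lax functor $F$; the cell $\Pi_{F\vec{x}}$ collapses via equation \eqref{eq:product-identity-comparison} to an identity whiskering. The right-hand side thus reduces to $\alpha_{f_j}$ with its source normalized by the identity cell on $F\pi_{f_0(j)}$.

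On the left-hand side, the product-preservation hypothesis on $\alpha$ supplies the strict equation $\pi_j \circ \alpha_{\Pi\vec{y}} = \alpha_{y_j} \circ F\pi_j$, and likewise at $\vec{x}$. Using lax naturality of $\alpha$ in the arrow $\Pi(f_0,f)$ together with the relation $\pi_j \circ \Pi(f_0,f) = f_j \circ \pi_{f_0(j)}$, the cell $\alpha_{\Pi(f_0,f)}$ is reduced under $\pi_j$ to the composite of $\alpha_{f_j}$ with the cell $\alpha_{\pi_{f_0(j)}}$ — and the latter has the explicit form prescribed by the product-preservation axiom, namely $G_{y_{f_0(j)}}$ whiskered by an identity. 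The cells $G\Pi_{\vec{y}}$ and $\Psi_{\id_{\vec{y}}}$ project under $\pi_j$ using the defining equations $\pi_j \circ \Psi_{\vec{y}} = G\pi_j$ and the analogue of \eqref{eq:product-identity-comparison} for $G\Pi$, yielding further $G_{y_j}$-type unitor factors that align exactly with those appearing on the right-hand side.

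The main obstacle is purely bookkeeping: many comparison cells participate simultaneously — $\Phi$, $\Psi$, the lax-functor unitors $F_{x}$ and $G_y$, the product unitors $\Pi_{F\vec{x}}$ and $G\Pi_{\vec{y}}$, and the lax-naturality cell $\alpha_{\pi_{f_0(j)}}$ — and one must apply each defining equation in the correct order to match the two reductions. Once this is done, the universal property of $\Pi G\id_{\vec{y}}$ finishes the proof. The ``in particular'' statement is then immediate: normality of products in $\dbl{D}$ makes $\Pi_{\vec{y}}$ invertible, and preservation of products of identities by $G$ makes $\Psi_{\id_{\vec{y}}}$ invertible, so \eqref{eq:naturality-comparison-product} can be inverted on the left side to solve uniquely for $\alpha_{\Pi(f_0,f)}$ in terms of $\Pi(f_0, \alpha_f)$.
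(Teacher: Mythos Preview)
Your proposal is correct and follows essentially the same approach as the paper: post-compose both sides with the projection $\pi_j$, reduce each side using the defining equations for $\Phi$, $\Psi$, $\Pi_{F\vec{x}}$, and $G\Pi_{\vec{y}}$, and then match the results via the functoriality of the naturality comparisons applied to the relation $\pi_j \circ \Pi(f_0,f) = f_j \circ \pi_{f_0(j)}$ together with the explicit form of $\alpha_{\pi_i}$ given by the product-preservation axiom. One small slip: the unitor appearing in the reduced form of $\alpha_{\pi_{f_0(j)}}$ is $G_{x_{f_0(j)}}$, not $G_{y_{f_0(j)}}$, but this is exactly the kind of bookkeeping you flag.
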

\begin{proof}
  Fixing an element $j \in J$, the post-composite of the left-hand side of
  Equation \eqref{eq:naturality-comparison-product} with the projection $\pi_j$ in
  $\dbl{E}$ is
  \begin{equation*}
    \begin{tikzcd}
      {F\Pi\vec{x}} & {F\Pi\vec{x}} \\
      {\Pi G\vec{y}} & {\Pi G\vec{y}} \\
      {Gy_j} & {Gy_j}
      \arrow[""{name=0, anchor=center, inner sep=0}, "{\id_{F\Pi\vec{x}}}", "\shortmid"{marking}, from=1-1, to=1-2]
      \arrow[""{name=1, anchor=center, inner sep=0}, "{\Pi G\id_{\vec{y}}}"', "\shortmid"{marking}, from=2-1, to=2-2]
      \arrow[from=1-1, to=2-1]
      \arrow[from=1-2, to=2-2]
      \arrow[""{name=2, anchor=center, inner sep=0}, "{G\id_{y_j}}"', "\shortmid"{marking}, from=3-1, to=3-2]
      \arrow["{\pi_j}", from=2-2, to=3-2]
      \arrow["{\pi_j}"', from=2-1, to=3-1]
      \arrow["{\text{LHS}}"{description}, draw=none, from=0, to=1]
      \arrow["{\pi_j}"{description, pos=0.6}, draw=none, from=1, to=2]
    \end{tikzcd}
    \quad=\quad
    \begin{tikzcd}
      {F\Pi\vec{x}} & {F\Pi\vec{x}} \\
      {G\Pi\vec{y}} & {G\Pi\vec{y}} \\
      {G\Pi\vec{y}} & {G\Pi\vec{y}} \\
      {G y_j} & {G y_j}
      \arrow[""{name=0, anchor=center, inner sep=0}, "{\id_{F\Pi\vec{x}}}", "\shortmid"{marking}, from=1-1, to=1-2]
      \arrow[""{name=1, anchor=center, inner sep=0}, "{G\id_{\Pi\vec{y}}}", "\shortmid"{marking}, from=2-1, to=2-2]
      \arrow[Rightarrow, no head, from=2-1, to=3-1]
      \arrow[Rightarrow, no head, from=2-2, to=3-2]
      \arrow[""{name=2, anchor=center, inner sep=0}, "{G\Pi\id_{\vec{y}}}", "\shortmid"{marking}, from=3-1, to=3-2]
      \arrow[from=1-1, to=2-1]
      \arrow[from=1-2, to=2-2]
      \arrow["{G\pi_j}"', from=3-1, to=4-1]
      \arrow["{G\pi_j}", from=3-2, to=4-2]
      \arrow[""{name=3, anchor=center, inner sep=0}, "{G\id_{y_j}}"', "\shortmid"{marking}, from=4-1, to=4-2]
      \arrow["{\alpha_{\Pi(f_0,f)}}"{description, pos=0.4}, draw=none, from=0, to=1]
      \arrow["{G\Pi_{\vec{y}}}"{description, pos=0.4}, draw=none, from=1, to=2]
      \arrow["{G\pi_j}"{description}, draw=none, from=2, to=3]
    \end{tikzcd}
    \quad=\quad
    \begin{tikzcd}[column sep=large]
      {F\Pi\vec{x}} & {F\Pi\vec{x}} \\
      {G\Pi\vec{x}} & {F\Pi\vec{y}} \\
      {G\Pi\vec{y}} & {G\Pi\vec{y}} \\
      {Gy_j} & {Gy_j}
      \arrow[""{name=0, anchor=center, inner sep=0}, "{\id_{F\Pi\vec{x}}}", "\shortmid"{marking}, from=1-1, to=1-2]
      \arrow["{\alpha_{\Pi\vec{x}}}"', from=1-1, to=2-1]
      \arrow["{G\Pi(f_0,f)}"', from=2-1, to=3-1]
      \arrow["{F\Pi(f_0,f)}", from=1-2, to=2-2]
      \arrow["{\alpha_{\Pi\vec{y}}}", from=2-2, to=3-2]
      \arrow[""{name=1, anchor=center, inner sep=0}, "{G\id_{\Pi\vec{y}}}", "\shortmid"{marking}, from=3-1, to=3-2]
      \arrow["{G\pi_j}"', from=3-1, to=4-1]
      \arrow["{G\pi_j}", from=3-2, to=4-2]
      \arrow[""{name=2, anchor=center, inner sep=0}, "{G\id_{y_j}}"', "\shortmid"{marking}, from=4-1, to=4-2]
      \arrow["{\alpha_{\Pi(f_0,f)}}"{description}, draw=none, from=0, to=1]
      \arrow["{G\id_{\pi_j}}"{description}, draw=none, from=1, to=2]
    \end{tikzcd}.
  \end{equation*}
  Setting $i \coloneqq f_0(j) \in I$ and using the equations
  \begin{equation} \label{eq:product-arrow}
    \begin{tikzcd}
      {\Pi\vec{x}} & {\Pi\vec{y}} \\
      {x_{f_0(j)}} & {y_j}
      \arrow["{\Pi(f_0,f)}", from=1-1, to=1-2]
      \arrow["{\pi_j}", from=1-2, to=2-2]
      \arrow["{f_j}"', from=2-1, to=2-2]
      \arrow["{\pi_i = \pi_{f_0(j)}}"', from=1-1, to=2-1]
    \end{tikzcd}
    \qquad\text{and}\qquad
    \begin{tikzcd}
      {\Pi\id_{F\vec{x}}} & {\Pi G\id_{\vec{y}}} \\
      {\id_{Fx_i}} & {G \id_{y_j}}
      \arrow["{\Pi(f_0,\alpha_f)}", from=1-1, to=1-2]
      \arrow["{\pi_j}", from=1-2, to=2-2]
      \arrow["{\alpha_{f_j}}"', from=2-1, to=2-2]
      \arrow["{\pi_i = \pi_{f_0(j)}}"', from=1-1, to=2-1]
    \end{tikzcd}
  \end{equation}
  in $\dbl{E}_0$ and $\dbl{E}_1$, the post-composite of the right-hand side of
  Equation \eqref{eq:naturality-comparison-product} with $\pi_j$ is
  \begin{equation*}
    \begin{tikzcd}
      {F\Pi\vec{x}} & {F\Pi\vec{x}} \\
      {\Pi G\vec{y}} & {\Pi G\vec{y}} \\
      {Gy_j} & {Gy_j}
      \arrow[""{name=0, anchor=center, inner sep=0}, "{\id_{F\Pi\vec{x}}}", "\shortmid"{marking}, from=1-1, to=1-2]
      \arrow[""{name=1, anchor=center, inner sep=0}, "{\Pi G\id_{\vec{y}}}"', "\shortmid"{marking}, from=2-1, to=2-2]
      \arrow[from=1-1, to=2-1]
      \arrow[from=1-2, to=2-2]
      \arrow[""{name=2, anchor=center, inner sep=0}, "{G\id_{y_j}}"', "\shortmid"{marking}, from=3-1, to=3-2]
      \arrow["{\pi_j}", from=2-2, to=3-2]
      \arrow["{\pi_j}"', from=2-1, to=3-1]
      \arrow["{\text{RHS}}"{description}, draw=none, from=0, to=1]
      \arrow["{\pi_j}"{description, pos=0.6}, draw=none, from=1, to=2]
    \end{tikzcd}
    =
    \begin{tikzcd}[row sep=scriptsize]
      {F\Pi\vec{x}} & {F\Pi\vec{x}} \\
      {\Pi F\vec{x}} & {\Pi F\vec{x}} \\
      {\Pi F\vec{x}} & {\Pi F\vec{x}} \\
      {Fx_i} & {Fx_i} \\
      {Gy_j} & {Gy_j}
      \arrow[""{name=0, anchor=center, inner sep=0}, "{\id_{F\Pi\vec{x}}}", "\shortmid"{marking}, from=1-1, to=1-2]
      \arrow["{\Phi_{\vec{x}}}"', from=1-1, to=2-1]
      \arrow["{\Phi_{\vec{x}}}", from=1-2, to=2-2]
      \arrow[""{name=1, anchor=center, inner sep=0}, "{\id_{\Pi F\vec{x}}}"', "\shortmid"{marking}, from=2-1, to=2-2]
      \arrow[""{name=2, anchor=center, inner sep=0}, "{\Pi \id_{F\vec{x}}}"', "\shortmid"{marking}, from=3-1, to=3-2]
      \arrow[Rightarrow, no head, from=2-1, to=3-1]
      \arrow[Rightarrow, no head, from=2-2, to=3-2]
      \arrow[from=4-1, to=5-1]
      \arrow[from=4-2, to=5-2]
      \arrow[""{name=3, anchor=center, inner sep=0}, "{\id_{Fx_i}}"', "\shortmid"{marking}, from=4-1, to=4-2]
      \arrow[""{name=4, anchor=center, inner sep=0}, "{G\id_{y_j}}"', "\shortmid"{marking}, from=5-1, to=5-2]
      \arrow["{\pi_i}"', from=3-1, to=4-1]
      \arrow["{\pi_i}", from=3-2, to=4-2]
      \arrow["{\id_{\Phi_{\vec{x}}}}"{description}, draw=none, from=0, to=1]
      \arrow["{\Pi_{F\vec{x}}}"{description, pos=0.6}, draw=none, from=1, to=2]
      \arrow["{\pi_i}"{description, pos=0.6}, draw=none, from=2, to=3]
      \arrow["{\alpha_{f_j}}"{description, pos=0.6}, draw=none, from=3, to=4]
    \end{tikzcd}
    =
    \begin{tikzcd}
      {F\Pi\vec{x}} & {F\Pi\vec{x}} \\
      {\Pi F\vec{x}} & {\Pi F\vec{x}} \\
      {Fx_i} & {Fx_i} \\
      {Gy_j} & {Gy_j}
      \arrow[""{name=0, anchor=center, inner sep=0}, "{\id_{F\Pi\vec{x}}}", "\shortmid"{marking}, from=1-1, to=1-2]
      \arrow["{\Phi_{\vec{x}}}"', from=1-1, to=2-1]
      \arrow["{\Phi_{\vec{x}}}", from=1-2, to=2-2]
      \arrow[""{name=1, anchor=center, inner sep=0}, "{\id_{\Pi F\vec{x}}}"', "\shortmid"{marking}, from=2-1, to=2-2]
      \arrow[from=3-1, to=4-1]
      \arrow[from=3-2, to=4-2]
      \arrow[""{name=2, anchor=center, inner sep=0}, "{\id_{Fx_i}}"', "\shortmid"{marking}, from=3-1, to=3-2]
      \arrow[""{name=3, anchor=center, inner sep=0}, "{G\id_{y_j}}"', "\shortmid"{marking}, from=4-1, to=4-2]
      \arrow["{\pi_i}"', from=2-1, to=3-1]
      \arrow["{\pi_i}", from=2-2, to=3-2]
      \arrow["{\id_{\Phi_{\vec{x}}}}"{description}, draw=none, from=0, to=1]
      \arrow["{\alpha_{f_j}}"{description, pos=0.6}, draw=none, from=2, to=3]
      \arrow["{\id_{\pi_i}}"{description, pos=0.6}, draw=none, from=1, to=2]
    \end{tikzcd}
    =
    \begin{tikzcd}
      {F\Pi\vec{x}} & {F\Pi\vec{x}} \\
      {Fx_i} & {Fx_i} \\
      {Gx_i} & {Fy_j} \\
      {Gy_j} & {Gy_j}
      \arrow[""{name=0, anchor=center, inner sep=0}, "{\id_{F\Pi\vec{x}}}", "\shortmid"{marking}, from=1-1, to=1-2]
      \arrow[""{name=1, anchor=center, inner sep=0}, "{\id_{Fx_i}}"', "\shortmid"{marking}, from=2-1, to=2-2]
      \arrow[""{name=2, anchor=center, inner sep=0}, "{G\id_{y_j}}"', "\shortmid"{marking}, from=4-1, to=4-2]
      \arrow["{F\pi_i}"', from=1-1, to=2-1]
      \arrow["{F\pi_i}", from=1-2, to=2-2]
      \arrow["{\alpha_{x_i}}"', from=2-1, to=3-1]
      \arrow["{Gf_j}"', from=3-1, to=4-1]
      \arrow["{Ff_j}", from=2-2, to=3-2]
      \arrow["{\alpha_{y_j}}", from=3-2, to=4-2]
      \arrow["{\id_{F\pi_i}}"{description}, draw=none, from=0, to=1]
      \arrow["{\alpha_{f_j}}"{description}, draw=none, from=1, to=2]
    \end{tikzcd}.
  \end{equation*}
  But, applying the functorality of the naturality comparisons to the first
  square in Equation \eqref{eq:product-arrow} and using the assumption that $\alpha$ is
  strictly natural with respect to projections, we also have
  \begin{equation*}
    \begin{tikzcd}[column sep=large]
      {F\Pi\vec{x}} & {F\Pi\vec{x}} \\
      {G\Pi\vec{x}} & {F\Pi\vec{y}} \\
      {G\Pi\vec{y}} & {G\Pi\vec{y}} \\
      {Gy_j} & {Gy_j}
      \arrow[""{name=0, anchor=center, inner sep=0}, "{\id_{F\Pi\vec{x}}}", "\shortmid"{marking}, from=1-1, to=1-2]
      \arrow["{\alpha_{\Pi\vec{x}}}"', from=1-1, to=2-1]
      \arrow["{G\Pi(f_0,f)}"', from=2-1, to=3-1]
      \arrow["{F\Pi(f_0,f)}", from=1-2, to=2-2]
      \arrow["{\alpha_{\Pi\vec{y}}}", from=2-2, to=3-2]
      \arrow[""{name=1, anchor=center, inner sep=0}, "{G\id_{\Pi\vec{y}}}", "\shortmid"{marking}, from=3-1, to=3-2]
      \arrow["{G\pi_j}"', from=3-1, to=4-1]
      \arrow["{G\pi_j}", from=3-2, to=4-2]
      \arrow[""{name=2, anchor=center, inner sep=0}, "{G\id_{y_j}}"', "\shortmid"{marking}, from=4-1, to=4-2]
      \arrow["{\alpha_{\Pi(f_0,f)}}"{description}, draw=none, from=0, to=1]
      \arrow["{G\id_{\pi_j}}"{description}, draw=none, from=1, to=2]
    \end{tikzcd}
    \quad=\quad
    \alpha_{\pi_j \circ \Pi(f_0,f)}
    \;=\;
    \alpha_{f_j \circ \pi_i}
    \quad=\quad
    \begin{tikzcd}
      {F\Pi\vec{x}} & {F\Pi\vec{x}} \\
      {Fx_i} & {Fx_i} \\
      {Gx_i} & {Fy_j} \\
      {Gy_j} & {Gy_j}
      \arrow[""{name=0, anchor=center, inner sep=0}, "{\id_{F\Pi\vec{x}}}", "\shortmid"{marking}, from=1-1, to=1-2]
      \arrow[""{name=1, anchor=center, inner sep=0}, "{\id_{Fx_i}}"', "\shortmid"{marking}, from=2-1, to=2-2]
      \arrow[""{name=2, anchor=center, inner sep=0}, "{G\id_{y_j}}"', "\shortmid"{marking}, from=4-1, to=4-2]
      \arrow["{F\pi_i}"', from=1-1, to=2-1]
      \arrow["{F\pi_i}", from=1-2, to=2-2]
      \arrow["{\alpha_{x_i}}"', from=2-1, to=3-1]
      \arrow["{Gf_j}"', from=3-1, to=4-1]
      \arrow["{Ff_j}", from=2-2, to=3-2]
      \arrow["{\alpha_{y_j}}", from=3-2, to=4-2]
      \arrow["{\id_{F\pi_i}}"{description}, draw=none, from=0, to=1]
      \arrow["{\alpha_{f_j}}"{description}, draw=none, from=1, to=2]
    \end{tikzcd}.
  \end{equation*}
  We have proved that the post-composite of
  Equation \eqref{eq:naturality-comparison-product} with any projection $\pi_j$ is true,
  hence the equation itself is true by the universal property of the product.
\end{proof}

We now turn to modules between lax double functors, introduced by Paré while
developing the Yoneda theory of double categories \citep{pare2011}. A
\define{module} between lax functors $F,G: \dbl{D} \to \dbl{E}$ can be
succinctly defined as a lax functor $M: \dbl{D} \times \dbl{I} \to \dbl{E}$,
where $\dbl{I} \coloneqq \{0 \proto 1\}$ is the walking proarrow, such that
$M(-,0) = F$ and $M(-,1) = G$. The definition is fully articulated in
\citep[\S{3.1}]{pare2011} or \citep[\mbox{Definition 9.1}]{lambert2024}. Since a
module is a kind of lax functor, a product-preserving module can be defined very
similarly to a product-preserving lax functor
(Definition \ref{def:preserve-dbl-products}), extending our previous definition of a
cartesian module \citep[\mbox{Definition 9.4}]{lambert2024}.

\begin{definition}[Product-preserving module]
  Let $F,G: \dbl{D} \to \dbl{E}$ be lax functors between double categories with
  lax products. A module $M: F \proTo G$ \define{preserves products} if, for
  every family of proarrows $(A,\vec{m}): (I,\vec{x}) \proto (J,\vec{y})$ in
  $\dbl{D}$, the canonical comparison cell
  \begin{equation*}
    \begin{tikzcd}
      {F\Pi\vec{x}} & {G\Pi\vec{y}} \\
      {\Pi F\vec{x}} & {\Pi G\vec{y}}
      \arrow[""{name=0, anchor=center, inner sep=0}, "{M\Pi\vec{m}}", "\shortmid"{marking}, from=1-1, to=1-2]
      \arrow["{\Phi_{\vec{x}}}"', from=1-1, to=2-1]
      \arrow["{\Psi_{\vec{y}}}", from=1-2, to=2-2]
      \arrow[""{name=1, anchor=center, inner sep=0}, "{\Pi M\vec{m}}"', "\shortmid"{marking}, from=2-1, to=2-2]
      \arrow["{\Mu_{\vec{m}}}"{description}, draw=none, from=0, to=1]
    \end{tikzcd},
  \end{equation*}
  defined by the equations
  \begin{equation} \label{eq:module-product-comparison}
    \begin{tikzcd}
      {F\Pi\vec{x}} & {G\Pi\vec{y}} \\
      {\Pi F\vec{x}} & {\Pi G\vec{y}} \\
      {Fx_i} & {Gy_j}
      \arrow[""{name=0, anchor=center, inner sep=0}, "{M\Pi\vec{m}}", "\shortmid"{marking}, from=1-1, to=1-2]
      \arrow["{\Phi_{\vec{x}}}"', from=1-1, to=2-1]
      \arrow["{\Psi_{\vec{y}}}", from=1-2, to=2-2]
      \arrow[""{name=1, anchor=center, inner sep=0}, "{\Pi M\vec{m}}"', "\shortmid"{marking}, from=2-1, to=2-2]
      \arrow["{\pi_i}"', from=2-1, to=3-1]
      \arrow["{\pi_j}", from=2-2, to=3-2]
      \arrow[""{name=2, anchor=center, inner sep=0}, "{Mm_a}"', "\shortmid"{marking}, from=3-1, to=3-2]
      \arrow["{\Mu_{\vec{m}}}"{description}, draw=none, from=0, to=1]
      \arrow["{\pi_a}"{description, pos=0.6}, draw=none, from=1, to=2]
    \end{tikzcd}
    \quad=\quad
    \begin{tikzcd}
      {F\Pi\vec{x}} & {G\Pi\vec{y}} \\
      {Fx_i} & {Gy_j}
      \arrow[""{name=0, anchor=center, inner sep=0}, "{M\Pi\vec{m}}", "\shortmid"{marking}, from=1-1, to=1-2]
      \arrow[""{name=1, anchor=center, inner sep=0}, "{Mm_a}"', "\shortmid"{marking}, from=2-1, to=2-2]
      \arrow["{F\pi_i}"', from=1-1, to=2-1]
      \arrow["{G\pi_j}", from=1-2, to=2-2]
      \arrow["{M\pi_a}"{description}, draw=none, from=0, to=1]
    \end{tikzcd},
    \qquad (i \xproto{a} j) : (I \xproto{A} J),
  \end{equation}
  is an isomorphism in $\dbl{E}_1$.
\end{definition}

For a module $M: F \proTo G$ between lax double functors to preserve products,
it is clearly necessary that the underlying functors
$F_0,G_0: \dbl{D}_0 \to \dbl{E}_0$ preserve products. One typically assumes that
the double functors $F,G: \dbl{D} \to \dbl{E}$ in their entirety preserve
products.

A square of compatible transformations and modules are bound together by a
\emph{modulation}, defined for strict transformations in
\citep[\S{3.3}]{pare2011} and for lax transformations in \citep[\mbox{Definition
  9.7}]{lambert2024}. As a top-dimensional cell, modulations automatically
preserve products.

\begin{lemma}[Modulation components for products]
  \label{lem:modulation-component-product}
  Let $\dbl{D}$ and $\dbl{E}$ be double categories with lax products, let
  $F,G,H,K: \dbl{D} \to \dbl{E}$ be lax functors, let $\alpha: F \To H$ and
  $\beta: G \To K$ be product-preserving lax transformations, let
  $M: F \proTo G$ and $N: H \proTo K$ be modules, and finally let
  \begin{equation*}
    \begin{tikzcd}
      F & G \\
      H & K
      \arrow[""{name=0, anchor=center, inner sep=0}, "M", "\shortmid"{marking}, from=1-1, to=1-2]
      \arrow["\alpha"', from=1-1, to=2-1]
      \arrow["\beta", from=1-2, to=2-2]
      \arrow[""{name=1, anchor=center, inner sep=0}, "N"', "\shortmid"{marking}, from=2-1, to=2-2]
      \arrow["\mu"{description}, draw=none, from=0, to=1]
    \end{tikzcd}
  \end{equation*}
  be a modulation. Then, for any family of proarrows
  $(A,\vec{m}): (I,\vec{x}) \proto (J,\vec{y})$ in $\dbl{D}$, we have
  \begin{equation} \label{eq:modulation-component-product}
    \begin{tikzcd}
      {F\Pi\vec{x}} & {G\Pi\vec{y}} \\
      {H\Pi\vec{x}} & {K\Pi\vec{y}} \\
      {\Pi H\vec{x}} & {\Pi K\vec{y}}
      \arrow["{\alpha_{\Pi\vec{x}}}"', from=1-1, to=2-1]
      \arrow[""{name=0, anchor=center, inner sep=0}, "{M\Pi\vec{m}}", "\shortmid"{marking}, from=1-1, to=1-2]
      \arrow["{\beta_{\Pi\vec{y}}}", from=1-2, to=2-2]
      \arrow[""{name=1, anchor=center, inner sep=0}, "{N\Pi\vec{m}}", "\shortmid"{marking}, from=2-1, to=2-2]
      \arrow["{\Eta_{\vec{x}}}"', from=2-1, to=3-1]
      \arrow[""{name=2, anchor=center, inner sep=0}, "{\Pi N\vec{m}}"', "\shortmid"{marking}, from=3-1, to=3-2]
      \arrow["{\Kappa_{\vec{y}}}", from=2-2, to=3-2]
      \arrow["{\mu_{\Pi\vec{m}}}"{description, pos=0.4}, draw=none, from=0, to=1]
      \arrow["{\Nu_{\vec{m}}}"{description}, draw=none, from=1, to=2]
    \end{tikzcd}
    \quad=\quad
    \begin{tikzcd}
      {F\Pi\vec{x}} & {G\Pi\vec{y}} \\
      {\Pi F\vec{x}} & {\Pi G\vec{y}} \\
      {\Pi H\vec{x}} & {\Pi K\vec{y}}
      \arrow[""{name=0, anchor=center, inner sep=0}, "{M\Pi\vec{m}}", "\shortmid"{marking}, from=1-1, to=1-2]
      \arrow["{\Phi_{\vec{x}}}"', from=1-1, to=2-1]
      \arrow["{\Psi_{\vec{y}}}", from=1-2, to=2-2]
      \arrow[""{name=1, anchor=center, inner sep=0}, "{\Pi M\vec{m}}", "\shortmid"{marking}, from=2-1, to=2-2]
      \arrow["{\Pi\alpha_{\vec{x}}}"', from=2-1, to=3-1]
      \arrow["{\Pi\beta_{\vec{y}}}", from=2-2, to=3-2]
      \arrow[""{name=2, anchor=center, inner sep=0}, "{\Pi N\vec{m}}"', "\shortmid"{marking}, from=3-1, to=3-2]
      \arrow["{\Mu_{\vec{m}}}"{description, pos=0.4}, draw=none, from=0, to=1]
      \arrow["{\Pi\mu_{\vec{m}}}"{description}, draw=none, from=1, to=2]
    \end{tikzcd},
  \end{equation}
  where $(1_A,\mu_{\vec{m}})$ is the cell in $\DblFamOp(\dbl{E})$ with
  components
  \begin{equation*}
    \begin{tikzcd}
      {Fx_i} & {Gy_j} \\
      {Hx_i} & {Ky_j}
      \arrow[""{name=0, anchor=center, inner sep=0}, "{Mm_a}", "\shortmid"{marking}, from=1-1, to=1-2]
      \arrow["{\alpha_{x_i}}"', from=1-1, to=2-1]
      \arrow["{\beta_{y_j}}", from=1-2, to=2-2]
      \arrow[""{name=1, anchor=center, inner sep=0}, "{Nm_a}"', "\shortmid"{marking}, from=2-1, to=2-2]
      \arrow["{\mu_{m_a}}"{description}, draw=none, from=0, to=1]
    \end{tikzcd},
    \qquad (i \xproto{a} j) : (I \xproto{A} J).
  \end{equation*}
  In particular, when the module $N: H \proTo K$ preserves products, the
  component of the modulation $\mu$ at the product $\Pi\vec{m}$ is uniquely
  determined by the product of the components at $m_a$ for each $a \in A$.
\end{lemma}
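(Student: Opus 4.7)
The proof will follow the same overall strategy as \cref{lem:laxator-unitor-product}: we exploit the universal property of the product $\Pi N\vec{m}$ on the right-hand side of \cref{eq:modulation-component-product} to reduce the desired equation to a collection of equations indexed by elements $a \in A$, each involving only a single component $\mu_{m_a}$. Concretely, it suffices to show that, for every element $(i \xproto{a} j): (I \xproto{A} J)$, post-composing both sides of \cref{eq:modulation-component-product} with the projection cell $\pi_a: \Pi N\vec{m} \to Nm_a$ in $\dbl{E}$ produces equal cells.

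First I would post-compose the left-hand side with $\pi_a$. Using the defining equation \cref{eq:module-product-comparison} for the comparison $\Nu_{\vec{m}}$ (applied to the module $N$) and the strict naturality of $\beta$ with respect to the projection arrow $\pi_j: \Pi\vec{y} \to y_j$ (which is part of the hypothesis that $\beta$ preserves products), the projection $\pi_a \cdot (\Kappa_{\vec{y}} \circ \beta_{\Pi\vec{y}})$ rewrites as $\beta_{y_j} \circ N\pi_j$. A dual rewriting applies to the left leg involving $\alpha$ and $\Eta_{\vec{x}}$. What remains is the modulation component $\mu_{\Pi\vec{m}}$ bordered below by $N\pi_a$, which by the naturality axiom for modulations applied to the projection cell $\pi_a$ in $\dbl{D}$ equals $\mu_{m_a}$ bordered above by $M\pi_a$. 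For the right-hand side, post-composition with $\pi_a$ applies the defining equation \cref{eq:module-product-comparison} of $\Mu_{\vec{m}}$, followed by the projection property of the product cell $\Pi\mu_{\vec{m}}$: by definition of $\mu_{\vec{m}}$ as a family and the universal property of $\Pi N\vec{m}$, we have $\pi_a \cdot \Pi\mu_{\vec{m}} = \mu_{m_a} \cdot \pi_a$. Combining these rewrites leaves $M\pi_a$ on top followed by $\mu_{m_a}$, which matches the result of the left-hand computation.

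Thus for each $a \in A$ both post-composites reduce to the same pasting, namely the modulation axiom instance $N\pi_a \circ \mu_{\Pi\vec{m}} = \mu_{m_a} \circ M\pi_a$ flanked by $\alpha$-components on the left and $\beta$-components on the right. By the universal property of the product $\Pi N\vec{m}$ in $\dbl{E}$, \cref{eq:modulation-component-product} itself holds. The final claim of the lemma is then immediate: if $N$ preserves products, then $\Nu_{\vec{m}}$ is an isomorphism, so \cref{eq:modulation-component-product} can be solved for $\mu_{\Pi\vec{m}}$ in terms of $\Mu_{\vec{m}}$, the $\Nu_{\vec{m}}^{-1}$, and $\Pi\mu_{\vec{m}}$, showing uniqueness.

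The main obstacle is not conceptual but bookkeeping: tracking the many naturality comparisons, laxators, and projection cells through a large pasting diagram without obscuring the core identity, which is just the modulation axiom applied to a projection cell. The use of strict naturality of $\alpha$ and $\beta$ with respect to projections is what keeps these comparisons from proliferating on the left-hand side, while on the right-hand side the functorality of $\Pi$ with respect to cells in $\DblFamOp(\dbl{E})$ handles the corresponding projections cleanly.
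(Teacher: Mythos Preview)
Your proposal is correct and follows essentially the same approach as the paper: post-compose both sides of \cref{eq:modulation-component-product} with the projection $\pi_a$, reduce each side to the modulation naturality axiom at the projection cell $\pi_a$ in $\dbl{D}$ (using the defining equations for $\Mu_{\vec{m}}$ and $\Nu_{\vec{m}}$ together with the strict naturality of $\alpha$ and $\beta$ at projections), and conclude by the universal property of $\Pi N\vec{m}$. The paper's proof is slightly terser, stating the modulation naturality equation directly and then observing it coincides with the post-composite of \cref{eq:modulation-component-product} with $\pi_a$, but the logical content is identical.
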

\begin{proof}
  Fixing an element $a: i \proto j$ of the indexing span, the naturality of the
  modulation $\mu$ applied to the projection cell $\pi_a: \Pi\vec{m} \to m_a$ in
  $\dbl{D}$ is the equation
  \begin{equation*}
    \begin{tikzcd}
      {F\Pi\vec{x}} & {G\Pi\vec{y}} \\
      {H\Pi\vec{x}} & {K\Pi\vec{y}} \\
      {Hx_i} & {K y_j}
      \arrow["{\alpha_{\Pi\vec{x}}}"', from=1-1, to=2-1]
      \arrow[""{name=0, anchor=center, inner sep=0}, "{M\Pi\vec{m}}", "\shortmid"{marking}, from=1-1, to=1-2]
      \arrow["{\beta_{\Pi\vec{y}}}", from=1-2, to=2-2]
      \arrow[""{name=1, anchor=center, inner sep=0}, "{N\Pi\vec{m}}", "\shortmid"{marking}, from=2-1, to=2-2]
      \arrow["{H\pi_i}"', from=2-1, to=3-1]
      \arrow["{K\pi_j}", from=2-2, to=3-2]
      \arrow[""{name=2, anchor=center, inner sep=0}, "{N m_a}"', "\shortmid"{marking}, from=3-1, to=3-2]
      \arrow["{\mu_{\Pi\vec{m}}}"{description, pos=0.4}, draw=none, from=0, to=1]
      \arrow["{N\pi_a}"{description}, draw=none, from=1, to=2]
    \end{tikzcd}
    \quad=\quad
    \begin{tikzcd}
      {F\Pi\vec{x}} & {G\Pi\vec{y}} \\
      {Fx_i} & {Gy_j} \\
      {Hx_i} & {Ky_j}
      \arrow[""{name=0, anchor=center, inner sep=0}, "{M\Pi\vec{m}}", "\shortmid"{marking}, from=1-1, to=1-2]
      \arrow["{F\pi_i}"', from=1-1, to=2-1]
      \arrow["{\alpha_{x_i}}"', from=2-1, to=3-1]
      \arrow["{\beta_{y_j}}", from=2-2, to=3-2]
      \arrow["{G\pi_j}", from=1-2, to=2-2]
      \arrow[""{name=1, anchor=center, inner sep=0}, "{Mm_a}", "\shortmid"{marking}, from=2-1, to=2-2]
      \arrow[""{name=2, anchor=center, inner sep=0}, "{Nm_a}"', "\shortmid"{marking}, from=3-1, to=3-2]
      \arrow["{M\pi_a}"{description, pos=0.4}, draw=none, from=0, to=1]
      \arrow["{\mu_{m_a}}"{description}, draw=none, from=1, to=2]
    \end{tikzcd},
  \end{equation*}
  where we have also used the assumption that the lax transformations $\alpha$
  and $\beta$ preserve products. But, by Equation \eqref{eq:module-product-comparison},
  the equation above is also the post-composite of
  Equation \eqref{eq:modulation-component-product} with the projection cell
  $\pi_a: \Pi N\vec{m} \to N m_a$ in $\dbl{E}$. So, by the universal property of
  the product $\Pi N\vec{m}$, the claimed Equation \eqref{eq:modulation-component-product}
  holds.
\end{proof}

Since lax double functors are, in general, the objects of a merely virtual
double category, we must also consider the multi-input generalization of a
modulation. \emph{Multimodulations} were defined for strict transformations in
\citep[\S{4.1}]{pare2011} and for lax transformations in \citep[\mbox{Definition
  10.1}]{lambert2024}. The following lemma on multimodulations is a
straightforward generalization of Lemma \ref{lem:modulation-component-product} on
modulations. We include it not just for completeness but to emphasize the role
by played by strong products in the domain, irrelevant in the unary case.

\begin{lemma}[Multimodulation components for products]
  Let $\dbl{D}$ and $\dbl{E}$ be double categories with lax products and, for
  any $k \geq 0$, let $F_0,\dots,F_k$, $G$, and $H$ be lax functors from
  $\dbl{D}$ to $\dbl{E}$. Suppose given a multimodulation
  \begin{equation*}
    \begin{tikzcd}
      {F_0} & \cdots & {F_k} \\
      G && H
      \arrow["{M_1}", "\shortmid"{marking}, from=1-1, to=1-2]
      \arrow["{M_k}", "\shortmid"{marking}, from=1-2, to=1-3]
      \arrow["\alpha"', from=1-1, to=2-1]
      \arrow[""{name=0, anchor=center, inner sep=0}, "N"', "\shortmid"{marking}, from=2-1, to=2-3]
      \arrow["\beta", from=1-3, to=2-3]
      \arrow["\mu"{description, pos=0.4}, draw=none, from=1-2, to=0]
    \end{tikzcd},
  \end{equation*}
  where $\alpha$ and $\beta$ are product-preserving lax natural transformations.
  Then, for any length-$k$ sequence of proarrow families
  $(I_0,\vec{x}_0) \xproto{(A_1,\vec{m}_1)} \cdots \xproto{(A_k,\vec{m}_k)} (I_k,\vec{x}_k)$
  in $\dbl{D}$, we have
  \begin{equation} \label{eq:multimodulation-component-product}
    \begin{tikzcd}
      {F_0 \Pi \vec{x}_0} & \cdots & {F_k \Pi\vec{x}_k} \\
      {G\Pi\vec{x}_0} && {H\Pi\vec{x}_k} \\
      {G\Pi\vec{x}_0} && {H\Pi\vec{x}_k} \\
      {\Pi G\vec{x}_0} && {\Pi H\vec{x}_k}
      \arrow["{M_1 \Pi\vec{m}_1}", "\shortmid"{marking}, from=1-1, to=1-2]
      \arrow["{\alpha_{\Pi\vec{x}_0}}"', from=1-1, to=2-1]
      \arrow["{M_k \Pi\vec{m}_k}", "\shortmid"{marking}, from=1-2, to=1-3]
      \arrow["{\beta_{\Pi\vec{x}_k}}", from=1-3, to=2-3]
      \arrow[""{name=0, anchor=center, inner sep=0}, "{N(\Pi\vec{m}_1 \odot\cdots\odot \Pi\vec{m}_k)}"', "\shortmid"{marking}, from=2-1, to=2-3]
      \arrow[""{name=1, anchor=center, inner sep=0}, "{\Pi N(\vec{m}_1 \odot\cdots\odot \vec{m}_k)}"', "\shortmid"{marking}, from=4-1, to=4-3]
      \arrow[Rightarrow, no head, from=2-1, to=3-1]
      \arrow[Rightarrow, no head, from=2-3, to=3-3]
      \arrow[""{name=2, anchor=center, inner sep=0}, "{N\Pi(\vec{m}_1 \odot\cdots\odot \vec{m}_k)}"', "\shortmid"{marking}, from=3-1, to=3-3]
      \arrow["{\Psi_{\vec{x}_0}}"', from=3-1, to=4-1]
      \arrow["{\Eta_{\vec{x}_k}}", from=3-3, to=4-3]
      \arrow["{\mu_{\Pi\vec{m}_1,\dots,\Pi\vec{m}_k}}"{description, pos=0.4}, draw=none, from=1-2, to=0]
      \arrow["{N \Pi_{\vec{m}_1,\dots,\vec{m}_k}}"{description, pos=0.6}, draw=none, from=0, to=2]
      \arrow["{\Nu_{\vec{m}_1 \odot\cdots\odot \vec{m}_k}}"{description, pos=0.6}, draw=none, from=2, to=1]
    \end{tikzcd}
    \quad=\quad
    \begin{tikzcd}
      {F_0 \Pi \vec{x}_0} & \cdots & {F_k \Pi\vec{x}_k} \\
      {\Pi F_0 \vec{x}_0} & \cdots & {\Pi F_k \vec{x}_k} \\
      {\Pi F_0 \vec{x}_0} && {\Pi F_k \vec{x}_k} \\
      {\Pi G\vec{x}_0} && {\Pi H\vec{x}_0}
      \arrow[""{name=0, anchor=center, inner sep=0}, "{M_1 \Pi\vec{m}_1}"{inner sep=.8ex}, "\shortmid"{marking}, from=1-1, to=1-2]
      \arrow["{(\Phi_0)_{\vec{x}_0}}"', from=1-1, to=2-1]
      \arrow[""{name=1, anchor=center, inner sep=0}, "{M_k \Pi\vec{m}_k}"{inner sep=.8ex}, "\shortmid"{marking}, from=1-2, to=1-3]
      \arrow["{(\Phi_k)_{\vec{x}_k}}", from=1-3, to=2-3]
      \arrow[""{name=2, anchor=center, inner sep=0}, "{\Pi M_1 \vec{m}_1}"'{inner sep=.8ex}, "\shortmid"{marking}, from=2-1, to=2-2]
      \arrow[equals, from=2-1, to=3-1]
      \arrow[""{name=3, anchor=center, inner sep=0}, "{\Pi M_k \vec{m}_k}"'{inner sep=.8ex}, "\shortmid"{marking}, from=2-2, to=2-3]
      \arrow[equals, from=2-3, to=3-3]
      \arrow[""{name=4, anchor=center, inner sep=0}, "{\Pi(M_1 \vec{m}_1 \odot\cdots\odot M_k \vec{m}_k)}"'{inner sep=.8ex}, "\shortmid"{marking}, from=3-1, to=3-3]
      \arrow["{\Pi\alpha_{\vec{x}_0}}"', from=3-1, to=4-1]
      \arrow["{\Pi\beta_{\vec{x}_k}}", from=3-3, to=4-3]
      \arrow[""{name=5, anchor=center, inner sep=0}, "{\Pi N(\vec{m}_1 \odot\cdots\odot \vec{m}_k)}"'{inner sep=.8ex}, "\shortmid"{marking}, from=4-1, to=4-3]
      \arrow["{(\Mu_1)_{\vec{m}_1}}"{description}, draw=none, from=0, to=2]
      \arrow["{(\Mu_k)_{\vec{m}_k}}"{description}, draw=none, from=1, to=3]
      \arrow["{\Pi_{M_1 \vec{m}_1, \dots, M_k \vec{m}_k}}"{description}, draw=none, from=2-2, to=4]
      \arrow["{\Pi\mu_{\vec{m}_1, \dots, \vec{m}_k}}"{description, pos=0.6}, draw=none, from=4, to=5]
    \end{tikzcd}.
  \end{equation}
  In particular, when the domain double category $\dbl{D}$ has strong products
  and the module $N$ preserves products, the component of the multimodulation
  $\mu$ at the products $\Pi\vec{m}_1,\dots,\Pi\vec{m}_k$ is uniquely determined
  by the product of the components at $m_{1,a_1}, \dots, m_{k,a_k}$ indexed by
  $i_0 \xproto{a_0} \cdots \xproto{a_k} i_k$.
\end{lemma}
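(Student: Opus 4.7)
The plan is to mimic the proof of \cref{lem:modulation-component-product} but at the multimodulation level. Concretely, I would prove \cref{eq:multimodulation-component-product} by invoking the universal property of the product $\Pi N(\vec{m}_1 \odot\cdots\odot \vec{m}_k)$ on the right-hand side: it suffices to check that both sides become equal after post-composition with the projection cell $\pi_{(a_1,\ldots,a_k)}$ for each sequence of elements $(a_1,\ldots,a_k)$ lying over some $i_0 \xproto{a_1} i_1 \xproto{a_2} \cdots \xproto{a_k} i_k$ in the pullback of the indexing spans.

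First I would compute the post-composite of the left-hand side with $\pi_{(a_1,\ldots,a_k)}$. Using the defining equations of the module-product comparison cells $\Nu_{\vec{m}_1 \odot\cdots\odot \vec{m}_k}$ (the dual of \cref{eq:module-product-comparison}) together with the fact that $\Nu$ fed through a projection gives $N$ applied to the projection, the stack of cells collapses to $\mu_{\Pi\vec{m}_1,\ldots,\Pi\vec{m}_k}$ with $N$-image of the projection $\pi_{(a_1,\ldots,a_k)}$ for the composite of strong products $\Pi_{\vec{m}_1,\ldots,\vec{m}_k}$ stacked beneath it. Then I apply the naturality of the multimodulation $\mu$ with respect to the family of projection cells $\pi_{a_j}: \Pi\vec{m}_j \to m_{j,a_j}$; this moves the projections above $\mu$ and, using that $\alpha$ and $\beta$ preserve products (so their components $\alpha_{\Pi\vec{x}_0}$ and $\beta_{\Pi\vec{x}_k}$ strictly commute with projections), yields precisely $\mu_{m_{1,a_1},\ldots,m_{k,a_k}}$ sitting under $F_j$-projection arrows.

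Next I would compute the post-composite of the right-hand side with $\pi_{(a_1,\ldots,a_k)}$. Here the projection cascades through $\Pi_{M_1\vec{m}_1,\ldots,M_k\vec{m}_k}$ (invoking that this laxator is an isomorphism determined by the projections) and through each product comparison $(\Mu_j)_{\vec{m}_j}$ using \cref{eq:module-product-comparison}, so the composite equals the horizontal composition of the $\mu_{m_{j,a_j}}$'s pre- and post-whiskered by $F_j$-projections, which in turn coincides with $\mu_{m_{1,a_1},\ldots,m_{k,a_k}}$ pre-composed with the $F_j$-projections. The two post-composites then agree, and the universal property of the product on the right-hand codomain closes the argument.

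The main obstacle will be the bookkeeping of the many laxators and comparison cells: reducing $\Pi N(\vec{m}_1 \odot\cdots\odot \vec{m}_k)$ under a projection requires the factorization of $\pi_{(a_1,\ldots,a_k)}$ through the product-composition comparison $\Pi_{\vec{m}_1,\ldots,\vec{m}_k}$ in $\dbl{D}$, which is where the hypothesis that $\dbl{D}$ has \emph{strong} products is used — otherwise this comparison would not be invertible and the pullback-indexed projections on composites of products would not be well-defined in the required sense. Once one is willing to chase the diagram through these comparisons, each step is a direct invocation of either a projection axiom for a product, the naturality of $\mu$ on a projection cell, the definition of $\Mu$ or $\Nu$ on a projection, or the product-preservation axiom for $\alpha$ and $\beta$.
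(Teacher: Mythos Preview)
Your approach is essentially the same as the paper's: post-compose both sides of \cref{eq:multimodulation-component-product} with the projection $\pi_{(a_1,\ldots,a_k)}$, reduce each side using the defining equations of the comparison cells and the naturality of $\mu$ at the projection cells $\pi_{a_j}$, and conclude by the universal property of the product $\Pi N(\vec{m}_1\odot\cdots\odot\vec{m}_k)$.

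There is, however, a misconception in your discussion of hypotheses. You claim that strong products in $\dbl{D}$ are needed to make the projection $\pi_{(a_1,\ldots,a_k)}$ interact correctly with the laxator $\Pi_{\vec{m}_1,\ldots,\vec{m}_k}$, and similarly that the laxator $\Pi_{M_1\vec{m}_1,\ldots,M_k\vec{m}_k}$ in $\dbl{E}$ must be invertible. Neither is true. The defining equation \cref{eq:product-composition-comparison} for the lax comparison already says that $\Pi_{\vec{m}_1,\ldots,\vec{m}_k}$ followed by $\pi_{(a_1,\ldots,a_k)}$ equals $\pi_{a_1}\odot\cdots\odot\pi_{a_k}$; no invertibility is required, only this factorization. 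The same applies in $\dbl{E}$. Thus the main equation holds under the stated hypothesis of merely \emph{lax} products in both $\dbl{D}$ and $\dbl{E}$, exactly as the lemma asserts. Strong products in $\dbl{D}$ enter only in the ``In particular'' clause, where invertibility of $N\Pi_{\vec{m}_1,\ldots,\vec{m}_k}$ (together with invertibility of $\Nu$) is what allows one to recover $\mu_{\Pi\vec{m}_1,\ldots,\Pi\vec{m}_k}$ from the right-hand side.
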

\begin{proof}
  Fixing elements $i_0 \xproto{a_0} \cdots \xproto{a_k} i_k$ of the indexing
  spans $I_0 \xproto{A_0} \cdots \xproto{A_k} I_k$, the naturality of the
  multimodulation $\mu$ at the projection cells
  $\pi_{a_1}: \Pi\vec{m}_1 \to m_{1,a_1}$ through
  $\pi_{a_k}: \Pi\vec{m}_k \to m_{k,a_k}$ in $\dbl{D}$ is the equation
  \begin{equation*}
    \begin{tikzcd}
      {F_0 \Pi\vec{x}_0} & \cdots & {F_k \Pi\vec{x}_k} \\
      {G\Pi\vec{x}_0} && {H\Pi\vec{x}_k} \\
      {Gx_{0,i_0}} && {Hx_{k,i_k}}
      \arrow["{M_1 \Pi\vec{m}_1}", "\shortmid"{marking}, from=1-1, to=1-2]
      \arrow["{M_k \Pi\vec{m}_k}", "\shortmid"{marking}, from=1-2, to=1-3]
      \arrow[""{name=0, anchor=center, inner sep=0}, "{N(m_{1,a_1} \odot\cdots\odot m_{k,a_k})}"', "\shortmid"{marking}, from=3-1, to=3-3]
      \arrow[""{name=1, anchor=center, inner sep=0}, "{N(\Pi\vec{m}_1 \odot\cdots\odot \Pi\vec{m}_k)}"', "\shortmid"{marking}, from=2-1, to=2-3]
      \arrow["{\alpha_{\Pi\vec{x}_0}}"', from=1-1, to=2-1]
      \arrow["{\beta_{\Pi\vec{x}_k}}", from=1-3, to=2-3]
      \arrow["{G\pi_{i_0}}"', from=2-1, to=3-1]
      \arrow["{H\pi_{i_k}}", from=2-3, to=3-3]
      \arrow["{\mu_{\Pi\vec{m}_1,\dots,\Pi\vec{m}_k}}"{description, pos=0.4}, draw=none, from=1-2, to=1]
      \arrow["{N(\pi_{a_1} \odot\cdots\odot \pi_{a_k})}"{description, pos=0.6}, draw=none, from=1, to=0]
    \end{tikzcd}
    \quad=\quad
    \begin{tikzcd}
      {F_0 \Pi\vec{x}_0} & \cdots & {F_k \Pi\vec{x}_k} \\
      {F_0 x_{0,i_0}} & \cdots & {F_k x_{k,i_k}} \\
      {Gx_{0,i_0}} && {Hx_{k,i_k}}
      \arrow[""{name=0, anchor=center, inner sep=0}, "{M_1 m_{1,a_1}}"', "\shortmid"{marking}, from=2-1, to=2-2]
      \arrow[""{name=1, anchor=center, inner sep=0}, "{M_k m_{k,a_k}}"', "\shortmid"{marking}, from=2-2, to=2-3]
      \arrow["{F_0 \pi_{i_0}}"', from=1-1, to=2-1]
      \arrow[""{name=2, anchor=center, inner sep=0}, "{M_1 \Pi\vec{m}_1}", "\shortmid"{marking}, from=1-1, to=1-2]
      \arrow[""{name=3, anchor=center, inner sep=0}, "{M_k \Pi\vec{m}_k}", "\shortmid"{marking}, from=1-2, to=1-3]
      \arrow["{F_k \pi_{i_k}}", from=1-3, to=2-3]
      \arrow["{\alpha_{x_{0,i_0}}}"', from=2-1, to=3-1]
      \arrow[""{name=4, anchor=center, inner sep=0}, "{N(m_{1,a_1} \odot\cdots\odot m_{k,a_k})}"', "\shortmid"{marking}, from=3-1, to=3-3]
      \arrow["{\beta_{x_{k,a_k}}}", from=2-3, to=3-3]
      \arrow["{M_1 \pi_{a_1}}"{description}, draw=none, from=2, to=0]
      \arrow["{M_k \pi_{a_k}}"{description}, draw=none, from=3, to=1]
      \arrow["{\mu_{m_{1,a_1},\dots,m_{k,a_k}}}"{description}, draw=none, from=2-2, to=4]
    \end{tikzcd}.
  \end{equation*}
  But this equation is the post-composite of
  Equation \eqref{eq:multimodulation-component-product} with the projection cell
  \begin{equation*}
    \pi_{(a_1,\dots,a_k)}: \Pi N (\vec{m}_1 \odot\cdots\odot \vec{m}_k)
    \to N(m_{1,a_1} \odot\cdots\odot m_{k,a_k})
  \end{equation*}
  in $\dbl{E}$. By the universal property of the product,
  Equation \eqref{eq:multimodulation-component-product} holds.
\end{proof}

As anticipated, for two fixed double categories with lax products, the
product-preserving lax functors and higher morphisms between them assemble into
a unital virtual double category. The preceding lemmas are not needed to prove
this, only to ensure that the construction is well behaved. For the definitions
of virtual double categories and units in them, see \citep{cruttwell2010}.

\begin{theorem}[Virtual double category of product-preserving lax functors]
  Let $\dbl{D}$ and $\dbl{E}$ be double categories with lax products. There is a
  virtual double category with units whose
  \begin{itemize}[nosep]
    \item objects are product-preserving lax functors from $\dbl{D}$ to
      $\dbl{E}$,
    \item arrows are either (strict) natural transformations or
      product-preserving pseudo, lax, or oplax natural transformations,
    \item proarrows are product-preserving modules, and
    \item multicells are multimodulations.
  \end{itemize}
\end{theorem}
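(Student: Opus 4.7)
The plan is to invoke the known construction of a virtual double category of lax functors, lax transformations, modules, and multimodulations (\cite[\S 4]{pare2011} for the strict-natural case and \cite[\S 10]{lambert2023} for the lax-natural case) between $\dbl{D}$ and $\dbl{E}$, and then verify that the full sub-virtual-double-category cut out by the product-preservation conditions is closed under all the operations and contains the units. Concretely, I would fix the ambient virtual double category whose objects are all lax functors, whose arrows are natural and pseudo/lax/oplax natural transformations, whose proarrows are all modules, and whose multicells are all multimodulations; this is known to be a unital virtual double category. What remains is to check that restricting to the classes singled out in the statement preserves this structure.

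First, for arrows, I would verify that identity transformations and the (whiskering) composites of product-preserving lax transformations are again product-preserving: both the strict naturality with respect to projections and the shape of the naturality comparison at projections are obviously stable under the pasting defining composition in the 2-categorical hom. Second, for units, I would show that for each product-preserving lax functor $F: \dbl{D} \to \dbl{E}$, the identity module $U_F: F \proTo F$ (given by the unit proarrows $F\id_{x}$ with the unitors $F_x$ serving as binding cells) is product-preserving: its comparison cell at a family $(A,\vec{m}): (I,\vec{x}) \proto (J,\vec{y})$ agrees, up to the projection universal property, with the cell exhibited in \cref{eq:unitor-product}, so when $F$ preserves products in the object and proarrow sense, the induced $\Mu_{\vec{m}}$ is an isomorphism. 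Third, for proarrows, I would check that identity cells on product-preserving modules are multimodulations (this is immediate) and that composable multimodulations compose to multimodulations, which only uses the ambient structure.

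The substantive point is that the virtual composition of product-preserving modules, which takes place inside multimodulations rather than as an external construction, is automatically well-behaved: multicell domains are formed from formal sequences of proarrows, and no new modules need to be manufactured, so no new product-preservation condition must be re-verified beyond what is already inherited from the components. The two component lemmas just proved (\cref{lem:modulation-component-product} and its multimodulation analogue) are not needed to assemble the virtual double category but confirm that the higher morphisms behave as expected on products, which justifies reading the restricted structure as the ``correct'' one.

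The main, and essentially only, obstacle is the unit verification: one must show that $U_F$ is a product-preserving module when $F$ preserves products. This boils down to reducing the comparison cell $\Mu_{\vec{m}}$ for $U_F$ at a family $(A,\vec{m})$ to a cell built out of the unitors $F_{x_i}$, $F_{\Pi\vec{x}}$, and the identity comparisons $\Phi_{\vec{x}}$; an appeal to the universal property of the product proarrow $\Pi M \vec{m}$ in $\dbl{E}$ together with \cref{eq:unitor-product} then forces $\Mu_{\vec{m}}$ to be invertible. With units in hand, that the structure is unital in the sense of \cite{cruttwell2010} is immediate from the analogous property of the ambient virtual double category. Restricting further to (strict) natural transformations, or to the 2-cells over identity proarrows, yields in particular a 2-category of product-preserving lax functors, as stated.
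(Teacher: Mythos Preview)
Your approach is essentially the same as the paper's: start from the known ambient unital virtual double category of lax functors, transformations, modules, and multimodulations, and verify that the product-preserving objects, arrows, and proarrows form a sub-structure closed under composition and containing the units. One small correction on the unit verification: the identity module $\id_F$ has components $\id_F(m) = Fm$ for \emph{every} proarrow $m$ (not just identities $F\id_x$), so its product comparison $\Mu_{\vec m}$ is literally the lax functor's own comparison $\Phi_{\vec m}$; hence $\id_F$ preserves products if and only if $F$ does, and no detour through \cref{eq:unitor-product} is needed.
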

\begin{proof}
  For fixed double categories $\dbl{D}$ and $\dbl{E}$, there is a unital virtual
  double category of lax functors $\dbl{D} \to \dbl{E}$, natural
  transformations, modules, and multimodulations, for any choice of laxness in
  the natural transformations. This result was stated for strict transformations
  in \citep[\mbox{Theorem 4.3}]{pare2011} and proved for pseudo or (op)lax
  transformations in \citep[\mbox{Theorem 10.3}]{lambert2024}. So we need only
  check that the product-preserving arrows and proarrows are closed under the
  operations of a unital virtual double category. Product-preserving lax
  transformations are closed under composition, by pasting the naturality
  squares for projections, and the identity lax transformation arises from the
  identity strict transformation, so certainly preserves products. Finally, the
  identity module $\id_F$ on a lax functor $F$ preserves products if and only if
  $F$ does.
\end{proof}

\begin{corollary}[Virtual double category of models]
  Every finite-product double theory has a unital virtual double category of
  models, for any choice of strict, pseudo, or (op)lax maps of models.
\end{corollary}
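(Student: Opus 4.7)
The plan is to apply the preceding theorem with $\dbl{D} = \dbl{T}$ and $\dbl{E} = \dbl{S}$, where $\dbl{S}$ denotes the double category in which models are valued. The only point needing attention is that the theorem and the supporting lemmas of this section were stated for lax products in the infinitary sense, whereas models of a finite-product double theory involve only finite products. I would first observe that all of the relevant definitions and lemmas admit a finitary version by uniformly replacing ``products'' with ``finite products,'' and then specialize.

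This finitary reformulation is straightforward: each of the definitions of product-preserving lax transformation and product-preserving module, together with \cref{lem:laxator-unitor-product}, \cref{lem:modulation-component-product}, and the multimodulation lemma, quantifies over indexing sets and spans of a single fixed but arbitrary size, with proofs that instantiate the universal property of one such product at a time. Restricting to finite indexing data leaves every argument intact. Likewise, the closure properties invoked in the proof of the theorem---composition of product-preserving lax transformations by pasting naturality squares over a finite indexing set, and the triviality of product preservation for identity transformations and identity modules---are insensitive to the finitary restriction.

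With the finitary theorem in hand, the corollary is then immediate. Its objects are the finite-product-preserving lax functors $\dbl{T} \to \dbl{S}$, which are precisely the models of $\dbl{T}$ in $\dbl{S}$; its arrows are maps of models of whichever chosen strictness---strict, pseudo, lax, or oplax; its proarrows are finite-product-preserving modules between models; and its multicells are multimodulations. These assemble into a unital virtual double category by the theorem. I do not anticipate any substantive obstacle here, as the finite and infinitary arguments are formally identical; the work of this section and the prior one has been to ensure that all of the higher morphisms between models interact well enough with products that the closure properties under composition and units go through without adjustment.
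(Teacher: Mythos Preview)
Your proposal is correct and follows the same approach as the paper, which simply states the corollary without proof as an immediate consequence of the preceding theorem. Your additional care in noting that the theorem and lemmas of this section specialize verbatim to the finitary case is a reasonable clarification that the paper leaves implicit.
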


For instance, the theory of local commutative monoids
(Example \ref{ex:cmon-enrichment}) gives rise to the unital virtual double category of
categories, functors, profunctors, and natural transformations, all enriched in
commutative monoids. In this case, the strict and lax maps of models coincide
since the only arrows in the theory are the structure arrows between products.
The verification of these claims uses the lemmas proved in this section. More
generally, for any category $\catV$ of models of a commutative algebraic theory,
Construction \ref{ex:algebraic-category-enrichment} gives the unital virtual
double category of $\catV$-categories, $\catV$-functors, $\catV$-profunctors,
and $\catV$-natural transformations.


\begin{references}
\providecommand{\natexlab}[1]{#1}
\providecommand{\url}[1]{\texttt{#1}}
\providecommand{\doi}[1]{doi: \href{https://doi.org/#1}{#1}}

\bibitem[Ad{\'a}mek and Rosick{\`y}(2020)]{adamek2020}
Ji{\v{r}}{\'\i} Ad{\'a}mek and Ji{\v{r}}{\'\i} Rosick{\`y}.
\newblock How nice are free completions of categories?
\newblock \emph{Topology and its Applications}, 273:\penalty0 106972, 2020.
\newblock \doi{10.1016/j.topol.2019.106972}.

\bibitem[Adámek et~al.(2010)Adámek, Rosický, and Vitale]{adamek2010}
Jiří Adámek, Jiří Rosický, and Enrico Vitale.
\newblock \emph{Algebraic theories: a categorical introduction to general
  algebra}, volume 184 of \emph{Cambridge Tracts in Mathematics}.
\newblock Cambridge University Press, 2010.
\newblock \doi{10.1017/CBO9780511760754}.

\bibitem[Aleiferi(2018)]{aleiferi2018}
Evangelia Aleiferi.
\newblock \emph{Cartesian double categories with an emphasis on characterizing
  spans}.
\newblock PhD thesis, Dalhousie University, 2018.

\bibitem[Baez and Courser(2020)]{baez2020}
John~C. Baez and Kenny Courser.
\newblock Structured cospans.
\newblock \emph{Theory and Applications of Categories}, 35\penalty0
  (48):\penalty0 1771--1822, 2020.

\bibitem[B{\'e}nabou(1985)]{benabou1985}
Jean B{\'e}nabou.
\newblock Fibered categories and the foundations of naive category theory.
\newblock \emph{The Journal of Symbolic Logic}, 50\penalty0 (1):\penalty0
  10--37, 1985.
\newblock \doi{10.2307/2273784}.

\bibitem[Borceux(1994)]{borceux1994}
Francis Borceux.
\newblock \emph{Handbook of categorical algebra}.
\newblock Cambridge University Press, 1994.
\newblock \doi{10.1017/CBO9780511525858}.
\newblock 3 volumes.

\bibitem[Bourke(2021)]{bourke2021}
John Bourke.
\newblock Accessible aspects of 2-category theory.
\newblock \emph{Journal of Pure and Applied Algebra}, 225\penalty0
  (3):\penalty0 106519, 2021.
\newblock \doi{10.1016/j.jpaa.2020.106519}.

\bibitem[Carboni and Johnstone(1995)]{carboni1995}
Aurelio Carboni and Peter Johnstone.
\newblock Connected limits, familial representability and artin glueing.
\newblock \emph{Mathematical Structures in Computer Science}, 5\penalty0
  (4):\penalty0 441--459, 1995.
\newblock \doi{10.1017/S0960129500001183}.

\bibitem[Carboni and Walters(1987)]{carboni1987}
Aurelio Carboni and Robert~F.C. Walters.
\newblock Cartesian bicategories {I}.
\newblock \emph{Journal of Pure and Applied Algebra}, 49\penalty0
  (1-2):\penalty0 11--32, 1987.
\newblock \doi{10.1016/0022-4049(87)90121-6}.

\bibitem[Carboni et~al.(1993)Carboni, Lack, and Walters]{carboni1993}
Aurelio Carboni, Stephen Lack, and Robert~F.C. Walters.
\newblock Introduction to extensive and distributive categories.
\newblock \emph{Journal of Pure and Applied Algebra}, 84\penalty0 (2):\penalty0
  145--158, 1993.
\newblock \doi{10.1016/0022-4049(93)90035-R}.

\bibitem[Carboni et~al.(2008)Carboni, Kelly, Walters, and Wood]{carboni2008}
Aurelio Carboni, G.~Max Kelly, Robert~F.C. Walters, and Richard~J. Wood.
\newblock Cartesian bicategories {II}.
\newblock \emph{Theory and Applications of Categories}, 19\penalty0
  (6):\penalty0 93--124, 2008.

\bibitem[Cruttwell et~al.(2022)Cruttwell, Lambert, Pronk, and
  Szyld]{cruttwell2022}
Geoffrey Cruttwell, Michael Lambert, Dorette Pronk, and Martin Szyld.
\newblock Double fibrations.
\newblock \emph{Theory and Applications of Categories}, 38\penalty0
  (35):\penalty0 1326--1394, 2022.

\bibitem[Cruttwell and Shulman(2010)]{cruttwell2010}
G.S.H. Cruttwell and Michael~A. Shulman.
\newblock A unified framework for generalized multicategories.
\newblock \emph{Theory and Applications of Categories}, 24\penalty0
  (21):\penalty0 580--655, 2010.

\bibitem[Dawson et~al.(2010)Dawson, Par{\'e}, and Pronk]{dawson2010}
Robert Dawson, Robert Par{\'e}, and Dorette Pronk.
\newblock The span construction.
\newblock \emph{Theory and Applications of Categories}, 24\penalty0
  (13):\penalty0 302--377, 2010.

\bibitem[Fiadeiro and Schmitt(2007)]{fiadeiro2007}
Jos{\'e}~Luiz Fiadeiro and Vincent Schmitt.
\newblock Structured co-spans: an algebra of interaction protocols.
\newblock In \emph{International Conference on Algebra and Coalgebra in
  Computer Science (CALCO 2007)}, pages 194--208, 2007.
\newblock \doi{10.1007/978-3-540-73859-6\_14}.

\bibitem[Grandis(2019)]{grandis2019}
Marco Grandis.
\newblock \emph{Higher dimensional categories: From double to multiple
  categories}.
\newblock World Scientific, 2019.
\newblock \doi{10.1142/11406}.

\bibitem[Grandis and Par{\'e}(1999)]{grandis1999}
Marco Grandis and Robert Par{\'e}.
\newblock Limits in double categories.
\newblock \emph{Cahiers de topologie et g{\'e}om{\'e}trie diff{\'e}rentielle
  cat{\'e}goriques}, 40\penalty0 (3):\penalty0 162--220, 1999.

\bibitem[Grandis and Par{\'e}(2004)]{grandis2004}
Marco Grandis and Robert Par{\'e}.
\newblock Adjoint for double categories.
\newblock \emph{Cahiers de topologie et g{\'e}om{\'e}trie diff{\'e}rentielle
  cat{\'e}goriques}, 45\penalty0 (3):\penalty0 193--240, 2004.

\bibitem[Grandis and Par{\'e}(2008)]{grandis2008}
Marco Grandis and Robert Par{\'e}.
\newblock Kan extensions in double categories.
\newblock \emph{Theory and Applications of Categories}, 20\penalty0
  (8):\penalty0 152--185, 2008.

\bibitem[Hu and Tholen(1995)]{hu1995}
Hongde Hu and Walter Tholen.
\newblock Limits in free coproduct completions.
\newblock \emph{Journal of Pure and Applied Algebra}, 105\penalty0
  (3):\penalty0 277--291, 1995.
\newblock \doi{10.1016/0022-4049(94)00153-7}.

\bibitem[Lack and Shulman(2012)]{lack2012}
Stephen Lack and Michael Shulman.
\newblock Enhanced 2-categories and limits for lax morphisms.
\newblock \emph{Advances in Mathematics}, 229\penalty0 (1):\penalty0 294--356,
  2012.
\newblock \doi{10.1016/j.aim.2011.08.014}.

\bibitem[Lack and Soboci{\'n}ski(2006)]{lack2006}
Stephen Lack and Pawe{\l} Soboci{\'n}ski.
\newblock Toposes are adhesive.
\newblock In \emph{International Conference on Graph Transformation}, pages
  184--198, 2006.
\newblock \doi{10.1007/11841883\_14}.

\bibitem[Lambert(2022)]{lambert2022}
Michael Lambert.
\newblock Double categories of relations.
\newblock \emph{Theory and Applications of Categories}, 38\penalty0
  (33):\penalty0 1249--1283, 2022.

\bibitem[Lambert and Patterson(2024)]{lambert2024}
Michael Lambert and Evan Patterson.
\newblock Cartesian double theories: A double-categorical framework for
  categorical doctrines.
\newblock \emph{Advances in Mathematics}, 444:\penalty0 109630, 2024.
\newblock \doi{10.1016/j.aim.2024.109630}.

\bibitem[Mac~Lane(1998)]{maclane1998}
Saunders Mac~Lane.
\newblock \emph{Categories for the working mathematician}.
\newblock Springer, 2 edition, 1998.
\newblock \doi{10.1007/978-1-4757-4721-8}.

\bibitem[Par{\'e}(1990)]{pare1990}
Robert Par{\'e}.
\newblock Simply connected limits.
\newblock \emph{Canadian Journal of Mathematics}, 42\penalty0 (4):\penalty0
  731--746, 1990.
\newblock \doi{10.4153/CJM-1990-038-6}.

\bibitem[Par{\'e}(2009)]{pare2009}
Robert Par{\'e}.
\newblock Coherent theories as double lawvere theories.
\newblock 6 2009.
\newblock URL \url{https://www.mathstat.dal.ca/~pare/CapeTown2009.pdf}.

\bibitem[Paré(2011)]{pare2011}
Robert Paré.
\newblock Yoneda theory for double categories.
\newblock \emph{Theory and Applications of Categories}, 25\penalty0
  (17):\penalty0 436--489, 2011.

\bibitem[Patterson(2023)]{patterson2023}
Evan Patterson.
\newblock Structured and decorated cospans from the viewpoint of double
  category theory.
\newblock In \emph{Proceedings of the Sixth International Conference on Applied
  Category Theory (ACT 2023)}, 2023.

\bibitem[Patterson(2024)]{patterson2024}
Evan Patterson.
\newblock Transposing cartesian and other structure in double categories.
\newblock 2024.

\bibitem[Peschke and Tholen(2020)]{peschke2020}
George Peschke and Walter Tholen.
\newblock Diagrams, fibrations, and the decomposition of colimits.
\newblock 2020.

\bibitem[Pisani(2014)]{pisani2014}
Claudio Pisani.
\newblock Sequential multicategories.
\newblock \emph{Theory and Applications of Categories}, 29\penalty0
  (19):\penalty0 496--541, 2014.

\bibitem[Shulman(2008)]{shulman2008}
Michael Shulman.
\newblock Framed bicategories and monoidal fibrations.
\newblock \emph{Theory and Applications of Categories}, 20\penalty0
  (18):\penalty0 650--738, 2008.

\bibitem[Shulman(2010)]{shulman2010}
Michael Shulman.
\newblock Constructing symmetric monoidal bicategories.
\newblock 2010.

\bibitem[Shulman(2016)]{shulman2016}
Michael Shulman.
\newblock Categorical logic from a categorical point of view.
\newblock Draft for {AARMS} Summer School 2016, 7 2016.
\newblock URL \url{https://mikeshulman.github.io/catlog/catlog.pdf}.

\end{references}
\end{document}